\documentclass[10pt,a4paper]{article}
\usepackage[utf8]{inputenc}
\usepackage[english]{babel}
\usepackage[all]{xy}
\usepackage{enumitem}
\usepackage[dvipsnames]{xcolor}
\usepackage{graphicx}
\usepackage{stmaryrd}
\usepackage{amsfonts,amstext,amssymb,amsmath,amsthm,pspicture,bigints}
\usepackage{fullpage}
\usepackage{moreverb}
\usepackage{pifont}
\usepackage{pst-all}
\usepackage[left=2cm,right=2cm,top=2cm,bottom=2cm]{geometry}
\usepackage{esint}
\usepackage{fourier-orns}
\usepackage{mathrsfs}
\usepackage{array}

\usepackage{hyperref}
\usepackage{authblk}
\newcommand{\ii }{{\rm i} }
\newcolumntype{C}[1]{>{\centering\arraybackslash}b{#1}}
\newcolumntype{R}[1]{>{\raggedleft\arraybackslash}b{#1}}
\newcolumntype{L}[1]{>{\raggedright\arraybackslash}b{#1}}
\newcolumntype{M}[1]{>{\centering}m{#1}}
\newtheorem{theo}{Theorem}[section]
\newtheorem{defin}{Definition}[section]
\newtheorem{lem}{Lemma}[section]
\newtheorem{prop}{Proposition}[section]

\newtheorem{remark}{Remark}[section]

\usepackage{bbm}
\numberwithin{equation}{section}

\date{}
\title{Geometric effects on the persistence of patch
structures for\\ 
2D Euler equations in simply-connected bounded domains}
\author{Emeric Roulley}
\begin{document}
\maketitle
\begin{abstract}
	We first prove a rigidity result for uniformly rotating vortex patch solutions to Euler equations in bounded simply-connected planar domains. Namely, if a bounded simply-connected domain with real-analytic boundary contains a nontrivial vortex patch that remains strictly inside the domain while undergoing a uniform rigid rotation, then the ambient domain must be a disc centered at the rotation center. The proof combines the rotating-frame formulation with harmonic unique continuation and the generalized Schwarz's reflection principle across analytic boundary arcs. These tools imply that, unless the outer domain is rotationally invariant, the velocity must vanish in an open region surrounding the patch, contradicting the nonzero circulation generated by the patch. In the same spirit, we show that only circular domains possess a circular point vortex orbit.\\
    
   As a second result, we construct quasi-periodic vortex patch solutions to the Euler equations in domains that are small perturbations of the unit disc $\mathbb{D}$. The amplitudes of the solutions are linked to the size of the domain deformation, allowing the classical Rankine vortices $b\cdot\mathbb{D}$, $b\in(0,1)$, to be regarded as equilibrium states. Our analysis is perturbative with respect to the construction developed in the undeformed case \cite{HR21-1}, and the resulting quasi-periodic solutions exist for most values of the inner radius $b$. This provides the first construction of quasi-periodic vortex patch solutions to the Euler equations in non-radial domains. Moreover, combined with the rigidity result established above, this shows that, in the periodic case, these solutions cannot arise from a purely rigid rotation.
\end{abstract}
\tableofcontents
\section{Introduction}
The present paper proposes to explore some geometric effects on the construction of periodic and quasi-periodic vortex patches within simply-connected bounded domains. In this introductive section, we first recall the 2D Euler equations in simply-connected domains. Then, we discuss some historical background on the construction of V-states and quasi-periodic vortex patch solutions. Finally, we state our main results and give some elements of their proofs.
\subsection{Euler equations in a simply-connected domain}\label{sec intro 1}
We consider the incompressible Euler system set in a simply-connected domain $\mathbf{D}\subset\mathbb{R}^{2}\equiv\mathbb{C}$ and written in the velocity-vorticity formulation
\begin{equation}\label{Euler eq bounded domain}
	\begin{cases}
		\partial_{t}\boldsymbol{\omega}+\mathbf{v}\cdot\nabla\boldsymbol{\omega}=0, & \textnormal{in }\mathbb{R}\times\mathbf{D},\\
		\mathbf{v}=\nabla^{\bot}\boldsymbol{\Psi}, & \textnormal{in }\mathbb{R}\times\mathbf{D},\\
		\boldsymbol{\omega}(0,\cdot)=\boldsymbol{\omega}_0, & \textnormal{in }\mathbf{D}.
	\end{cases}
\end{equation}
This is an active scalar model with unknown the vorticity $\boldsymbol{\omega}=\operatorname{curl}(\mathbf{v})$ related to the solenoidal (second equation in \eqref{Euler eq bounded domain}) velocity field $\mathbf{v}$ of the fluid. In the case where the domain $\mathbf{D}$ has a boundary, we impose the additional impermeability condition stating that no fluid enters or escapes the domain $\mathbf{D}$ at its boundary
\begin{equation}\label{impermea}
    \mathbf{v}\cdot\mathbf{n}_{\partial\mathbf{D}}=0,\qquad \textnormal{on }\mathbb{R}\times\partial\mathbf{D}.
\end{equation}
The notation $\mathbf{n}_{\partial\mathbf{D}}$ stands for the outward normal vector to the boundary $\partial\mathbf{D}$. By construction, the velocity potential (or stream function) $\boldsymbol{\Psi}$ solves the elliptic equation
\begin{equation}\label{elliptic_eq:stream-intro}
    \Delta\boldsymbol{\Psi}=\boldsymbol{\omega},\quad\textnormal{in }\mathbb{R}\times\mathbf{D}.
\end{equation}
In addition, the impermeability condition \eqref{impermea} translates into a Dirichlet boundary condition for the velocity potential, namely
\begin{equation}\label{Dirichlet_stream}
    \boldsymbol{\Psi}\equiv0,\quad\textnormal{on }\mathbb{R}\times\partial\mathbf{D}.
\end{equation}
From \cite{Evans}, solving the previous PDE gives the following integral representation of the stream function
\begin{equation}\label{Psi EDP}
	\boldsymbol{\Psi}(t,x)=\frac{1}{2\pi}\int_{\mathbf{D}} G_{\mathbf{D}}(x,y)\boldsymbol{\omega}(t,y)dy,
\end{equation}
where $G_{\mathbf{D}}:\mathbf{D}\times\mathbf{D}\rightarrow\mathbb{R}$ is the Green function associated to the Dirichlet-Laplace problem set in $\mathbf{D}.$ In particular,
$$G_{\mathbb{R}^2}(x,y)=\log|x-y|$$
and, if the domain $\mathbf{D}$ is bounded, then according to \cite[Chap. 6]{A66} we can write
\begin{equation}\label{def:Green}
    G_{\mathbf{D}}(x,y)=G_{\mathbb{D}}\big(\boldsymbol{\Phi}(x),\boldsymbol{\Phi}(y)\big),\qquad  G_{\mathbb{D}}(z,\xi)=\log\left|\frac{z-\xi}{1-\overline{\xi}z}\right|,
\end{equation}
where $\boldsymbol{\Phi}:\mathbf{D}\to\mathbb{D}$ is a conformal mapping and $\mathbb{D}$ is the unit disc. The function $G_{\mathbf{D}}$ admits the following decomposition
$$G_{\mathbf{D}}(x,y)=\log|x-y|+K_{\mathbf{D}}(x,y),$$
where
$$K_{\mathbf{D}}(x.y)\triangleq\log\left|\frac{\frac{\boldsymbol{\Phi}(x)-\boldsymbol{\Phi}(y)}{x-y}}{1-\overline{\boldsymbol{\Phi}(y)}\boldsymbol{\Phi}(x)}\right|$$
is smooth on $\mathbf{D}\times\mathbf{D}$ and actually harmonic in both variables $x$ and $y.$ We introduce the Robin function
$$\mathcal{R}_{\mathbf{D}}(x)\triangleq K_{\mathbf{D}}(x,x)=-\log\left(\frac{1-|\boldsymbol{\Phi}(x)|^2}{|\boldsymbol{\Phi}'(x)|}\right).$$
Note that the Robin function is real-analytic in $\mathbf{D}$ and explodes at the boundary
\begin{equation}\label{Robin bndry}
    \lim_{x\to\partial\mathbf{D}}\mathcal{R}_{\mathbf{D}}(x)=+\infty.
\end{equation}
In addition, it solves the Liouville equation
\begin{equation}\label{Liouville intro}
    \Delta\mathcal{R}_{\mathbf{D}}=4e^{2\mathcal{R}_{\mathbf{D}}},\quad\textnormal{in }\mathbf{D}.
\end{equation}
A point vortex is a formal solution to Euler equations where the vorticity is concentrated in a point
$$\boldsymbol{\omega}(t)=\Gamma\delta_{\xi(t)},\qquad\Gamma\in\mathbb{R}^*.$$
Its dynamics is generated by the boundary $\partial\mathbf{D}$ and given by the following Hamiltonian system 
\begin{equation}\label{Hamiltonian_system_1PV}
    \dot{\xi}(t)=\frac{\Gamma}{4\pi}\nabla^{\bot}\mathcal{R}_{\mathbf{D}}\big(\xi(t)\big).
\end{equation}
 
\subsection{Historical background on (quasi-)periodic vortex patches}
We shall now discuss some important results on the emergence of periodic and quasi-periodic vortex patch solutions. Recall that the Yudovich theory \cite{Y63} (see also \cite{GVL13,LZ19} for domains with rough boundaries and holes) ensures the existence and uniqueness of a global in time weak Lagrangian solution to \eqref{Euler eq bounded domain} provided that $\boldsymbol{\omega}_0\in L^1(\mathbf{D})\cap L^{\infty}(\mathbf{D}).$ In particular, if $\boldsymbol{\omega}$ is the characteristic function of a bounded domain $D_0$, then, at any time $t>0$, the solution $\boldsymbol{\omega}(t)$ is also the characteristic function of a bounded domain $D_t$ which has the same area as $D_0.$ This type of solutions is called \textit{vortex patch}
$$\boldsymbol{\omega}(t)=\mathbf{1}_{D_t},\qquad|D_t|=|D_0|.$$
It is well known that the dynamics of such solutions is characterized by the evolution of the boundary $\partial D_t=\{z(t,\theta),\,\theta\in\mathbb{T}\}$, with $\mathbb{T}\triangleq\mathbb{R}/2\pi\mathbb{Z},$ through the so-called contour dynamics equation, see \cite{HMV13}
\begin{equation}\label{CDE}
	\textnormal{Im}\Big\{\partial_{t}z(t,\theta)\overline{\partial_{\theta}z(t,\theta)}\Big\}=\partial_{\theta}\Big[\boldsymbol{\Psi}\big(t,z(t,\theta)\big)\Big].
\end{equation}
Now, we shall review the literature by dividing the various results into three different topics: V-states, quasi-periodic solutions and non-rigid periodic motion. For a more complete presentation, we refer the reader to the monograph \cite{H26}.
\paragraph{V-states}
Vortex patches for which the motion corresponds to a uniform rotation of the initial domain are called \textit{V-states}. More precisely, there exists an angular velocity $\Omega\in\mathbb{R}$ such that 
$$D_t=\mathcal{R}_{\Omega t}D_0,\qquad\mathcal{R}(\theta)\triangleq\begin{bmatrix}
    \cos(\theta) & -\sin(\theta)\\
    \sin(\theta) & \cos(\theta)
\end{bmatrix}.$$
$\cdot$ \texttt{Case $\mathbf{D}=\mathbb{R}^2$ :} Obviously, radial patches are trivial V-states commonly called \textit{Rankine vortices}. The first nontrivial example of V-states have elliptic shape and were found by Kirchhoff \cite{K74}. More precisely, an ellipse with semi-axes $a$ and $b$ rotates uniformly about its center with speed
$$\Omega_{a,b}\triangleq\frac{ab}{(a
+b)^2}\cdot$$
Solutions with higher symmetry were then numerically obtained a century after by Deem \& Zabusky in \cite{DZ78}. The patch domains they found are $\mathbf{m}$-fold, namely invariant under $\tfrac{2\pi}{\mathbf{m}}$-rotation for a given positive integer $\mathbf{m}\geqslant3.$ Their rigorous construction has been performed by Burbea \cite{B82} by means of local bifurcation theory: the nontrivial V-states emerge from the Rankine unit disc at the angular velocities
$$\Omega_{\mathbf{m}}=\frac{\mathbf{m}-1}{2\mathbf{m}},\qquad\mathbf{m}\geqslant2.$$
In particular, the $\mathbf{m}=2$ branch corresponds to Kirchhoff solutions. The local Burbea branches can be extended via global bifurcation \cite{HMW20}. Note that Burbea's bifurcation points belong to the interval $(0,\tfrac{1}{2}).$ Outside this interval, no nontrivial V-states exist as proved in the series of works \cite{F00,GPSY20,H15}. Inside, this interval, the global phase portrait description is more delicate. Locally near the junction between the trivial branch and bifurcation branches rigidity also occurs \cite{H25}. In \cite{P22}, the Liouville properties of the $\mathbf{m}$-fold symmetric, simply-connected V-states were explored. Variational analysis was used to demonstrate that for a general family of $\mathbf{m}$-fold symmetric V-states, which satisfy reasonable symmetry constraints, the
angular velocity $\Omega$ must be greater than $\tfrac{1}{2}-\tfrac{C}{\mathbf{m}}$ for a universal positive constant $C$. The doubly-connected case where the patch domain possesses a hole  has been studied by de la Hoz, Hmidi, Mateu \& Verdera \cite{HHMV16} who constructed, for every positive integer $\mathbf{m}$ satisfying the condition 
\begin{equation}\label{cond:DC-intro}
    1+b^{\mathbf{m}}-\frac{1-b^2}{2}\mathbf{m}<0,
\end{equation}
local curves of non-annular $\mathbf{m}$-fold symmetric doubly-connected V-states bifurcating from the renormalized annulus
\begin{equation}\label{def:annulus}
    \mathbb{A}_b\triangleq\big\{z\in\mathbb{C}\quad\textnormal{s.t.}\quad b<|z|<1\big\},\qquad b\in(0,1),
\end{equation}
at the angular velocities
$$\Omega_\mathbf{m}^\pm(b)=\frac{1-b^2}{4}\pm\frac{1}{2\mathbf{m}} \sqrt{\left(\frac{\mathbf{m}\left(1-b^2\right)}{2} - 1\right)^2 - b^{2\mathbf{m}}}.$$
The degenerate case where the relation \eqref{cond:DC-intro} exactly vanishes has been partially treated in \cite{HM16-2,WXZ22}. An other way to get V-states is via implicit function theorem for desingularization of point vortices. We refer the reader to \cite{BCGHM26,G20,G21,HH21,HW21,HM17,XZ26} for the contour dynamics method and to \cite{CLZ21} for their variational counterparts.\\

\noindent $\cdot$ \texttt{Case $\mathbf{D}=\mathbb{D}$ :} In \cite[Thm. 1]{HHHM15}, de la Hoz, Hassainia, Hmidi \& Mateu could implement the local bifurcation theory and obtained for any $\mathbf{m}\in\mathbb{N}^*$ the existence of $\mathbf{m}$-fold symmetric V-states emerging from the Rankine vortices $b\cdot\mathbb{D}$, $b\in (0,1)$ at the angular velocities
$$\Omega_{\mathbf{m}}(b)=\frac{\mathbf{m}-1+b^{2\mathbf{m}}}{2\mathbf{m}}\cdot$$
The global phase portrait description in this case is less complete with only few results \cite{FWZ24,W24,WZ21}. The doubly-connected case is also analyzed in \cite[Thm. 2]{HHHM15}, obtaining, under the constraint
$$\mathbf{m}>\frac{2+2b^{\mathbf{m}}-(b_1^{\mathbf{m}}+b_2^{\mathbf{m}})}{1-b^2},\qquad b\triangleq\frac{b_2}{b_1},$$
the birfurcation from the renormalized annulus
$$\mathbb{A}_{b_1,b_2}\triangleq\big\{z\in\mathbb{C}\quad\textnormal{s.t.}\quad b_2<|z|<b_1\big\},\qquad0<b_2<b_1<1$$
at
$$\Omega_{\mathbf{m}}^{\pm}(b_1,b_2)\triangleq\frac{1-b^2}{4}+\frac{b_1^{2\mathbf{m}}-b_2^{2\mathbf{m}}}{4\mathbf{m}}\pm\frac{1}{2\mathbf{m}}\sqrt{\left(\frac{\mathbf{m}\left(1-b^2\right)}{2} -\frac{2-b_1^{2\mathbf{m}}-b_2^{2\mathbf{m}}}{2}\right)^2 - b^{2\mathbf{m}}(1-b_1^{\mathbf{m}})}.$$
The desingularization approach has been implemented in \cite[Thm. 6.1]{HHR24} near point vortex circular orbits. Variational methods also have been implemented \cite{CWWZ21,CWZ21}.\\

We mention that in general, for Euler equations, the steady solutions exhibit a wide variety of structures with flexibility results \cite{CS12,CDG21,CZEW23,EH25,EH26,EFR24} and rigidity ones \cite{DN26,EHSX24,GXX26,HN17,HN19,Ruiz23,WZ23}. In this context and together with the above mentioned literature, a natural question arises:
\begin{center}
    \textit{Do there exist nontrivial V-states in non-radial domains (i.e. neither $\mathbb{R}^2$ nor $\mathbb{D}$) ?}
\end{center}
The stationary case $\Omega=0$ admits positive answers \cite{CW22,LWZ19,WZ20}. One of the main objectives of the present paper is to bring a negative answer to the previous question in the case $\Omega\neq0$ within analytic simply-connected bounded ambient domains; see Theorem \ref{thm:inverse-pb}.\\ 

We conclude this paragraph on V-states by mentioning their constructions for other models related to the Euler equations such as the generalized suface quasi-geostrophic equations \cite{CCG16,G19,HH15,HHH16,HXX22,R17}, the shallow-water quasi-geostrophic equations \cite{BGR26,DHR19,R21}, the Euler-$\alpha$ equation \cite{R22}, the Euler equations on the rotating sphere \cite{GHR23}, the Lake equations \cite{HHRZ25} and the Vlasov-Poisson/Euler-Poisson systems \cite{GHHR26,R23,R26}. Finally, a unified theory of V-state constructions for generic active scalar planar models covering a small part of the previous results have been obtained in \cite{HXX26,HXX26-1}.
\paragraph{Time quasi-periodic patches} 
Recall that that a function $f:\mathbb{R}\to\mathbb{R}$ is said to be \textit{quasi-periodic} if and only if there exist $d\in\mathbb{N}^*,$ $F:\mathbb{T}^{d}\to\mathbb{R}$ and $\omega\in\mathbb{R}^d$ such that
$$\forall t\in\mathbb{R},\quad f(t)=F(\omega t)\qquad\textnormal{and}\qquad\forall l\in\mathbb{Z}^d\setminus\{0\},\quad\omega\cdot l\neq 0.$$
In particular, the case $d=1$ corresponds to a periodic function with frequency $\omega\in\mathbb{R}^*$. The construction of quasi-periodic solutions for fluid models was initiated by the seminal works of Berti \& Montalto \cite{BM18} and Baldi, Berti, Haus \& Montalto \cite{BBHM18}, where quasi-periodic standing water waves were constructed for the two-dimensional gravity-capillary and pure gravity water-wave equations on the torus. This was subsequently extended to traveling quasi-periodic water waves, allowing for the inclusion of constant vorticity \cite{BFM21,BFM21-1}. The deep-water case was later addressed by Feola \& Giuliani \cite{FG21} using weak Birkhoff normal form techniques, and more recently the corresponding three-dimensional problem was treated in \cite{FMT25}. Further developments include the construction of space-quasi-periodic steady solutions for the Euler equations in a bounded channel, which become quasi-periodic traveling waves in a suitable moving frame \cite{FMM24}, as well as large-amplitude traveling waves for the $\beta$-plane approximation of rotating fluids \cite{BFMT25}. We also mention several constructions of time-quasi-periodic or almost-periodic solutions to the Euler equations on the $d$-dimensional torus that do not rely on KAM theory \cite{CF13,EPSL23,FM24-1}. These solutions are obtained by superposing finitely or infinitely many rescaled, non-interacting traveling waves generated by compactly supported stationary building blocks. Finally, in the non-autonomous setting, time-quasi-periodic solutions have been constructed for the three-dimensional Euler equations with quasi-periodic forcing \cite{BM21} and for the two-dimensional Navier--Stokes equations \cite{FM24}. In the latter case, the solutions converge, in the zero-viscosity limit, to time-quasi-periodic solutions of the two-dimensional Euler equations on arbitrarily long time intervals.\\

The first constructions of quasi-periodic vortex patches were obtained in 2021 by Hmidi \& Roulley \cite{HR21} and Hassainia, Hmidi \& Masmoudi \cite{HHM21}, who considered the quasi-geostrophic shallow-water and generalized quasi-geostrophic equations near circular Rankine vortices. In both works, an external parameter in the equation is used to avoid resonances and verify the non-resonance conditions required by the KAM scheme. The same strategy was subsequently applied to the Euler-$\alpha$ model \cite{R22}, with the parameter $\alpha$ playing the role of the external parameter, again near circular Rankine patches. The remaining parameter range for the surface quasi-geostrophic equations has been covered by G\'omez-Serrano, Ionescu \& Park in \cite{GIP26}. The Euler equations themselves, however, do not possess such an external parameter. This led to the idea of exploiting instead the geometric parameters of suitable families of trivial solutions. This approach was first implemented in 2022 by Berti, Hassaini \& Masmoudi \cite{BHM23}, who used the aspect ratio of the Kirchhoff ellipses as a parameter. Around the same time, Hassainia \& Roulley used the radius $b$ of the Rankine patch in the unit disc as a parameter, while Hassainia, Hmidi \& Roulley \cite{HHR23} later considered the inner radius $b$ of the renormalized annulus $\mathbb{A}_b$ defined in \eqref{def:annulus}. In all these works, however, the ambient domain $\mathbf{D}$ in which the fluid evolves is either $\mathbb{R}^2$ or $\mathbb{D}$, both of which are radially symmetric. This naturally raises the following question:
\begin{center}
Are there non-radial domains in which quasi-periodic vortex patch solutions exist?
\end{center}
The second part of the present paper gives a positive answer to this question. We construct quasi-periodic vortex patch solutions in domains that are small perturbations of the unit disc, by perturbing the construction of the circular case developed in \cite{HR21-1}; see Theorem \ref{thm:QPS_in_nonradial_domains}.

\paragraph{Non-rigid periodic motion} We conclude this historical subsection by mentioning some recent works lying at the intermediate between the previous two presentations, namely periodic solutions but that do not perform a uniform rotation or translation. The first work in this direction has been obtained by Hassainia, Hmidi \& Masmoudi \cite{HHM25} for the planar leapfrogging motion. This work has been recently extended to the 3D vortex ring leapfrogging phenomenon in \cite{GHH26}. In bounded simply-connected domains, Hassainia, Hmidi \& Roulley \cite{HHR24} could desingularize point vortex periodic orbits into periodic patch motion. Finally, Franzoi, Hmidi \& Montalto contructed periodic vortex patch pairs with oscillations around far away translating point vortex pairs in \cite{FHM26}.
\subsection{Main results}
Our first main contribution solves the inverse problem consisting in the non-existence of nontrivial V-states in non-radial real-analytic simply-connected bounded domains. The result reads as follows and constitute the most important novel contribution of the paper.
\begin{theo}\label{thm:inverse-pb}
\textbf{(Inverse problem for V-states : a rigidity criterion)}\\
Let $\mathbf{D}\subset\mathbb{R}^2$ be a bounded simply-connected domain with
real-analytic boundary. Assume that the Euler equations \eqref{Euler eq bounded domain} in
$\mathbf{D}$ admit a nontrivial uniformly rotating patch solution
$$\boldsymbol{\omega}(t,x)=\mathbf 1_{\mathcal{R}_{\Omega t}D_0}(x),\qquad\Omega\neq0,$$
where $D_0\Subset\mathbf{D}$ is a nonempty open simply-connected set and
$\mathcal{R}_\theta$ denotes the rotation of angle $\theta$ about the origin
$$\mathcal{R}_{\theta}\triangleq\begin{bmatrix}
    \cos(\theta) & -\sin(\theta)\\
    \sin(\theta) & \cos(\theta)
\end{bmatrix}.$$
Then,
$\mathbf{D}$ is a disc centered at the origin.
\end{theo}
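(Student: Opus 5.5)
We work in the rotating frame and set $D_t=\mathcal{R}_{\Omega t}D_0$. Write $\boldsymbol{\omega}(t,x)=\mathbf 1_{D_0}(\mathcal{R}_{-\Omega t}x)$. For the stream function we use the fact that $\boldsymbol\Psi(t,\cdot)$ solves $\Delta\boldsymbol\Psi=\mathbf 1_{D_t}$ in $\mathbf D$ with $\boldsymbol\Psi=0$ on $\partial\mathbf D$. Transport of the indicator by $\mathbf v$ with the boundary of $D_t$ moving rigidly is equivalent, through the contour dynamics equation \eqref{CDE}, to the following statement: the relative stream function
$$\Phi_t(x)\triangleq\boldsymbol\Psi(t,x)-\tfrac{\Omega}{2}|x|^2$$
is constant along $\partial D_t$, since the rigid rotation field is $\nabla^\perp(\tfrac{\Omega}{2}|x|^2)$. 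Rotating back, $\psi_0\triangleq\boldsymbol\Psi(0,\cdot)$ satisfies $\psi_0-\tfrac{\Omega}{2}|x|^2=c$ on $\partial D_0$.

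The key idea is to compare $\psi_0$ with the stream function of the rotated problem. Because $D_t=\mathcal{R}_{\Omega t}D_0$, the function $\boldsymbol\Psi(t,\cdot)\circ\mathcal{R}_{\Omega t}$ solves $\Delta u=\mathbf 1_{D_0}$ in $\mathcal{R}_{-\Omega t}\mathbf D$, with zero data on $\partial(\mathcal{R}_{-\Omega t}\mathbf D)$. Set $u_\theta$ to be this solution in $\mathcal{R}_{-\theta}\mathbf D$. The rotating-patch condition must hold for every $t$, so each $u_\theta-\tfrac{\Omega}{2}|x|^2$ is constant on $\partial D_0$. Fix a small $\theta$ and let $U$ be the connected component of $(\mathbf D\cap\mathcal{R}_{-\theta}\mathbf D)\setminus\overline{D_0}$ adjacent to $\partial D_0$. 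On $U$ the difference $w=u_\theta-\psi_0$ is harmonic. Across $\partial D_0$, the Laplacians of $u_\theta$ and $\psi_0$ agree, so $w$ is harmonic in a full neighbourhood of $\overline{D_0}$. Moreover $w$ is constant on $\partial D_0$, hence constant in $D_0$ by the maximum principle, and therefore constant on the connected component of the common domain, $w\equiv c_\theta$.

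Next we exploit the Dirichlet conditions. On $\partial\mathbf D\cap\mathcal{R}_{-\theta}\mathbf D$ we have $\psi_0=0$, so $u_\theta=c_\theta$ there. We then let the analytic continuation argument extend: $\psi_0$ is harmonic near $\partial\mathbf D$ because $D_0\Subset\mathbf D$, and by the Schwarz reflection principle across real-analytic arcs it extends harmonically across $\partial\mathbf D$. The same holds for $u_\theta$ across $\partial\mathcal{R}_{-\theta}\mathbf D$. The identity $u_\theta=\psi_0+c_\theta$ therefore propagates, by unique continuation, to a neighbourhood of $\partial\mathbf D$. Consequently $\psi_0+c_\theta$ vanishes on $\mathcal{R}_{-\theta}\partial\mathbf D$ for all small $\theta$, while $\psi_0$ vanishes on $\partial\mathbf D$.

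Now suppose $\mathbf D$ is not a centred disc. Then the curves $\mathcal{R}_{-\theta}\partial\mathbf D$ sweep out an open annular-type region $A$ near $\partial\mathbf D$, namely points whose modulus lies between $\min_{\partial\mathbf D}|x|$ and $\max_{\partial\mathbf D}|x|$. Along each curve $\psi_0$ takes a constant value $-c_\theta$. If $c_\theta$ is nonconstant in $\theta$, then $\psi_0$ is a function of the leaf and its gradient is normal to rotated copies of $\partial\mathbf D$. If $c_\theta\equiv0$, the same condition gives $\psi_0=0$ on an open set. In the latter case $\psi_0\equiv0$ on the harmonic component containing $\mathbf D\setminus\overline{D_0}$, so $\mathbf v=0$ near $\partial D_0$ from outside. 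This contradicts the circulation $\oint_{\partial D_0}\mathbf v\cdot\tau=|D_0|\neq0$. To rule out the former case, we differentiate in $\theta$ at $\theta=0$: this gives $\partial_\vartheta\psi_0=c'(0)$ on $\partial\mathbf D$, where $\partial_\vartheta$ denotes the angular derivative. The function $\partial_\vartheta\psi_0$ is harmonic in $\mathbf D\setminus\overline{D_0}$, and since the rotation commutes with the Laplacian, $\partial_\vartheta\psi_0$ restricted near $\partial D_0$ equals $\Omega\partial_\vartheta(|x|^2/2)=0$ in the jump sense. Combining this with the maximum principle should force $c'(0)=0$ and $\partial_\vartheta\psi_0$ constant, hence $\partial_\vartheta\psi_0=0$ on $\partial\mathbf D$. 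Because $\nabla\psi_0\neq0$ on $\partial\mathbf D$ by Hopf's lemma, $\partial\mathbf D$ is a union of circles centred at the origin, and $\mathbf D$ is a centred disc.

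The main obstacle is making the unique-continuation step rigorous. Since $\mathbf D$ and $\mathcal{R}_{-\theta}\mathbf D$ differ, the identity $w\equiv c_\theta$ only holds on a connected component of the intersection. One must check that this component reaches the boundary arcs, which is where the analytic reflection is needed to cross them. I would first handle the infinitesimal version, working directly with $\partial_\vartheta\psi_0-\Omega\partial_\vartheta(|x|^2/2)$, which avoids the intersection issue entirely.
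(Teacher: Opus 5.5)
You have a genuine gap: the exclusion of $c_\theta\neq0$ does not work.

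Your first steps are sound and essentially follow the paper.
\begin{itemize}
\item The comparison $u_\theta-\psi_0\equiv c_\theta$ on the component containing $D_0$ is Lemma~\ref{lem:htheta}.
\item To get $\psi_0\equiv -c_\theta$ on all of $\mathcal R_{-\theta}\partial\mathbf D$, you reflect both $\psi_0$ and $u_\theta$ and run unique continuation on a connected collar of $\partial\mathbf D$, which meets that component for small $\theta$. This is a valid, slightly cleaner substitute for the paper's transversal arc $I_\theta$ plus analyticity of $\phi_0\circ\gamma_\theta$.
\item Your branch $c_\theta\equiv0$ is the paper's endgame. For small $\theta$, though, the rotated curves only sweep an open set near a point with $p^\perp\cdot\mathbf n_{\partial\mathbf D}(p)\neq0$ (by the inverse function theorem), not the whole annulus $\min|x|<|x|<\max|x|$.
\end{itemize}

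Your claim that $\partial_\vartheta\psi_0$ equals $\Omega\partial_\vartheta(|x|^2/2)=0$ on $\partial D_0$ is false. Writing $\Phi_0=\psi_0-\frac{\Omega}{2}|x|^2$, the fact that $\Phi_0$ is constant on $\partial D_0$ only makes $\nabla\Phi_0$ normal there. Since $x^\perp$ is not tangent to $\partial D_0$, you actually get $\partial_\vartheta\psi_0=(x^\perp\cdot\nu)\,\partial_\nu\Phi_0$ on $\partial D_0$, with $\nu$ the normal to $\partial D_0$. This has no reason to vanish, and there is no jump either since $\psi_0\in C^1$. With unknown data on the inner boundary, the maximum principle in $\mathbf D\setminus\overline{D_0}$ gives nothing, so ``should force $c'(0)=0$'' is not an argument.

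The missing idea is the paper's topological observation in Lemma~\ref{lem:phi0_vanish}.
\begin{itemize}
\item First, $0\in\mathbf D$, because the circles $\{\mathcal R_\theta x\}$ for $x\in D_0$ lie in $\mathbf D$.
\item So the Jordan curves $\partial\mathbf D$ and $\mathcal R_{-\theta}\partial\mathbf D$ both enclose the origin and bound regions of equal area. They cannot be disjoint, since they would then be strictly nested.
\item At a common point $p$ you get $0=\psi_0(p)=-c_\theta$.
\end{itemize}
Hence $c_\theta=0$ for every small $\theta$, and you land in the branch you already treat.

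Alternatively, your infinitesimal idea can be rescued using only the outer boundary.
\begin{itemize}
\item Differentiating $\psi_0(\mathcal R_{-\theta}p)=-c_\theta$ at $\theta=0$ gives $\partial_{\mathbf n}\psi_0(p)\,\big(p^\perp\cdot\mathbf n(p)\big)=c'(0)$ on $\partial\mathbf D$. This also shows that $\theta\mapsto c_\theta$ is smooth.
\item Since $\Delta\psi_0=\mathbf 1_{D_0}\geqslant0$ and $\psi_0=0$ on $\partial\mathbf D$, you have $\psi_0<0$ in $\mathbf D$, so Hopf's lemma gives $\partial_{\mathbf n}\psi_0>0$.
\item Moreover $\int_{\partial\mathbf D}p^\perp\cdot\mathbf n\,d\sigma=\int_{\mathbf D}\operatorname{div}(x^\perp)\,dx=0$.
\end{itemize}
If $c'(0)\neq0$, then $p^\perp\cdot\mathbf n$ would have a strict sign, contradicting the zero integral. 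So $c'(0)=0$ and $p^\perp\cdot\mathbf n\equiv0$, which means $|x|$ is constant on $\partial\mathbf D$. This route concludes directly and bypasses both the open-set and the circulation steps.
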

\begin{remark} Let us make the following comments.
\begin{enumerate}
    \item The Theorem \ref{thm:inverse-pb} actually means that the nontrivial non-stationary V-states constructed inside the unit disc in \cite[Thm. 1 and 2]{HHHM15} or \cite[Thm. 6.1]{HHR24} are exceptional. Generally, the dynamics must be more complicated than a pure rotation. Indeed, the periodic vortex patch motions obtained in \cite[Thm. 1.1]{HHR24} following a point vortex orbits revealed rich dynamics. 
    \item The condition $\Omega\neq0$ is fundamental because the proof relies on the patch actually exploring a nontrivial family of rotated positions. Actually, nontrivial stationary vortex patches do exist in planar domains as can be observed for instance in the series of works \cite{CW22,LWZ19,WZ20}.
    \item The real-analyticity of the boundary is essential in our argumentation (generalized Schwarz's reflexion across real-analytic arcs for harmonic functions, real-analytic uniqueness extension theorem). The result is probably true also when assuming less regularity but we leave it as an open problem.
    \item The hypothesis that $D_0$ is simply-connected can be discussed. It might be true that our proof works (up to slight modifications taking into account the different values of the relative stream function at the patch boundaries) in the multiply-connected situation, namely when $D_0$ has holes. However, it is unlikely that the same result holds true in multiply-connected ambient domains.
\end{enumerate}
\end{remark}

We also prove is this study, a singular version of the previous theorem for point vortices.

\begin{theo}\label{thm:rigidityPV}
\textbf{(Inverse problem for point vortex orbits)}\\
    Let $\mathbf{D}\subset\mathbb{R}^2$ be a simply-connected bounded domain with $C^1$ boundary. Assume that there exists, within $\mathbf{D}$ a circular point vortex orbit. Then, $\mathbf{D}$ is a disc.
\end{theo}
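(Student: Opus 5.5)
The plan is to show that the Robin function $\mathcal{R}_{\mathbf{D}}$ is radial about the center of the orbit, and then use its boundary blow-up \eqref{Robin bndry} to force $\mathbf{D}$ to be a disc.

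\emph{Setup.} Let the orbit be $\xi(t)=c+re^{\ii\Omega t}$, with $r>0$, $\Omega\neq 0$ and $C\triangleq\{|x-c|=r\}\subset\mathbf{D}$. I read "circular point vortex orbit" as a circle traversed at constant angular speed, as for V-states. If only the trajectory is assumed circular, one first needs constancy of the speed, and that would need a separate argument. From \eqref{Hamiltonian_system_1PV}, $\mathcal{R}_{\mathbf{D}}$ is conserved along the flow, so $\mathcal{R}_{\mathbf{D}}\equiv h$ is constant on $C$. Moreover $|\dot\xi|=r|\Omega|$ is constant and $\dot\xi$ is tangent to $C$. Hence $\nabla\mathcal{R}_{\mathbf{D}}$ is normal to $C$ with constant modulus $4\pi r|\Omega|/|\Gamma|\neq0$. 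By continuity and non-vanishing it also has constant sign, so the normal derivative $\partial_n\mathcal{R}_{\mathbf{D}}$ is a constant $\kappa$ on $C$.

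\emph{Rotational invariance near $C$.} For $\theta\in\mathbb{T}$, let $\rho_\theta$ be the rotation of angle $\theta$ about $c$, and set $U_\theta\triangleq\mathcal{R}_{\mathbf{D}}\circ\rho_\theta$ on $\rho_\theta^{-1}\mathbf{D}$. Since the Laplacian is rotation invariant, $U_\theta$ solves the same Liouville equation \eqref{Liouville intro}. It also has the same Cauchy data $(h,\kappa)$ on the analytic, non-characteristic curve $C$. Both $U_\theta$ and $\mathcal{R}_{\mathbf{D}}$ are real-analytic, so the uniqueness part of Cauchy--Kovalevskaya for analytic solutions gives $U_\theta=\mathcal{R}_{\mathbf{D}}$ in a neighborhood of $C$. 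One can use the Taylor coefficients normal to $C$, which are all determined recursively by the equation from $(h,\kappa)$. Analytic continuation then propagates the identity to the connected component of $\mathbf{D}\cap\rho_\theta^{-1}\mathbf{D}$ containing $C$.

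\emph{Conclusion via blow-up.} Let $\rho\triangleq\operatorname{dist}(c,\partial\mathbf{D})$ be the radius of the largest disc $B(c,\rho)$ about $c$ contained in $\mathbf{D}$; here $c\in\mathbf{D}$ should be checked separately, or else one argues with the annular region around $C$. The annulus $\{r'<|x-c|<\rho\}$, or the whole disc $B(c,\rho)$, is invariant under all $\rho_\theta$ and lies in the good component. There $\mathcal{R}_{\mathbf{D}}(x)=f(|x-c|)$ for a single function $f$. Pick $p\in\partial\mathbf{D}$ with $|p-c|=\rho$. By \eqref{Robin bndry}, $f(s)\to+\infty$ as $s\to\rho^-$.

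Suppose $\mathbf{D}\neq B(c,\rho)$. Then some point $q$ with $|q-c|=\rho$ lies in $\mathbf{D}$: otherwise the whole circle $\{|x-c|=\rho\}$ is in $\partial\mathbf{D}$, and simple connectedness forces $\mathbf{D}=B(c,\rho)$. Continuity of $\mathcal{R}_{\mathbf{D}}$ at $q$ gives $f(s)\to\mathcal{R}_{\mathbf{D}}(q)<\infty$ as $s\to\rho^-$, a contradiction. Hence $\mathbf{D}$ is the disc $B(c,\rho)$.

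\emph{Main obstacle.} The delicate step is the unique-continuation step for the nonlinear equation \eqref{Liouville intro}, together with the topological bookkeeping needed to ensure that the region where $U_\theta=\mathcal{R}_{\mathbf{D}}$ reaches out to the circle of radius $\rho$. My first attempt would be to continue along rays from $C$ inside $B(c,\rho)$, which is star-shaped about $c$ and rotation invariant.
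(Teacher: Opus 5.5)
Your argument has a genuine gap at the first step. The theorem only assumes that the vortex trajectory is the full circle $C=\{|x-c|=r\}$. The paper's proof uses nothing beyond the fact that $\mathcal{R}_{\mathbf{D}}$, being conserved along the Hamiltonian flow \eqref{Hamiltonian_system_1PV}, equals a constant $h$ on $C$. You add the hypothesis of constant angular speed and defer its justification to a ``separate argument'' that is never supplied.

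Without that hypothesis you do not know that $\partial_n\mathcal{R}_{\mathbf{D}}$ is constant on $C$. Then $\mathcal{R}_{\mathbf{D}}\circ\rho_\theta$ and $\mathcal{R}_{\mathbf{D}}$ need not share Cauchy data, and the Cauchy--Kovalevskaya uniqueness step cannot start.

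The missing idea, which is the heart of the paper's proof, is that the Dirichlet datum alone suffices because the nonlinearity $t\mapsto e^{2t}$ is increasing.
\begin{itemize}
\item Since $\mathbf{D}$ is simply connected, $\overline{B(c,r)}\subset\mathbf{D}$. This also settles $c\in\mathbf{D}$ and $r<\rho$.
\item If $v_1,v_2$ both solve $\Delta v=4e^{2v}$ in $B(c,r)$ with $v=h$ on $C$, the mean value theorem shows that $w=v_1-v_2$ satisfies $\Delta w=\mathtt{c}\,w$ with $\mathtt{c}>0$ and $w=0$ on $C$. Hence $w\equiv0$ by the maximum principle.
\item Apply this to $v_1=\mathcal{R}_{\mathbf{D}}\circ\rho_\theta$ and $v_2=\mathcal{R}_{\mathbf{D}}$. (The paper instead compares $\mathcal{R}_{\mathbf{D}}$ with the explicit radial solution; the effect is the same.) This shows that $\mathcal{R}_{\mathbf{D}}$ is radial in $B(c,r)$.
\end{itemize}
Constancy of $\kappa$ and of the speed then follow a posteriori rather than being assumed, and Cauchy--Kovalevskaya is not needed at all.

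With this replacement the rest of your proof is correct, and it differs usefully from the paper's. The topological issue you single out is harmless. $B(c,\rho)$ is connected, invariant under every $\rho_\theta$, contained in $\mathbf{D}\cap\rho_\theta^{-1}\mathbf{D}$, and contains $B(c,r)$. So the identity principle for the real-analytic functions $\mathcal{R}_{\mathbf{D}}\circ\rho_\theta$ and $\mathcal{R}_{\mathbf{D}}$ gives radiality on all of $B(c,\rho)$ directly. The paper, with $c=0$, instead propagates the vanishing of $x^{\bot}\cdot\nabla\mathcal{R}_{\mathbf{D}}$ to the whole of $\mathbf{D}$.

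Your endgame compares the two limits of the radial profile $f(s)$ as $s\to\rho^-$, along the ray towards $p\in\partial\mathbf{D}$ and along the ray towards $q\in\mathbf{D}$. It is sound and more economical than the paper's. The paper needs a point $p_0\in\partial\mathbf{D}$ at which $p_0^{\bot}$ is transversal to $\partial\mathbf{D}$ (the construction in Lemma~\ref{lem:arc}), and this is exactly where the $C^1$ regularity of $\partial\mathbf{D}$ is used. You only need the blow-up \eqref{Robin bndry}. This holds for every bounded simply connected domain, since by Koebe's one-quarter theorem the conformal radius $e^{-\mathcal{R}_{\mathbf{D}}}$ is comparable to $\operatorname{dist}(\cdot,\partial\mathbf{D})$. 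Combined with the Dirichlet-uniqueness step, your route therefore proves the theorem without the $C^1$ assumption on $\partial\mathbf{D}$.
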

\begin{remark}
    Our proof exploit some $C^1$ regularity of the boundary since we make use of the behavior of the Robin function near the boundary. However, we strongly believe that this hypothesis can be removed with more sophisticated arguments. 
\end{remark}
Our next goal is to expose our last main result dealing with the construction of quasi-periodic vortex patches in domains that are perturbations of the unit disc. The analysis will be done in a perturbative way from the unit disc which has been recently studied in \cite{HR21-1}. The main result of this work reads as follows.
\begin{theo}\label{thm:QPS_in_nonradial_domains}\textbf{(Quasi-periodic vortex patches in non-radial domains)}\\
	Let $0<b_0<b_1<1$ and  $\mathbb{S}\subset\mathbb{N}^*$ a finite set.
	Fix some amplitudes $(\mathtt{a}_j)_{j\in\mathbb{S}}$, there exists  $\varepsilon_{0}\in(0,1)$ small enough such that for any $\varepsilon\in(0,\varepsilon_0)$, for any domain $\mathbf{D}$ which is a perturbation of the unit disc and associated to a conformal mapping $\boldsymbol{\Phi}:\mathbf{D}\rightarrow\mathbb{D}$ in the form
	$$\boldsymbol{\Phi}(x)=x+\varepsilon^2f_{\varepsilon}(x),\qquad f_{\varepsilon}\textnormal{ analytic in }\mathbf{D},\qquad f_{\varepsilon}(0)=0,\qquad\overline{f_{\varepsilon}(x)}=f_{\varepsilon}(\overline{x}),\qquad\varepsilon\mapsto f_{\varepsilon}\textnormal{ is continuous}$$
	there exists a Cantor-like set $\mathscr{C}_{\infty}^{\varepsilon}\subset(b_{0},b_{1})$ with asymptotically full Lebesgue  measure as $\varepsilon\rightarrow 0,$ i.e.
	$$\lim_{\varepsilon\rightarrow 0}|\mathscr{C}_{\infty}^{\varepsilon}|=b_{1}-b_{0},$$
	such that for any $b\in\mathscr{C}_{\infty}^{\varepsilon}$, the  equation \eqref{Euler eq bounded domain} admits a time quasi-periodic vortex patch solution with diophantine frequency vector ${\omega}_{\tiny{\textnormal{p}}}(b,\varepsilon)\triangleq \big(\omega_{j}(b,\varepsilon)\big)_{j\in\mathbb{S}}\in\mathbb{R}^{|\mathbb{S}|}$ and taking  the form
	$$t\mapsto\mathbf{1}_{D_{t}^{\varepsilon}},\qquad D_{t}^{\varepsilon}=\big\{\ell e^{\ii\theta},\, 0\leqslant \theta\leqslant 2\pi,\, 0\leqslant \ell< R_{\varepsilon}(b,t,\theta) \big\},\qquad R_{\varepsilon}(b,t,\theta)\triangleq \sqrt{b^2+2\varepsilon r(t,\theta)},$$
	with
	\begin{equation}\label{QP:sol:thm}
	    r(t,\theta)=\sum_{j\in\mathbb{S}}{\mathtt{a}_{j}}\cos\big(j\theta+\omega_{j}(b,\varepsilon)t\big)+\mathbf{p}\big({\omega}_{\tiny{\textnormal{p}}}(b,\varepsilon)t,\theta\big).
	\end{equation}
	The frequency vector $\mathtt{\omega}_{\tiny{\textnormal{p}}}(b,\varepsilon)$ satisfies
	$$\mathtt{\omega}_{\tiny{\textnormal{p}}}(b,\varepsilon)\underset{\varepsilon\rightarrow 0}{\longrightarrow}\big(-\Omega_{j}(b)\big)_{j\in\mathbb{S}},\qquad \Omega_{j}(b)\triangleq\frac{1}{2}\Big(j-1+b^{2j}\Big).$$ 
	The perturbation $\mathbf{p}:\mathbb{T}^{|\mathbb{S}|+1}\to\mathbb{R}$ is an even function satisfying for some large index of regularity $s=s(\mathbb{S})>0.$
	$$\| \mathbf{p}\|_{H^{{s}}(\mathbb{T}^{|\mathbb{S}|+1},\mathbb{R})}\underset{\varepsilon\rightarrow 0}{=}o(\varepsilon).$$
	In addition, the solutions are linearly stable.
\end{theo}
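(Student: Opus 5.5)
The plan is to treat the problem as a perturbation of the radial construction of \cite{HR21-1}. We then rerun the Nash--Moser/KAM scheme with $\varepsilon$ playing two roles: the size of the solution, and, through $\varepsilon^2 f_\varepsilon$, the size of the geometric deformation.

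\textbf{Reduction to the disc.} Pull everything back to $\mathbb{D}$ via the conformal map. By \eqref{def:Green},
$$G_{\mathbf{D}}(x,y)=G_{\mathbb{D}}(x,y)+\log\Big|\tfrac{\boldsymbol{\Phi}(x)-\boldsymbol{\Phi}(y)}{x-y}\Big|-\log\Big|\tfrac{1-\overline{\boldsymbol{\Phi}(y)}\boldsymbol{\Phi}(x)}{1-\overline{y}x}\Big|.$$
Since $\boldsymbol{\Phi}=\mathrm{Id}+\varepsilon^2 f_\varepsilon$, the correction is a smooth kernel $\varepsilon^2\mathcal{K}_\varepsilon(x,y)$ on a neighbourhood of $\overline{b_1\mathbb{D}}\times\overline{b_1\mathbb{D}}$. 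It is real-analytic, and therefore smoothing, and it depends continuously on $\varepsilon$. I write the contour dynamics equation \eqref{CDE} in the polar parametrization $z=R_\varepsilon(b,t,\theta)e^{\ii\theta}$ with $R_\varepsilon=\sqrt{b^2+2\varepsilon r}$, which is a Hamiltonian equation for $r$. After the rescaling $r\mapsto\varepsilon r$ it takes the form
$$\partial_t r=\partial_\theta\nabla H_{\mathbb{D}}(r)+\varepsilon^{2}\,\partial_\theta\mathcal{Q}_\varepsilon(r),$$
where the first term is exactly the vector field of \cite{HR21-1} and $\mathcal{Q}_\varepsilon$ comes from $\mathcal{K}_\varepsilon$.

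The condition $f_\varepsilon(0)=0$ kills the leading position-dependent part of the Robin function at the origin. The reality symmetry $\overline{f_\varepsilon(x)}=f_\varepsilon(\overline{x})$ preserves the reversibility $\theta\mapsto-\theta$, $t\mapsto-t$, which the scheme of \cite{HR21-1} uses to work with even functions. The essential point is that the geometric term is $O(\varepsilon^2)$ in size, of order zero, and smoothing. In the rescaled variables it enters at the same order as, or beyond, the quadratic nonlinearity. Consequently the circle $b\mathbb{D}$ is no longer an exact equilibrium, but it is one up to an $O(\varepsilon^2)$ forcing term, which is harmless after rescaling.

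\textbf{Running the scheme.} I would reproduce the steps of \cite{HR21-1}.
\begin{enumerate}
\item Linearize at $r=0$ to get the unperturbed frequencies $\Omega_j(b)$.
\item Use non-degeneracy and transversality of $b\mapsto(\Omega_j(b))$ on $(b_0,b_1)$ to choose diophantine tangential frequencies. The set $\mathbb{S}$ and the amplitudes $\mathtt{a}_j$ define the embedded torus in action-angle variables; the $\varepsilon^2$ perturbation shifts the frequencies by $O(\varepsilon^2)$, so the transversality is stable.
\item Solve the Nash--Moser iteration. The linearized operator at each step is a perturbation of the one in \cite{HR21-1} by a smoothing operator whose tame estimates are bounded by $\varepsilon^2$ times those of the torus. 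Reduce it with the same sequence of transformations: quasi-periodic change of variables straightening the transport part, reduction of the order-zero part, and the KAM reducibility scheme. The extra term goes into the remainder that the KAM reduction absorbs.
\item Obtain the Cantor set $\mathscr{C}^\varepsilon_\infty$ from the final first- and second-order Melnikov conditions. Its measure estimate follows from Rüssmann-type lemmas, using that the perturbed eigenvalues $\mu_j(b,\varepsilon)$ are $O(\varepsilon)$-close in Lipschitz norm to $\Omega_j(b)$.
\end{enumerate}
Linear stability then follows because the linearized operator is reduced to a diagonal operator with purely imaginary spectrum.

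\textbf{Main obstacle.} I expect the main difficulty to be that the non-radial geometry breaks the rotational invariance, that is, the conservation of angular impulse and the $\theta$-translation covariance used in \cite{HR21-1}. Without it, the normal form is no longer diagonal in the Fourier basis $e^{\ii j\theta}$, and the tangential/normal decoupling acquires $O(\varepsilon^2)$ coupling terms. I would first check that these couplings are smoothing and small enough in the weighted Lipschitz norms $\|\cdot\|^{\gamma,\mathcal{O}}_s$ to be treated as part of the perturbation $P_\varepsilon$. The $\varepsilon^2$ scaling, against $\gamma\sim\varepsilon^{a}$ with $a<1$, gives the required smallness $\varepsilon^2\gamma^{-1}\ll1$. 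The loss of momentum conservation also removes the constraint used to simplify the second Melnikov conditions for $j\neq j'$. Those conditions must then be imposed over all pairs, but the smoothing nature of the perturbation keeps the measure loss summable.
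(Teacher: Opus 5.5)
Your overall route is the paper's: split $G_{\mathbf D}=G_{\mathbb D}+\varepsilon^2G_p^\varepsilon$ near the patch, get reversibility from $\overline{\boldsymbol{\Phi}(x)}=\boldsymbol{\Phi}(\overline{x})$, and rerun the Nash--Moser/KAM scheme of \cite{HR21-1} with the geometric terms in the perturbation. One step, however, fails as you state it. You treat the geometric correction to the linearized operator as a smoothing operator that the KAM reduction absorbs, and you straighten only the disc transport term.

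The geometric part of the vector field is $-\varepsilon\partial_\theta\big[\boldsymbol{\Psi}_p^\varepsilon\big(t,R_\varepsilon(b,t,\theta)e^{\ii\theta}\big)\big]$, so the correction stream function is evaluated on the moving boundary. Its $r$-derivative therefore has two pieces.
\begin{itemize}
\item Varying the patch gives $\varepsilon^2\partial_\theta\mathbf{S}^p_{\varepsilon r}$. This is an integral operator with smooth kernel $G_p^\varepsilon\big(R_\varepsilon(\theta)e^{\ii\theta},R_\varepsilon(\eta)e^{\ii\eta}\big)$, and it is indeed smoothing.
\item Varying the evaluation point gives $\varepsilon^2\partial_\theta\big(V^p_{\varepsilon r}h\big)$. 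Here $V^p_{\varepsilon r}$ is essentially the radial derivative of $\boldsymbol{\Psi}_p^\varepsilon$ on the patch boundary divided by $R_\varepsilon$. This is a first-order transport term with a coefficient depending on $(\varphi,\theta)$. Already at $r=0$ it is in general $\theta$-dependent, since $\mathbf D$ is not radial.
\end{itemize}
The second piece cannot sit in the KAM remainder. The reducibility step needs $\mathscr R_0$ and $\partial_\theta\mathscr R_0$ bounded in the off-diagonal norm, and with this term the homological equation would produce an unbounded generator. Its diagonal part would also shift $\mathtt{d}_j$ by $O(\varepsilon^2 j)$. That destroys the bound $|r_j^\infty|\lesssim\varepsilon\gamma^{-1}|j|^{-1}$, which the measure argument uses to reduce second Melnikov conditions with $\min(|j|,|j_0|)$ large to first-order ones.

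The fix, which is what the paper does in Lemma~\ref{lem:lin_origin} and Proposition~\ref{prop:transport}, is to put this term into the transport coefficient, $V_{\varepsilon r}=V^{\mathbb D}_{\varepsilon r}+\varepsilon^2V^p_{\varepsilon r}$. You then check two things:
\begin{itemize}
\item $V_{\varepsilon r}$ is even in $(\varphi,\theta)$ for reversible $r$. This is a second place where the reflection symmetry of $\mathbf D$ is needed, so that $\beta$ is odd and $\mathscr B$ is reversibility preserving.
\item $\|V_{\varepsilon r}-\tfrac12\|\lesssim\varepsilon$ still holds.
\end{itemize}
Then straighten the full transport term, so that only $\varepsilon^2\partial_\theta\mathbf{S}^p_{\varepsilon r}$ joins the order $-1$ remainder. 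Consequently the normal frequencies are $\mathtt{d}_j^\infty=\Omega_j(b)+j\big(V_{i_0}^\infty-\tfrac12\big)+r_j^\infty$. These differ from $\Omega_j(b)$ by $O(\varepsilon|j|)$, not uniformly by $O(\varepsilon)$ as your step 4 claims. This is why the separate transport Cantor set $|\omega\cdot l+jV^\infty_{i_0}|>4\gamma^{\upsilon}\langle j\rangle\langle l\rangle^{-\tau_1}$ is needed.

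Your order bookkeeping also needs correcting. In the rescaled equation the geometric vector field has size $\varepsilon$, not $\varepsilon^2$, and it does not vanish at $r=0$. So it enters $\mathscr{F}(i_{\mathrm{flat}})$ at the same order as the nonlinearity; only its linearization is $O(\varepsilon^2)$. The smallness you need is the same kind as in the disc case, $\varepsilon\gamma^{-2-q}N_0^{\mu}\ll1$, not $\varepsilon^2\gamma^{-1}\ll1$. This does not harm the strategy, but your stated reason is wrong.

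Two side remarks:
\begin{itemize}
\item $f_\varepsilon(0)=0$ is only a normalization of $\boldsymbol{\Phi}$. It does not make $0$ critical for $\mathcal{R}_{\mathbf D}$, since $\nabla\mathcal{R}_{\mathbf D}(0)$ involves $f_\varepsilon''(0)$.
\item The loss of rotation invariance is not an obstacle. The scheme uses only reversibility and already imposes the second Melnikov conditions over all pairs.
\end{itemize}
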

\begin{remark}
    Let us make the following comments.
    \begin{enumerate}
        \item The Theorem \ref{thm:QPS_in_nonradial_domains} constitutes the first construction of quasi-periodic vortex patch structures in domains which are non-radial (non $\mathbb{R}^2$ and non $\mathbb{D}$). Note that not necessarily the boundary of the ambient domain must be real-analytic.
        \item Combined with the Theorem \ref{thm:inverse-pb}, this shows that, in the special periodic case $|\mathbb{S}|=1$ and for real-analytic boundary, the solutions that are constructed in Theorem \ref{thm:QPS_in_nonradial_domains} do NOT correspond to V-states, namely to a purely rotating domain. Oscillations of the patch boundary must occur during the time evolution.
        \item The assumption $$\overline{f_{\varepsilon}(x)}=f_{\varepsilon}(\overline{x})\qquad\textnormal{i.e.}\qquad\overline{\boldsymbol{\Phi}(x)}=\boldsymbol{\Phi}(\overline{x})$$ corresponds to a symmetry of the ambient domain $\mathbf{D}$, namely it possesses an axis of symmetry (1-fold). This geometric condition is essential in order ensure the reversibility of the underlying Hamitonian dynamics, an essential fact to run KAM reducibility techniques. 
        \item The link between the amplitude $\varepsilon$ of the solutions and the scale deformation $\varepsilon^2$ of the domain allows to perform a pertubated proof compared to \cite{HR21-1}, the equilibrium spectrum corresponding again exactly to the bifurcation points of V-states from the Rankine vortices. It is not clear how to run a proof without this coherent scaling.
        \item Here, linear stability refers to the fact that the Floquet exponents associated with the linearized dynamics around the quasi-periodic solution \eqref{QP:sol:thm} are all purely imaginary. This property is not established separately, but rather emerges naturally from the construction of the solution. More precisely, the reducibility argument developed in Section \ref{sec:reductionKAM}, combined with the Berti--Bolle reduction in action-angle-normal variables \cite{BB15}, transforms the linearized system into a constant-coefficient system whose spectrum is purely imaginary. We refer to \cite[Thm. 1.2]{BM18} for a detailed discussion of this argument.
        \item Assume that the ambient domain $\mathbf{D}$ is $\mathbf{m}$-fold symmetric for a given $\mathbf{m}\geqslant2$, that is
        $$f_{\varepsilon}(x)=\sum_{n=0}^{\infty}\underbrace{a_{n}(\varepsilon)}_{\in\mathbb{R}}x^{\mathbf{m}n+1},\qquad\textnormal{i.e.}\qquad\boldsymbol{\Phi}\left(e^{\frac{2\ii\pi}{\mathbf{m}}}x\right)=e^{\frac{2\ii\pi}{\mathbf{m}}}\boldsymbol{\Phi}(x).$$
        Then, one can show that the Hamiltonian governing the contour dynamics equation in Lemma \ref{lem eq EDP r} is invariant under $\tfrac{2\pi}{\mathbf{m}}$-space translations, see Remark \ref{rem:mfold}. Therefore, similarly to \cite{HHR23}, one can run the KAM/Nash-Moser scheme within the Sobolev $\mathbf{m}$-fold function scale and ensure that the final vortex patch solution is $\mathbf{m}$-fold symmetric. In order to avoid useless complexity, we do not perform it here, but just mention that it can be done. 
    \end{enumerate}
\end{remark}
\subsection{Ideas of proofs}
We conclude this introductive section by briefly explaining the key ideas and main steps in the proofs of Theorem \ref{thm:inverse-pb}, Theorem \ref{thm:rigidityPV} and Theorem \ref{thm:QPS_in_nonradial_domains}.\\

Let us start with Theorem \ref{thm:inverse-pb}. It is a proof by contradiction assuming that the ambient domain is not a disc.
We compute in two different ways the circulation of the initial velocity field 
$$u_0\triangleq\mathbf{v}(0,\cdot)=\nabla^{\bot}\phi_0,\qquad\phi_0\triangleq\boldsymbol{\Psi}(0,\cdot)$$
along a Jordan curve $\Gamma$ surrounding the initial patch $D_0.$ Direct computation based on Stokes' Theorem shows that
$$\oint_{\Gamma} u_0\cdot dp=\int_{\operatorname{int}(\Gamma)}\operatorname{curl}(u_0)(x)dx=\int_{D_0}1\,dx=|D_0|>0.$$
The rest of the work consists in calculating it in another way and prove that it is zero, therefore arriving at a contradiction. First, observe that a non-circular domain must possess a transversal direction, namely there exists an element (actually a continuum) $p\in\partial\mathbf{D}$ such that $p^{\bot}$ is not tangent to $\partial\mathbf{D}.$ This allows, in Lemma \ref{lem:arc} to construct a whole family of arcs $\theta\mapsto I_{\theta}$, satisfying 
$$I_{\theta}\subset\partial\mathbf{D}_{\theta}\cap\mathbf{D},\qquad\mathbf{D}_{\theta}\triangleq\mathcal{R}_{-\theta}\mathbf{D},\qquad|\theta|\ll1,$$
obtained by small rotations of open arc surrounding $p.$ On each arc we show the existence of a constant $a_{\theta}\in\mathbb{R}$ such that 
$$\phi_0\equiv a_{\theta},\qquad\textnormal{on }I_{\theta}.$$
In Lemma \ref{lem:phi0_constant}, we prove that $\phi_0$ extends to a whole neighborhood of $\partial\mathbf{D}$ and therefore to any boundary $\partial\mathbf{D}_{\theta}$, for small $\theta$. Using that $\partial\mathbf{D}$ is real-analytic and that $\phi_0$ takes the constant value $a_{\theta}$ on the open arc $I_{\theta}\subset\partial\mathbf{D}_{\theta}$, then 
$$\phi_{0}\equiv a_{\theta},\qquad\textnormal{on }\partial\mathbf{D}_{\theta}.$$
Now, $\partial\mathbf{D}$ and $\partial\mathbf{D}_{\theta}$ being two Jordan curves enclosing two domains with same area and with a common point (the center of rotation), both curves must intersect at least once. Since $\phi_0$ vanishes on $\partial\mathbf{D}$ by the Dirichlet reformulation \eqref{Dirichlet_stream} of the impermeability condition, then $a_{\theta}=0$ and
$$\phi_0\equiv0,\qquad\textnormal{on }\partial\mathbf{D}_{\theta}.$$
The family of arcs $\theta\mapsto I_{\theta}$ actually draws, via local inversion, a small open subset of the connected set $\mathbf{D}\setminus\overline{D}_0$ where the function $\phi_0$ vanishes. The initial stream function $\phi_0$ being also harmonic in $\mathbf{D}\setminus\overline{D}_0,$ then, by unique continuation,
$$\phi_{0}\equiv0,\qquad\textnormal{on }\mathbf{D}\setminus\overline{D}_0.$$
The Jordan curve $\Gamma$ being contained in $\mathbf{D}\setminus\overline{D}_0$ and $u_0=\nabla^{\bot}\phi_0\equiv0$ there, we deduce that
$$\oint_{\Gamma}u_0\cdot dp=0.$$
Contradiction.\\

Next, we provide the key steps in the proof of Theorem \ref{thm:rigidityPV}. Assume that there is a circular point vortex orbit $\partial(r\mathbb{D})$ with $r>0.$ By the Hamiltonian nature of the point vortex dynamics, this orbit corresponds to an energy level of the Robin function
$$\mathcal{R}_{\mathbf{D}}=\lambda,\quad\textnormal{on }\partial(r\mathbf{D}).$$
The Robin function solves the Liouville equation
$$\Delta\mathcal{R}_{\mathbf{D}}=4e^{2\mathcal{R}_{\mathbf{D}}},\quad\textnormal{in }\mathbf{D}.$$
By means of maximum principle, we show that $\mathcal{R}_{\mathbf{D}}$ must be radial on $r\mathbb{D}.$ Then, real-analyticity of the Robin function allows to extend this radial property to the whole domain $\mathbf{D}$. Finally, combining this last property with the boundary behavior of the Robin function, we conclude that the domain must be circular.\\

Let us now give some elements of the proof of Theorem \ref{thm:QPS_in_nonradial_domains}. The proof is perturbative with respect to the undeformed circular domain \cite{HR21-1} and, similarly, is based on a Nash-Moser iteration coupled with KAM reducibility arguments. The assumption
$$\boldsymbol{\Phi}(x)=x+\varepsilon^2f_{\varepsilon}(x)$$ allows to write, at least far from the boundary, the stream function of the domain $\mathbf{D}$ as a perturbation of the one in the unit disc through the following Green kernel identity
$$G_{\mathbf{D}}(x,y)=G_{\mathbb{D}}(x,y)+\varepsilon^2 G_p^{\varepsilon}(x,y),$$
where the perturbed Green function $G_{p}^{\varepsilon}$ is smooth. We look for vortex patch solutions near the Rankine patch of radius $b\in(b_0,b_1)\Subset(0,1)$ in the form
$$\boldsymbol{\omega}(t,\cdot)=\mathbf{1}_{D_t^{\varepsilon}},\qquad D_t^{\varepsilon}\triangleq\Big\{\ell e^{\ii\theta},\quad 0\leqslant \theta\leqslant 2\pi,\quad 0\leqslant \ell< \sqrt{b^2+2\varepsilon r(t,\theta)}\Big\}.$$
The deformation function $r$ solves the Hamiltonian equation, related to the pseudo-kinetic energy $H$,
$$\partial_t r=\frac{1}{\varepsilon^2}\partial_{\theta}\nabla H(r),\qquad H(r)(t)\triangleq-\frac{1}{4\pi^2}\int_{D_t}\int_{D_t}G_{\mathbf{D}}(x,y)dxdy.$$
The symmetry assumption $\overline{\boldsymbol{\Phi}(x)}=\boldsymbol{\Phi}(\overline{x})$ implies that the Hamiltonian $H$ is reversible, meaning that if $(t,\theta)\mapsto r(t,\theta)$ is a solution, then so is $(t,\theta)\mapsto r(-t,-\theta)$. Note that since $\varepsilon^2G_p^{\varepsilon}$ is smooth and small, its contribution to the dynamics is only pertubative with respect to \cite{HR21-1}. The parameter $\varepsilon$ encodes both the geometry deformation of the domain and the amplitude of the solutions. Hence, the linearization at $\varepsilon=0$ corresponds to the linearization at the Rankine patch $b\cdot\mathbb{D}$ within the circular domain. According to \cite[Lem. 3.2 and Prop. 3.1]{HR21-1}, this latter possesses, for almost every $b$, a family of reversible quasi-periodic solutions
\begin{equation}\label{EQU:SOL}
    h(t,\theta)=\sum_{j\in\mathbb{S}}\mathtt{a}_j\cos\big(j\theta-\Omega_{j}(b)t\big),\qquad\mathbb{S}\subset\mathbb{N}^*\textnormal{ finite},\qquad\Omega_{j}(b)\triangleq\frac{j}{2|j|}\big(|j|-1+b^{2|j|}\big).
\end{equation}
Fixing a finite set $\mathbb{S}$, with $d\triangleq|\mathbb{S}|\in\mathbb{N}^*,$ of Fourier modes as above allows to split the phase space into tangential and normal parts. On the tangential part, we introduce actions ($I\in\mathbb{R}^d$) and angles ($\vartheta\in\mathbb{T}^d$) variables as symplectic polar change of variable on the Fourier coefficients
$$r=\mathbf{A}(\vartheta,I,z)\triangleq\sum_{j\in\mathbb{S}}\sqrt{\mathtt{a}_j^2+jI_j}e^{\ii\vartheta_j}+z,\qquad z=\textnormal{ normal variable},$$
bringing the problem to the search for zeros, embedded tori carrying a quasi-periodic trajectory with frequency vector $\omega$, of a nonlinear functional
$$\mathscr{F}(i,\omega)=0,\qquad\mathscr{F}(i,\omega)\triangleq\omega\cdot\partial_{\varphi}i-X_{H\circ\mathbf{A}}(i),\qquad i:\varphi\in\mathbb{T}^d\mapsto\triangleq\big(\vartheta(\varphi),I(\varphi),z(\varphi)\big),$$
where $X_H$ is the vector field of the underlying Hamiltonian system.
We find the desired solution by implementing a Nash-Moser iteration starting with the linear reversible quasi-periodic solution \eqref{EQU:SOL} as first approximate solution, seen in the action-angle-normal variables as the trivial embedding $\varphi\mapsto(\varphi,0,0)$. The Nash-Moser scheme requires, at each step, the construction of an approximate right inverse to the linearized operator at the current approximate solutions. It is performed by implementing the classical Berti-Bolle approach \cite{BB15} coupled with a KAM reducibility of the linearized operator in the normal directions. 
When linearizing, all the pertubative terms, coming from $\varepsilon^2G_p^{\varepsilon}$ correspond to either small size smoothing integral operators or small size local perturbation in the variable coefficient transport term. Therefore, from there the rest of the proof is  very similar to the unperturbed domain case \cite{HR21-1}. The interested reader is referred to this reference for more details.\\

\noindent\textbf{Acknowledgements :}
\begin{enumerate}[label=\textbullet]
    \item Supported by the ERC STARTING GRANT 2021 “Hamiltonian Dynamics, Normal Forms and Water Waves” (HamDyWWa), Project Number: 101039762.\\
    The views and opinions expressed are however those of the authors only and do not necessarily reflect those of the European Union or the European Research Council. Neither the European Union nor the granting authority can be held responsible for them.
    \item Also supported by INdAM - GNAMPA Project “(In)-stability in Fluid Mechanics”, CUP E53C25002010001.
    \item The author also would like to thank Taoufik Hmidi for useful comments and suggestions during the preparation of the manuscript.
\end{enumerate} 

\section{Rigidity results}
Here we provide the proofs of both Theorem \ref{thm:inverse-pb} and Theorem \ref{thm:rigidityPV}.
\subsection{The inverse problem for vortex patches}
This subsection is devoted to the proof of Theorem \ref{thm:inverse-pb}.\\

Let us consider $\mathbf{D}\subset\mathbb{R}^2$ an open bounded simply-connected domain of $\mathbb{R}^2$ with real-analytic boundary. Assume that there exists a non-trivial V-state rotating with an angular velocity $\Omega\in\mathbb{R}^*$ inside of $\mathbf{D}$, namely
\begin{equation}\label{V-state:ansatz}
    \boldsymbol{\omega}(t,\cdot)=\mathbf{1}_{D_t},\qquad D_t=\mathcal{R}_{\Omega t}D_0,\qquad\mathcal{R}_{\theta}\triangleq\begin{bmatrix}
    \cos(\theta) & -\sin(\theta)\\
    \sin(\theta) & \cos(\theta)
\end{bmatrix},
\end{equation}
where $D_0$ is a nonempty simply-connected open subset strict of $\mathbf{D}.$
\begin{lem}\label{lem:0inD}
    We have necessarily $0\in\mathbf{D}.$
\end{lem}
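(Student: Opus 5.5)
We argue by contradiction and assume $0\notin\mathbf{D}$. The key observation is that a uniformly rotating patch sweeps out a whole annular region around the origin, and this region has to stay inside $\mathbf{D}$. Indeed, by the ansatz \eqref{V-state:ansatz}, $D_t=\mathcal{R}_{\Omega t}D_0\subset\mathbf{D}$ for every $t\in\mathbb{R}$. Since $\Omega\neq0$, the angle $\Omega t$ runs over all of $\mathbb{R}$, so $\mathcal{R}_\theta D_0\subset\mathbf{D}$ for every $\theta\in\mathbb{T}$.

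Pick a point $x_0\in D_0$; this is possible since $D_0$ is nonempty. Its whole orbit $C\triangleq\{\mathcal{R}_\theta x_0,\ \theta\in\mathbb{T}\}$ then lies in $\mathbf{D}$. We may choose $x_0\neq0$: if $0\in D_0$, then $0\in\mathbf{D}$ directly and there is nothing to prove, and otherwise every point of $D_0$ is nonzero. So $C$ is the circle of radius $|x_0|>0$ centered at the origin, and it is a Jordan curve contained in $\mathbf{D}$.

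Now we use simple connectivity. Since $\mathbf{D}$ is a simply-connected open subset of the plane, its complement in the Riemann sphere is connected. Equivalently, for every Jordan curve $C\subset\mathbf{D}$, the bounded component of $\mathbb{C}\setminus C$ (the interior of $C$) is contained in $\mathbf{D}$. This is the standard characterization of planar simple connectivity; alternatively, one can transport the curve by the conformal map $\boldsymbol{\Phi}$ and use the Jordan curve theorem. The interior of our circle $C$ is the open disc of radius $|x_0|$, which contains $0$. Hence $0\in\mathbf{D}$, contradicting the assumption.

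The only step needing care is the topological one: that simple connectivity forces the inside of any Jordan curve in $\mathbf{D}$ to lie in $\mathbf{D}$. One could also phrase it homotopically. If $0\notin\mathbf{D}$, the circle $C$ has winding number $1$ around $0$, so it cannot be null-homotopic in $\mathbb{C}\setminus\{0\}\supset\mathbf{D}$. This contradicts simple connectivity of $\mathbf{D}$. I would present this winding-number version, since it avoids invoking the Jordan curve theorem. Note that neither the Euler dynamics nor the analyticity of the boundary is used here; only $\Omega\neq0$ and $D_0\neq\emptyset$ matter.
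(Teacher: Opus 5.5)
Your proof is correct and follows the paper's argument: the rotation orbit of a point of $D_0$ is a circle centered at the origin lying in $\mathbf{D}$, and simple connectivity forces the enclosed disc, hence the origin, to lie in $\mathbf{D}$. Your separate treatment of the case $0\in D_0$ and your winding-number justification of the topological step make explicit what the paper's short proof leaves implicit.
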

\begin{proof}
    Take $x\in D_0$. Since
$\mathcal{R}_{\Omega t}x\in\mathbf{D}$ for every $t\in\mathbb{R}$, the whole circle
$$\mathcal{C}_{x}\triangleq\{\mathcal{R}_\theta x,\,\,\theta\in\mathbb{R}\}$$
is contained in $\mathbf{D}$. Since $\mathbf{D}$ is simply-connected, the disc $\operatorname{int}(\mathcal{C}_{x})$ (centered in $0$) formed of the interior of the circle $\mathcal{C}_x$ is
contained in $\mathbf{D}$. This provides the desired result.
\end{proof}
In a similar way to \cite{HMV13}, integrating the contour dynamics provides the following stream function reformulation of the problem
\begin{equation}\label{stream_ref_Vstates}
    \forall t\in\mathbb{R},\quad\forall x\in\partial D_0,\quad \boldsymbol{\Psi}\big(t,\mathcal{R}_{\Omega t}x\big)=\frac{\Omega}{2}|x|^2+c(t),\qquad c(t)\in\mathbb{R}.
\end{equation}
Fix $\theta\in\mathbb{R}$. Define the rotated domain 
$$\mathbf{D}_{\theta}\triangleq\mathcal{R}_{-\theta}\mathbf{D}$$ 
and the following function on $\mathbf{D}_{\theta}$
$$\forall x\in\mathbf{D}_{\theta},\quad\phi_\theta(x)\triangleq\boldsymbol{\Psi}\left(\frac{\theta}{\Omega},\mathcal{R}_\theta x\right).$$
\begin{lem}\label{lem:htheta}
Denote $\mathbf{U}_{\theta}$ the connected component of $\mathbf{D}\cap \mathbf{D}_{\theta}$ containing $D_0$.
    Consider the function $h_{\theta},$ defined on $\mathbf{U}_{\theta}$ by
$$h_\theta\triangleq\phi_0-\phi_\theta.$$
Then, there exists a constant $a_{\theta}\in\mathbb{R}$ such that
$$h_{\theta}\equiv a_{\theta},\qquad\textnormal{in }\mathbf{U}_{\theta}.$$
\end{lem}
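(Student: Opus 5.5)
The plan is to identify $\phi_\theta$ as the stream function generated by the \emph{same} patch $D_0$ in the rotated ambient domain $\mathbf{D}_\theta$. Then $h_\theta$ is harmonic on $\mathbf{U}_\theta$, it is constant on $\partial D_0$ by \eqref{stream_ref_Vstates}, and the maximum principle together with unique continuation give the conclusion.

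\textbf{Step 1: the Green function is rotation-covariant.} Since $\mathcal{R}_\theta:\mathbf{D}_\theta\to\mathbf{D}$ is an isometry, $\boldsymbol{\Phi}\circ\mathcal{R}_\theta$ is a conformal map $\mathbf{D}_\theta\to\mathbb{D}$. By \eqref{def:Green} this gives
$$G_{\mathbf{D}_\theta}(x,y)=G_{\mathbf{D}}(\mathcal{R}_\theta x,\mathcal{R}_\theta y).$$
We use \eqref{Psi EDP} at time $t=\theta/\Omega$, where the vorticity is $\mathbf{1}_{\mathcal{R}_\theta D_0}$, and change variables $y=\mathcal{R}_\theta y'$:
$$\phi_\theta(x)=\frac{1}{2\pi}\int_{\mathcal{R}_\theta D_0}G_{\mathbf{D}}(\mathcal{R}_\theta x,y)\,dy=\frac{1}{2\pi}\int_{D_0}G_{\mathbf{D}_\theta}(x,y')\,dy'.$$
Note that $D_0\subset\mathbf{D}_\theta$ because $\mathcal{R}_\theta D_0\subset\mathbf{D}$. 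Hence $\phi_\theta$ solves $\Delta\phi_\theta=\mathbf{1}_{D_0}$ in $\mathbf{D}_\theta$, in the distributional sense, with $\phi_\theta\in W^{2,p}_{\rm loc}\subset C^1$. The case $\theta=0$ gives $\Delta\phi_0=\mathbf{1}_{D_0}$ in $\mathbf{D}$.

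\textbf{Step 2: $h_\theta$ is harmonic.} On $\mathbf{U}_\theta\subset\mathbf{D}\cap\mathbf{D}_\theta$ we get $\Delta h_\theta=\mathbf{1}_{D_0}-\mathbf{1}_{D_0}=0$ in $\mathcal{D}'$. By Weyl's lemma, $h_\theta$ is a genuine harmonic, hence real-analytic, function on $\mathbf{U}_\theta$. The set $\mathbf{U}_\theta$ is well defined because $\overline{D_0}\subset\mathbf{D}\cap\mathbf{D}_\theta$ and $D_0$ is connected.

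\textbf{Step 3: $h_\theta$ is constant on $\partial D_0$, hence on $D_0$.} Apply \eqref{stream_ref_Vstates} at $t=\theta/\Omega$ and at $t=0$. For every $x\in\partial D_0$ this gives
$$\phi_\theta(x)=\tfrac{\Omega}{2}|x|^2+c(\theta/\Omega),\qquad \phi_0(x)=\tfrac{\Omega}{2}|x|^2+c(0).$$
Therefore $h_\theta\equiv a_\theta\triangleq c(0)-c(\theta/\Omega)$ on $\partial D_0$. The function $h_\theta$ is harmonic on the bounded open set $D_0$ and continuous on $\overline{D_0}\subset\mathbf{U}_\theta$. By the maximum principle applied to $\pm(h_\theta-a_\theta)$, we obtain $h_\theta\equiv a_\theta$ on $D_0$. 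No boundary regularity of $D_0$ is needed here.

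\textbf{Step 4: propagation to $\mathbf{U}_\theta$.} The function $h_\theta-a_\theta$ is real-analytic on the connected open set $\mathbf{U}_\theta$ and vanishes on the nonempty open subset $D_0$. By the identity theorem (unique continuation for harmonic functions), it vanishes identically on $\mathbf{U}_\theta$.

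The main point requiring care is Step 1. One must correctly justify that $\phi_\theta$ is the stream function of $D_0$ in $\mathbf{D}_\theta$, checking both the Green-function covariance and that $\mathbf{1}_{D_0}$ is the right-hand side. One must also justify that the Laplacians cancel across $\partial D_0$ in the weak sense, so that $h_\theta$ is harmonic on all of $\mathbf{U}_\theta$ and not only away from $\partial D_0$.
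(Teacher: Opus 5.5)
Your proof is correct and follows the paper's argument: $\Delta h_\theta=0$ on $\mathbf{U}_\theta$, constancy of $h_\theta$ on $\partial D_0$ from the stream-function relation at times $0$ and $\theta/\Omega$, the maximum principle on $D_0$, and unique continuation to $\mathbf{U}_\theta$. The only difference is that you get $\Delta\phi_\theta=\mathbf{1}_{D_0}$ from the rotation covariance of the Green function, while the paper reads it off directly from $\Delta(f\circ\mathcal{R}_\theta)=(\Delta f)\circ\mathcal{R}_\theta$; your extra care with the distributional cancellation across $\partial D_0$ and Weyl's lemma is a small refinement that the paper leaves implicit.
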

\begin{proof}
Recall the following classical property of the Laplacian: for any orthogonal transformation $A\in O_2(\mathbb{R}),$ one has
$$\Delta\big(f\circ A\big)=(\Delta f)\circ A.$$
Using that $\mathcal{R}_{\theta}$ is an orthogonal transformation, \eqref{elliptic_eq:stream-intro} and \eqref{V-state:ansatz}, we find
\begin{equation}\label{Laplace:phitheta}
    \begin{aligned}
    \forall x\in\mathbf{D}_{\theta},\quad\Delta\phi_{\theta}(x)&=\Delta\left[\boldsymbol{\Psi}\left(\tfrac{\theta}{\Omega},\mathcal{R}_{\theta}x\right)\right]\\
    &=(\Delta\boldsymbol{\Psi})\left(\tfrac{\theta}{\Omega},\mathcal{R}_{\theta}x\right)\\
    &=\mathbf{1}_{\mathcal{R}_{\theta}D_0}(\mathcal{R}_{\theta}x)\\
    &=\mathbf{1}_{D_0}(x).
\end{aligned}
\end{equation}
In addition, since $\partial\mathbf{D}_{\theta}=\mathcal{R}_{-\theta}\partial\mathbf{D},$ then in view of \eqref{Dirichlet_stream}, one has
\begin{equation}\label{Diri:phith}
    \phi_{\theta}=0,\qquad\textnormal{on }\partial\mathbf{D}_{\theta}.
\end{equation}
Moreover, evaluating \eqref{stream_ref_Vstates} at time $t=\tfrac{\theta}{\Omega},$ we find
$$\forall x\in\partial D_0,\quad\phi_\theta(x)-\frac{\Omega}{2}|x|^2=c_\theta,\qquad c_{\theta}\triangleq c\left(\tfrac{\theta}{\Omega}\right).$$
This being true for any $\theta\in\mathbb{R},$ substracting the time zero relation gives
\begin{equation}\label{htheta:bdry}
    h_{\theta}=\phi_0-\phi_\theta=c_0-c_\theta\triangleq a_{\theta},
    \qquad\textnormal{on }\partial D_0.
\end{equation}
Remark that $\mathbf{U}_{\theta}$ is nonempty thanks to Lemma \ref{lem:0inD}. 
According to \eqref{Laplace:phitheta}, we have
$$\Delta h_{\theta}=\Delta\phi_0-\Delta\phi_{\theta}=\mathbf{1}_{D_0}-\mathbf{1}_{D_0}=0,\qquad\textnormal{in }\mathbf{U}_{\theta}.$$
That is, the function $h_\theta$ is harmonic in $\mathbf{U}_{\theta}$ and in particular in $D_0$. The boundary condition \eqref{htheta:bdry} together with the maximum principle therefore imply
$$h_\theta=a_{\theta},\qquad\textnormal{in }D_0.$$
Introducing
$$g_{\theta}\triangleq h_{\theta}-a_{\theta},$$
then
$$\begin{cases}
    \Delta g_{\theta}=\Delta h_{\theta}=0, & \textnormal{in }\mathbf{U}_{\theta},\\
    g_{\theta}=0, & \textnormal{in }D_0.
\end{cases}$$
The function $g_{\theta}$ being harmonic, it is analytic \cite[Thm. 10, pp. 31]{Evans} in $\mathbf{U}_{\theta}.$ Moreover, it vanishes on the open nonempty set $D_0.$ Hence, by the uniqueness extension theorem
$$g_{\theta}\equiv0, \quad\textnormal{in }\mathbf{U}_{\theta},\qquad\textnormal{i.e.}\qquad h_{\theta}=a_{\theta}, \quad\textnormal{in }\mathbf{U}_{\theta}.$$
This concludes the proof of Lemma \ref{lem:htheta}.
\end{proof}
\begin{lem}\label{lem:arc}
    Assume that $\mathbf{D}$ is not a disc. Then, there exists a small angle $\theta_{\textnormal{max}}>0$ such that for any $\theta\in(-\theta_{\textnormal{max}},\theta_{\textnormal{max}})\setminus\{0\}$, there exists an open arc 
    $$I_{\theta}\subset\partial\mathbf{D}_{\theta}\cap\mathbf{D}.$$
    Moreover,
    $$\phi_0=a_{\theta}, \qquad\textnormal{on }I_{\theta}.$$
\end{lem}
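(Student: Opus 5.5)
Since $\mathbf{D}$ is not a disc centered at the origin (recall $0\in\mathbf{D}$ by Lemma \ref{lem:0inD}), we first produce a boundary point $p\in\partial\mathbf{D}$ at which the rotation field $p^{\bot}=\ii p$ is transversal to $\partial\mathbf{D}$. Indeed, if $p^\bot$ were tangent to $\partial\mathbf{D}$ at every $p$, then $\tfrac{d}{ds}|\gamma(s)|^2=2\gamma\cdot\gamma'=0$ along an arclength parametrization $\gamma$ of the analytic boundary curve. Hence $|\cdot|$ would be constant on $\partial\mathbf{D}$, and $\mathbf{D}$ would be a disc centered at $0$. We therefore get $p$ with $p\cdot\tau(p)\neq0$, $\tau$ being the unit tangent. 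By continuity this persists on a small closed arc $J\subset\partial\mathbf{D}$ around $p$, where $|\gamma|^2$ is strictly monotone in $s$ and $\nu(q)\cdot q^\bot\neq0$, with $\nu$ the outward normal.

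Next we rotate $J$. The map $(\theta,s)\mapsto\mathcal{R}_{-\theta}\gamma(s)$ has Jacobian proportional to $\gamma(s)^\bot\cdot\nu(\gamma(s))\neq0$. So for small $|\theta|$ the rotated arc $\mathcal{R}_{-\theta}J$ moves transversally off $\partial\mathbf{D}$, to one side depending on the sign of $\theta$. Concretely, for $q$ in a compact sub-arc $J'\Subset J$ and $0<|\theta|<\theta_{\max}$, the point $\mathcal{R}_{-\theta}q$ lies at distance of order $|\theta|$ from $\partial\mathbf{D}$ near $J$, since $\partial\mathbf{D}$ is a graph over its tangent line locally. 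We must pick the side so that it lies \emph{inside} $\mathbf{D}$. This is the step needing care: for $\theta$ of one sign the arc enters $\mathbf{D}$, and for the other sign it exits. To handle both signs, we note that $|\gamma|^2$ is monotone on $J$, so $|\gamma|$ has a local increase on one end. Alternatively, we choose two points $p_\pm$, one where $\nu\cdot p^\bot>0$ and one where it is $<0$. Both exist because $\oint_{\partial\mathbf{D}}\nu\cdot q^\bot\,d\sigma=\int_{\mathbf{D}}\operatorname{div}(x^\bot)dx=0$ and the integrand is not identically zero. Using $p_+$ for $\theta>0$ and $p_-$ for $\theta<0$ yields, for all $0<|\theta|<\theta_{\max}$, an open arc $I_\theta\triangleq\mathcal{R}_{-\theta}J'_\pm\subset\partial\mathbf{D}_\theta\cap\mathbf{D}$.

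It remains to show $\phi_0=a_\theta$ on $I_\theta$. By \eqref{Diri:phith}, $\phi_\theta=0$ on $I_\theta\subset\partial\mathbf{D}_\theta$. Lemma \ref{lem:htheta} gives $\phi_0-\phi_\theta\equiv a_\theta$ in $\mathbf{U}_\theta$, and continuity up to $\partial\mathbf{D}_\theta\cap\mathbf{D}$ then gives $\phi_0=a_\theta$ on $I_\theta$, provided $I_\theta\subset\partial\mathbf{U}_\theta$. This is the main obstacle: we must check that the component of $\mathbf{D}\cap\mathbf{D}_\theta$ containing $D_0$ reaches $I_\theta$. We plan to choose $\theta_{\max}$ small and use that $\mathbf{D}\cap\mathbf{D}_\theta$ contains $\{x\in\mathbf{D}:\operatorname{dist}(x,\partial\mathbf{D})>C|\theta|\}$. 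For small $|\theta|$ this set is connected, as a collar deformation of $\mathbf{D}$ by analyticity of the boundary, and it contains $\overline{D_0}$. A point just on the $\mathbf{D}\cap\mathbf{D}_\theta$ side of $I_\theta$ can be joined to this set by a short normal segment staying in $\mathbf{D}\cap\mathbf{D}_\theta$. Hence $I_\theta\subset\overline{\mathbf{U}_\theta}$. Moreover, $\phi_0,\phi_\theta$ are continuous up to $I_\theta$: $\phi_0$ is $C^1$ inside $\mathbf{D}$, and $\phi_\theta$ is $C^1$ up to the analytic boundary $\partial\mathbf{D}_\theta$ since its Laplacian is bounded. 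This gives the claim.
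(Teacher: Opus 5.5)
Your proposal is correct and follows the paper's route. You get a transversal boundary point from the ``$|\gamma|$ constant'' argument, both signs of $p^\bot\cdot\mathbf{n}_{\partial\mathbf{D}}$ from the vanishing flux of $x^\bot$ (your divergence-theorem version is equivalent to \eqref{integral:transverse}), and a small boundary arc rotated into $\mathbf{D}$ (the paper uses the signed distance $\mathtt{d}_{\partial\mathbf{D}}$ instead of a local graph), then conclude from \eqref{Diri:phith} and Lemma \ref{lem:htheta}.

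Your extra check that $I_\theta\subset\overline{\mathbf{U}_\theta}$ (the inner parallel set is connected and contains $\overline{D_0}$) and that $\phi_0,\phi_\theta$ are continuous up to $I_\theta$ fills a step the paper skips. The paper applies Lemma \ref{lem:htheta}, which holds only in the component $\mathbf{U}_\theta$, directly on $I_\theta\subset\partial\mathbf{D}_\theta$ without this justification.
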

\begin{proof}
$\blacktriangleright$ \textbf{Existence of a transversal direction :} Assume, in view of a contradiction, that
$$\forall p\in\partial\mathbf{D},\quad p^{\bot}\cdot\mathbf{n}_{\partial\mathbf{D}}(p)=0.$$
This means that $p^{\bot}$ is tangent to $\partial\mathbf{D}$. Let $\gamma:\mathbb{T}\to\partial\mathbf{D}$ be a parametrization of the boundary. Then, by hypothesis
$$\forall s\in\mathbb{T},\quad\gamma'(s)=\lambda(s)\gamma^{\bot}(s),\qquad\lambda(s)\in\mathbb{R}.$$
Hence,
$$\frac{d}{ds}|\gamma(s)|^2=2\gamma(s)\cdot\gamma'(s)=2\lambda(s)\gamma(s)\cdot\gamma^{\bot}(s)=0.$$
Therefore, $|\gamma|$ is constant, i.e. $\partial\mathbf{D}$ is a circle. This means that $\mathbf{D}$ is a disc. Contradiction. So there exists $p_0\in\partial\mathbf{D}$ such that
\begin{equation}\label{transverse:p0}
    p_0^{\bot}\cdot\mathbf{n}_{\partial\mathbf{D}}(p_0)\neq0.
\end{equation}
Actually, we can say more. The nornal vector can express in terms of the parametrization as
$$\forall s\in\mathbb{T},\quad\mathbf{n}_{\partial\mathbf{D}}\big(\gamma(s)\big)=-\frac{\big(\gamma'(s)\big)^{\bot}}{|\gamma'(s)|}\cdot$$
Therefore, for any $s\in\mathbb{T},$
$$\big(\gamma(s)\big)^{\bot}\cdot\mathbf{n}_{\partial\mathbf{D}}\big(\gamma(s)\big)=-\frac{\big(\gamma(s)\big)^{\bot}\cdot\big(\gamma'(s)\big)^{\bot}}{|\gamma'(s)|}=-\frac{\gamma(s)\cdot\gamma'(s)}{|\gamma'(s)|}=-\frac{1}{2|\gamma'(s)|}\frac{d}{ds}|\gamma(s)|^2.$$
Thus,
\begin{equation}\label{integral:transverse}
    \int_{\partial\mathbf{D}}p^{\bot}\cdot\mathbf{n}_{\partial\mathbf{D}}(p)dp=-\frac{1}{2}\int_{\mathbb{T}}\frac{d}{ds}|\gamma(s)|^2ds=0.
\end{equation}
The function $T:p\mapsto p^{\bot}\cdot\mathbf{n}_{\partial\mathbf{D}}(p)$ being continuous and non identically zero because of \eqref{transverse:p0}, we deduce from \eqref{integral:transverse} that the "transverse" function $T$ takes strictly positive and strictly negative values.

\medskip

$\blacktriangleright$ \textbf{Construction of the arc :} Let us treat the case
$$p_0^{\bot}\cdot\mathbf{n}_{\partial\mathbf{D}}(p_0)>0.$$
Denote $\mathtt{d}_{\partial\mathbf{D}}$ the signed distance function to $\partial\mathbf{D}$
$$\mathtt{d}_{\partial\mathbf{D}}(x)\triangleq\left(\mathbf{1}_{\mathbf{D}}(x)-\mathbf{1}_{\mathbb{R}^2\setminus\mathbf{D}}(x)\right)\inf_{p\in\partial\mathbf{D}}|x-p|.$$
Since $\partial\mathbf{D}$ is real-analytic, then the application $\mathtt{d}_{\partial\mathbf{D}}$ is smooth in a tubular neighborhood $\mathcal{V}$ of $\partial\mathbf{D}$, see \cite[Thm. 1]{D23}, and satisfies the following properties 
$$\begin{cases}
  \mathtt{d}_{\partial\mathbf{D}}>0,&\textnormal{in }\mathbf{D}\cap\mathcal{V},\\
  \mathtt{d}_{\partial\mathbf{D}}=0,&\textnormal{on }\partial\mathbf{D},\\
  \mathtt{d}_{\partial\mathbf{D}}<0,&\textnormal{in }(\mathbb{R}^2\setminus\mathbf{D})\cap\mathcal{V}
\end{cases}$$
and, the boundary $\partial\mathbf{D}$ corresponding to the zero level set of $\mathtt{d}_{\partial\mathbf{D}}$, we have
\begin{equation}\label{grad_dist}
    \forall p\in\partial\mathbf{D},\quad\nabla\mathtt{d}_{\partial\mathbf{D}}(p)=-\mu(p)\mathbf{n}_{\partial\mathbf{D}}(p),\qquad\mu(p)>0.
\end{equation}
By continuity of the family of matrices $\theta\mapsto\mathcal{R}_{\theta}$ and compactness of $\partial\mathbf{D}$ there exists $\theta_{\textnormal{max}}>0$ such that
$$\forall\theta\in(-\theta_{\textnormal{max}},\theta_{\textnormal{max}}),\quad\partial\mathbf{D}_{\theta}\subset\mathcal{V}.$$
Therefore, the following function $F$ is well-defined and smooth on $(-\theta_{\textnormal{max}},\theta_{\textnormal{max}})\times\partial\mathbf{D}$ 
$$F(\theta,p)\triangleq\mathtt{d}_{\partial\mathbf{D}}(\mathcal{R}_{-\theta}p).$$
In particular, we have the equivalence
\begin{equation}\label{inD_cond}
    F(\theta,p)>0\quad\Longleftrightarrow\quad\mathcal{R}_{-\theta}p\in\mathbf{D}.
\end{equation}
Note that at $(0,p_0)$, we have
$$F(0,p_0)=0$$
and, by \eqref{grad_dist},
\begin{align*} 
\partial_\theta F(0,p_0)&=-\nabla\mathtt{d}_{\partial\mathbf{D}}(p_0)\cdot p_0^\bot\\
&=\mu(p_0)\mathbf{n}_{\partial\mathbf{D}}(p_0)\cdot p_0^{\bot}\\
&>0. 
\end{align*}
Therefore, by Taylor formula, we deduce that there exists an open arc $A\subset\partial\mathbf{D}$ with $p_0\in A$ such that, up to reducing the value of $\theta_{\textnormal{max}},$
$$\forall(\theta,p)\in(0,\theta_{\textnormal{max}})\times A,\quad F(\theta,p)>0.$$
Equivalently, by \eqref{inD_cond},
$$\forall(\theta,p)\in(0,\theta_{\textnormal{max}})\times A,\quad\mathcal{R}_{-\theta} p\in\mathbf{D}.$$
But,
$$\mathcal{R}_{-\theta}A\subset\mathcal{R}_{-\theta}(\partial\mathbf{D}) =\partial(\mathcal{R}_{-\theta}\mathbf{D})=\partial\mathbf{D}_{\theta}.$$
Hence, setting
$$I_{\theta}\triangleq \mathcal{R}_{-\theta}A,$$
we obtain that $I_{\theta}$ is an open arc as a rotation of the open arc $A$ and 
$$\forall\theta\in(0,\theta_{\textnormal{max}}),\quad I_{\theta}\subset\partial\mathbf{D}_{\theta}\cap\mathbf{D}.$$
The case 
$$p_0^{\bot}\cdot\mathbf{n}_{\partial\mathbf{D}}(p_0)<0$$
reaches the negative values $(-\theta_{\textnormal{max}},0).$

\medskip

    $\blacktriangleright$ \textbf{Value of the initial stream function on the arc :} Finally, by virtue of \eqref{Diri:phith} and Lemma \ref{lem:htheta}, we obtain $$\phi_0=a_{\theta},\qquad\textnormal{on }I_\theta.$$
    This completes the proof of Lemma \ref{lem:arc}.
\end{proof}
 In the next lemma, we show that actually, $\phi_0$ is constant on the whole rotated boundaries. 

\begin{lem}\label{lem:phi0_constant}
     Assume that $\mathbf{D}$ is not a disc. There exists a neighborhood $\mathcal{N}(\partial\mathbf{D})$ of the boundary $\partial\mathbf{D}$ and $\theta_{\textnormal{max}}>0$ such that 
     $$\forall\theta\in(-\theta_{\textnormal{max}},\theta_{\textnormal{max}}),\quad\partial\mathbf{D}_{\theta}\subset\mathcal{N}(\partial\mathbf{D}).$$
     The function $\phi_0$ admits a unique extension (still denoted $\phi_0$) on $\mathcal{N}(\partial\mathbf{D})$ and we have
     $$\phi_0\equiv a_{\theta}
    \qquad\textnormal{on }\partial\mathbf{D}_{\theta}.$$
\end{lem}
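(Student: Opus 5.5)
Near $\partial\mathbf{D}$ the function $\phi_0$ is harmonic (since $D_0\Subset\mathbf{D}$) and vanishes on $\partial\mathbf{D}$ by \eqref{Dirichlet_stream}. The plan is to reflect $\phi_0$ across the real-analytic boundary, so that it becomes harmonic on a full two-sided neighborhood, and then to propagate the identity $\phi_0=a_\theta$ from the arc $I_\theta$ of Lemma \ref{lem:arc} to all of $\partial\mathbf{D}_\theta$ by analytic continuation along that curve.

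\textbf{Step 1 (extension).} Fix $\delta>0$ with $\operatorname{dist}(D_0,\partial\mathbf{D})>2\delta$, so that $\Delta\phi_0=0$ in $\{x\in\mathbf{D},\ \mathtt{d}_{\partial\mathbf{D}}(x)<2\delta\}$ and $\phi_0=0$ on $\partial\mathbf{D}$. Since $\partial\mathbf{D}$ is real-analytic, the generalized Schwarz reflection principle for harmonic functions vanishing on an analytic arc applies locally around each boundary point. Concretely, compose with a conformal map $\boldsymbol{\Phi}$ onto $\mathbb{D}$, which extends analytically and univalently across $\partial\mathbf{D}$, and set $\phi_0\circ\boldsymbol{\Phi}^{-1}(1/\overline{z})=-\phi_0\circ\boldsymbol{\Phi}^{-1}(z)$. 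This yields a harmonic extension of $\phi_0$ to a two-sided neighborhood $\mathcal{N}(\partial\mathbf{D})$, which we take to be an annular region bounded by two analytic Jordan curves and therefore connected. The local extensions agree on overlaps, and the extension is unique because two harmonic functions coinciding on the open set $\mathcal{N}(\partial\mathbf{D})\cap\mathbf{D}$ coincide on the connected set $\mathcal{N}(\partial\mathbf{D})$. By compactness of $\partial\mathbf{D}$ and continuity of $\theta\mapsto\mathcal{R}_{-\theta}$, we may shrink $\theta_{\max}$ (also below the value from Lemma \ref{lem:arc}) so that $\partial\mathbf{D}_\theta\subset\mathcal{N}(\partial\mathbf{D})$ for every $|\theta|<\theta_{\max}$.

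\textbf{Step 2 (propagation along $\partial\mathbf{D}_\theta$).} The case $\theta=0$ is immediate with $a_0=0$. For $\theta\neq0$, choose a real-analytic parametrization $\gamma_\theta=\mathcal{R}_{-\theta}\gamma$ of $\partial\mathbf{D}_\theta$, where $\gamma$ is a real-analytic, periodic parametrization of $\partial\mathbf{D}$. Then $s\mapsto\phi_0(\gamma_\theta(s))$ is a composition of real-analytic maps, defined on all of $\mathbb{T}$ because $\partial\mathbf{D}_\theta\subset\mathcal{N}(\partial\mathbf{D})$. By Lemma \ref{lem:arc}, this function equals $a_\theta$ on the open interval of parameters corresponding to $I_\theta$. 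The identity theorem for real-analytic functions on the connected set $\mathbb{T}$ then gives $\phi_0\circ\gamma_\theta\equiv a_\theta$, that is, $\phi_0\equiv a_\theta$ on $\partial\mathbf{D}_\theta$.

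\textbf{Main obstacle.} The delicate point is Step 1. The reflection across a general real-analytic curve must be built globally and consistently, and the resulting neighborhood must be connected and must contain every rotated boundary. Using the conformal map $\boldsymbol{\Phi}$, continued analytically across $\partial\mathbf{D}$ (Schwarz reflection for the Riemann map of an analytic Jordan domain), reduces the construction to the explicit reflection $z\mapsto1/\overline{z}$ on a thin annulus $1-\eta<|z|<1+\eta$. This settles both the well-definedness and the connectedness of $\mathcal{N}(\partial\mathbf{D})$.
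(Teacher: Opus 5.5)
Your proof is correct and follows the same route as the paper. You extend $\phi_0$ harmonically across the real-analytic boundary, then apply the identity theorem to the real-analytic function $s\mapsto\phi_0(\mathcal{R}_{-\theta}\gamma(s))$, which equals $a_\theta$ on the parameter interval of $I_\theta$; the paper cites the harmonic extension theorem across analytic arcs instead of building the reflection explicitly via the continued conformal map $\boldsymbol{\Phi}$ and $z\mapsto 1/\overline{z}$, and your version also gives the connectedness of $\mathcal{N}(\partial\mathbf{D})$ (hence uniqueness) and treats $\theta=0$ separately.
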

\begin{proof}
$\blacktriangleright$ \textbf{Extension of $\phi_0$ :} By \eqref{Laplace:phitheta}, we have
$$\Delta\phi_0=\mathbf 1_{D_0}.$$
Since $D_0\Subset\mathbf{D}$, there is a positive distance
$$d\triangleq\operatorname{dist}(D_0,\partial\mathbf{D})>0,\qquad\operatorname{dist}(A,B)\triangleq\inf_{a\in A\atop b\in B}|a-b|.$$
Hence, the collar
$$\mathbf{W}_d\triangleq\{x\in\mathbf{D}\quad\textnormal{s.t.}\quad\operatorname{dist}(x,\partial\mathbf{D})<\tfrac{d}{2}\}$$
does not intersect $D_0$
$$\mathbf{W}_d\cap D_0=\varnothing.$$
Therefore,
$$\Delta\phi_0=0,\qquad\textnormal{in }\mathbf{W}_d.$$
Thus, $\phi_0$ is harmonic in a full neighborhood of the outer boundary, on its interior side. Moreover, by our normalization,
$$\phi_0=0,\qquad\textnormal{on }\partial\mathbf{D}.$$
Since $\partial\mathbf{D}$ is real-analytic one can apply the harmonic extension theorem across real-analytic arcs \cite[Thm. 4.10, pp. 20]{C12} and obtain in particular the existence of a tubular neighborhood $\mathcal{N}(\partial\mathbf{D})$ of the boundary $\partial\mathbf{D}$ and a harmonic extension still denoted $\phi_0$ to $\mathcal{N}(\partial\mathbf{D}).$

\medskip

$\blacktriangleright$ \textbf{$\phi_0$ is constant on each rotated boundary :} Using the continuity of the family $\theta\mapsto\mathcal{R}_{\theta}$ of linear transformations, we get the existence of $\theta_{\textnormal{max}}>0$ such that
$$\forall\theta\in(-\theta_{\textnormal{max}},\theta_{\textnormal{max}}),\quad\partial\mathbf{D}_{\theta}\subset\mathcal{N}(\partial\mathbf{D}).$$
So the already-constructed extension of $\phi_0$ is defined on the entire family of slightly rotated boundaries. Now, fix $\theta\in(-\theta_{\textnormal{max}},\theta_{\textnormal{max}})$ and let $\gamma:\mathbb{T}\to\partial\mathbf{D}$ be a real-analytic parametrization of the boundary $\partial\mathbf{D}$. Then, the application $\gamma_{\theta}:\mathbb{T}\to\partial\mathbf{D}_{\theta}$ defined by
$$\gamma_\theta(s)\triangleq R_{-\theta}\gamma(s)$$
provides a real-analytic parametrization of $\partial\mathbf{D}_{\theta}$. On the other hand, the extended $\phi_0$ is harmonic in a neighborhood of $\partial\mathbf{D}_{\theta}$ and harmonic functions are analytic \cite[Thm. 10, pp. 31]{Evans}. Hence,
$$f_\theta(s)\triangleq\phi_0\big(\gamma_\theta(s)\big)=\phi_0\big(\mathcal{R}_{-\theta}\gamma(s)\big)$$
is a real-analytic function of the variable $s$. In addition, according to Lemma \ref{lem:arc}, the function $f_{\theta}$ takes the constant value $a_{\theta}$ on the open interval $\gamma_{\theta}^{-1}(I_{\theta}).$ By the uniqueness extension theorem for real-analytic functions, we deduce that
$$\phi_0\equiv a_{\theta},\qquad\textnormal{on }\partial\mathbf{D}_{\theta}.$$
This ends the proof of Lemma \ref{lem:phi0_constant}.
\end{proof}
The next step is to show that actually, the constant value of $\phi_0$ on the rotated boundaries is zero.
\begin{lem}\label{lem:phi0_vanish}
    Assume that $\mathbf{D}$ is not a disc. Then, for any $\theta\in(-\theta_{\textnormal{max}},\theta_{\textnormal{max}}),$
    $$\phi_0\equiv 0,
    \qquad\textnormal{on }\partial\mathbf{D}_{\theta}.$$
\end{lem}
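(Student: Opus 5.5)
By Lemma \ref{lem:phi0_constant}, the extended function $\phi_0$ takes the constant value $a_\theta$ on the whole curve $\partial\mathbf{D}_\theta$. By the Dirichlet condition \eqref{Dirichlet_stream}, $\phi_0=0$ on $\partial\mathbf{D}$. It therefore suffices to exhibit a point $q\in\partial\mathbf{D}\cap\partial\mathbf{D}_\theta$. At such a point, $a_\theta=\phi_0(q)=0$, because the extension coincides with the original $\phi_0$ on $\partial\mathbf{D}$. For $\theta=0$ there is nothing to prove, since $\partial\mathbf{D}_0=\partial\mathbf{D}$ and $a_0=0$. The plan is thus to show that the two Jordan curves $\partial\mathbf{D}$ and $\partial\mathbf{D}_\theta=\mathcal{R}_{-\theta}\partial\mathbf{D}$ intersect.

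We argue by contradiction and suppose $\partial\mathbf{D}\cap\partial\mathbf{D}_\theta=\varnothing$. Since $\partial\mathbf{D}_\theta$ is connected and disjoint from $\partial\mathbf{D}$, it lies either entirely in $\mathbf{D}$ or entirely in $\mathbb{R}^2\setminus\overline{\mathbf{D}}$.

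In the first case, $\partial\mathbf{D}_\theta\subset\mathbf{D}$, and simple connectivity of $\mathbf{D}$ (Jordan curve theorem) gives $\mathbf{D}_\theta=\operatorname{int}(\partial\mathbf{D}_\theta)\subset\mathbf{D}$. Both domains are open and have the same area, $|\mathbf{D}_\theta|=|\mathbf{D}|$, because rotations preserve Lebesgue measure. Hence $\mathbf{D}\setminus\overline{\mathbf{D}_\theta}$ has measure zero; the boundary of a real-analytic Jordan domain has measure zero. But $\mathbf{D}\setminus\overline{\mathbf{D}_\theta}$ is open, and it is nonempty because points of $\mathbf{D}$ close to the compact set $\partial\mathbf{D}_\theta\subset\mathbf{D}$ on its exterior side lie outside $\overline{\mathbf{D}_\theta}$. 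A nonempty open set cannot have measure zero, a contradiction.

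In the second case, $\partial\mathbf{D}_\theta\subset\mathbb{R}^2\setminus\overline{\mathbf{D}}$. By Lemma \ref{lem:0inD}, $0\in\mathbf{D}$, and $0$ is fixed by rotations, so $0\in\mathbf{D}_\theta$ as well. Then $\partial\mathbf{D}$ is a connected set disjoint from $\partial\mathbf{D}_\theta$. It meets $\mathbf{D}_\theta$: a segment from $0$ to a point of $\partial\mathbf{D}$ stays in $\overline{\mathbf{D}}$, hence avoids $\partial\mathbf{D}_\theta$, hence stays in $\mathbf{D}_\theta$. Therefore $\partial\mathbf{D}\subset\mathbf{D}_\theta$, and we are back to the first case with the roles of $\mathbf{D}$ and $\mathbf{D}_\theta$ exchanged, giving the same contradiction.

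Consequently the curves intersect at some $q$, and $a_\theta=\phi_0(q)=0$. The only delicate point is the topological step, namely handling the equal-area argument and the two containment cases rigorously with the Jordan curve theorem. Alternatively, one could use that $\mathbf{D}\subset\mathbf{D}_\theta$ would give $\mathbf{D}=\mathcal{R}_{-\theta}^{k}\mathbf{D}$ growing strictly, but the area argument is cleaner.
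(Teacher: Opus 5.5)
Your proof is correct and follows the same route as the paper. Both arguments show $\partial\mathbf{D}\cap\partial\mathbf{D}_\theta\neq\varnothing$ because disjoint boundaries would force one of two domains of equal area (both containing $0$) to be strictly nested in the other, and then evaluate $\phi_0$ at a common point to get $a_\theta=0$; you simply supply the Jordan-curve details that the paper leaves implicit. One small repair is needed in your second case: the segment from $0$ to an arbitrary point of $\partial\mathbf{D}$ need not stay in $\overline{\mathbf{D}}$ when $\mathbf{D}$ is not star-shaped, so either take the point of $\partial\mathbf{D}$ nearest to $0$, or, more simply, note that the connected set $\overline{\mathbf{D}}$ avoids $\partial\mathbf{D}_\theta$ and contains $0\in\mathbf{D}_\theta$, hence $\overline{\mathbf{D}}\subset\mathbf{D}_\theta$.
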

\begin{proof}
 According to Lemma \ref{lem:arc}, in particular, we have (if $\theta_{\textnormal{max}}$ is small enough)
 $$\forall\theta\in(-\theta_{\textnormal{max}},\theta_{\textnormal{max}})\setminus\{0\},\quad\mathbf{D}_{\theta}\neq\mathbf{D}.$$ 
 By virtue of Lemma \ref{lem:0inD}, both $\partial\mathbf{D}$ and $\partial\mathbf{D}_{\theta}$ are Jordan curves enclosing the origin. They cannot be
disjoint, otherwise one of their interiors would be strictly contained in the other, which is impossible because rotations preserve area. Hence,
$$\partial\mathbf{D}\cap\partial\mathbf{D}_{\theta}\neq\varnothing.$$
Let $p\in\partial\mathbf{D}\cap\partial\mathbf{D}_{\theta}.$ By \eqref{Dirichlet_stream} and Lemma \ref{lem:phi0_constant}, we have 
$$0=\phi_0(p)=a_{\theta}.$$ 
As a consequence, using one more time Lemma \ref{lem:phi0_constant},
$$\phi_0=0,\qquad\textnormal{on }\partial\mathbf{D}_{\theta}$$
for all sufficiently small $\theta$ (the case $\theta=0$ being covered by \eqref{Dirichlet_stream}). This achieves the proof of Lemma \ref{lem:phi0_vanish}.
\end{proof}

\begin{proof}[Proof of Theorem \ref{thm:inverse-pb}]
Suppose, in view of a contradiction, that $\mathbf{D}$ is not a disc centered at the origin.

\medskip

$\blacktriangleright$ \textbf{$\phi_0$ vanishes on an open set inside $\mathbf{D}$ :} Following the proof of Lemma \ref{lem:arc}, see \eqref{transverse:p0}, there exists $p_0\in\partial\mathbf{D}$ such that $p_0^{\bot}$ is not tangent to
$\partial\mathbf{D}$. Let $\gamma:\mathbb{T}\to\partial\mathbf{D}$ be a smooth parametrization of $\partial\mathbf{D}$
with $\gamma(s_0)=p_0$ for some $s_0\in\mathbb{T}$, and define
$$G(s,\theta)\triangleq\mathcal{R}_{-\theta}\gamma(s).$$
At $(s_0,0)$,
$$G(s_0,0)=p_0,\qquad\partial_sG(s_0,0)=\gamma'(s_0),
    \qquad
    \partial_\theta G(s_0,0)=-p_0^{\bot}.$$
By construction, these last two vectors are linearly independent. Therefore,
$$\det\big(D_{(s,\theta)}G(s_0,0)\big)=\det\left(\begin{bmatrix}
    \gamma'(s_0) & -p_0^{\bot}
\end{bmatrix}\right)\neq0.$$
Hence, the local inverse function
theorem implies in particular the existence of a nonempty open set $\mathbf{V}$ satisfying, for some $\theta_{\textnormal{max}}>0,$ the inclusion 
$$\mathbf{V}\subset\mathbf{D}\cap\bigcup_{|\theta|<\theta_{\textnormal{max}}}\mathcal{R}_{-\theta}\partial\mathbf{D}.$$
For $\theta_{\textnormal{max}}$ sufficiently small, $\mathbf{V}$ is contained in
$\mathbf{D}\setminus\overline{D_0}$. Since $\phi_0=0$ on every
$\partial\mathbf{D}_{\theta}=\mathcal{R}_{-\theta}\partial\mathbf{D}$ with $|\theta|<\theta_{\textnormal{max}}$, we have
$$\phi_0=0,\qquad\textnormal{on }\mathbf{V}.$$

\medskip

$\blacktriangleright$ \textbf{Obtaining a contradiction :} Denote $\mathbf{U}$ the connected component of $\mathbf{D}\setminus\overline{D_0}$ containing $\mathbf{V}$. Since
$$\Delta\phi_0=0,\qquad\textnormal{in }\mathbf{U},$$
and vanishes on the open subset $\mathbf{V}\subset\mathbf{U},$ the unique continuation implies
$$\phi_0\equiv0,\qquad\textnormal{in }\mathbf{U}.$$
Consequently,
$$u_0\triangleq\nabla^\bot\phi_0\equiv0,\qquad\textnormal{in }\mathbf{U}.$$
Choose a smooth Jordan curve $\Gamma\subset\mathbf{U}$ enclosing $D_0$. Then, we have
\begin{equation}\label{circul_zero}
    \oint_{\Gamma} u_0\cdot dp=0.
\end{equation}
On the other hand, by Stokes' theorem,
$$\oint_{\Gamma} u_0\cdot dp=\int_{\operatorname{int}(\Gamma)}\operatorname{curl}(u_0)(x)dx.$$
Since
$$\operatorname{curl}(u_0)=\Delta\phi_0=\mathbf{1}_{D_0}\qquad\textnormal{and}\qquad D_0\subset\operatorname{int}(\Gamma),$$
we obtain
$$\oint_{\Gamma}u_0\cdot dp=\int_{D_0}1\,dx=|D_0|>0,$$
which is a contradiction with \eqref{circul_zero}. Therefore $\mathbf{D}$ must be a disc centered at the origin. This ends the proof of Theorem \ref{thm:inverse-pb}.
\end{proof}

\subsection{The inverse problem for point vortices}
This subsection is devoted to the proof of Theorem \ref{thm:rigidityPV}.\\

Let us consider $\mathbf{D}\subset\mathbb{R}^2$ an open bounded simply-connected domain of $\mathbb{R}^2$ with $C^1$ boundary. Assume that there exists a circular nontrivial (i.e. not a stationary point) point vortex orbit
$$\{t\mapsto\xi(t)\}=\partial(r\mathbb{D})\subset\mathbf{D},\qquad r>0.$$

\begin{proof} $\blacktriangleright$ \textbf{The Robin function is radial :}
Recall from \eqref{Hamiltonian_system_1PV} that the dynamics of a single point vortex is Hamiltonian, with Hamiltonian given, up to an irrelevant multiplicative constant, by the Robin function $\mathcal{R}_{\mathbf{D}}$. In particular, $\mathcal{R}_{\mathbf{D}}$ is constant along every vortex trajectory. Hence there exists an energy level $\lambda\in\mathbb{R}$ such that
\begin{equation}\label{energy_level}
    \forall x\in\partial(r\mathbb{D}),\quad \mathcal{R}_{\mathbf{D}}(x)=\lambda.
\end{equation}
Since $\partial(r\mathbb{D})\subset\mathbf{D}$ and $\mathbf{D}$ is simply-connected, then the whole disc $r\mathbb{D}$ is contained in $\mathbf{D}$. Recall from \eqref{Liouville intro} that the Robin function satisfies the Liouville equation
\begin{equation}\label{Liouville_Robin}
    \Delta\mathcal{R}_{\mathbf{D}}=4e^{2\mathcal{R}_{\mathbf{D}}},\qquad\textnormal{in }\mathbf{D}.
\end{equation}
Consider the radial solution $u$ of
\begin{equation}\label{pb:radial_u}
    \begin{cases}
    \Delta u=4e^{2u}, & \textnormal{in }r\mathbb{D},\\
    u=\lambda, & \textnormal{on }\partial(r\mathbb{D}).
\end{cases}
\end{equation}
Note that the function $u$ is explicitly given by
$$u(x)=\log\left(\frac{\mathtt{a}}{1-\mathtt{a}^2|x|^2}\right),\qquad\mathtt{a}\triangleq\frac{\sqrt{1+4e^{\lambda}r^2}-1}{2e^{\lambda}r^2}\cdot$$
Set
$$h\triangleq\mathcal{R}_{\mathbf{D}}-u.$$
Then, from \eqref{energy_level} and the second equation in \eqref{pb:radial_u}, we have
\begin{equation}\label{vanish_diff}
    h=0,\qquad\textnormal{on }\partial(r\mathbb{D}).
\end{equation} 
Besides, \eqref{Liouville_Robin} and the first equation in \eqref{pb:radial_u} give
\begin{equation}\label{Laplace_h}
    \Delta h=4\left(e^{2\mathcal{R}_{\mathbf{D}}}-e^{2u}\right).
\end{equation}
We shall now prove that $h$ identically vanishes by two different ways:
\begin{enumerate}[label=\textbullet]
    \item \textit{Proof 1:} For any $x\in r\mathbb{D},$ applying the mean-value theorem to the function $t\mapsto e^{2t}$ between $\mathcal{R}_{\mathbf{D}}(x)$ and $u(x),$ we obtain
$$e^{2\mathcal{R}_{\mathbf{D}}(x)}-e^{2u(x)}=2\big(\mathcal{R}_{\mathbf{D}}(x)-u(x)\big)\int_0^1e^{2u(x)+2t\big(\mathcal{R}_{\mathbf{D}}(x)-u(x)\big)}dt.$$
We deduce from \eqref{Laplace_h} that $h$ is solution to the equation
$$\Delta h-\mathtt{c}h=0,\qquad\mathtt{c}(x)\triangleq 8\int_0^1e^{2u(x)+2t\big(\mathcal{R}_{\mathbf{D}}(x)-u(x)\big)}dt>0.$$
The maximum principle \cite[Thm. 2, pp. 329]{Evans}, together with \eqref{vanish_diff}, yields
$$h\equiv0,\quad\textnormal{in }r\mathbb{D},\qquad\textnormal{i.e.}\qquad\mathcal{R}_{\mathbf{D}}\equiv u,\quad\textnormal{in }r\mathbb{D}$$
\item \textit{Proof 2:} Assume in view of a contradiction that $h$ is not identically zero on $r\mathbb{D}$. According to \eqref{vanish_diff}, $h$ takes strictly positive or strictly negative values inside $r\mathbb{D}.$ Let us treat the strictly positive case, the other one being similar. So, by compactness and continuity, there exists $x_0\in r\mathbb{D}$ such that
\begin{equation}\label{h_pos_max}
    \mathcal{R}_{\mathbf{D}}(x_0)-u(x_0)=h(x_0)>0\qquad\textnormal{and}\qquad h(x_0)=\max_{x\in r\mathbb{D}}h(x).
\end{equation}
Since $h$ realizes a maximum at $x_0$, we have
$$\Delta h(x_0)\leqslant0.$$
But, the real exponential being strictly increasing, we have from the first condition in \eqref{h_pos_max} and \eqref{Laplace_h},
$$\Delta h(x_0)=4\left(e^{2\mathcal{R}_{\mathbf{D}}(x_0)}-e^{2u(x_0)}\right)>0.$$
Contradiction. Hence,
$$h\equiv0,\quad\textnormal{in }r\mathbb{D},\qquad\textnormal{i.e.}\qquad\mathcal{R}_{\mathbf{D}}\equiv u,\quad\textnormal{in }r\mathbb{D}$$
\end{enumerate}
So $\mathcal{R}_{\mathbf{D}}$ is radial around $0$ on a nonempty open subset of $\mathbf{D}$. Since the Robin function is real-analytic in $\mathbf{D}$, the function
$$x\longmapsto x^{\bot}\cdot\nabla\mathcal{R}_{\mathbf{D}}(x)$$
is real-analytic in $\mathbf{D}$. It vanishes identically in $r\mathbb{D}$ because $\mathcal{R}_{\mathbf{D}}$ is radial there. Since $\mathbf{D}$ is connected, the identity principle for real-analytic functions gives
$$\forall x\in\mathbf{D},\quad x^{\bot}\cdot\nabla\mathcal{R}_{\mathbf{D}}(x)=0.$$
Therefore $\mathcal{R}_{\mathbf{D}}$ is radial on $\mathbf{D}$.

\medskip

$\blacktriangleright$ \textbf{A radial Robin function implies a radial domain :} Assume, in view of a contradiction that $\mathbf{D}$ is not a disc centered at $0$. Then, according to the proof of Lemma \ref{lem:arc} (actually, according to the given reference \cite[Thm. 1]{D23}, it is sufficient here to assume that $\partial\mathbf{D}$ is $C^{1}$), there exists $p_0\in\partial\mathbf{D}$ such that $p_{0}^{\bot}\cdot\mathbf{n}_{\partial\mathbf{D}}(p_0)>0$ and there exists $\theta_{\textnormal{max}}>0$ such that
$$\forall\theta\in(0,\theta_{\textnormal{max}}),\quad\mathcal{R}_{-\theta}p_0\in\mathbf{D}.$$
By the behavior of the Robin function at the boundary \eqref{Robin bndry}, we deduce that
$$\lim_{\theta\to0}\mathcal{R}_{\mathbf{D}}\big(\mathcal{R}_{-\theta}p_0\big)=+\infty.$$
But, since $\mathcal{R}_{\mathbf{D}}$ is radial, we have 
$$\forall\theta\in(0,\theta_{\textnormal{max}}),\quad\mathcal{R}_{\mathbf{D}}\big(\mathcal{R}_{-\theta}p_0\big)=\mathcal{R}_{\mathbf{D}}\big(\mathcal{R}_{-\frac{\theta_{\textnormal{max}}}{2}}p_0\big)\in\mathbb{R}.$$
So the continuous function $$\theta\mapsto\mathcal{R}_{\mathbf{D}}\big(\mathcal{R}_{-\theta}p_0\big)$$
is constant on the interval $(0,\theta_{\textnormal{max}})$ and tends to $+\infty$ at $0.$ Contradiction. Therefore $\mathbf{D}$ is a disc centered at $0$. This concludes the proof of Theorem \ref{thm:rigidityPV}.
























\end{proof}

\section{Quasi-periodic patches in non-radial domains}\label{sec:QP}
This section is devoted to the proof of Theorem \ref{thm:QPS_in_nonradial_domains}.

\subsection{The equation and its linearization}
Our first purpose is to set up the problem by writting down a contour dynamics equation for a vortex patch evolution in domains that are a pertubation of the unit disc. We show that the corresponding dynamics is Hamiltonian. Then, we study its linearization. By coupling the amplitude of the solution with the domain deformation, the equilibrium state corresponds to Rankine circular vortices within the unit disc. We recall that the linearization there already admits quasi-periodic solutions.
\subsubsection{Contours dynamics in nearly circular domains}
In the whole Section \ref{sec:QP}, we shall focus on the case where the simply-connected bounded domain $\mathbf{D}$ is a small perturbation of the unit disc $\mathbb{D}$. More precisely, we shall assume that the conformal mapping $\boldsymbol{\Phi}:\mathbf{D}\to\mathbb{D}$ writes
\begin{equation}\label{conf map}
	\boldsymbol{\Phi}(x)=x+\varepsilon^2f_{\varepsilon}(x),
\end{equation}
with $\varepsilon\in(0,1)$ a small parameter. We assume that the family $(f_{\varepsilon})_{\varepsilon\in(0,1)}$ satisfies
\begin{equation}\label{assumptions:feps}
    f_{\varepsilon}\textnormal{ is analytic on }\mathbf{D},\qquad f_{\varepsilon}(0)=0,\qquad f_{\varepsilon}(\overline{x})=\overline{f_{\varepsilon}(x)},\qquad\varepsilon\mapsto f_{\varepsilon} \textnormal{ is continuous}.
\end{equation}
The continuity assumption $\varepsilon\mapsto f_{\varepsilon}$ is understood as from a small symmetric interval $(-\varepsilon_0,\varepsilon_0)$ onto the Fréchet space of holomorphic functions in $\mathbf{D}.$ Typically (but not necessarily), we can choose
$$f_{\varepsilon}(x)=\sum_{n=0}^{\infty}a_n\varepsilon^{n}x^{n+1},\qquad a_n\in\mathbb{R}.$$
As a consequence of the symmetry (third) condition in \eqref{assumptions:feps}, 
\begin{equation}\label{symm:conform}
   \boldsymbol{\Phi}(\overline{x})=\overline{\boldsymbol{\Phi}(x)}, 
\end{equation}
which means that the domain $\mathbf{D}$ is $1$-fold, that is admits an axis of symmetry which can be chosen as the $(Ox)$ axis. This assumption is required later to check the reversibility of the Hamiltonian describing the dynamics of the patch. Now, remarking from \eqref{def:Green} that
\begin{equation}\label{sym G0}
	G_{\mathbb{D}}(\overline{z},\overline{\xi})=G_{\mathbb{D}}(z,\xi),
\end{equation}
we deduce from \eqref{symm:conform} that
\begin{equation}\label{sym G}
	G_{\mathbf{D}}(\overline{x},\overline{y})=G_{\mathbf{D}}(x,y).
\end{equation}
Observe that combining \eqref{def:Green} and \eqref{conf map}, we can write (at least formally)
\begin{align*}
	G_{\mathbf{D}}(x,y)&=\log\left|\frac{x-y+\varepsilon^2\big(f_{\varepsilon}(x)-f_{\varepsilon}(y)\big)}{1-\overline{y}x-\varepsilon^2\big(xf_{\varepsilon}(\overline{y})+\overline{y}f_{\varepsilon}(x)+\varepsilon^2f_{\varepsilon}(x)f_{\varepsilon}(\overline{y})\big)}\right|\\
	&=G_\mathbb{D}(x,y)+\log\left|1+\varepsilon^2\frac{f_{\varepsilon}(x)-f_{\varepsilon}(y)}{x-y}\right|-\log\left|1-\varepsilon^2\frac{xf_{\varepsilon}(\overline{y})+\overline{y}f_{\varepsilon}(x)+\varepsilon^2f_{\varepsilon}(x)f_{\varepsilon}(\overline{y})}{1-\overline{y}x}\right|\\
	&\triangleq G_\mathbb{D}(x,y)+\log\left|1+\varepsilon^2g_1(\varepsilon,x,y)\right|-\log\left|1+\varepsilon^2g_2(\varepsilon,x,y)\right|.
\end{align*}
Now, we shall give a meaning to the previous decomposition. First observe that $G_{\mathbb{D}}$ is only well-defined on $\mathbb{D}\times\mathbb{D},$ therefore, the previous spliting can only make sense there. Since $f_{\varepsilon}$ is in particular differentiable on $\mathbf{D},$ then $g_1(\varepsilon,\cdot,\cdot)$ is well-defined on $\mathbf{D}\times\mathbf{D}.$ However, $g_2(\varepsilon,\cdot,\cdot)$ is a priori well-defined only on $\mathbf{D}\times\mathbf{D}\setminus\big\{(x,y)\in\mathbf{D}\times\mathbf{D}\quad\textnormal{s.t.}\quad x\overline{y}=1\big\}.$ In particular, if we fix $a\in(0,1)$, such that
$$\overline{D(0,a)}\subset\mathbb{D}\cap\mathbf{D},\qquad D(0,a)\triangleq\big\{(u,v)\in\mathbb{R}^2\quad\textnormal{s.t.}\quad u^2+v^2<a^2\big\},$$
then $g_2(\varepsilon,\cdot,\cdot)$ is smooth on $\overline{D(0,a)}\times\overline{D(0,a)}.$ Hence, for $\varepsilon$ small enough, we can expand the $\log$ in power series and write for $(x,y)\in \overline{D(0,a)}\times\overline{D(0,a)},$
\begin{align*}
	\log\left|1+\varepsilon^2g_1(\varepsilon,x,y)\right|-\log\left|1+\varepsilon^2g_2(\varepsilon,x,y)\right|&=\varepsilon^2\sum_{n=1}^{\infty}\tfrac{(-\varepsilon^2)^{n-1}}{n}\textnormal{Re}\left\{\big(g_{1}(\varepsilon,x,y)\big)^n-\big(g_2(\varepsilon ,x,y)\big)^n\right\}\\
	&\triangleq\varepsilon^2 G_p^\varepsilon(x,y).
\end{align*}
Hence, we have the following expression
\begin{equation}\label{split G}
	\forall (x,y)\in\overline{D(0,a)}\times\overline{D(0,a)},\quad G(x,y)=G_{\mathbb{D}}(x,y)+\varepsilon^2 G_{p}^{\varepsilon}(x,y),
\end{equation}
with $G_{p}^{\varepsilon}:\overline{D(0,a)}\times\overline{D(0,a)}\to\mathbb{R}$ being a smooth function and $\varepsilon\mapsto G_{p}^{\varepsilon}$ being continuous. As a consequence, according to \eqref{Psi EDP}, choosing a vorticity $\boldsymbol{\omega}$ with (at least for a short time $T>0$)
$$\forall t\in[0,T],\quad\textnormal{supp}\big(\boldsymbol{\omega}(t,\cdot)\big)\subset\overline{D(0,a)},$$
the potential velocity can be split in the following way
\begin{equation}\label{split Psi}
	\forall (t,x)\in[0,T]\times\overline{D(0,a)},\quad\boldsymbol{\Psi}(t,x)=\boldsymbol{\Psi}_{\mathbb{D}}(t,x)+\varepsilon^2\boldsymbol{\Psi}_p^{\varepsilon}(t,x),
\end{equation}
with
\begin{equation}\label{PsiD Psip}
	\boldsymbol{\Psi}_{\mathbb{D}}(t,x)\triangleq\frac{1}{2\pi}\int_{\mathbf{D}}G_{\mathbb{D}}(x,y)\boldsymbol{\omega}(t,y)dy,\qquad\boldsymbol{\Psi}_{p}^{\varepsilon}(t,x)\triangleq\frac{1}{2\pi}\int_{\mathbf{D}}G_{p}^{\varepsilon}(x,y)\boldsymbol{\omega}(t,y)dy.
\end{equation}

Now, we fix  
\begin{equation}\label{Rankine_range}
    0<b_0<b_1<1.
\end{equation}
For a given $b\in (b_0,b_1),$ we consider a vortex patch $\boldsymbol{\omega}:t\mapsto \mathbf{1}_{D_t^{\varepsilon}}$ which is close to the Rankine vortex $\mathbf{1}_{b\cdot\mathbb{D}}.$ Recall that the latter is a stationary solution for Euler equations in the unit disc, namely the unperturbed domain.
Assume that $D_t^{\varepsilon}$ can be parametrized in polar coordinates in the following way
\begin{equation}\label{def Re}
	D_t^{\varepsilon}=\Big\{\ell e^{\ii\theta},\quad 0\leqslant \theta\leqslant 2\pi,\quad 0\leqslant \ell< R_{\varepsilon}(b,t,\theta) \Big\},\qquad R_{\varepsilon}(b,t,\theta)\triangleq \sqrt{b^2+2\varepsilon r(t,\theta)}.
\end{equation}
The parameter $\varepsilon$ therefore corresponds to the amplitude of the solutions and $r$ is the graph perturbation unknown. Notice that such parametrization is valid at least for short time since we are looking for a solution close to the stationary one in the unperturbed domain. In addition, for $\varepsilon$ and $t$ small enough, we can ensure
$$\textnormal{supp}\big(\boldsymbol{\omega}(t)\big)=D_t^{\varepsilon}\subset \overline{D\big(0,\tfrac{1+b_1}{2}\big)}\subset\mathbb{D}\cap\mathbf{D}.$$
According to the above discussion, this allows to guarantee the splittings \eqref{split G} and \eqref{split Psi}-\eqref{PsiD Psip} with $G_{p}^{\varepsilon}$ smooth on $\overline{D_t^{\varepsilon}}\times\overline{D_t^{\varepsilon}}.$ The next proposition states that the radial deformation $r$ satisfies a Hamiltonian equation related to the stream function.
\begin{lem}\label{lem eq EDP r}
Recall that $\boldsymbol{\Psi}$, $\boldsymbol{\Psi}_{\mathbb{D}}$, $\boldsymbol{\Psi}_p$ and $R_\varepsilon$ are defined in \eqref{Psi EDP}, \eqref{PsiD Psip} and \eqref{def Re}, respectively. Then, the following holds.
\begin{enumerate}[label=(\roman*)]
    \item For a small enough time $T$, the radial deformation $r$, defined through \eqref{def Re}, satisfies the following nonlinear and nonlocal transport PDE
	\begin{equation}\label{EDP eq r}
		\forall (t,\theta)\in[0,T]\times\mathbb{T},\quad\partial_{t}r(t,\theta)+\tfrac{1}{\varepsilon}\partial_{\theta}\Big(\boldsymbol{\Psi}\big(t,R_{\varepsilon}(b,t,\theta)e^{\ii\theta}\big)\Big)=0.
	\end{equation}
    \item The equation \eqref{EDP eq r} is Hamiltonian and can be written as 
\begin{equation}\label{HAM EDP}
	\partial_t r=\partial_\theta\nabla H(r) , \qquad H(r)\triangleq -\tfrac{1}{2\varepsilon^2}E(r),\qquad E(r)\triangleq E_{\mathbb{D}}(r)+\varepsilon^2E_p^{\varepsilon}(r),
\end{equation}
with
\begin{equation}\label{ED Ep}
	E_{\mathbb{D}}(r)(t)\triangleq \frac{1}{2\pi}\int_{D_t}\Psi_{\mathbb{D}} (t,x)dx,\qquad E_{p}^{\varepsilon}(r)(t)\triangleq \frac{1}{2\pi}\int_{D_t}\Psi_{p}^{\varepsilon}(t,x)dx
\end{equation}
and where $\nabla$ is the $L^{2}(\mathbb{T})$-gradient associated with the $L^{2}(\mathbb{T})$ normalized inner product
$$\big\langle r_{1},r_{2}\big\rangle_{L^{2}(\mathbb{T})}\triangleq\int_{\mathbb{T}}r_{1}(\theta)r_{2}(\theta)d\theta,\qquad\int_{\mathbb{T}}f(\eta)d\eta\triangleq \frac{1}{2\pi}\int_{0}^{2\pi}f(\eta)d\eta.$$
\item The equation \eqref{HAM EDP} is reversible in the sense that if $(t,\theta)\mapsto r(t,\theta)$ is a solution, then so is $(t,\theta)\mapsto r(-t,-\theta)$. In other words, introducing the involution
	\begin{equation}\label{def:invol_rev}
		(\mathscr{S}r)(\theta)\triangleq r(-\theta),
	\end{equation}
    then we have the invariance
    $$H\circ\mathscr{S}=H.$$
	Consequently, in what follows, we reduce our research of quasi-periodic solutions to the one satisfying the following reversibility condition
	$$r(-t,-\theta)=r(t,\theta).$$
\end{enumerate}

\end{lem}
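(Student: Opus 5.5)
Part (i) comes from the contour dynamics equation \eqref{CDE}, applied with the polar parametrization $z(t,\theta)=R_{\varepsilon}(b,t,\theta)e^{\ii\theta}$. Writing $R=R_\varepsilon$, we have $\partial_t z=\partial_t R\,e^{\ii\theta}$ and $\partial_\theta z=(\partial_\theta R+\ii R)e^{\ii\theta}$. Hence
$$\textnormal{Im}\big\{\partial_t z\,\overline{\partial_\theta z}\big\}=\textnormal{Im}\big\{\partial_t R(\partial_\theta R-\ii R)\big\}=-R\,\partial_tR=-\tfrac12\partial_t(R^2)=-\varepsilon\,\partial_t r .$$
Substituting this into \eqref{CDE} and dividing by $-\varepsilon$ gives \eqref{EDP eq r}. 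For $T$ small the parametrization remains valid and $D^\varepsilon_t\subset\overline{D(0,a)}$, so the splitting \eqref{split Psi} can be used on the boundary.

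For (ii) I would compute the $L^2(\mathbb{T})$-gradient of $E$ by a first variation. In polar coordinates,
$$E(r)=\frac{1}{4\pi^2}\int_{D_t}\int_{D_t}G_{\mathbf{D}}(x,y)\,dx\,dy ,$$
where $G_{\mathbf{D}}=G_{\mathbb{D}}+\varepsilon^2G_p^\varepsilon$ on the support, and $dx=\ell\,d\ell\,d\theta$. The Green function is symmetric, so perturbing $r$ by $h$ changes both integrals equally. This gives
$$dE(r)[h]=2\cdot\frac{1}{4\pi^2}\int_0^{2\pi}\Big(\int_{D_t}G_{\mathbf{D}}(R e^{\ii\theta},y)\,dy\Big)R\,\partial_rR\,h\,d\theta ,$$
and $R\,\partial_r R=\varepsilon$. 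Therefore
$$dE(r)[h]=\frac{2\varepsilon}{2\pi}\int_0^{2\pi}\boldsymbol{\Psi}(t,Re^{\ii\theta})\,h\,d\theta ,$$
and with the normalized inner product $\nabla E(r)=2\varepsilon\,\boldsymbol{\Psi}(t,Re^{\ii\theta})$. It follows that $\nabla H=-\tfrac{1}{\varepsilon}\boldsymbol{\Psi}(t,Re^{\ii\theta})$ and $\partial_\theta\nabla H=-\tfrac1\varepsilon\partial_\theta\boldsymbol{\Psi}(t,Re^{\ii\theta})$, which is exactly \eqref{EDP eq r}. The splitting $E=E_{\mathbb{D}}+\varepsilon^2E_p^\varepsilon$ with \eqref{ED Ep} then follows from \eqref{PsiD Psip} by linearity. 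The main obstacle is justifying differentiation under the integral despite the logarithmic singularity of $G_{\mathbb{D}}$. Since $\log$ is locally integrable, the inner integral $\boldsymbol{\Psi}$ is $C^1$, and the standard domain-variation (Reynolds transport) formula applies. The normalization constants must also be tracked carefully, since they are precisely the ones that make the factor $2\varepsilon/(2\varepsilon^2)=1/\varepsilon$ come out right.

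For (iii), let $\mathscr{S}r=r(-\cdot)$. The corresponding domain is the complex conjugate $\overline{D_t}$, because $R(\mathscr{S}r)(\theta)e^{\ii\theta}=\overline{R(r)(-\theta)e^{\ii(-\theta)}}$. Changing variables $x\mapsto\overline{x}$, $y\mapsto\overline{y}$ (Jacobian $1$ in absolute value) and using \eqref{sym G}, which comes from \eqref{symm:conform}, gives $E(\mathscr{S}r)=E(r)$, hence $H\circ\mathscr{S}=H$. Differentiating this identity gives $\nabla H\circ\mathscr{S}=\mathscr{S}\nabla H$, since $\mathscr{S}$ is an $L^2$-isometric involution, and also $\partial_\theta\mathscr{S}=-\mathscr{S}\partial_\theta$. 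So if $r$ solves \eqref{HAM EDP} and $\tilde r(t)=\mathscr{S}r(-t)$, then
$$\partial_t\tilde r=-\mathscr{S}\partial_\theta\nabla H(r(-t))=\partial_\theta\mathscr{S}\nabla H(r(-t))=\partial_\theta\nabla H(\tilde r),$$
so $\tilde r$ is again a solution. This reversible structure justifies restricting the search to solutions with $r(-t,-\theta)=r(t,\theta)$.
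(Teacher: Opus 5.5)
Your proposal is correct and follows the paper's proof. Part (i) uses the same polar computation $\textnormal{Im}\{\partial_t z\,\overline{\partial_\theta z}\}=-\varepsilon\partial_t r$. Part (ii) uses the same first variation, with symmetry of the Green kernel and $R_\varepsilon\partial_r R_\varepsilon=\varepsilon$, to get $\nabla E=2\varepsilon\boldsymbol{\Psi}(t,R_\varepsilon e^{\ii\theta})$. Part (iii) uses the same conjugation ($\eta\mapsto-\eta$) change of variables with \eqref{sym G}. The only differences are cosmetic: you differentiate with the full kernel $G_{\mathbf{D}}$ and split by linearity afterwards, and you spell out how $H\circ\mathscr{S}=H$ gives time-reversal of solutions, which the paper leaves implicit.
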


\begin{proof}
	$(i)$ A natural parametrization of $\partial D_t$ is given by
	$$z_{\varepsilon}(t,\theta)=R_{\varepsilon}(b,t,\theta)e^{\ii\theta}.$$
	Direct computations give
	$$\mbox{Im}\left(\partial_{t}z_{\varepsilon}(t,\theta)\overline{\partial_{\theta}z_{\varepsilon}(t,\theta)}\right)=-\varepsilon\partial_{t}r(t,\theta).$$
	Inserting this into \eqref{CDE} implies immediately \eqref{EDP eq r}.\\
$(ii)$ Using the splitting \eqref{split Psi}-\eqref{PsiD Psip} together with the polar coordinates, we can write
\begin{align*}
	\boldsymbol{\Psi}\big(t,R_{\varepsilon}(b,t,\theta)e^{\ii\theta}\big)&= \boldsymbol{\Psi}_{\mathbb{D}}\big(t,R_{\varepsilon}(b,t,\theta)e^{\ii\theta}\big)+\varepsilon^2\boldsymbol{\Psi}_p^{\varepsilon}\big(t,R_{\varepsilon}(b,t,\theta)e^{\ii\theta}\big)\\
	&=\int_{\mathbb{T}}\left(\int_{0}^{R_{\varepsilon}(b,t,\eta)}G_{\mathbb{D}}\big(R_{\varepsilon}(b,t,\theta)e^{\ii\theta},\ell_2 e^{\ii\eta}\big)\ell_2 \,d\ell_2\right) d\eta\\
	&\quad+\varepsilon^2\int_{\mathbb{T}}\left(\int_{0}^{R_{\varepsilon}(b,t,\eta)}G_{p}^{\varepsilon}\big(R_{\varepsilon}(b,t,\theta)e^{\ii\theta},\ell_2 e^{\ii\eta}\big)\ell_2 \,d\ell_2\right) d\eta.
\end{align*}
Besides, still using the polar coordinates, we have from \eqref{ED Ep}
\begin{equation}\label{polar:EdEp}
    \begin{aligned}
	E_{\mathbb{D}}(r)(t)&=\int_{\mathbb{T}}\int_{\mathbb{T}}\int_{0}^{R_{\varepsilon}(b,t,\theta)}\left(\int_0^{R_{\varepsilon}(b,t,\eta)}G_{\mathbb{D}}\big(\ell_1e^{\ii\theta},\ell_2 e^{\ii\eta}\big)\ell_2 \,d\ell_2\right) \ell_1 d\ell_1 d\eta d\theta,\\
	E_{p}(r)(t)&=\int_{\mathbb{T}}\int_{\mathbb{T}}\int_{0}^{R_{\varepsilon}(b,t,\theta)}\left(\int_0^{R_{\varepsilon}(b,t,\eta)}G_p^{\varepsilon}\big(\ell_1e^{\ii\theta},\ell_2 e^{\ii\eta}\big)\ell_2 \,d\ell_2\right) \ell_1 d\ell_1 d\eta d\theta.
\end{aligned}
\end{equation}
Differentiating in $r$ and using the symmetries $G_{\mathbb{D}}(x,y)=G_{\mathbb{D}}(y,x)$ and $G_{p}^{\varepsilon}(x,y)=G_{p}^{\varepsilon}(y,x)$ yields
\begin{align*}
	d_rE_{\mathbb{D}}(r)[h](t)&=2\varepsilon\int_{\mathbb{T}}h(t,\theta)\left(\int_{\mathbb{T}}\left(\int_{0}^{R_{\varepsilon}(b,t,\eta)}G_{\mathbb{D}}\big(R_{\varepsilon}(b,t,\theta)e^{\ii\theta},\ell_2 e^{\ii\eta}\big)\ell_2 \,d\ell_2\right)  d\eta \right)d\theta\\
	d_rE_p^{\varepsilon}(r)[h](t)&=2\varepsilon\int_{\mathbb{T}}h(t,\theta)\left(\int_{\mathbb{T}}\left(\int_{0}^{R_{\varepsilon}(b,t,\eta)}G_p^{\varepsilon}\big(R_{\varepsilon}(b,t,\theta)e^{\ii\theta},\ell_2 e^{\ii\eta}\big)\ell_2 \,d\ell_2\right)  d\eta \right)d\theta.
\end{align*}
Hence,
\begin{align*}
	d_rE_{\mathbb{D}}(r)[\rho]&=\big\langle \nabla E_{\mathbb{D}}(r),\rho\big\rangle_{L^2(\mathbb{T})},\quad  \nabla E_{\mathbb{D}}(r)(t,\theta)=2\varepsilon\boldsymbol{\Psi}_{\mathbb{D}}\big(t,R_{\varepsilon}(b,t,\theta)e^{\ii\theta}\big),\\
	d_rE_p^{\varepsilon}(r)[\rho]&=\big\langle \nabla E_p^{\varepsilon}(r),\rho\big\rangle_{L^2(\mathbb{T})},\quad  \nabla E_p^{\varepsilon}(r)(t,\theta)=2\varepsilon\boldsymbol{\Psi}_p^{\varepsilon}\big(t,R_{\varepsilon}(b,t,\theta)e^{\ii\theta}\big).
\end{align*}
$(iii)$ Follows from the change of variables $\eta\mapsto-\eta,$ \eqref{polar:EdEp}, \eqref{HAM EDP} and \eqref{sym G}. This achieves the proof of Lemma \ref{lem eq EDP r}.
\end{proof}

\begin{remark}
	The structure of the equation \eqref{HAM EDP} ($\partial_{\theta}$ Poisson tensor) implies the conservation of the space average during the dynamics. Therefore, the natural phase space associated with this Hamiltonian equation is the space of square integrable functions with zero average denoted $L_0^2(\mathbb{T})$ and defined by
	\begin{equation}\label{def:phase-space}
	    L_0^{2}(\mathbb{T})\triangleq \left\lbrace r=\sum_{j\in\mathbb{Z}^*}r_{j}\mathbf{e}_j\quad\textnormal{s.t.}\quad r_{-j}=\overline{r_j},\qquad\sum_{j\in\mathbb{Z}^*}|r_j|^2<+\infty\right\rbrace,\qquad \mathbf{e}_{j}(\theta)\triangleq e^{\ii j\theta}.
	\end{equation}
\end{remark}
\begin{remark}\label{rem:mfold}
    Assume that the domain $\mathbf{D}$ is $\mathbf{m}$-fold symmetric for a given $\mathbf{m}\geqslant2$, namely invariant under $\tfrac{2\pi}{\mathbf{m}}$-rotations. At the level of conformal mapping, this means
    $$f_{\varepsilon}(x)=\sum_{n=0}^{\infty}\underbrace{a_{n}(\varepsilon)}_{\in\mathbb{R}}x^{\mathbf{m}n+1},\qquad\textnormal{i.e.}\qquad\boldsymbol{\Phi}\left(e^{\frac{2\ii\pi}{\mathbf{m}}}x\right)=e^{\frac{2\ii\pi}{\mathbf{m}}}\boldsymbol{\Phi}(x).$$
    In view of \eqref{def:Green}, this implies
    \begin{equation}\label{mfold:Green}
        G_{\mathbf{D}}\left(e^{\frac{2\ii\pi}{\mathbf{m}}}x,e^{\frac{2\ii\pi}{\mathbf{m}}}y\right)=G_{\mathbf{D}}(x,y).
    \end{equation}
    Therefore, performing the change of variable $\eta\mapsto\eta+\tfrac{2\pi}{\mathbf{m}}$ inside \eqref{polar:EdEp} and using \eqref{HAM EDP} and \eqref{mfold:Green}, we find the following invariance of the Hamiltonian
$$H\circ\mathcal{T}_{\mathbf{m}}=\mathcal{T}_{\mathbf{m}}\circ H,\qquad\mathcal{T}_{\mathbf{m}}(r)(\theta)\triangleq r\left(\theta+\tfrac{2\pi}{\mathbf{m}}\right).$$
Consequently, under this symmetry assumption on the domain, it is possible to run the forthcoming analysis (Nash-Moser construction) within the $\mathbf{m}$-fold Sobolev scale. It just adds extra computations, for instance checking that the various operators $A$ appearing in the reducibility procedure preserve the class of $\mathbf{m}$-fold symmetric functions, namely
$$r\left(t,\theta+\tfrac{2\pi}{\mathbf{m}}\right)=r(t,\theta)\qquad\Rightarrow\qquad(Ar)\left(t,\theta+\tfrac{2\pi}{\mathbf{m}}\right)=(Ar)(t,\theta).$$
In the present manuscript, this is just additional information and does not solve any substantial problem. The interested reader is invited to consult the work \cite{HHR23} where this analysis is performed. There, the $\mathbf{m}$-fold symmetry was essential to be tracked since low frequencies for annular patches correspond to (linear) instability.
\end{remark}
\subsubsection{Linearized operator and equilibrium quasi-periodic solutions}
Here, we study the linearized equation of \eqref{HAM EDP} at a given small state $\varepsilon r.$ The associated linearized operator can be decomposed into the sum of a variable coefficients transport operator, a nonlocal operator of order $0$ and a smoothing nonlocal operator in the variable $\theta.$ The equilibrium case $\varepsilon=0$ corresponds to linearized Euler equation in the unit disc at the Rankine vortices and was studied in \cite{HR21-1}. In this case, the linear operator is a Fourier multiplier with polynomial linear frequencies with respect to the radius $b$ of the Rankine patch $b\cdot\mathbb{D}$.
\begin{lem}\label{lem:lin_origin}
	The linearized Hamiltonian equation of \eqref{HAM EDP} at a state $r$ is the time-dependant Hamiltonian system 
	$$\partial_{t}h+\partial_{\theta}\Big(V_{\varepsilon r}h+\mathbf{L}_{\varepsilon r}[h]+\mathbf{S}_{\varepsilon r}[h]\Big)=0,$$
	where 
    \begin{enumerate}[label=\textbullet]
        \item $V_{\varepsilon r}$ is a function defined by
	\begin{equation}\label{Vr_EDP}
	    \begin{aligned}
		V_{\varepsilon r}(b,t,\theta)&\triangleq\tfrac{\varepsilon^2}{R_{\varepsilon}(b,t,\theta)}\int_{\mathbb{T}}\left(\int_{0}^{R_{\varepsilon}(b,t,\eta)}(\nabla_xG_{p}^{\varepsilon})\big(R_{\varepsilon}(b,t,\theta)e^{\ii\theta},\ell_2 e^{\ii\eta}\big)e^{\ii\theta}\ell_2 \,d\ell_2\right) d\eta\\
		&\quad-\tfrac{1}{2}\int_{\mathbb{T}}\tfrac{R_{\varepsilon}^{2}(b,t,\eta)}{R_{\varepsilon}^{2}(b,t,\theta)}d\eta
		-\tfrac{1}{R_{\varepsilon}(b,t,\theta)}\int_{\mathbb{T}}\log\big(A_{\varepsilon r}(b,t,\theta,\eta)\big)\partial_{\eta}\big(R_{\varepsilon}(b,t,\eta)\sin(\eta-\theta)\big)d\eta\\
		&\quad-\tfrac{1}{R_{\varepsilon}^{3}(b,t,\theta)}\int_{\mathbb{T}}\log\big(B_{\varepsilon r}(b,t,\theta,\eta)\big)\partial_{\eta}\big(R_{\varepsilon}(b,t,\eta)\sin(\eta-\theta)\big)d\eta.
	\end{aligned} 
	\end{equation}
    Moreover, if $r(-t,-\theta)=r(t,\theta)$, then
	\begin{equation}\label{symVr}
		V_{\varepsilon r}(b,-t,-\theta)=V_{\varepsilon r}(b,t,\theta).
	\end{equation}
	\item $\mathbf{L}_{\varepsilon r}$ is a nonlocal operator in the form
	\begin{equation}\label{mathbfLr}
		\mathbf{L}_{\varepsilon r}[h](b,t,\theta)\triangleq\int_{\mathbb{T}}h(t,\eta)\log\big(A_{\varepsilon r}(b,t,\theta,\eta)\big)d\eta,
	\end{equation}
    where
    \begin{equation}\label{Ar}
        A_{\varepsilon r}(b,t,\theta,\eta)\triangleq \left|R_{\varepsilon}(b,t,\theta)e^{\ii\theta}-R_{\varepsilon}(b,t,\eta)e^{\ii\eta}\right|,
    \end{equation}
    with $R_{\varepsilon}$ as in \eqref{def Re}.
	\item $\mathbf{S}_{\varepsilon r}$ is a smoothing nonlocal operator in the form
	\begin{equation}\label{bfSr_EDP}
		\mathbf{S}_{\varepsilon r}[h](b,t,\theta)\triangleq\int_{\mathbb{T}}h(t,\eta)\Big[\varepsilon^2 G_{p}^{\varepsilon}\big(R_{\varepsilon}(b,t,\theta)e^{\ii\theta},R_{\varepsilon}(b,t,\eta)e^{\ii\eta}\big)-\log\big(B_{\varepsilon r}(b,t,\theta,\eta)\big)\Big]d\eta,
	\end{equation}
    where $G_{p}^{\varepsilon}$ is defined in \eqref{split G} and
    \begin{equation}\label{Br}
        B_{\varepsilon r}(b,t,\theta,\eta)\triangleq \left|1-R_{\varepsilon}(b,t,\theta)R_{\varepsilon}(b,t,\eta)e^{\ii(\eta-\theta)}\right|.
    \end{equation}
    \end{enumerate}
\end{lem}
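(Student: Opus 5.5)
The plan is to linearize the right-hand side of \eqref{EDP eq r} directly, using the splitting \eqref{split Psi}. We write
$$\boldsymbol{\Psi}\big(t,R_\varepsilon e^{\ii\theta}\big)=\int_{\mathbb{T}}\int_0^{R_\varepsilon(\eta)}\Big[G_{\mathbb{D}}+\varepsilon^2G_p^\varepsilon\Big]\big(R_\varepsilon(\theta)e^{\ii\theta},\ell_2e^{\ii\eta}\big)\ell_2\,d\ell_2\,d\eta,$$
and differentiate in $r$ along a direction $h$. We use $\partial_r R_\varepsilon=\varepsilon/R_\varepsilon$, so that $d_r R_\varepsilon[h]=\varepsilon h/R_\varepsilon$.

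There are two contributions.
\begin{enumerate}[label=(\alph*)]
    \item Varying the upper limit $R_\varepsilon(\eta)$ gives
    $$\varepsilon\int_{\mathbb{T}}h(\eta)\,G\big(R_\varepsilon(\theta)e^{\ii\theta},R_\varepsilon(\eta)e^{\ii\eta}\big)\,d\eta.$$
    Writing $G_{\mathbb{D}}=\log A_{\varepsilon r}-\log B_{\varepsilon r}$ (recall $|z-\xi|/|1-\bar\xi z|$ evaluated at boundary points), this produces $\varepsilon(\mathbf{L}_{\varepsilon r}+\mathbf{S}_{\varepsilon r})[h]$, with the $\varepsilon^2G_p^\varepsilon$ part going into $\mathbf{S}_{\varepsilon r}$.
    \item Varying the evaluation point $R_\varepsilon(\theta)e^{\ii\theta}$ gives $\frac{\varepsilon h(\theta)}{R_\varepsilon(\theta)}\,e^{\ii\theta}\cdot\nabla_x\boldsymbol{\Psi}$. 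This is a multiplication operator, whose coefficient is $\varepsilon V_{\varepsilon r}$. After dividing by the prefactor $1/\varepsilon$ in \eqref{EDP eq r}, the $\varepsilon$ cancels and we get the stated form.
\end{enumerate}
The $G_p^\varepsilon$ piece of the radial derivative gives directly the first line of \eqref{Vr_EDP}.

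The main computation is turning the radial derivative of the $G_{\mathbb{D}}$ part into the explicit boundary-integral form in \eqref{Vr_EDP}.
\begin{enumerate}[label=(\arabic*)]
    \item For the free-space part $\log|x-y|$, apply $\partial_{\ell}$ at $x=\ell e^{\ii\theta}$. Then convert the area integral into a contour integral by Green/Stokes, as in \cite{HR21-1} and \cite{HMV13}. Concretely, use $\int_0^{R(\eta)}\log|x-\ell_2e^{\ii\eta}|\,\ell_2\,d\ell_2$, differentiate in $x$, and integrate by parts in $\eta$. This yields the $-\tfrac12\int R^2(\eta)/R^2(\theta)$ term and the term $\log A_{\varepsilon r}\,\partial_\eta\big(R(\eta)\sin(\eta-\theta)\big)$.
    \item For the image part $-\log|1-\bar y x|$, use the inversion $y\mapsto 1/\bar y$ and the same manipulation. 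This produces the $R_\varepsilon^{-3}\log B_{\varepsilon r}$ term.
\end{enumerate}
I would simply import these identities from the unperturbed computation \cite[Lem.~3.1]{HR21-1}, since the $G_{\mathbb{D}}$ part is literally identical. Only the $G_p^\varepsilon$ terms are new, and they need no integration by parts because $G_p^\varepsilon$ is smooth on $\overline{D(0,a)}^2$.

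The Hamiltonian structure follows from Lemma \ref{lem eq EDP r}(ii): the linearization of $\partial_\theta\nabla H$ is $\partial_\theta d\nabla H(r)$. Here $d\nabla H(r)$ is symmetric, being a Hessian; this is consistent with the symmetry of the kernels $\log A$, $\log B$, $G_p^\varepsilon$ in $(\theta,\eta)$.

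For \eqref{symVr}, substitute $\theta\to-\theta$, $t\to-t$ and change variables $\eta\to-\eta$ in each integral. We use:
\begin{enumerate}[label=\textbullet]
    \item $R_\varepsilon(b,-t,-\theta)=R_\varepsilon(b,t,\theta)$, by reversibility of $r$;
    \item $A_{\varepsilon r}$ and $B_{\varepsilon r}$ are invariant under simultaneous conjugation;
    \item $\partial_\eta\big(R(\eta)\sin(\eta-\theta)\big)$ is even under the joint flip;
    \item $G_p^\varepsilon(\bar x,\bar y)=G_p^\varepsilon(x,y)$, from \eqref{sym G} and \eqref{sym G0}. This is combined with $e^{-\ii\theta}\cdot\nabla_x$ evaluated at the conjugate point, using that conjugation reflects gradients consistently.
\end{enumerate}

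I expect the main obstacle to be the careful bookkeeping of the integration by parts producing the exact form of $V_{\varepsilon r}$, in particular the $B_{\varepsilon r}$ term with the $R^{-3}$ factor. I plan to rely on \cite{HR21-1} for this.
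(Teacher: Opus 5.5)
Your proposal is correct and follows the paper's proof. In both, you split $G_{\mathbf{D}}=G_{\mathbb{D}}+\varepsilon^2G_p^\varepsilon$ and differentiate the smooth $G_p^\varepsilon$ part directly: the moving upper limit gives the $\varepsilon^2G_p^\varepsilon$ kernel in $\mathbf{S}_{\varepsilon r}$, and the moving evaluation point gives the first line of \eqref{Vr_EDP}. You then import the $G_{\mathbb{D}}$ part from \cite[Lem.~3.1]{HR21-1}, divide by $\varepsilon$, and get \eqref{symVr} from $\eta\mapsto-\eta$ together with \eqref{sym G}.

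One correction to how you attribute the terms; it is harmless since you import that computation. The free-space kernel alone yields only the $\log A_{\varepsilon r}$ term. Both the $-\tfrac12\int_{\mathbb{T}}R_\varepsilon^2(\eta)/R_\varepsilon^2(\theta)\,d\eta$ term and the $R_\varepsilon^{-3}\log B_{\varepsilon r}$ term come from the image part, via $\log|1-\overline{y}x|=\log|x|+\log|1/\overline{x}-y|$, which is an inversion of $x$, not of $y$. You can check this at $R_\varepsilon\equiv b$, where the free part gives $\tfrac12$ and the image part gives $0$.
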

\begin{proof}
	According to \eqref{split G}, we can write
	\begin{align}\label{Psi to PsiD}
		\boldsymbol{\Psi}\big(t,R_{\varepsilon}(b,t,\theta)e^{\ii\theta}\big)&=\int_{\mathbb{T}}\left(\int_{0}^{R_{\varepsilon}(b,t,\eta)}G_{\mathbb{D}}\big(R_{\varepsilon}(b,t,\theta)e^{\ii\theta},\ell_2 e^{\ii\eta}\big)\ell_2 \,d\ell_2\right) d\eta\nonumber\\
		&\quad+\varepsilon^2 \int_{\mathbb{T}}\left(\int_{0}^{R_{\varepsilon}(b,t,\eta)}G_{p}^{\varepsilon}\big(R_{\varepsilon}(b,t,\theta)e^{\ii\theta},\ell_2 e^{\ii\eta}\big)\ell_2 \,d\ell_2\right) d\eta\nonumber\\
		&=\boldsymbol{\Psi}_{\mathbb{D}}\big(t,R_{\varepsilon}(b,t,\theta)e^{\ii\theta}\big)+\varepsilon^2 \int_{\mathbb{T}}\left(\int_{0}^{R_{\varepsilon}(b,t,\eta)}G_{p}^{\varepsilon}\big(R_{\varepsilon}(b,t,\theta)e^{\ii\theta},\ell_2 e^{\ii\eta}\big)\ell_2 \,d\ell_2\right) d\eta.
	\end{align}
	Hence linearizing $\boldsymbol{\Psi}$ yields
	\begin{align*}
		\partial_r\boldsymbol{\Psi}\big(t,R_{\varepsilon}(b,t,\theta)e^{\ii\theta}\big)h=&\partial_r\boldsymbol{\Psi}_{\mathbb{D}}\big(t,R_{\varepsilon}(b,t,\theta)e^{\ii\theta}\big)h+\varepsilon^3\int_{\mathbb{T}}h(t,\eta)G_{p}^{\varepsilon}\big(R_{\varepsilon}(b,t,\theta)e^{\ii\theta},R_{\varepsilon}(b,t,\eta) e^{\ii\eta}\big)d\eta\\
		+&\varepsilon^3\frac{h(t,\theta)}{R_{\varepsilon}(b,t,\theta)}\int_{\mathbb{T}}\left(\int_{0}^{R_{\varepsilon}(b,t,\eta)}(\nabla_xG_{p}^{\varepsilon})\big(R_{\varepsilon}(b,t,\theta)e^{\ii\theta},\ell_2 e^{\ii\eta}\big)e^{\ii\theta}\ell_2 \,d\ell_2\right)d\eta.
	\end{align*}
	By setting
	\begin{align}
		K_{\varepsilon r}^p(b,t,\theta,\eta)&\triangleq G_{p}^{\varepsilon}\big(R_{\varepsilon}(b,t,\theta)e^{\ii\theta},R_{\varepsilon}(b,t,\eta) e^{\ii\eta}\big),\label{Kerp}\\
		V_{\varepsilon r}^p(b,t,\theta)&\triangleq \frac{1}{R_{\varepsilon}(b,t,\theta)}\int_{\mathbb{T}}\left(\int_{0}^{R_{\varepsilon}(b,t,\eta)}(\nabla_xG_{p}^{\varepsilon})\big(R_{\varepsilon}(b,t,\theta)e^{\ii\theta},\ell_2 e^{\ii\eta}\big)e^{\ii\theta}\ell_2 \,d\ell_2\right) d\eta,\label{Verp}
	\end{align}
	we may write
	\begin{align*}
		\partial_r\boldsymbol{\Psi}\big(t,R_{\varepsilon}(b,t,\theta)e^{\ii\theta}\big)h=&\partial_r\Psi_{\mathbb{D}}\big(R_{\varepsilon}(b,t,\theta)e^{\ii\theta}\big)h+\varepsilon^3\int_{\mathbb{T}}h(t,\eta)K_{\varepsilon r}^p(b,t,\theta,\eta)d\eta+\varepsilon^3 V_{\varepsilon r}^p(b,t,\theta)h(t,\theta)\\
		\triangleq &\partial_r\boldsymbol{\Psi}_{\mathbb{D}}\big(t,R_{\varepsilon}(b,t,\theta)e^{\ii\theta}\big)h(t,\theta)+\varepsilon^3\mathbf{S}_{\varepsilon r}^p[h](b,t,\theta)+\varepsilon^3 V_{\varepsilon r}^p(b,t,\theta)h(t,\theta).
	\end{align*}
	Notice that $K_{\varepsilon r}^p$ and $V_{\varepsilon r}^p$ are smooth. From the computations done in \cite[Lem. 3.1]{HR21-1} (being careful to the change of sign in the definition of $\mathbf{S}_{\varepsilon r}^{\mathbb{D}}$ and to the rescaling in $\varepsilon$), we get 
	\begin{align*}
		\partial_r\boldsymbol{\Psi}_{\mathbb{D}}\big(t,R_{\varepsilon}(b,t,\theta)e^{\ii\theta}\big)h=&\varepsilon\Big(V_{\varepsilon r}^{\mathbb{D}}h+\mathbf{L}_{\varepsilon r}^{\mathbb{D}}h+\mathbf{S}_{\varepsilon r}^{\mathbb{D}}h\Big),
	\end{align*}
	where
	\begin{enumerate}[label=\textbullet]
	    \item $V_{\varepsilon r}^{\mathbb{D}}$ is a function defined by
        \begin{equation}\label{VerD}
        \begin{aligned}
		V_{\varepsilon r}^{\mathbb{D}}(b,t,\theta)&\triangleq-\tfrac{1}{2}\int_{\mathbb{T}}\tfrac{R_{\varepsilon}^{2}(b,t,\eta)}{R_{\varepsilon}^{2}(b,t,\theta)}d\eta-\tfrac{1}{R_{\varepsilon}(b,t,\theta)}\int_{\mathbb{T}}\log\big(A_{\varepsilon r}(b,t,\theta,\eta)\big)\partial_{\eta}\big(R_{\varepsilon}(b,t,\eta)\sin(\eta-\theta)\big)d\eta\\
		&\quad-\tfrac{1}{R_{\varepsilon}^{3}(b,t,\theta)}\int_{\mathbb{T}}\log\big(B_{\varepsilon r}(b,t,\theta,\eta)\big)\partial_{\eta}\big(R_{\varepsilon}(b,t,\eta)\sin(\eta-\theta)\big)d\eta,
	\end{aligned}
    \end{equation}
	\item $\mathbf{L}_{\varepsilon r}^{\mathbb{D}}$ is a nonlocal operator in the form
	$$\mathbf{L}_{\varepsilon r}^{\mathbb{D}}[h](b,t,\theta)\triangleq\int_{\mathbb{T}}h(t,\eta)\log\big(A_{\varepsilon r}(b,t,\theta,\eta)\big)d\eta.$$
	\item $\mathbf{S}_{\varepsilon r}^{\mathbb{D}}$ is a smoothing nonlocal operator in the form
	$$\mathbf{S}_{\varepsilon r}^{\mathbb{D}}[h](b,t,\theta)\triangleq-\int_{\mathbb{T}}h(t,\eta)\log\big(B_{\varepsilon r}(b,t,\theta,\eta)\big)d\eta.$$
	\end{enumerate}
	Finally we get the structure
	
	\begin{align*}
		\partial_r\boldsymbol{\Psi}\big(t,R_{\varepsilon}(b,t,\theta)e^{\ii\theta}\big)h=
		&\varepsilon\Big(\big(V_{\varepsilon r}^{\mathbb{D}}+\varepsilon^2 V_{\varepsilon r}^p\big)h+\mathbf{L}_{\varepsilon r}^{\mathbb{D}}h+\big(\mathbf{S}_{\varepsilon r}^{\mathbb{D}}+\varepsilon^2\mathbf{S}_{\varepsilon r}^p\big)h\Big)
	\end{align*}
	and, after simplification by $\varepsilon$, the linearized equation at any state $r$ takes the form
	\begin{equation}\label{lin eq EDP}
		\partial_t h+\partial_\theta\Big[V_{\varepsilon r}h+\mathbf{L}_{\varepsilon r}h+\mathbf{S}_{\varepsilon r}h\Big]=0,
	\end{equation}
	with
	\begin{equation}\label{dec VLS}
		V_{\varepsilon r}\triangleq V_{\varepsilon r}^{\mathbb{D}}+\varepsilon^2 V_{\varepsilon r}^p,\qquad \mathbf{L}_{\varepsilon r}\triangleq \mathbf{L}_{\varepsilon r}^{\mathbb{D}},\qquad\mathbf{S}_{\varepsilon r}\triangleq \mathbf{S}_{\varepsilon r}^{\mathbb{D}}+\varepsilon^2\mathbf{S}_{\varepsilon r}^p.
	\end{equation}
The symmetry property \eqref{symVr} is easily obtained from \eqref{def Re}, \eqref{VerD}, \eqref{Verp} and \eqref{dec VLS}.
\end{proof}
Our next step is to provide the expression of the linearized equation at the equilibrium state $\varepsilon=0$. By virtue of \eqref{dec VLS} this corresponds to the linearized equations at the Rankine vortex $b\cdot\mathbb{D}$ within the unperturbed domain $\mathbb{D}.$ Therefore, the analysis has already been performed in \cite[Lem. 3.2 and Prop. 3.1]{HR21-1} and we recall here the results.
\begin{lem}\label{lem:equilibre_HR21}
	The following results hold true.
	\begin{enumerate}[label=(\roman*)]
		\item The linearized equation of \eqref{HAM EDP} at the equilibrium state $(\varepsilon=0)$ writes 
		\begin{equation}\label{Ham eq 0}
			\partial_{t}h=\partial_{\theta}\mathrm{L}(b)[h]=\partial_{\theta}\nabla H_{\mathrm{L}}(h),
		\end{equation}
		where $\mathrm{L}(b)$ is the self-adjoint operator defined by 
		\begin{equation*}
			\mathrm{L}(b)[h]\triangleq -\frac{h}{2}-(\mathcal{K}_1-\mathcal{K}_{b})\ast h=-\sum_{j\in\mathbb{Z}^*}\tfrac{\Omega_{j}(b)}{j}h_{j}\mathbf{e}_{j},
		\end{equation*} 
	with
		\begin{equation}\label{def:calKb}
			\mathcal{K}_{\mathtt{p}}\triangleq \log\left(\left|1-\mathtt{p}^2e^{\ii\theta}\right|\right)
		\end{equation}
	and
	\begin{equation}\label{equilibrium_spectrum}
	    \Omega_{j}(b)\triangleq\frac{j}{2|j|}\Big(|j|-1+b^{2|j|}\Big).
	\end{equation}
		The Hamiltonian $H_{\mathrm{L}}$ is quadratic and expresses as follows
		\begin{equation}\label{defLHL_EDP}
			H_{\mathrm{L}}(h)\triangleq \frac{1}{2}\big\langle\mathrm{L}(b)h,h\big\rangle_{L^{2}(\mathbb{T})}=-\sum_{j\in\mathbb{Z}^{*}}\tfrac{\Omega_{j}(b)}{2j}|h_{j}|^{2}.
		\end{equation}
		\item Let $0<b_0<b_1<1$ and $\mathbb{S}\subset\mathbb{N}^*$ finite. Then, there exists a Cantor-like set 
		$$\mathscr{C}_{\textnormal{Eq}}\subset(b_0,b_1),\qquad|\mathscr{C}_{\textnormal{Eq}}|=b_1-b_0$$
		such that for all $b\in\mathscr{C}_{\textnormal{Eq}}$, every function in the form
		\begin{equation}\label{rev sol eq:EDP}
			h(t,\theta)=\sum_{j\in\mathbb{S}}h_{j}\cos\big(j\theta-\Omega_{j}(b)t\big),\quad h_{j}\in\mathbb{R}^*
		\end{equation} 
		is a time quasi-periodic reversible solution to \eqref{Ham eq 0} with vector frequency $-\omega_{\textnormal{Eq}}(b),$ where
		\begin{equation}\label{def:equil_freq_HHHM}
		    \omega_{\textnormal{Eq}}(b)\triangleq\big(\Omega_{j}(b)\big)_{j\in\mathbb{S}}.
		\end{equation}
	\end{enumerate}
\end{lem}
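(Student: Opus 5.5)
Part (i) follows by setting $\varepsilon=0$ in the linearized operator of Lemma~\ref{lem:lin_origin} and computing each term at the unperturbed state. With $\varepsilon=0$ we have $R_0\equiv b$, and the perturbative contributions $\varepsilon^2V^p_{\varepsilon r}$ and $\varepsilon^2\mathbf{S}^p_{\varepsilon r}$ from \eqref{dec VLS} vanish. The terms evaluate as follows.
\begin{enumerate}[label=\textbullet]
\item In \eqref{VerD}, the first integral equals $-\tfrac12$. The other two involve $\partial_\eta(b\sin(\eta-\theta))=b\cos(\eta-\theta)$ integrated against the logarithms
$$\log|b e^{\ii\theta}-be^{\ii\eta}|=\log b+\log|1-e^{\ii(\eta-\theta)}|,\qquad \log|1-b^2e^{\ii(\eta-\theta)}|.$$
\item These integrals are computed from the Fourier expansions $\log|1-\mathtt{p}^2e^{\ii x}|=-\sum_{n\geqslant1}\frac{\mathtt{p}^{2n}}{n}\cos(nx)$, with $\mathtt{p}=1$ understood in the distributional/$L^1$ sense, so only the first Fourier mode survives. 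This gives the contributions $\tfrac12$ and $\tfrac{b^2}{2}\cdot\tfrac{1}{b^2}$ up to signs. I will track these signs carefully against \cite[Lem. 3.2]{HR21-1}; they combine to a constant $V_0$.
\item $\mathbf{L}_0[h]=\mathcal{K}_1\ast h$ up to the $\log b$ constant, which is killed by the zero average of $h$.
\item $\mathbf{S}_0[h]=-\mathcal{K}_b\ast h$.
\end{enumerate}
Since convolution with $\mathcal{K}_\mathtt{p}$ acts on $\mathbf{e}_j$ by $-\frac{\mathtt{p}^{2|j|}}{2|j|}$, the operator becomes a Fourier multiplier. Its symbol $-\tfrac12+\frac{1}{2|j|}-\frac{b^{2|j|}}{2|j|}$, up to the constant from $V_0$, equals $-\Omega_j(b)/j$ as in \eqref{equilibrium_spectrum}. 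Self-adjointness then follows from the real, even symbol, and $\nabla H_{\mathrm{L}}=\mathrm{L}(b)$ follows from the definition \eqref{defLHL_EDP} of the quadratic form.

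For part (ii), I solve \eqref{Ham eq 0} mode by mode: $\partial_t h_j=-\ii\,\Omega_j(b)h_j$, so $h_j(t)=h_j(0)e^{-\ii\Omega_j(b)t}$. With $h_{-j}=\overline{h_j}$ and real initial coefficients, the real solution is $\sum_{j\in\mathbb{S}}h_j\cos(j\theta-\Omega_j(b)t)$. This function is invariant under $(t,\theta)\mapsto(-t,-\theta)$, so it is reversible. It remains to show that the frequency vector $\omega_{\textnormal{Eq}}(b)$ is non-resonant for almost every $b$.

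The main obstacle is this non-resonance, which is needed for genuine quasi-periodicity. For fixed $l\in\mathbb{Z}^{|\mathbb{S}|}\setminus\{0\}$, the map $b\mapsto\sum_j l_j\Omega_j(b)$ is real-analytic, and it suffices to show it is not identically zero. Writing it as a polynomial in $b^2$, its constant part is $\tfrac12\sum_j l_j(j-1)$, while the coefficient of $b^{2j}$ is $\tfrac12 l_j$. Since the indices $j\in\mathbb{S}$ are distinct, the monomials $b^{2j}$ are linearly independent, so some $l_j\neq0$ makes the polynomial nonzero. A nonzero polynomial has finitely many roots, and a countable union over $l$ of finite sets has measure zero. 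Hence $\mathscr{C}_{\textnormal{Eq}}$, the complement of this countable union, has full measure $b_1-b_0$.
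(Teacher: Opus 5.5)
Your proposal is correct and follows essentially the route the paper relies on: the paper gives no proof of its own and imports the lemma from \cite[Lem. 3.2 and Prop. 3.1]{HR21-1}, and your argument — evaluating Lemma~\ref{lem:lin_origin} at $R_0\equiv b$ to get a Fourier multiplier, then using that $b\mapsto\sum_{j\in\mathbb{S}}l_j\Omega_j(b)$ is a nonzero polynomial so resonances occur only on a countable set — is the natural self-contained verification of that imported result. The one step you should close rather than defer is the transport constant: at $R_0\equiv b$ the three terms of \eqref{VerD} are $-\tfrac12$, $+\tfrac12$ and $+\tfrac12$, so $V_0=\tfrac12$, and the $-\tfrac12$ in your symbol is exactly $-V_0$ coming from $\mathrm{L}(b)=-(V_0+\mathbf{L}_0+\mathbf{S}_0)$; it is not a term that matches only ``up to'' $V_0$.
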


\subsection{Functional of interest and regularity aspects}
We begin by recasting \eqref{HAM EDP} within the Hamiltonian framework underlying the subsequent KAM analysis. The linearization around the equilibrium provides the reference integrable dynamics, while the nonlinear terms are treated as a perturbation of this system. We then distinguish a finite collection of tangential modes from the remaining normal directions and introduce action-angle coordinates for the former. In these variables, the construction of quasi-periodic solutions amounts to finding an invariant torus with prescribed tangential frequencies. This is expressed as the vanishing of a nonlinear functional $\mathscr{F}$, for which we shall implement a Nash--Moser iteration. We conclude the section by establishing the tame regularity properties of the Hamiltonian vector field entering the functional equation, which will be essential for the estimates required throughout the Nash--Moser procedure.
\\

Observe that, according to Lemma \ref{lem:equilibre_HR21}, the equation \eqref{HAM EDP} writes
\begin{equation}\label{perturb_EDP and XP}\partial_{t}r=\partial_{\theta}\mathrm{L}(b)[r]+\varepsilon X_{P_{\varepsilon}}(r),\qquad X_{P_{\varepsilon}}(r)\triangleq X_{P_{\varepsilon}^{\mathbb{D}}}(r)+X_{P_{\varepsilon}^{p}}(r),
\end{equation}
with
\begin{equation}\label{XPD XPp}
	X_{P_{\varepsilon}^{\mathbb{D}}}(r)\triangleq \tfrac{1}{\varepsilon}\Big(\tfrac{1}{2}\partial_{\theta}r+\partial_{\theta}\mathcal{K}_{b}\ast r-\tfrac{1}{2\varepsilon^2}\partial_{\theta}\nabla E_{\mathbb{D}}(r)\Big),\qquad X_{P_{\varepsilon}^{p}}(r)\triangleq -\tfrac{1}{2\varepsilon}\partial_{\theta}\nabla E_p^{\varepsilon}(r),
\end{equation}
where the components $E_{\mathbb{D}}$ and $E_{p}^{\varepsilon}$ of the kinetic energy are introduced in  \eqref{ED Ep} and the convolution kernel $\mathcal{K}_b$ is given by \eqref{def:calKb}. Observe that \eqref{perturb_EDP and XP} is the Hamiltonian system generated by Hamiltonian  
\begin{align}\label{Hamil:perturbe}
	\mathcal{H}_{\varepsilon}(r)&\triangleq H_{\mathrm{L}}(r)+\varepsilon P_{\varepsilon}(r),\qquad P_{\varepsilon}(r)\triangleq P_{\varepsilon}^{\mathbb{D}}(r)+P_{\varepsilon}^{p}(r),
\end{align}
with $H_{\mathrm{L}}$ the quadratic Hamiltonian defined in \eqref{defLHL_EDP}, 
$\varepsilon P_{\varepsilon}^{\mathbb{D}}(r)$ containing terms
of higher order more than cubic and $\varepsilon P_{\varepsilon}^p(r)$ affine but containing smallness.
\subsubsection{Reformulation with  the action-angle and normal variables}\label{subsec act-angl}
We provide here a suitable reformulation of the problem in terms of action-angle-mormal variables well adapted to the implementation of KAM techniques.

\medskip

$\blacktriangleright$ \textbf{Phase-space decomposition :}
We first isolate the finite-dimensional set of modes around which the quasi-periodic dynamics will be constructed. Let
$$\mathbb S\subset\mathbb N^*,
\qquad
d\triangleq|\mathbb{S}|<\infty,$$
be a fixed finite collection of tangential sites. Since the phase space consists of real-valued functions, it is convenient to work with the symmetrized set
$$\overline{\mathbb S}\triangleq\mathbb S\cup(-\mathbb S),
\qquad
\mathbb S_0\triangleq\overline{\mathbb S}\cup\{0\}.$$
We then split the phase space $L_0^2(\mathbb{T}),$ introduced in \eqref{def:phase-space}, into the modes selected above and their complement
\begin{equation*}
L^2_0(\mathbb T)=L_{\overline{\mathbb S}}
\overset{\bot}{\oplus}
L^2_\bot,
\qquad
L_{\overline{\mathbb S}}
\triangleq
\operatorname{span}\{\mathbf{e}_j,\,j\in\overline{\mathbb S}\},\qquad L^2_\bot
\triangleq
\overline{\operatorname{span}\{\mathbf{e}_j,\,j\in\mathbb{Z}\setminus\mathbb{S}_0\}}.
\end{equation*}
Accordingly, every $r\in L^2_0(\mathbb T)$ admits a unique decomposition
\begin{equation*}
r=v+z,
\qquad
v=\Pi_{\overline{\mathbb S}}r
\triangleq\sum_{j\in\overline{\mathbb S}}r_j\mathbf{e}_j,
\qquad
z=\Pi^\bot_{\mathbb S_0}r\triangleq
\sum_{j\in\mathbb Z\setminus\mathbb S_0}r_j\mathbf{e}_j.
\end{equation*}
The variables $v$ and $z$ will respectively describe the \textit{tangential} and \textit{normal directions}.\\

$\blacktriangleright$ \textbf{Introduction of action-angle variables :} The tangential component is then parametrized in canonical coordinates. Fix small positive amplitudes
$$
(\mathtt a_j)_{j\in\overline{\mathbb S}}
\in(\mathbb R_+^*)^d,
\qquad
\mathtt a_{-j}=\mathtt a_j,
$$
and introduce, for $j\in\overline{\mathbb S}$, the polar transformation
\begin{equation*}
r_j=\sqrt{\mathtt a_j^2+|j|I_j}e^{\ii\vartheta_j}.
\end{equation*}
The reality condition is encoded by
\begin{equation*}
I_{-j}=I_j,
\qquad
\vartheta_{-j}=-\vartheta_j,
\qquad
I_j\in\mathbb R,\quad \vartheta_j\in\mathbb T.
\end{equation*}
Hence the complete change of variables can be written as
\begin{equation}\label{def:bfA}
r=\mathbf{A}(\vartheta,I,z)
\triangleq\sum_{j\in\overline{\mathbb S}}
\sqrt{\mathtt a_j^2+|j|I_j}\,
e^{\ii\vartheta_j}\mathbf{e}_j+z.
\end{equation}
The symplectic form associated with \eqref{HAM EDP} on $L^2_0(\mathbb T)$ is
$$\mathcal W(r_1,r_2)
\triangleq
\int_{\mathbb T}\partial_\theta^{-1}r_1(\theta)r_2(\theta)\,d\theta,
\qquad
\partial_\theta^{-1}r
\triangleq
\sum_{j\in\mathbb Z^*}\frac{r_j}{\ii j}\mathbf{e}_j$$
and that the corresponding Hamiltonian vector field is
$$X_H(r)=\partial_\theta\nabla H(r).$$ This parametrization is chosen so that the symplectic structure separates into canonical action-angle variables and the symplectic structure of the normal directions. More precisely,
\begin{equation*}
\mathcal W=\sum_{j\in\mathbb S}d\vartheta_j\wedge dI_j
+
\sum_{j\in\mathbb Z\setminus\mathbb S_0}
\frac{1}{2\ii j},dr_j\wedge dr_{-j}=
\left(\sum_{j\in\mathbb S}d\vartheta_j\wedge dI_j\right)
\oplus
\mathcal W_{|L^2_\bot}.
\end{equation*}
In particular, $(\vartheta,I)$ form canonical coordinates on the tangential directions, while $z$ contains all the remaining modes. Moreover, the linear dynamics around the equilibrium is compatible with the above splitting. Since $\mathrm L(b)$ preserves both $L_{\overline{\mathbb S}}$ and $L^2_\bot$, the quadratic Hamiltonian becomes, up to an irrelevant constant,
\begin{equation*}
H_{\mathrm L}\circ\mathbf{A}=-\sum_{j\in\mathbb S}\Omega_j(b)I_j
+
\frac12\langle \mathrm L(b)z,z\rangle_{L^2(\mathbb T)}=-\omega_{\mathrm{Eq}}(b)\cdot I
+
\frac12\langle \mathrm L(b)z,z\rangle_{L^2(\mathbb T)},
\end{equation*}
where $\omega_{\mathrm{Eq}}(b)$ is the vector of tangential frequencies defined in \eqref{def:equil_freq_HHHM}. Notice in particular that the trajectory
$v(-\omega_{\mathrm{Eq}}(b)t,0)$ reproduces the corresponding solution of the linearized equation \eqref{Ham eq 0}, cf. \eqref{rev sol eq:EDP}. Consequently, after composing the original Hamiltonian with the transformation $\mathbf{A}$, the perturbed problem takes the standard integrable-plus-perturbation form
\begin{equation}\label{derniere_def}
H_\varepsilon=\mathcal N+\varepsilon\mathcal P_\varepsilon,
\qquad
\mathcal N
\triangleq
-\omega_{\mathrm{Eq}}(b)\cdot I
+
\frac12\langle\mathrm L(b)z,z\rangle_{L^2(\mathbb T)},
\qquad
\mathcal P_\varepsilon
\triangleq
P_\varepsilon\circ\mathbf{A}.
\end{equation}
Thus, the linearized dynamics provides the integrable Hamiltonian around which the KAM construction will be performed.

\medskip
$\blacktriangleright$ \textbf{Invariant tori and the nonlinear functional :}
The objective is now to find an invariant $d$-dimensional torus carrying quasi-periodic motions with a prescribed frequency vector $\omega\in\mathbb R^d$. We therefore consider embeddings
\begin{equation*}
i:
\begin{array}[t]{rcl}
\mathbb T^d&\longrightarrow&
\textnormal{PS}\\
\varphi&\longmapsto&
i(\varphi)=\big(\vartheta(\varphi),I(\varphi),z(\varphi)\big),
\end{array}\qquad\textnormal{PS}\triangleq\mathbb T^d\times\mathbb R^d\times L^2_\bot.
\end{equation*}
Such an embedding parametrizes an invariant torus precisely when its pullback by the linear flow $\varphi\mapsto\varphi+\omega t$ agrees with the Hamiltonian dynamics. Equivalently,
\begin{equation*}
\omega\cdot\partial_\varphi i(\varphi)=X_{H_\varepsilon}\big(i(\varphi)\big),
\end{equation*}
where
\begin{equation}\label{def_ultime}
X_{H_\varepsilon}=\left(
\partial_IH_\varepsilon,
-\partial_\vartheta H_\varepsilon,
\Pi_{\mathbb S_0}^\bot
\partial_\theta\nabla_zH_\varepsilon
\right).
\end{equation}
At $\varepsilon=0$, this equation has an explicit family of invariant tori. Indeed, for every $b\in(b_0,b_1)$,
\begin{equation}\label{def iflat}
i_{\mathrm{flat}}(\varphi)=(\varphi,0,0)
\end{equation}
is invariant for the unperturbed Hamiltonian system when the tangential frequency is $\omega_{\mathrm{Eq}}(b)$. The perturbative construction amounts to continuing these trivial tori for $\varepsilon\neq0$ while allowing their frequency to vary. To implement the Nash--Moser scheme, we introduce an auxiliary frequency parameter $\alpha\in\mathbb R^d$ and consider
\begin{equation*}
H_\varepsilon^\alpha=\mathcal N_\alpha+\varepsilon\mathcal P_\varepsilon,
\qquad
\mathcal N_\alpha
\triangleq
\alpha\cdot I
+
\frac12\langle\mathrm L(b)z,z\rangle_{L^2(\mathbb T)}.
\end{equation*}
The original Hamiltonian is recovered for
$$\alpha=-\omega_{\mathrm{Eq}}(b).$$
The freedom to vary $\alpha$ is useful in solving the finite-dimensional compatibility conditions that arise when constructing an approximate inverse of the linearized operator. We can now formulate the entire problem as a single functional equation. Define
\begin{equation*}
\mathscr F(i,\alpha,(b,\omega),\varepsilon)
\triangleq
\omega\cdot\partial_\varphi i
-X_{H_\varepsilon^\alpha}(i).
\end{equation*}
In components,
$$\mathscr F\big((\vartheta,I,z),\alpha,(b,\omega),\varepsilon\big)=
\begin{bmatrix}
    \omega\cdot\partial_\varphi\vartheta
-\alpha
-\varepsilon\partial_I\mathcal P_\varepsilon(i)\\
\omega\cdot\partial_\varphi I
+\varepsilon\partial_\vartheta\mathcal P_\varepsilon(i)\\
\omega\cdot\partial_\varphi z
-\partial_\theta
\left[
\mathrm L(b)z
+
\varepsilon\nabla_z\mathcal P_\varepsilon(i)
\right]
\end{bmatrix}.$$
Thus, the construction of quasi-periodic solutions is reduced to finding zeros of $\mathscr F$. The involution $\mathscr{S}$ in \eqref{def:invol_rev} associated with reversibility also takes a particularly simple form in the action-angle-normal variables
\begin{equation}\label{reversiblility_op:action-angle}
\mathfrak S:(\vartheta,I,z)
\longmapsto
(-\vartheta,I,\mathscr Sz).
\end{equation} 
In these coordinates, the reversibility inherited from the original equation reads: for every $\alpha\in\mathbb R^d$,
$$
H_\varepsilon^\alpha\circ\mathfrak S
=
H_\varepsilon^\alpha.
$$
We consequently restrict the functional equation to the search of reversible embeddings, namely those satisfying
\begin{equation*}
\mathfrak{S}(i)(\varphi)=i(-\varphi),\qquad\textnormal{i.e.}\qquad\vartheta(-\varphi)=-\vartheta(\varphi),
\qquad
I(-\varphi)=I(\varphi),
\qquad
z(-\varphi)=(\mathscr Sz)(\varphi).
\end{equation*}
This symmetry is useful in the subsequent analysis since it is compatible with both the Hamiltonian structure and the linearized equations.

\medskip
$\blacktriangleright$ \textbf{Perturbation of the trivial torus.}
It remains to choose coordinates adapted to the perturbative nature of the problem. Since the unperturbed torus is given by \eqref{def iflat}, we measure a general embedding relative to this reference configuration rather than working directly with $i$ itself. We write
$$
i(\varphi)
=
(\varphi,0,0)
+
\mathfrak I(\varphi),
$$
where
\begin{equation*}
\mathfrak I(\varphi)
\triangleq
i(\varphi)-(\varphi,0,0)=\big(\Theta(\varphi),I(\varphi),z(\varphi)\big),
\qquad
\Theta(\varphi)
\triangleq
\vartheta(\varphi)-\varphi.
\end{equation*}
The unknown $\mathfrak I$ therefore represents the deformation of the flat invariant torus: $\Theta$ describes the correction to the angular component, while $I$ and $z$ measure the tangential and normal displacements, respectively.

\subsubsection{Regularity of the perturbed Hamiltonian vector field}
Here start the functional analysis of the problem. We shall obtain the required tame estimates on the Hamiltonian vector field governing the dynamics in order to run the Nash-Moser scheme. The Nash-Moser iteration creates dependence of the approximate solutions with respect to the parameters $(b,\omega).$ Therefore, we shall work within the weighted Sobolev scale following this parameter dependence. We refer the reader to the Appendix \ref{appendix:topo} for their presentation. We shall fix a parameter set
\begin{equation}\label{def:global_param_set}
		\mathscr{O}\triangleq (b_{0},b_{1})\times B_{\mathbb{R}^d}(0,R_0)\subset\mathbb{R}^{d+1}, \qquad R_0>0,
	\end{equation}
    where $b_0,b_1$ are the lower and upper Rankine radii chosen in \eqref{Rankine_range} and $B_{\mathbb{R}^d}(0,R_0)$ is an open ball of $\mathbb{R}^d$ containing the continuous curve of equilibrium frequency vector  $b\in[b_0,b_1]\mapsto\omega_{\textnormal{Eq}}(b);$ see \eqref{def:equil_freq_HHHM}. We consider an integer
    $$q\in\mathbb{N}^*$$
    tracking the regularity of our functions with respect to the parameters inside $\mathscr{O}.$
    We also introduce a parameter
    $$\gamma>0$$
    that will appear naturally in the non-resonance condition during the KAM reducibility to construct an approximate inverse. Finally, we fix a range of Sobolev regularity indices in the time-space variables $(\varphi,\theta)$, where the Nash-Moser scheme holds
	$$\tfrac{d+1}{2}+q+2\leqslant s_0\leqslant s\leqslant S.$$
\begin{lem}\label{lem:tame_XP_EDP}
	There exists $\varepsilon_{0}\in(0,1]$ such that if
		$$\varepsilon\leqslant\varepsilon_0\quad\textnormal{and}\quad\| r\|_{q,s_{0}+2}^{\gamma,\mathscr{O}}\leqslant1,$$
		then the vector field $X_{P_{\varepsilon}}$, defined in \eqref{perturb_EDP and XP} satisfies the following estimates
		\begin{enumerate}[label=(\roman*)]
			\item $\| X_{P_{\varepsilon}}(r)\|_{q,s}^{\gamma,\mathscr{O}}\lesssim 1+\| r\|_{q,s+1}^{\gamma,\mathscr{O}}.$
			\item $\| d_{r}X_{P_{\varepsilon}}(r)[h]\|_{q,s}^{\gamma,\mathscr{O}}\lesssim\|h\|_{q,s+2}^{\gamma,\mathscr{O}}+\| r\|_{q,s+2}^{\gamma,\mathscr{O}}\|h\|_{q,s_{0}+1}^{\gamma,\mathscr{O}}.$
			\item 
			$\| d_r^{2}X_{P_{\varepsilon}}(r)[h_{1},h_{2}]\|_{q,s}^{\gamma,\mathscr{O}}\lesssim\|h_{1}\|_{q,s_{0}+1}^{\gamma,\mathscr{O}}\|h_{2}\|_{q,s+2}^{\gamma,\mathscr{O}}+\big(\|h_{1}\|_{q,s+2}^{\gamma,\mathscr{O}}+\| r\|_{q,s+2}^{\gamma,\mathscr{O}}\|h_{1}\|_{q,s_{0}+1}^{\gamma,\mathscr{O}}\big)\|h_{2}\|_{q,s_{0}+1}^{\gamma,\mathscr{O}}$.
	\end{enumerate}
\end{lem}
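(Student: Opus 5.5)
The vector field splits as $X_{P_\varepsilon}=X_{P_\varepsilon^{\mathbb D}}+X_{P_\varepsilon^{p}}$ (see \eqref{perturb_EDP and XP}--\eqref{XPD XPp}), and we treat the two parts separately. The first part $X_{P_\varepsilon^{\mathbb D}}$ is exactly the perturbative vector field of the undeformed problem in the unit disc, up to the rescaling $r\mapsto\varepsilon r$ in the radial profile $R_\varepsilon=\sqrt{b^2+2\varepsilon r}$. For it we invoke directly the tame estimates established in \cite{HR21-1}: they are of the form (i)--(iii), with the loss of one derivative coming from $\partial_\theta$ and a further loss in the linearized and second-order estimates coming from the transport term $V_{\varepsilon r}h$. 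The only check needed is that the normalizations agree. The factor $\tfrac{1}{\varepsilon}$ in front of $X_{P_\varepsilon^{\mathbb D}}$ is compensated by the cancellation of the linear part $\tfrac12\partial_\theta r+\partial_\theta\mathcal K_b\ast r$ against the linear part of $\tfrac{1}{2\varepsilon^2}\partial_\theta\nabla E_{\mathbb D}(r)$, which is Lemma \ref{lem:equilibre_HR21}(i). Hence $X_{P_\varepsilon^{\mathbb D}}$ is a Taylor remainder of order $\varepsilon$, and the constants are uniform in $\varepsilon\le\varepsilon_0$.

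The new work is the part $X_{P_\varepsilon^{p}}(r)=-\tfrac{1}{2\varepsilon}\partial_\theta\nabla E_p^\varepsilon(r)$. By Lemma \ref{lem eq EDP r}(ii),
$$\nabla E_p^\varepsilon(r)=2\varepsilon\boldsymbol{\Psi}_p^\varepsilon\big(t,R_\varepsilon e^{\ii\theta}\big),\qquad\text{so}\qquad X_{P_\varepsilon^p}(r)=-\partial_\theta\Big[\int_{\mathbb T}\int_0^{R_\varepsilon(\eta)}G_p^\varepsilon\big(R_\varepsilon(\theta)e^{\ii\theta},\ell e^{\ii\eta}\big)\ell\,d\ell\,d\eta\Big].$$
This is $O(1)$; it is not small, but it is smooth. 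First I note that $\|r\|_{q,s_0+2}^{\gamma,\mathscr O}\le1$ and $\varepsilon\le\varepsilon_0$ give, by Sobolev embedding ($s_0>\tfrac{d+1}{2}+q$), that $R_\varepsilon$ takes values in $[b_0/2,(1+b_1)/2]$. On this region $G_p^\varepsilon$ is smooth with bounds uniform in $\varepsilon$, thanks to the continuity of $\varepsilon\mapsto f_\varepsilon$ and the construction of \eqref{split G}. It is also independent of $(b,\omega)$, so parameter derivatives only hit $R_\varepsilon$, whose $b$-dependence is explicit and smooth.

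Next I write the inner integral in the form $\int_{\mathbb T}\mathcal G\big(R_\varepsilon(\theta),R_\varepsilon(\eta),\theta,\eta\big)\,d\eta$, where
$$\mathcal G(\rho_1,\rho_2,\theta,\eta)\triangleq\int_0^{\rho_2}G_p^\varepsilon(\rho_1e^{\ii\theta},\ell e^{\ii\eta})\ell\,d\ell$$
is a $C^\infty$ function of all its arguments. The tame composition lemma of Appendix \ref{appendix:topo} (Moser estimates in the weighted norms $\|\cdot\|_{q,s}^{\gamma,\mathscr O}$), combined with the standard estimate for integral operators with smooth kernels, then gives
$$\|\boldsymbol{\Psi}_p^\varepsilon\|_{q,s}\lesssim1+\|r\|_{q,s}.$$
Applying $\partial_\theta$ yields (i) with a loss of one derivative.

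For (ii) and (iii) I differentiate explicitly, as in the proof of Lemma \ref{lem:lin_origin}. The first derivative produces $\partial_\theta\big(\varepsilon^2 V^p_{\varepsilon r}h+\varepsilon^2\mathbf S^p_{\varepsilon r}h\big)$ up to the $\varepsilon$ normalization. The multiplication term is estimated by the tame product law, using $\|V^p\|_{q,s+1}\lesssim1+\|r\|_{q,s+2}$. The smoothing integral operator with kernel $K^p_{\varepsilon r}$ is bounded by $\|h\|_{q,s_0}+\|r\|_{q,s+1}\|h\|_{q,s_0}$. Together these give (ii) with loss $2$. Differentiating once more produces products of $h_1h_2$ with $\partial_x^\alpha G_p^\varepsilon$ evaluated along the boundary, plus integral terms. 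Applying the product and composition laws again, each time placing the high norm on one factor at a time, gives (iii).

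The main obstacle is bookkeeping rather than anything conceptual: keeping the losses of derivatives exactly within $s+1$ and $s+2$, and making the constants uniform in $\varepsilon$. The $\varepsilon$-uniformity of the $C^k$ bounds of $G_p^\varepsilon$ on the compact annulus is where I would be most careful. I would obtain it from the convergent series defining $G_p^\varepsilon$, together with continuity of $f_\varepsilon$ in the Fréchet topology on the compact disc $\overline{D(0,a)}$.
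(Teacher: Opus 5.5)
Your proposal takes essentially the paper's route. You split off $X_{P_\varepsilon^{\mathbb D}}$ and quote \cite{HR21-1} for it. You write $X_{P_\varepsilon^{p}}(r)=-\partial_\theta\big[\boldsymbol{\Psi}_p^{\varepsilon}(t,R_\varepsilon e^{\ii\theta})\big]$ and bound it using only the $\varepsilon$-uniform smoothness of $G_p^\varepsilon$ together with the product and composition laws of Lemma \ref{lem:funct_topo}. For (ii)--(iii) you differentiate into the multiplication part $V^p_{\varepsilon r}$ and the smooth-kernel integral part $\mathbf{S}^p_{\varepsilon r}$.

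Two minor caveats. First, the disc estimate the paper quotes from \cite{HR21-1} loses two derivatives, $\|X_{P_\varepsilon^{\mathbb D}}(r)\|_{q,s}^{\gamma,\mathscr{O}}\lesssim\|r\|_{q,s+2}^{\gamma,\mathscr{O}}\|r\|_{q,s_0+1}^{\gamma,\mathscr{O}}$, not one as you assert. So the splitting argument, yours as much as the paper's, really yields (i) only as $1+\|r\|_{q,s+2}^{\gamma,\mathscr{O}}$, which is the form actually used in Lemma \ref{lem;tame_calXP}. Second, your bound for $\partial_\theta\mathbf{S}^p_{\varepsilon r}[h]$ must also contain a term $\|h\|_{q,s}^{\gamma,\mathscr{O}}$, because the operator integrates only in $\eta$ and does not smooth in $\varphi$. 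This term is absorbed by $\|h\|_{q,s+2}^{\gamma,\mathscr{O}}$ in (ii), so it does not affect the conclusion.
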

\begin{proof} Recall from \eqref{perturb_EDP and XP} the decomposition
	$$X_{P_{\varepsilon}}(r)=X_{P_{\varepsilon}^{\mathbb{D}}}(r)+X_{P_{\varepsilon}^{p}}(r).$$
	$(i)$ It has been proved in \cite[Lem. 5.1-(i)]{HR21-1} that
	\begin{equation}\label{e-XPD}
		\|X_{P_{\varepsilon}^{\mathbb{D}}}(r)\|_{q,s}^{\gamma,\mathscr{O}}\lesssim\|r\|_{q,s+2}^{\gamma,\mathscr{O}}\|r\|_{q,s_0+1}^{\gamma,\mathscr{O}}.
	\end{equation}
	In particular, there is no singularity in $\varepsilon.$ On the other hand, we remind the identity
	$$\nabla E_p^{\varepsilon}(r)(\varphi,\theta)=2\varepsilon\boldsymbol{\Psi}_p^{\varepsilon}\big(\varphi,R_{\varepsilon}(b,\varphi,\theta)e^{\ii\theta}\big).$$
	Therefore,
	\begin{align*}
		X_{P_{\varepsilon}^p}(r)&\triangleq  \partial_{\theta}\left(\int_{\mathbb{T}}\left(\int_{0}^{R_{\varepsilon}(b,\varphi,\eta)}G_{p}^{\varepsilon}\big(R_{\varepsilon}(b,\varphi,\theta)e^{\ii\theta},\ell_2 e^{\ii\eta}\big)\ell_2 \,d\ell_2\right) d\eta\right)\\
		&=\Big[\ii R_{\varepsilon}(b,\varphi,\theta)+\frac{\varepsilon\partial_{\theta}r(\varphi,\theta)}{R_{\varepsilon}(b,\varphi,\theta)}\Big]e^{\ii\theta}\int_{\mathbb{T}}\left(\int_{0}^{R_{\varepsilon}(b,\varphi,\eta)}(\nabla_xG_{p}^{\varepsilon})\big(R_{\varepsilon}(b,\varphi,\theta)e^{\ii\theta},\ell_2 e^{\ii\eta}\big)\ell_2 \,d\ell_2\right) d\eta.
	\end{align*}
By smoothness of $G_p$, one immediately obtains after the change of variables $\eta\mapsto\eta+\theta$ and using Lemma \ref{lem:funct_topo}
\begin{equation}\label{e-XPp}
	\|X_{P_{\varepsilon}^p}(r)\|_{q,s}^{\gamma,\mathscr{O}}\lesssim 1+\|r\|_{q,s+1}^{\gamma,\mathscr{O}}.
\end{equation}
Combining \eqref{e-XPD} and \eqref{e-XPp}, one gets the desired result.\\
$(ii)$ It has been proved in \cite[Lem. 5.1-(ii)]{HR21-1} that
$$\| d_{r}X_{P_{\varepsilon}^{\mathbb{D}}}(r)[h]\|_{q,s}^{\gamma,\mathscr{O}}\lesssim\|h\|_{q,s+2}^{\gamma,\mathscr{O}}+\| r\|_{q,s+2}^{\gamma,\mathscr{O}}\|h\|_{q,s_{0}+1}^{\gamma,\mathscr{O}}.$$
On the other hand, one has from \eqref{lin eq EDP}, \eqref{Kerp} and \eqref{Verp}
\begin{equation}\label{dXPp}
	d_rX_{P_{\varepsilon}^p}(r)[h]=\varepsilon^{2}\partial_{\theta}\big(V_{\varepsilon r}^ph+\mathbf{S}_{\varepsilon r}^p[h]\big),
\end{equation}
with
$$V_{\varepsilon r}^p(b,\varphi,\theta)=\frac{1}{R_{\varepsilon}(b,\varphi,\theta)}\int_{\mathbb{T}}\left(\int_{0}^{R_{\varepsilon}(b,\varphi,\eta)}(\nabla_xG_{p}^{\varepsilon})\big(R_{\varepsilon}(b,\varphi,\theta)e^{\ii\theta},\ell_2 e^{\ii\eta}\big)e^{\ii\theta}\ell_2 \,d\ell_2\right) d\eta$$
and
$$\mathbf{S}_{\varepsilon r}^p(h)(b,\varphi,\theta)=\int_{\mathbb{T}}h(\varphi,\eta)K_{\varepsilon r}^p(b,\varphi,\theta,\eta)d\eta,\quad K_{\varepsilon r}^p(b,\varphi,\theta,\eta)=G_p^{\varepsilon}\big(R_{\varepsilon}(b,\varphi,\theta)e^{\ii\theta},R_{\varepsilon}(b,\varphi,\eta)e^{\ii\eta}\big).$$
By smoothness of $G_p^{\varepsilon}$, one immediately obtains after the change of variables $\eta\mapsto\eta+\theta$ and using Lemma \ref{lem:funct_topo}
\begin{equation}\label{e-Verp}
	\|V_{\varepsilon r}^{p}\|_{q,s}^{\gamma,\mathscr{O}}\lesssim 1+\|r\|_{q,s}^{\gamma,\mathscr{O}}.
\end{equation}
One also gets
\begin{equation}\label{e-Kerp}
	\|K_{\varepsilon r}^{p}\|_{q,H_{\varphi,\theta,\eta}^s}^{\gamma,\mathscr{O}}\lesssim 1+\|r\|_{q,s}^{\gamma,\mathscr{O}},
\end{equation}
which implies in turn by Sobolev embeddings
$$\|\partial_{\theta}\mathbf{S}_{\varepsilon r}^p(h)\|_{q,s}^{\gamma,\mathscr{O}}\lesssim\|h\|_{q,s}^{\gamma,\mathscr{O}}+\|r\|_{q,s+1}^{\gamma,\mathscr{O}}\|h\|_{q,s_0}^{\gamma,\mathscr{O}}.$$
Gathering the previous computations leads to
$$\|d_rX_{P_{\varepsilon}^p}(r)[h]\|_{q,s}^{\gamma,\mathscr{O}}\lesssim\|h\|_{q,s+1}^{\gamma,\mathscr{O}}+\|r\|_{q,s+1}^{\gamma,\mathscr{O}}\|h\|_{q,s_0+1}^{\gamma,\mathscr{O}}.$$
$(iii)$ It has been proved in \cite[Lem. 5.1-(iii)]{HR21-1} that
$$\| d_r^{2}X_{P_{\varepsilon}^{\mathbb{D}}}(r)[h_{1},h_{2}]\|_{q,s}^{\gamma,\mathscr{O}}\lesssim\|h_{1}\|_{q,s_{0}+1}^{\gamma,\mathscr{O}}\|h_{2}\|_{q,s+2}^{\gamma,\mathscr{O}}+\big(\|h_{1}\|_{q,s+2}^{\gamma,\mathscr{O}}+\| r\|_{q,s+2}^{\gamma,\mathscr{O}}\|h_{1}\|_{q,s_{0}+1}^{\gamma,\mathscr{O}}\big)\|h_{2}\|_{q,s_{0}+1}^{\gamma,\mathscr{O}}.$$
Differentiating \eqref{dXPp} with respect to $r$ in the direction $h_2$ yields
$$d_r^2X_{P_{\varepsilon}^p}(r)[h_1,h_2]=\varepsilon^{2}\partial_{\theta}\big(d_rV_{\varepsilon r}^p(r)[h_2]h_1+d_r\mathbf{S}_{\varepsilon r}^p(r)[h_2][h_1]\big).$$
We first study the term associated to $V_{\varepsilon r}^p$. Straightforward computations gives
\begin{align*}
	d_{r}V_{\varepsilon r}^{p}(r)[h_2](b,\varphi,\theta)&=\frac{-\varepsilon h_{2}(\varphi,\theta)}{R_{\varepsilon}^2(b,\varphi,\theta)}\int_{\mathbb{T}}\left(\int_{0}^{R_{\varepsilon}(b,\varphi,\eta)}(\nabla_xG_{p}^{\varepsilon})\big(R_{\varepsilon}(b,\varphi,\theta)e^{\ii\theta},\ell_2 e^{\ii\eta}\big)\cdot e^{\ii\theta}\ell_2 \,d\ell_2\right) d\eta\\
	&\quad+\frac{\varepsilon}{R_{\varepsilon}(b,\varphi,\theta)}\int_{\mathbb{T}}h_2(\varphi,\eta)(\nabla_xG_{p}^{\varepsilon})\big(R_{\varepsilon}(b,\varphi,\theta)e^{\ii\theta},R_{\varepsilon}(b,\varphi,\eta) e^{\ii\eta}\big)\cdot e^{\ii\theta}d\eta\\
	&\quad+\frac{\varepsilon h_2(\varphi,\theta)e^{\ii\theta}}{R_{\varepsilon}^2(b,\varphi,\theta)}\int_{\mathbb{T}}\left(\int_{0}^{R_{\varepsilon}(b,\varphi,\eta)}(\nabla_x^2G_{p}^{\varepsilon})\big(R_{\varepsilon}(b,\varphi,\theta)e^{\ii\theta},\ell_2 e^{\ii\eta}\big)e^{2\ii\theta}\ell_2 \,d\ell_2\right) d\eta.
\end{align*}
Using the same techniques as in the previous points, one gets
$$\|d_{r}V_{\varepsilon r}^{p}(r)[h_2]\|_{q,s}^{\gamma,\mathscr{O}}\lesssim\|h_2\|_{q,s}^{\gamma,\mathscr{O}}+\|r\|_{q,s}^{\gamma,\mathscr{O}}\|h_2\|_{q,s_0}^{\gamma,\mathscr{O}}.$$
Therefore,
\begin{align*}
	\|\partial_{\theta}\big(d_{r}V_{\varepsilon r}^{p}(r)[h_2]h_{1}\big)\|_{q,s}^{\gamma,\mathscr{O}}&\leqslant\|d_{r}V_{\varepsilon r}^{p}(r)[h_2]h_{1}\|_{q,s+1}^{\gamma,\mathscr{O}}\\
	&\lesssim\|d_{r}V_{\varepsilon r}^{p}(r)[h_2]\|_{q,s+1}^{\gamma,\mathscr{O}}\|h_1\|_{q,s_0}^{\gamma,\mathscr{O}}+\|d_{r}V_{\varepsilon r}^{p}(r)[h_2]\|_{q,s_0}^{\gamma,\mathscr{O}}\|h_1\|_{q,s+1}^{\gamma,\mathscr{O}}\\
	&\lesssim\|h_{1}\|_{q,s_0}^{\gamma,\mathscr{O}}\|h_2\|_{q,s+1}^{\gamma,\mathscr{O}}+\|r\|_{q,s+1}^{\gamma,\mathscr{O}}\|h_1\|_{q,s_0}^{\gamma,\mathscr{O}}\|h_2\|_{q,s}^{\gamma,\mathscr{O}}+\|h_1\|_{q,s+1}^{\gamma,\mathscr{O}}\|h_2\|_{q,s_0}^{\gamma,\mathscr{O}}.
\end{align*}
Besides, we can write
$$d_r\mathbf{S}_{\varepsilon r}^p(r)[h_2][h_1](b,\varphi,\theta)=\int_{\mathbb{T}}h_1(\varphi,\eta+\theta)d_rK_{\varepsilon r}^p(r)[h_2](b,\varphi,\theta,\eta+\theta)d\eta,$$
with
\begin{align*}
	d_rK_{\varepsilon r}^p(r)[h_2](b,\varphi,\theta,\eta)&=\frac{\varepsilon h_2(\varphi,\theta)e^{\ii\theta}}{R_{\varepsilon}(b,\varphi,\theta)}(\nabla_xG_p^{\varepsilon})\big(R_{\varepsilon}(b,\varphi,\theta)e^{\ii\theta},R_{\varepsilon}(b,\varphi,\eta)e^{\ii\eta}\big)\\
	&\quad+\frac{\varepsilon h_2(\varphi,\eta)e^{\ii\eta}}{R_{\varepsilon}(b,\varphi,\eta)}(\nabla_yG_p^{\varepsilon})\big(R_{\varepsilon}(b,\varphi,\theta)e^{\ii\theta},R_{\varepsilon}(b,\varphi,\eta)e^{\ii\eta}\big).
\end{align*}
Thus
$$\|d_rK_{\varepsilon r}^p(r)[h_2]\|_{q,H_{\varphi,\theta,\eta}^s}^{\gamma,\mathscr{O}}\lesssim\|h_2\|_{q,s}^{\gamma,\mathscr{O}}+\|r\|_{q,s}^{\gamma,\mathscr{O}}\|h_2\|_{q,s_0}^{\gamma,\mathscr{O}}.$$
It follows that
\begin{align*}
	\|\partial_{\theta}d_r\mathbf{S}_{\varepsilon r}^p(r)[h_2]h_1\|_{q,s}^{\gamma,\mathscr{O}}&\lesssim\|d_r\mathbf{S}_{\varepsilon r}^p(r)[h_2]h_1\|_{q,s+1}^{\gamma,\mathscr{O}}\\
	&\lesssim\|h_1\|_{q,s+1}^{\gamma,\mathscr{O}}\|h_{2}\|_{q,s_0+1}^{\gamma,\mathscr{O}}+\|h_1\|_{q,s_0}^{\gamma,\mathscr{O}}\|h_{2}\|_{q,s+1}^{\gamma,\mathscr{O}}+\|r\|_{q,s+1}^{\gamma,\mathscr{O}}\|h_1\|_{q,s_0}^{\gamma,\mathscr{O}}\|h_{2}\|_{q,s_0+1}^{\gamma,\mathscr{O}}.
\end{align*}
Putting together the foregoing computations ends the proof of Lemma \ref{lem:tame_XP_EDP}.
\end{proof}

With Lemma \ref{lem:tame_XP_EDP} in hand, we establish tame estimates for the Hamiltonian vector field 
$$X_{\mathcal{P}_{\varepsilon}}=(\partial_{I}\mathcal{P}_{\varepsilon},-\partial_{\vartheta}\mathcal{P}_{\varepsilon},\Pi_{\mathbb{S}}^{\bot}\partial_{\theta}\nabla_{z}\mathcal{P}_{\varepsilon})$$
defined by \eqref{derniere_def} and \eqref{def_ultime}. The torus displacement $\mathfrak I$ is equipped with the norm
$$
\|\mathfrak I\|_{q,s}^{\gamma,\mathscr{O}}
\triangleq
\|\Theta\|_{q,s}^{\gamma,\mathscr{O}}
+
\|I\|_{q,s}^{\gamma,\mathscr{O}}
+
\|z\|_{q,s}^{\gamma,\mathscr{O}}.
$$
\begin{lem}\label{lem;tame_calXP}
		There exists $\varepsilon_0\in(0,1)$ such that if 
		\begin{equation}\label{petit:init}
		    \varepsilon\leqslant\varepsilon_0\quad\textnormal{and}\quad\|\mathfrak{I}\|_{q,s_{0}+2}^{\gamma,\mathscr{O}}\leqslant 1,
		\end{equation} 
		then  the  perturbed Hamiltonian vector field $X_{\mathcal{P}_{\varepsilon}}$ satisfies the following tame estimates for $s\geqslant s_0,$
		\begin{enumerate}[label=(\roman*)]
			\item $\| X_{\mathcal{P}_{\varepsilon}}(i)\|_{q,s}^{\gamma,\mathscr{O}}\lesssim 1+\|\mathfrak{I}\|_{q,s+2}^{\gamma,\mathscr{O}}.$
			\item $\big\| d_{i}X_{\mathcal{P}_{\varepsilon}}(i)[\,\widehat{i}\,]\big\|_{q,s}^{\gamma,\mathscr{O}}\lesssim \|\,\widehat{i}\,\|_{q,s+2}^{\gamma,\mathscr{O}}+\|\mathfrak{I}\|_{q,s+2}^{\gamma,\mathscr{O}}\|\,\widehat{i}\,\|_{q,s_{0}+1}^{\gamma,\mathscr{O}}.$
			\item $\big\| d_{i}^{2}X_{\mathcal{P}_{\varepsilon}}(i)[\,\widehat{i},\widehat{i}\,]\big\|_{q,s}^{\gamma,\mathscr{O}}\lesssim \|\,\widehat{i}\,\|_{q,s+2}^{\gamma,\mathscr{O}}\|\,\widehat{i}\,\|_{q,s_{0}+1}^{\gamma,\mathscr{O}}+\|\mathfrak{I}\|_{q,s+2}^{\gamma,\mathscr{O}}\big(\|\,\widehat{i}\,\|_{q,s_{0}+1}^{\gamma,\mathscr{O}}\big)^{2}.$
	\end{enumerate}
\end{lem}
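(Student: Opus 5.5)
The plan is to transfer the estimates of Lemma \ref{lem:tame_XP_EDP} on $X_{P_\varepsilon}$ to the vector field $X_{\mathcal{P}_\varepsilon}$ by composing with the action-angle map $\mathbf{A}$ of \eqref{def:bfA}. The chain rule gives
$$\partial_I\mathcal{P}_\varepsilon(i)=\big\langle\nabla P_\varepsilon(\mathbf{A}(i)),\partial_I\mathbf{A}(i)\big\rangle_{L^2(\mathbb T)},\qquad\partial_\vartheta\mathcal{P}_\varepsilon(i)=\big\langle\nabla P_\varepsilon(\mathbf{A}(i)),\partial_\vartheta\mathbf{A}(i)\big\rangle_{L^2(\mathbb T)},\qquad\Pi_{\mathbb S_0}^\bot\partial_\theta\nabla_z\mathcal{P}_\varepsilon(i)=\Pi_{\mathbb S_0}^\bot X_{P_\varepsilon}(\mathbf{A}(i)).$$
The normal component is therefore directly controlled by Lemma \ref{lem:tame_XP_EDP} evaluated at $r=\mathbf{A}(i)$. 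The two finite-dimensional components involve $\nabla P_\varepsilon=\partial_\theta^{-1}X_{P_\varepsilon}$ paired against the tangential modes $\mathbf{e}_j$, $j\in\overline{\mathbb S}$. This pairing only sees finitely many Fourier modes in $\theta$, so it costs no $\theta$-derivatives and may be bounded by $\|X_{P_\varepsilon}(\mathbf A(i))\|_{q,s}^{\gamma,\mathscr O}$ times smooth functions of $I$ and $\vartheta$.

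The second step is to control $r=\mathbf{A}(i)$ in terms of $\mathfrak I$. I will write
$$\mathbf A(i)(\varphi,\theta)=\sum_{j\in\overline{\mathbb S}}\sqrt{\mathtt a_j^2+|j|I_j(\varphi)}\,e^{\ii(\varphi_j+\Theta_j(\varphi))}\mathbf e_j(\theta)+z(\varphi,\theta).$$
The amplitudes $\mathtt a_j>0$ are fixed, and the smallness \eqref{petit:init} together with Sobolev embedding ($s_0>\tfrac{d+1}{2}+q$) keeps $\mathtt a_j^2+|j|I_j$ bounded away from zero. The square root and exponential are then smooth on the relevant range, and the composition estimates of Lemma \ref{lem:funct_topo} give
$$\|\mathbf A(i)\|_{q,s}^{\gamma,\mathscr O}\lesssim1+\|\mathfrak I\|_{q,s}^{\gamma,\mathscr O},\qquad\|d_i\mathbf A(i)[\widehat i]\|_{q,s}^{\gamma,\mathscr O}\lesssim\|\widehat i\|_{q,s}^{\gamma,\mathscr O}+\|\mathfrak I\|_{q,s}^{\gamma,\mathscr O}\|\widehat i\|_{q,s_0}^{\gamma,\mathscr O},$$
and a similar bilinear bound holds for $d_i^2\mathbf A$. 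One point needs care here. The hypothesis $\|r\|_{q,s_0+2}\le1$ of Lemma \ref{lem:tame_XP_EDP} does not literally hold, because $\mathbf A(i)$ contains the $O(1)$ flat part. I would remedy this by taking the amplitudes $\mathtt a_j$ small, as is done when they are introduced. Alternatively, since the estimates of Lemma \ref{lem:tame_XP_EDP} hold under any $O(1)$ bound up to changing constants, I would apply them with $\|\mathbf A(i)\|_{q,s_0+2}\lesssim1$.

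Finally I will combine the pieces through the chain rule. For (i), $\|X_{P_\varepsilon}(\mathbf A(i))\|_{q,s}\lesssim1+\|\mathbf A(i)\|_{q,s+1}\lesssim1+\|\mathfrak I\|_{q,s+2}$. For (ii), the identity $d_iX_{\mathcal P_\varepsilon}=d_rX_{P_\varepsilon}(\mathbf A(i))[d_i\mathbf A(i)\widehat i]$, plus the terms coming from the finite-mode pairings, combines with Lemma \ref{lem:tame_XP_EDP}-(ii) and the tame product laws to give the stated bound. For (iii), differentiating once more produces $d_r^2X_{P_\varepsilon}[d_i\mathbf A\widehat i,d_i\mathbf A\widehat i]+d_rX_{P_\varepsilon}[d_i^2\mathbf A[\widehat i,\widehat i]]$. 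These are estimated with Lemma \ref{lem:tame_XP_EDP}-(iii) and (ii), using $\|\cdot\|_{q,s_0+2}\le1$ to absorb low-norm factors.

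The main obstacle is the bookkeeping of the tame interpolation, so that only one high norm appears per term, with loss exactly $2$ and low norm $s_0+1$. In particular the low norms of $d_i\mathbf A\,\widehat i$ must be bounded by $\|\widehat i\|_{q,s_0+1}$ without extra loss, which again relies on the finite-dimensional tangential structure.
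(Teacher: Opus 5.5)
Your proposal is correct and follows the paper's route. The paper's proof just invokes Lemma \ref{lem:tame_XP_EDP} composed with the action-angle map $\mathbf{A}$, as in \cite[Lem. 5.2]{HR21}, and your chain-rule splitting (normal part $\Pi_{\mathbb{S}_0}^{\bot}X_{P_\varepsilon}(\mathbf{A}(i))$, tangential parts as finite-mode pairings, tame bounds on $\mathbf{A}(i)$, $d_i\mathbf{A}$, $d_i^2\mathbf{A}$) is exactly the bookkeeping behind that reference.

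One caveat, which comes from the statement itself rather than from your argument: contrary to what you assert, $\|\mathfrak{I}\|_{q,s_0+2}^{\gamma,\mathscr{O}}\leqslant1$ alone does not keep $\mathtt{a}_j^2+|j|I_j$ away from zero (take $I_j\equiv-1$ with small $\mathtt{a}_j$). The hypothesis should therefore be read as smallness of $\|\mathfrak{I}\|_{q,s_0+2}^{\gamma,\mathscr{O}}$ relative to $\min_j\mathtt{a}_j^2/|j|$. This is harmless, since along the Nash--Moser scheme that norm is $O(\varepsilon\gamma^{-1})$.
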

\begin{proof}
    Follows from Lemma \ref{lem:tame_XP_EDP} similarly to \cite[Lem. 5.2]{HR21}.
\end{proof}
\begin{remark}
    From there, the rest of the paper is similar to \cite{HR21} and \cite{HR21-1} and doesn't bring any novelty. It is just here for the completeness of the proof and therefore the experts in the field can stop here. We only provide the main steps (sometimes simplifying the argument by omitting some technical details like extensions of functions and operators). The interested reader is referred to the given references and should consider what follows as a detailed roadmap in the application of KAM reducibility and Nash-Moser iteration. 
\end{remark}
\subsection{Obtention of an approximate right inverse}\label{sec:approx_inv}
In this subsection we fix $\alpha_0\in\mathbb{R}^d$ and a reversible torus $i_0=(\vartheta_0,I_0,z_0)$
$$\vartheta_0(-\varphi)=-\vartheta_0(\varphi),\qquad I_0(-\varphi)=I_0(\varphi),\qquad z_0(-\varphi)=(\mathscr{S}z_0)(\varphi)$$
that is meant to be a given approximate solution during the Nash-Moser iterative scheme and therefore satisfying the smallnesse condition \eqref{petit:init}. The corresponding patch deformation function is obtained through the transformation $\mathbf{A}$ in \eqref{def:bfA}
$$r=\mathbf{A}(i_0).$$
The torus $i_0$ being reversible, so is $r$, namely
\begin{equation}\label{symr}
    r(-\varphi,-\theta)=r(\varphi,\theta).
\end{equation}
In addition,
\begin{align}
	\|r\|_{q,s}^{\gamma,\mathscr{O}}&\lesssim 1+\|\mathfrak{I}_{0}\|_{q,s}^{\gamma,\mathscr{O}},\label{stima:r}\\
    \|\Delta_{12}r\|_{q,s}^{\gamma,\mathscr{O}}&\lesssim\|\Delta_{12}i\|_{q,s}^{\gamma,\mathscr{O}}+\|\Delta_{12}i\|_{q,s_0}^{\gamma,\mathscr{O}}\max_{k\in\{1,2\}}\|\mathfrak{I}_k\|_{q,s}^{\gamma,\mathscr{O}}. \label{stima:r12}
		\end{align}
here and in the sequel, the notation $\Delta_{12}$ denotes
$$\Delta_{12}f\triangleq f(i_1)-f(i_2),$$
for two given tori $i_1,i_2$ satisfying the smallness condition \eqref{petit:init}.
Our goal here is to construct an approximate inverse to the linearized operator 
$$d_{(i,\alpha)}\mathscr{F}(i_0,\alpha_0)[\hat{\imath},\hat{\alpha}]=\omega\cdot\partial_{\varphi}\hat{\imath}-d_{i}X_{H_\varepsilon^{\alpha_0}}\big(i_{0}(\varphi)\big)[\hat{\imath}]-(\hat{\alpha},0,0).$$
at this state which enjoys tame estimates. The final result is stated in Proposition \ref{prop:approx_full}.
\subsubsection{Triangular system in action-angle-normal variables}
The first step is to implement the Berti-Bolle approach \cite{BB15}, or actually its modified version \cite[Sec. 6]{HHM21}: a suitable choice of diffeomorphism of the phase-space $\textnormal{PS}$ conjugates the linear operator to a triangular system in action-angle-normal variables up to error terms vanishing at an exact solution, thus bringing the problem to the construction of an approximate right inverse in the normal directions only.\\

Assume that $\vartheta_0:\mathbb{T}^d\to\mathbb{T}^d$ is a diffeomorphism and consider the transformation $G_0:\textnormal{PS}\to\textnormal{PS}$ given by
$$\forall(\phi,\mathtt{y},\mathtt{w})\in\textnormal{PS},\quad G_0(\mathtt{\phi},\mathtt{y},\mathtt{w})\triangleq\begin{bmatrix}
    \theta_0(\phi)\\
    I_0(\phi)+\mathtt{L}_1(\phi)\mathtt{y}+\mathtt{L}_2(\phi)\mathtt{w}\\
    z_0(\phi)+\mathtt{w}
\end{bmatrix},$$
where
$$\mathtt{L}_1(\phi)\triangleq\big(\partial_{\varphi}\vartheta_0(\varphi)\big)^{-\top},\qquad\mathtt{L}_2(\varphi)\triangleq\Big(\partial_{\vartheta}\bar{z}_0\big(\vartheta_0(\varphi)\big)\Big)^{\top}\partial_{\theta}^{-1},\qquad \bar{z}_0(\vartheta)\triangleq z_0\big(\vartheta_0(\vartheta)\big).$$
Its linearization at the trivial torus gives
$$DG_0\big(i_{\textnormal{\tiny{triv}}}(\varphi)\big)=\begin{bmatrix}
    \partial_{\varphi}\vartheta_0(\varphi) & 0 & 0\\
    \partial_{\varphi}I_0(\varphi) & \mathtt{L}_1(\varphi) & \mathtt{L}_2(\varphi)\\
    \partial_{\varphi}z_0(\varphi) & 0 & \textnormal{Id}_{L^2_{\bot}} \\
\end{bmatrix}.$$
The torus $i_0$ being reversible it is easy to check the invariance
$$G_0\circ\mathfrak{S}=\mathfrak{S}\circ G_0$$
and consequently, the linear transformation $DG_0$ is reversibility preserving in the sense that it sends reversible tori onto reversible tori.
Then, denoting
$$\bar{G}_0(i,\alpha)\triangleq\big(G_0(i),\alpha\big),$$
we have, according to \cite[Sec. 6]{HHM21}, the following conjugation formula
\begin{equation}\label{conjugaison:Berti-Bolle}
    DG_0(i_{\textnormal{\tiny{triv}}})^{-1}d_{(i,\alpha)}\mathscr{F}(i_0,\alpha_0)D\bar{G}_0(i_{\textnormal{\tiny{triv}}})=\mathcal{T}+\mathcal{E},
\end{equation}
where
\begin{enumerate}[label=\textbullet]
    \item the transformation $\mathcal{T}$ is triangular in the variables $\hat{\phi},\hat{y},\hat{w}$ and given by
\begin{equation}\label{triangular_system}
    \mathcal{T}[\hat{\phi},\hat{\mathtt{y}},\hat{\mathtt{w}},\hat{\alpha}]\triangleq\begin{bmatrix}
        \omega\cdot\partial_{\varphi}\hat{\phi}-\big[H_{20}(\varphi)\hat{\mathtt{y}}+H_{11}^{\top}(\varphi)\hat{\mathtt{w}}+\mathtt{L}_1^{\top}(\varphi)\hat{\alpha}\big]\\
        \omega\cdot\partial_{\varphi}\mathtt{y}+\mathtt{P}(\varphi)\hat{\alpha}\\
        \mathcal{L}_{\bot}\hat{\mathtt{w}}-\partial_{\theta}\big[H_{11}(\varphi)\hat{\mathtt{y}}-\mathtt{L}_{2}^{\top}\hat{\alpha}\big]
    \end{bmatrix},
\end{equation}
with
\begin{align*}
\mathtt{P}(\varphi)&\triangleq \big(\partial_{\varphi}\vartheta_0(\varphi)\big)^{\top}\partial_{\varphi}I_0(\varphi)\mathtt{L}_1^{\top}(\varphi)+\big(\partial_{\varphi}z_0(\varphi)\big)^{\top}\mathtt{L}_2^{\top}(\varphi):\mathbb{R}^d\to\mathbb{R}^d,\\
    H_{20}(\varphi)&\triangleq\varepsilon\mathtt{L}_1^{\top}(\varphi)(\partial_{II}\mathcal{P}_{\varepsilon})\big(i_0(\varphi)\big)\mathtt{L}_1(\varphi):\mathbb{R}^d\to\mathbb{R}^d,\\
    H_{11}(\varphi)&\triangleq\varepsilon\mathtt{L}_2^{\top}(\varphi)(\partial_{II}\mathcal{P}_{\varepsilon})\big(i_0(\varphi)\big)\mathtt{L}_1(\varphi)+\varepsilon(\partial_{I}\nabla_z\mathcal{P}_{\varepsilon})\big(i_0(\varphi)\big)\mathtt{L}_1(\varphi):\mathbb{R}^d\to L^2_\bot.
\end{align*}
The operator $\mathcal{L}_{\bot}$ is the linearized operator in the normal directions defined by
\begin{equation}\label{lin_zdir}
	\begin{aligned}
	    \mathcal{L}_{\bot}&\triangleq \Pi_{\mathbb{S}_0}^{\bot}\Big(\omega\cdot\partial_{\varphi}-\partial_{\theta}( \partial_{z}\nabla_z H_\varepsilon^{\alpha_0}) \big(i_0(\varphi)\big)-\varepsilon\partial_{\theta}\mathcal{R}(\varphi)\Big)\Pi_{\mathbb{S}_0}^{\bot}\\
    &=\Pi_{\mathbb{S}_0}^{\bot}\Big(\mathcal{L}_{\varepsilon r}-\varepsilon\partial_{\theta}\mathcal{R}\Big)\Pi_{\mathbb{S}_0}^{\bot},
	\end{aligned}
\end{equation}
where $\mathcal{L}_{\varepsilon r}$ is the full linearized operator in the original variables obtained in Lemma \ref{lem:lin_origin} and
\begin{align}\label{finite_rank}
	\mathcal{R}(\varphi)\triangleq L_{2}^{\top}(\varphi)\partial_{I}\nabla_I\mathcal{P}_{\varepsilon}\big(i_{0}(\varphi)\big)L_{2}(\varphi)+L_{2}^{\top}(\varphi)\partial_{z}\nabla_{I}\mathcal{P}_{\varepsilon}\big(i_{0}(\varphi)\big)+\partial_{I}\nabla_{z}\mathcal{P}_{\varepsilon}\big(i_{0}(\varphi)\big)L_{2}(\varphi).
\end{align}
Note that, similarly to \cite[Prop. 6.1]{HR21-1}, we can show that the operator $\mathcal{R}(\varphi)$ is reversibility preserving, finite rank, and admits the integral form
$$\mathcal{R}(\varphi)[h](\varphi,\theta)=\int_{\mathbb{T}}h(\varphi,\eta)K(\varphi,\theta,\eta)d\eta,$$
where $K$ is a smooth kernel satisfying the reversibility preserving condition
\begin{equation}\label{sym:K}
    K(-\varphi,-\theta,-\eta)=K(\varphi,\theta,\eta)
\end{equation}
and the estimates
\begin{align}
    \|K\|_{q,s}&\lesssim 1+\|\mathfrak{I}_0\|_{q,s}^{\gamma,\mathscr{O}}\label{stima:K}\\
    \|\Delta_{12}K\|_{q,s}^{\gamma,\mathscr{O}}&\lesssim\|\Delta_{12}i\|_{q,s+3}^{\gamma,\mathscr{O}}+\|\Delta_{12}i\|_{q,s_0+3}^{\gamma,\mathscr{O}}\max_{k\in\{1,2\}}\|\mathfrak{I}_k\|_{q,s+3}^{\gamma,\mathscr{O}}.\label{stima:K12}
\end{align}
\item the operator $\mathcal{E}$ is an error term vanishing at an exact solution and taking the form
\begin{align*}
    \mathcal{E}[\hat{\phi},\hat{\mathtt{y}},\hat{\mathtt{w}}]&\triangleq DG_0(i_{\textnormal{\tiny{triv}}})^{-1}\partial_{\varphi}\mathtt{Z}(\varphi)\hat{\phi}\\
    &\quad+\begin{bmatrix}
0\\
\mathtt{Q}(\varphi)\big[\big(\partial_{\theta}( \partial_{z}\nabla_z H_\varepsilon^{\alpha_0}) \big(i_0(\varphi)\big)+\mathcal{R}(\varphi)\big)\hat{\mathtt{y}}+H_{11}^{\top}(\varphi)\hat{\mathtt{w}}\big]-R_{10}(\varphi)\hat{\mathtt{y}}-R_{01}(\varphi)\hat{\mathtt{w}}\\
0
\end{bmatrix},
\end{align*}
where
\begin{align*}
    \mathtt{Q}(\varphi)&\triangleq\big(\partial_{\varphi} \vartheta_0(\varphi)\big)^{\top}\partial_{\varphi} I_0(\varphi)-\big(\partial_\varphi I_0(\varphi)\big)^{\top}\partial_\varphi \vartheta_0(\varphi)  -\big(\partial_\varphi z_0(\varphi)\big)^{\top} \partial_{\theta}^{-1} \partial_\varphi z_0(\varphi):\mathbb{R}^d\to\mathbb{R}^d,\\
    R_{10}(\varphi)&\triangleq\big(\partial_{\varphi}\mathtt{Z}_1(\varphi)\big)^{\top}\mathtt{L}_1(\varphi):\mathbb{R}^d\to\mathbb{R}^d,\\
    R_{01}(\varphi)&\triangleq\big(\partial_{\varphi}\mathtt{Z}_1(\varphi)\big)^{\top}\mathtt{L}_2(\varphi)\partial_{\theta}^{-1}-\big(\partial_{\varphi}\mathtt{Z}_3(\varphi)\big)^{\top}\partial_{\theta}^{-1}:L^2_{\bot}\to\mathbb{R}^d,
\end{align*}
with
$$\mathtt{Z}=(\mathtt{Z}_1,\mathtt{Z}_2,\mathtt{Z}_3)\triangleq\mathscr{F}(i_0,\alpha_0).$$
\end{enumerate}
The triangular system \eqref{triangular_system} is solved via the inversion of the operators $\omega\cdot\partial_{\varphi}$ and $\mathcal{L}_{\bot}.$ The following subsubsection focuses on this last one. 
\subsubsection{Diagonalization and inversion in the normal directions}\label{sec:reductionKAM}
We shall now construct the approximate right inverse for the linearized operator restricted to the normal directions $\mathcal{L}_{\bot}$ introduced in \eqref{lin_zdir}. It is obtained via a reducibility procedure, conjugating $\mathcal{L}_{\bot}$ to a Fourier multiplier.\\

Define the scale
$$N_n\triangleq N_0^{\left(\frac{3}{2}\right)^n},\qquad N_0\gg1.$$
Throughout this section, we may consider the following parameters
\begin{equation}\label{prmtrrr}
	\begin{array}{ll} 
		s_{l}\triangleq s_0+\tau_1q+\tau_1+2,& \overline{\mu}\triangleq 4\tau_1q+6\tau_1+3, \\
		\overline{s}_{l}\triangleq s_l+\tau_2q+\tau_2, & \overline{s}_{h}\triangleq \frac{3}{2}\overline{\mu}+s_{l}+1\\
        \mu\geqslant \overline{\mu}+2\tau_2+\tau_2,&s_{h}\geqslant\max\left(\frac{3}{2}\mu+\overline{s}_{l}+1,\overline{s}_{h}+\mathtt{p}\right)\\
        \mathtt{p}\geqslant0.
	\end{array}
\end{equation}

Assume the smallness condition: there exists $\varepsilon_0>0$ small enough and $\sigma_{max}>0$ such that
\begin{equation}\label{petitesse}
    \varepsilon\gamma^{-2-q}N_0^{\mu}\leqslant\varepsilon_0,\qquad\|\mathfrak{I}_0\|_{q,s_h+\sigma_{max}}^{\gamma,\mathscr{O}}\leqslant1.
\end{equation}
In what follows, the Lipschitz estimates for $\Delta_{12}$ quantities assume that both tori $i_1$ and $i_2$ satisfy \eqref{petitesse}.\\

Let us start with the transport reduction to constant coefficients for the full linear operator. Classically, this is done via a symplectic change of variables \cite[Prop. 6.2]{HR21}. The next proposition explains this straightening and provides the effect of such transformation on lower order terms of the linearized operator.
\begin{prop}\label{prop:transport}
Let $\upsilon\in\left(0,\frac{1}{q+2}\right].$ 
	There exists $\sigma\triangleq\sigma(q,d,\tau_1)>0$ such that under the smallness condition \eqref{petitesse} with $\sigma\leqslant\sigma_{max}$, there exist
    \begin{enumerate}[label=\textbullet]
        \item an invertible transformation $\mathscr{B}$ in the form
    $$\mathscr{B}=(1+\partial_{\theta}\beta)\mathcal{B},\qquad\mathcal{B}h(b,\omega,\varphi,\theta)\triangleq h\big(b,\omega,\varphi,\theta+\beta(b,\omega,\varphi,\theta)\big),$$
    with 
    \begin{equation}\label{oddness_beta}
				\beta\in\bigcap_{s\in[s_{0},S]}W^{q,\infty,\gamma }(\mathscr{O},H^{s})\textnormal{ odd in }(\varphi,\theta)
			\end{equation}
            satisfying for all $s\in[s_0,S]$ and $\mathtt{p}\geqslant0,$
    \begin{align}
				\|\mathscr{B}^{\pm 1}h\|_{q,s}^{\gamma ,\mathscr{O}}&\lesssim\|h\|_{q,s}^{\gamma ,\mathscr{O}}+\varepsilon\gamma^{-1}\| \mathfrak{I}_{0}\|_{q,s+\sigma}^{\gamma ,\mathscr{O}}\|h\|_{q,s_{0}}^{\gamma ,\mathscr{O}},\label{stima:CVAR}\\
				\|\beta\|_{q,s}^{\gamma ,\mathscr{O}}&\lesssim \varepsilon\gamma ^{-1}\left(1+\| \mathfrak{I}_{0}\|_{q,s+\sigma}^{\gamma ,\mathscr{O}}\right),\label{stima:beta}\\
                \|\Delta_{12}\beta\|_{q,\overline{s}_{h}+\mathtt{p}}^{\gamma,\mathscr{O}}&\lesssim\varepsilon\gamma^{-1}\|\Delta_{12}i\|_{q,\overline{s}_{h}+\mathtt{p}+\sigma}^{\gamma,\mathscr{O}};\label{Lipschitz_beta}
			\end{align}
    \item a coefficient $V_{i_0}^{\infty}\in W^{q,\infty,\gamma }(\mathscr{O},\mathbb{C})$ satisfying
    \begin{align}
		\|V_{i_{0}}^{\infty}-\tfrac{1}{2}\|_{q}^{\gamma ,\mathscr{O}}&\lesssim \varepsilon,\label{stima:Vinfty}\\
        \|\Delta_{12}V_{i}^{\infty}\|_{q}^{\gamma,\mathscr{O}}&\lesssim\varepsilon\| \Delta_{12}i\|_{q,\overline{s}_{h}+2}^{\gamma,\mathscr{O}} \label{Lipschitz V}
    \end{align}
    \end{enumerate}
such that for any $n\in\mathbb{N}$, in restriction to the Cantor set
			\begin{equation}\label{def Cantor trans}
				\mathscr{O}_{n}^{(\mathtt{TR})}(\gamma,\tau_{1},i_{0})\triangleq\bigcap_{(l,j)\in\mathbb{Z}^{d}\times\mathbb{Z}\setminus\{(0,0)\}\atop|l|\leqslant N_{n}}\left\lbrace(b,\omega)\in \mathscr{O}\quad\textnormal{s.t.}\quad\big|\omega\cdot l+jV_{i_{0}}^{\infty}(b,\omega)\big|>\tfrac{4\gamma^{\upsilon}\langle j\rangle}{\langle l\rangle^{\tau_{1}}}\right\rbrace,
			\end{equation}
			the following decomposition holds
			$$\mathscr{B}^{-1}\mathcal{L}_{\varepsilon r}\mathscr{B}=\omega\cdot\partial_{\varphi}+V_{i_{0}}^{\infty}\partial_{\theta}+\tfrac{1}{2}\mathcal{H}-\partial_{\theta}\mathcal{K}_{b}\ast\cdot+\mathfrak{R}_{\varepsilon r}+\mathtt{E}_{n}^{(\mathtt{TR})},$$
			where
            \begin{enumerate}[label=\textbullet]
                \item $\mathcal{H}$ is the $2\pi$-periodic Hilbert transform
    $$\mathcal{H}[h](\theta)\triangleq\int_{\mathbb{T}}\cot\left(\tfrac{\theta-\eta}{2}\right)h(\eta)d\eta;$$
    \item $\mathcal{K}_{b}$ is the kernel defined in Lemma \ref{lem:equilibre_HR21};
    \item $\mathfrak{R}_{\varepsilon r}$ is a real and reversibility preserving integral operator satisfying
	\begin{align}
		\forall s\in[s_0,S],\quad \max_{k\in\{0,1\}}\|\partial_{\theta}^{k}\mathfrak{R}_{\varepsilon r}\|_{\textnormal{\tiny{O-d}},q,s}^{\gamma,\mathscr{O}}&\lesssim\varepsilon\gamma^{-1}\left(1+\|\mathfrak{I}_{0}\|_{q,s+\sigma}^{\gamma,\mathscr{O}}\right).\label{stima:Rfrac}\\
        \|\Delta_{12}\mathfrak{R}_{\varepsilon r}\|_{\textnormal{\tiny{O-d}},q,\overline{s}_{h}+\mathtt{p}}^{\gamma,\mathscr{O}}&\lesssim\varepsilon\gamma^{-1}\|\Delta_{12}i\|_{q,\overline{s}_{h}+\mathtt{p}+\sigma}^{\gamma,\mathscr{O}};\label{stima:Rfrac12}
	\end{align}
     \item $\mathtt{E}_{n}^{(\mathtt{TR})}=\mathtt{E}_{n}^{(\mathtt{TR})}(b,\omega,i_{0})$ is a linear operator satisfying
	   \begin{equation}\label{stima:EnTR}
       \|\mathtt{E}_{n}^{(\mathtt{TR})}h\|_{q,s_{0}}^{\gamma,\mathscr{O}}\lesssim\varepsilon N_{0}^{\mu}N_{n+1}^{-\mu}\|h\|_{q,s_{0}+2}^{\gamma,\mathscr{O}}.
	   \end{equation}
\end{enumerate}
\end{prop}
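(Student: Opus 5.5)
The proof follows the transport reduction of \cite[Prop. 6.2]{HR21} (and its disc version in \cite{HR21-1}). The only new ingredient is to check that the perturbative terms coming from $\varepsilon^2G_p^{\varepsilon}$ are harmless.

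\textbf{Step 1: decompose the linearized operator.} By Lemma \ref{lem:lin_origin} and \eqref{dec VLS}, the linearized operator reads
$$\mathcal{L}_{\varepsilon r}=\omega\cdot\partial_\varphi+\partial_\theta\big(V_{\varepsilon r}\,\cdot\big)+\partial_\theta\mathbf{L}_{\varepsilon r}+\partial_\theta\mathbf{S}_{\varepsilon r}.$$
Here $V_{\varepsilon r}=V^{\mathbb{D}}_{\varepsilon r}+\varepsilon^2V^p_{\varepsilon r}$. From \eqref{VerD} one has $V^{\mathbb{D}}_{0}=-\tfrac12$, up to the sign conventions of \cite{HR21-1}, where the transport speed is normalized near $\tfrac12$. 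By \eqref{e-Verp} and \eqref{stima:r} we get
$$\|V_{\varepsilon r}-V_0\|_{q,s}^{\gamma,\mathscr{O}}\lesssim\varepsilon\big(1+\|\mathfrak{I}_0\|_{q,s+\sigma}^{\gamma,\mathscr{O}}\right).$$
The reversibility \eqref{symVr} gives evenness of $V_{\varepsilon r}$ in $(\varphi,\theta)$. Thus the transport coefficient has exactly the same structure as in the disc case, with an extra smooth $O(\varepsilon^2)$ contribution.

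\textbf{Step 2: straighten the transport part.} I would apply the KAM-type reduction of \cite[Prop. 6.2]{HR21}. It seeks $\beta$ odd in $(\varphi,\theta)$ such that
$$\mathcal{B}^{-1}\big(\omega\cdot\partial_\varphi+\partial_\theta(V_{\varepsilon r}\,\cdot)\big)\mathcal{B}=\omega\cdot\partial_\varphi+V^\infty_{i_0}\partial_\theta$$
holds on the Cantor set \eqref{def Cantor trans}. This is done by a Nash--Moser/KAM iteration on the homological equation
$$\omega\cdot\partial_\varphi\beta+V^\infty\partial_\theta\beta=\text{(remainder of the coefficient)},$$
truncated at frequencies $N_n$. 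Truncation produces the error $\mathtt{E}_n^{(\mathtt{TR})}$, which obeys \eqref{stima:EnTR}. The divisors are controlled by the diophantine condition with exponent $\gamma^{\upsilon}$, and the constraint $\upsilon\le 1/(q+2)$ lets the $q$ parameter derivatives be absorbed under \eqref{petitesse}. The estimates \eqref{stima:beta} and \eqref{stima:CVAR} then follow from the tame composition Lemma \ref{lem:funct_topo} and Lemma \ref{lem;tame_calXP}. The bound \eqref{stima:Vinfty} holds because $V^\infty_{i_0}$ is the space-time average of the limiting coefficient, which is $\tfrac12+O(\varepsilon)$.

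The Lipschitz bounds \eqref{Lipschitz_beta} and \eqref{Lipschitz V} come from differentiating the scheme with respect to $i$ and using \eqref{stima:r12}. The symplectic weight $(1+\partial_\theta\beta)$ in $\mathscr{B}$ makes the conjugation Hamiltonian. It also means that conjugating $\partial_\theta(V\cdot)$ gives a pure transport term with no zero-order remainder.

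\textbf{Step 3: conjugate the nonlocal parts.} Next I would treat $\partial_\theta\mathbf{L}_{\varepsilon r}$ and $\partial_\theta\mathbf{S}_{\varepsilon r}$. The kernel $\log A_{\varepsilon r}$ splits into $\log|\sin(\tfrac{\theta-\eta}2)|$ plus a smooth kernel. The singular part produces $\tfrac12\mathcal{H}$ up to smoothing terms, while $-\log B_{\varepsilon r}$ at $\varepsilon=0$ produces $-\partial_\theta\mathcal{K}_b\ast$. After conjugation by $\mathcal{B}$, each operator equals its $\varepsilon=0$ value plus integral operators with kernels of size $\varepsilon\gamma^{-1}$. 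These are collected in $\mathfrak{R}_{\varepsilon r}$, and the kernel estimates in the off-diagonal norm give \eqref{stima:Rfrac} and \eqref{stima:Rfrac12}.

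The new term $\varepsilon^2\partial_\theta\mathbf{S}^p_{\varepsilon r}$ has the smooth kernel $K^p_{\varepsilon r}$ from \eqref{Kerp}, estimated in \eqref{e-Kerp}. Conjugated by $\mathcal{B}$, it stays a smooth kernel and can be put directly into $\mathfrak{R}_{\varepsilon r}$ with an even better size $\varepsilon^2$. Reality and reversibility preservation of $\mathfrak{R}_{\varepsilon r}$ follow from the oddness of $\beta$ and the kernel symmetry inherited from \eqref{sym G}.

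\textbf{Main obstacle.} The hardest step is the transport iteration, specifically tracking that the quantitative bounds hold uniformly in $n$ under \eqref{petitesse}. Since $V^p_{\varepsilon r}$ obeys the same tame bounds as $V^{\mathbb{D}}_{\varepsilon r}$, I would verify this by checking that every estimate in \cite[Prop. 6.2]{HR21} uses only tame bounds on $V_{\varepsilon r}$ together with its evenness. The argument then carries over verbatim.
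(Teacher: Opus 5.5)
Your proposal is essentially the paper's proof. It decomposes $\mathcal{L}_{\varepsilon r}$ via \eqref{dec VLS} and observes that $\varepsilon^2V^p_{\varepsilon r}$ is only a small smooth even correction, so the straightening of \cite[Prop. 6.2]{HR21} applies unchanged; it then splits $\partial_\theta\mathbf{L}_{\varepsilon r}$ and $\partial_\theta\mathbf{S}_{\varepsilon r}$ into $\tfrac12\mathcal{H}$, $-\partial_\theta\mathcal{K}_b\ast\cdot$ and smooth kernels (including the $\varepsilon^2G_p^{\varepsilon}$ part), and collects all conjugation errors in $\mathfrak{R}_{\varepsilon r}$ via Lemmas \ref{lem:conj_intgral} and \ref{lem:Hilbert}.

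There are only small slips. Evaluating \eqref{VerD} at $R_\varepsilon\equiv b$ gives exactly $-\tfrac12+\tfrac12+\tfrac12=\tfrac12$, so no sign-convention caveat is needed. Your straightening identity must use $\mathscr{B}$, not $\mathcal{B}$: with $\mathcal{B}$ alone the zero-order term $\mathcal{B}^{-1}(\partial_\theta V_{\varepsilon r})$ survives, as your next remark already implies. Finally, $\mathfrak{R}_{\varepsilon r}$ is built from $\mathscr{B}^{-1}\mathcal{H}\mathscr{B}-\mathcal{H}$ and from $\partial_\theta$ composed with conjugated even-kernel integral operators. It is therefore reversible, as the paper's proof concludes and as Proposition \ref{prop:normal_projection} uses, rather than reversibility preserving.
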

\begin{proof}
According to Lemma \ref{lem:lin_origin}, we have
\begin{equation}\label{rappel:L}
    \mathcal{L}_{\varepsilon r}=\omega\cdot\partial_{\varphi}+\partial_{\theta}(V_{\varepsilon r}\cdot)+\partial_{\theta}\mathbf{L}_{\varepsilon r}+\partial_{\theta}\mathbf{S}_{\varepsilon r}.
\end{equation}
	 Recall from \eqref{dec VLS} that
	$$V_{\varepsilon r}=V_{\varepsilon r}^{\mathbb{D}}+\varepsilon^2V_{\varepsilon r}^p,$$
	with $V_{\varepsilon r}^{\mathbb{D}}$ and $V_{\varepsilon r}^p$ defined by \eqref{VerD} and \eqref{Verp}, respectively. The following estimate can be found in \cite{HR21-1}, see also \eqref{stima:r},
    $$\|V_{\varepsilon r}^{\mathbb{D}}-\tfrac{1}{2}\|_{q,s}^{\gamma,\mathscr{O}}\lesssim\varepsilon\|r\|_{q,s+1}^{\gamma,\mathscr{O}}\lesssim\varepsilon\left(1+\|\mathfrak{I}_{0}\|_{q,s+1}^{\gamma,\mathscr{O}}\right).$$
Also recall the reversibility property \eqref{symVr} of the transport coefficient.
	These properties allow to implement the KAM reducibility scheme devised in \cite[Prop. 6.2]{HR21} providing $\beta$ and $V_{i_0}^{\infty}$ with their corresponding estimates such that, for any $n\in\mathbb{N},$ in restriction to the Cantor set $\mathscr{O}_{n}^{(\mathtt{TR})}(\gamma,\tau_{1},i_{0})$ in \eqref{def Cantor trans}, the following identity holds
	\begin{equation}\label{reduc lin trans op}
		\mathscr{B}^{-1}\big(\omega\cdot\partial_{\varphi}+\partial_{\theta}\big(V_{\varepsilon r}\cdot\big)\big)\mathscr{B}=\omega\cdot\partial_{\varphi}+V_{i_{0}}^{\infty}\partial_{\theta}+\mathtt{E}_{n}^{(\mathtt{TR})},
	\end{equation}
	where $\mathtt{E}_{n}^{(\mathtt{TR})}$ is an operator enjoying the decay property stated in \eqref{stima:EnTR}. In view of \eqref{mathbfLr} and \eqref{bfSr_EDP}, we have the following decompositions
	\begin{equation}\label{dec ops}
		\partial_{\theta}\mathbf{L}_{\varepsilon r}=\partial_{\theta}\mathcal{K}_{1}\ast\cdot+\partial_{\theta}\mathcal{I}_{\mathbb{K}_{\varepsilon r}^{\mathbf{L}}},\qquad\partial_{\theta}\mathbf{S}_{\varepsilon r}=-\partial_{\theta}\mathcal{K}_{b}\ast\cdot+\partial_{\theta}\mathcal{I}_{\mathbb{K}_{\varepsilon r}^{\mathbf{S}}},
	\end{equation}
	where $\mathcal{I}_{\mathbb{K}_{\varepsilon r}^{\mathbf{L}}}$ and $\mathcal{I}_{\mathbb{K}_{\varepsilon r}^{\mathbf{S}}}$ are integral operators in the sense \eqref{def:integr} with kernels
    \begin{align*}
        \mathbb{K}_{\varepsilon r}^{\mathbf{L}}&\triangleq\log(1+v_{\varepsilon r}), \qquad v_{\varepsilon r}(\varphi,\theta,\eta)\triangleq\left(R_{\varepsilon}(\varphi,\theta)R_{\varepsilon}(\varphi,\eta)+\left(\frac{R_{\varepsilon}(\varphi,\theta)-R_{\varepsilon}(\varphi,\eta)}{2\sin\left(\frac{\theta-\eta}{2}\right)}\right)^2\right)^{\frac{1}{2}}
    \end{align*}
    and
    \begin{align*}
        \mathbb{K}_{\varepsilon r}^{\mathbf{S}}(\varphi,\theta,\eta)&\triangleq\varepsilon^2 G_{p}^{\varepsilon}\big(R_{\varepsilon}(\varphi,\theta)e^{\ii\theta},R_{\varepsilon}(\varphi,\eta)e^{\ii\eta}\big)-\log\left(\frac{B_{\varepsilon r}(\varphi,\theta,\eta)}{B_{0}(\varphi,\theta,\eta)}\right).
    \end{align*}
    From \eqref{symr} and \eqref{sym G}, we have
    \begin{equation}\label{sym:bbK-LS}
        \mathbb{K}_{\varepsilon r}^{\mathbf{L}}(-\varphi,-\theta,-\eta)=\mathbb{K}_{\varepsilon r}^{\mathbf{L}}(\varphi,\theta,\eta),\qquad \mathbb{K}_{\varepsilon r}^{\mathbf{S}}(-\varphi,-\theta,-\eta)=\mathbb{K}_{\varepsilon r}^{\mathbf{S}}(\varphi,\theta,\eta).
    \end{equation}
    Making appeal to Lemma \ref{lem:funct_topo}-$(v)$ and \cite[Lem. 4.2]{HR21} and using \eqref{stima:r}-\eqref{stima:r12}, we get
    \begin{equation}\label{stima:kernelsLS}
        \begin{aligned}
        \|\mathbb{K}_{\varepsilon r}^{\mathbf{L}}\|_{q,s}^{q,\mathscr{O}}+\|\mathbb{K}_{\varepsilon r}^{\mathbf{S}}\|_{q,s}^{q,\mathscr{O}}&\lesssim\varepsilon\|r\|_{q,s+1}^{\gamma,\mathscr{O}}\lesssim\varepsilon\left(1+\|\mathfrak{I}_{0}\|_{q,s+1}^{\gamma,\mathscr{O}}\right),\\
        \|\Delta_{12}\mathbb{K}_{\varepsilon r}^{\mathbf{L}}\|_{q,s}^{q,\mathscr{O}}+\|\Delta_{12}\mathbb{K}_{\varepsilon r}^{\mathbf{S}}\|_{q,s}^{q,\mathscr{O}}&\lesssim\varepsilon\|\Delta_{12}r\|_{q,s+1}^{\gamma,\mathscr{O}}\lesssim\varepsilon\left(\|\Delta_{12}i\|_{q,s+1}^{\gamma,\mathscr{O}}+\|\Delta_{12}i\|_{q,s_0}^{\gamma,\mathscr{O}}\max_{k\in\{1,2\}}\|\mathfrak{I}_k\|_{q,s+1}^{\gamma,\mathscr{O}}\right).
    \end{aligned}
    \end{equation}
    Also note the identity,
    $$\partial_{\theta}\mathcal{K}_{1}\ast\cdot=\tfrac{1}{2}\mathcal{H}.$$
    Therefore, using \eqref{rappel:L}, and \eqref{reduc lin trans op}, one obtains in the Cantor set $\mathscr{O}_{n}^{(\mathtt{TR})}(\gamma,\tau_{1},i_{0})$ the following decomposition
    \begin{align*}
		\mathscr{B}^{-1}\mathcal{L}_{\varepsilon r}\mathscr{B}&=\mathscr{B}^{-1}\big(\omega\cdot\partial_{\varphi}+\partial_{\theta}\left(V_{\varepsilon r}\cdot\right)\big)\mathscr{B}+\mathscr{B}^{-1}\partial_{\theta}\mathbf{L}_{\varepsilon r}\mathscr{B}+\mathscr{B}^{-1}\partial_{\theta}\mathbf{S}_{\varepsilon r}\mathscr{B}\\
		&=\omega\cdot\partial_{\varphi}+V_{i_{0}}^{\infty}\partial_{\theta}+\tfrac{1}{2}\mathcal{H}-\partial_{\theta}\mathcal{K}_{b}\ast\cdot+\mathfrak{R}_{\varepsilon r}+\mathtt{E}_{n}^{(\mathtt{TR})},
	\end{align*}
    where
    \begin{align*}
        \mathfrak{R}_{\varepsilon r}&\triangleq\tfrac{1}{2}(\mathscr{B}^{-1}\mathcal{H}\mathscr{B}-\mathcal{H})-\Big(\mathscr{B}^{-1}\partial_{\theta}\big(\mathcal{K}_{b}\ast\cdot\big)\mathscr{B}-\partial_{\theta}\mathcal{K}_{b}\ast\cdot\Big)+\mathscr{B}^{-1}\mathcal{I}_{\mathbb{K}_{\varepsilon r}^{\mathbf{L}}}\mathscr{B}+\mathscr{B}^{-1}\mathcal{I}_{\mathbb{K}_{\varepsilon r}^{\mathbf{S}}}\mathscr{B}.
    \end{align*}
    Combining \eqref{sym:bbK-LS}, \eqref{def:calKb}, \eqref{oddness_beta} with Lemma \ref{lem:conj_intgral}, Lemma \ref{lem:Hilbert}, we deduce that $\mathfrak{R}_{\varepsilon r}$ is a reversible Toeplitz-in-time integral operator satisfying by virtue of \eqref{stima:conj_int}, \eqref{stima:conj_int_bis}, \eqref{stima:D12_conj}, \eqref{stime:Hilbert}, \eqref{stima:beta}, \eqref{Lipschitz_beta}, \eqref{stima:kernelsLS} and \eqref{petitesse} the estimates \eqref{stima:Rfrac}-\eqref{stima:Rfrac12}, up to increase the value of $\sigma$ if necessary. This achieves the proof of Proposition \ref{prop:transport}.
\end{proof}
Next, we study the effects of the localization in the normal directions for the reduction of the transport part. For that purpose, we consider the localized quasi-periodic symplectic change of coordinates defined by
\begin{equation}\label{def:Bbot}
    \mathscr{B}_{\bot}\triangleq\Pi_{\mathbb{S}_0}^{\bot}\mathscr{B}\Pi_{\mathbb{S}_{0}}^{\bot}.
\end{equation}
It satisfies the estimates for $s\in[s_0,S],$
\begin{equation}\label{stima:Bbot}
	\|\mathscr{B}_{\bot}^{\pm 1}h\|_{q,s}^{\gamma ,\mathscr{O}}\lesssim\|h\|_{q,s}^{\gamma ,\mathscr{O}}+\varepsilon\gamma ^{-1}\| \mathfrak{I}_{0}\|_{q,s+\sigma}^{\gamma ,\mathscr{O}}\|h\|_{q,s_{0}}^{\gamma ,\mathscr{O}}.
\end{equation}
Then, we have the following result.
\begin{prop}\label{prop:normal_projection}
	There exist $\sigma\triangleq\sigma(\tau_{1},q,d,s_{0})>0$ such that under the smallness condition \eqref{petitesse} with $\sigma\leqslant\sigma_{max}$, for any $n\in\mathbb{N},$ the following identity holds in restriction to the Cantor set $\mathscr{O}_{n}^{(\mathtt{TR})}(\gamma,\tau_{1},i_{0})$ in \eqref{def Cantor trans},
    $$\mathscr{B}_{\bot}^{-1}\mathcal{L}_{\bot}\mathscr{B}_{\bot}= \omega\cdot\partial_{\varphi}\Pi_{\mathbb{S}_0}^{\bot}+\mathscr{D}_{0}+\mathscr{R}_{0}+\mathtt{E}_{n}^{(\mathtt{Nor})},$$
        where
	\begin{enumerate}[label=\textbullet]
		 \item $\mathcal{L}_{\bot}$ and $\mathscr{B}_{\bot}$ are as in \eqref{lin_zdir} and \eqref{def:Bbot}.
		\item $\mathscr{D}_{0}=\Pi_{\mathbb{S}_0}^\bot \mathscr{D}_{0}\Pi_{\mathbb{S}_0}^\bot\triangleq\big(V_{i_{0}}^{\infty}\partial_{\theta}+\tfrac{1}{2}\mathcal{H}-\partial_{\theta}\mathcal{K}_{b}\ast\cdot\big)\Pi_{\mathbb{S}_0}^{\bot}$ is a reversible Fourier multiplier operator given by 
		\begin{equation}\label{initial diago}
		    \forall (l,j)\in \mathbb{Z}^{d}\times\mathbb{S}_{0}^{c},\quad  \mathscr{D}_{0}\mathbf{e}_{l,j}=\ii\,\mathtt{d}_{j}^{0}\,\mathbf{e}_{l,j},\qquad \mathtt{d}_{j}^{0}(b,\omega,i_{0})\triangleq\Omega_{j}(b)+j\big(V_{i_{0}}^{\infty}(b,\omega)-\tfrac{1}{2}\big).
		\end{equation}
        \item $\mathscr{R}_{0}=\Pi_{\mathbb{S}_0}^\bot \mathscr{R}_{0}\Pi_{\mathbb{S}_0}^\bot$  is a real and reversible Toeplitz in time operator satisfying 
        \begin{align}
            \forall s\in [s_{0},S],\quad \max_{k\in\{0,1\}}\|\partial_{\theta}^{k}\mathscr{R}_{0}\|_{\textnormal{\tiny{O-d}},q,s}^{\gamma,\mathscr{O}}&\lesssim\varepsilon\gamma^{-1}\left(1+\| \mathfrak{I}_{0}\|_{q,s+\sigma}^{\gamma,\mathscr{O}}\right),\label{stima:R0}\\
            \|\Delta_{12}\mathscr{R}_{0}\|_{\textnormal{\tiny{O-d}},q,\overline{s}_{h}+\mathtt{p}}^{\gamma,\mathscr{O}}&\lesssim\varepsilon\gamma^{-1}\| \Delta_{12}i\|_{q,\overline{s}_{h}+\mathtt{p}+\sigma}^{\gamma,\mathscr{O}}.\label{LipschitzR0}
        \end{align}
		\item The operator $\mathtt{E}_{n}^{(\mathtt{Nor})}=\Pi_{\mathbb{S}_0}^\bot \mathtt{E}_{n}^{(\mathtt{Nor})}\Pi_{\mathbb{S}_0}^\bot$ satisfies the following estimate
		\begin{equation}\label{stima:ttEn_Nor}
			\|\mathtt{E}_{n}^{(\mathtt{Nor})}h\|_{q,s_{0}}^{\gamma,\mathscr{O}}\lesssim \varepsilon N_{0}^{\mu}N_{n+1}^{-\mu}\|h\|_{q,s_{0}+2}^{\gamma,\mathscr{O}}.
		\end{equation}
	\end{enumerate}
\end{prop}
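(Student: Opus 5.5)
We start from the full conjugation of Proposition \ref{prop:transport} and deduce the normal-direction version by projecting onto $L^2_\bot$. Recall from \eqref{lin_zdir} that $\mathcal{L}_\bot=\Pi_{\mathbb{S}_0}^\bot(\mathcal{L}_{\varepsilon r}-\varepsilon\partial_\theta\mathcal{R})\Pi_{\mathbb{S}_0}^\bot$, and that $\mathscr{B}_\bot=\Pi_{\mathbb{S}_0}^\bot\mathscr{B}\Pi_{\mathbb{S}_0}^\bot$ is invertible on $L^2_\bot$ with the bound \eqref{stima:Bbot}. Set $\Pi\triangleq\Pi_{\mathbb{S}_0}^\bot$ and $\Pi^{\top}\triangleq\textnormal{Id}-\Pi$, which is a finite-rank projector onto the modes in $\mathbb{S}_0$. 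We write
$$\mathscr{B}_\bot=\Pi\mathscr{B}\Pi=\mathscr{B}\Pi-\Pi^{\top}\mathscr{B}\Pi.$$
The inverse $\mathscr{B}_\bot^{-1}$ is obtained, as in \cite[Prop. 6.3]{HR21}, through a Neumann series: $\Pi\mathscr{B}^{-1}\Pi\,\mathscr{B}_\bot=\Pi-\Pi\mathscr{B}^{-1}\Pi^{\top}\mathscr{B}\Pi$. Here $\Pi^{\top}\mathscr{B}\Pi$ is finite rank and small, because $\mathscr{B}-\textnormal{Id}$ is $O(\varepsilon\gamma^{-1})$ by \eqref{stima:beta}.

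Inserting these expressions, we get
$$\mathscr{B}_\bot^{-1}\mathcal{L}_\bot\mathscr{B}_\bot=\Pi\mathscr{B}^{-1}\mathcal{L}_{\varepsilon r}\mathscr{B}\Pi+(\textnormal{finite rank corrections})-\varepsilon\,\mathscr{B}_\bot^{-1}\Pi\partial_\theta\mathcal{R}\Pi\mathscr{B}_\bot.$$
Each correction term contains a factor $\Pi^{\top}$ sandwiched between $\mathscr{B}^{\pm1}$ and either $\mathcal{L}_{\varepsilon r}$ or $\mathscr{B}^{-1}\mathcal{L}_{\varepsilon r}\mathscr{B}$. On the Cantor set $\mathscr{O}_n^{(\mathtt{TR})}$ we apply Proposition \ref{prop:transport} to the main term. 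The constant-coefficient part $\omega\cdot\partial_\varphi+V_{i_0}^\infty\partial_\theta+\tfrac12\mathcal{H}-\partial_\theta\mathcal{K}_b\ast\cdot$ is a Fourier multiplier, so it commutes with $\Pi$. Its symbol at mode $j$ is $\ii\big(jV^\infty_{i_0}-\tfrac{j}{2}+\Omega_j(b)\big)$, using the identity $\tfrac12\mathcal{H}-\partial_\theta\mathcal{K}_b\ast=\partial_\theta\mathrm{L}(b)+\tfrac12\partial_\theta$ from Lemma \ref{lem:equilibre_HR21}. This gives $\mathscr{D}_0$ with the eigenvalues $\mathtt{d}^0_j$ of \eqref{initial diago}, and its reversibility follows from the fact that $V_{i_0}^\infty$ is real. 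The projected error $\Pi\mathtt{E}_n^{(\mathtt{TR})}\Pi$ defines $\mathtt{E}_n^{(\mathtt{Nor})}$, and \eqref{stima:ttEn_Nor} follows directly from \eqref{stima:EnTR}.

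The remainder $\mathscr{R}_0$ collects three contributions:
\begin{enumerate}[label=\textbullet]
\item $\Pi\mathfrak{R}_{\varepsilon r}\Pi$, which is controlled by \eqref{stima:Rfrac}--\eqref{stima:Rfrac12};
\item the conjugated finite rank operator $\varepsilon\mathscr{B}_\bot^{-1}\Pi\partial_\theta\mathcal{R}\Pi\mathscr{B}_\bot$;
\item the finite rank corrections produced by the projectors.
\end{enumerate}
For the second contribution, $\mathcal{R}$ has a smooth kernel $K$ with \eqref{sym:K}--\eqref{stima:K12}, so conjugating by $\mathscr{B}^{\pm1}$ through Lemma \ref{lem:conj_intgral} gives an integral operator with off-diagonal norm $\lesssim\varepsilon(1+\|\mathfrak{I}_0\|_{q,s+\sigma})$. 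For the third, we write $\Pi^{\top}=\sum_{j\in\mathbb{S}_0}\langle\cdot,\mathbf{e}_j\rangle\mathbf{e}_j$. Each resulting term then has the form $h\mapsto\langle h,g_1\rangle g_2$, where $g_1,g_2$ are built from $(\mathscr{B}^{\pm1})^{*}\mathbf{e}_j$ and $\mathcal{L}_{\varepsilon r}^*$ applied to these. Crucially, each term carries a factor $(\mathscr{B}-\textnormal{Id})$ or $(\mathscr{B}^{-1}-\textnormal{Id})$, and therefore a smallness $\varepsilon\gamma^{-1}$ after using \eqref{stima:beta}. Such operators are integral operators with kernel $g_2(\varphi,\theta)g_1(\varphi,\eta)$, and their $\textnormal{O-d}$ norms are estimated by tame product laws. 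The reality, reversibility and Toeplitz-in-time properties come from \eqref{oddness_beta}, \eqref{sym:K} and the reversibility of $\mathfrak{R}_{\varepsilon r}$.

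\textbf{Main obstacle.} The delicate step is the finite-rank bookkeeping. We have to verify that every correction involving $\Pi^{\top}$ genuinely carries the small factor $\mathscr{B}^{\pm1}-\textnormal{Id}$, so that no $O(1)$ term is left over. We must also check that the derivative $\omega\cdot\partial_\varphi$, when it falls on $\beta$ inside these terms, only costs a loss $\sigma$ in regularity. This loss is absorbed by enlarging $\sigma$, exactly as in \cite[Prop. 6.3]{HR21}. The Lipschitz estimate \eqref{LipschitzR0} then follows by applying $\Delta_{12}$ to each piece and using \eqref{Lipschitz_beta}, \eqref{stima:K12} and \eqref{stima:Rfrac12}.
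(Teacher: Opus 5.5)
Your proposal is correct and is essentially the paper's argument: you project the conjugation of Proposition \ref{prop:transport} onto $L^2_\bot$, let the constant-coefficient multiplier commute with $\Pi_{\mathbb{S}_0}^\bot$, and collect $\Pi_{\mathbb{S}_0}^\bot\mathfrak{R}_{\varepsilon r}\Pi_{\mathbb{S}_0}^\bot$, the projector corrections carrying $\mathscr{B}-\textnormal{Id}$, and $\varepsilon\mathscr{B}_\bot^{-1}\Pi_{\mathbb{S}_0}^\bot\partial_\theta\mathcal{R}\mathscr{B}_\bot$ into $\mathscr{R}_0$. The only difference is bookkeeping: the paper substitutes $\mathcal{L}_{\varepsilon r}\mathscr{B}=\mathscr{B}(\cdots+\mathtt{E}_n^{(\mathtt{TR})})$ into the whole of $\Pi_{\mathbb{S}_0}^\bot\mathcal{L}_{\varepsilon r}\mathscr{B}\Pi_{\mathbb{S}_0}^\bot$, so its correction is $\mathscr{B}_\bot^{-1}\Pi_{\mathbb{S}_0}^\bot\mathscr{B}\Pi_{\mathbb{S}_0}\mathfrak{R}_{\varepsilon r}\Pi_{\mathbb{S}_0}^\bot$, its error is $\mathtt{E}_n^{(\mathtt{Nor})}=\mathscr{B}_\bot^{-1}\Pi_{\mathbb{S}_0}^\bot\mathscr{B}\mathtt{E}_n^{(\mathtt{TR})}\Pi_{\mathbb{S}_0}^\bot$, and no $\omega\cdot\partial_\varphi\beta$ ever appears (in the other correction $\Pi_{\mathbb{S}_0}^\bot\,\omega\cdot\partial_\varphi\,\Pi_{\mathbb{S}_0}=0$), whereas your choice $\mathtt{E}_n^{(\mathtt{Nor})}=\Pi_{\mathbb{S}_0}^\bot\mathtt{E}_n^{(\mathtt{TR})}\Pi_{\mathbb{S}_0}^\bot$ is estimated trivially but requires bounding the unreduced $\Pi_{\mathbb{S}_0}\mathscr{B}^{-1}\mathcal{L}_{\varepsilon r}\mathscr{B}\Pi_{\mathbb{S}_0}^\bot$, which is exactly where your $\omega\cdot\partial_\varphi\beta$ loss comes from; note also that the identity you quote should read $\tfrac12\mathcal{H}-\partial_\theta\mathcal{K}_b\ast\cdot=-\partial_\theta\mathrm{L}(b)-\tfrac12\partial_\theta$, although the symbol $\mathtt{d}_j^0$ you state is correct.
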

\begin{proof}
Recall from Proposition \ref{prop:transport} that, in restriction to the Cantor set $\mathcal{O}_{n}^{(\mathtt{TR})}(\gamma,\tau_1,i_0)$, the following identity holds
$$\mathcal{L}_{\varepsilon r}\mathscr{B}=\mathscr{B}\big(\omega\cdot\partial_{\varphi}+V_{i_{0}}^{\infty}\partial_{\theta}+\tfrac{1}{2}\mathcal{H}-\partial_{\theta}\mathcal{K}_{b}\ast\cdot+\mathfrak{R}_{\varepsilon r}+\mathtt{E}_{n}^{(\mathtt{TR})}\big).$$
Also observe that
$$\textnormal{Id}=\Pi_{\mathbb{S}_0}+\Pi_{\mathbb{S}_0}^\bot,\qquad (\Pi_{\mathbb{S}_0}^\bot)^2=\Pi_{\mathbb{S}_0}^\bot,\qquad\Pi_{\mathbb{S}_0}\Pi_{\mathbb{S}_0}^\bot=\Pi_{\mathbb{S}_0}^\bot\Pi_{\mathbb{S}_0}=0.$$
Therefore, denoting 
\begin{equation}\label{def:ttEn_nor}
		\mathtt{E}_{n}^{(\mathtt{Nor})}\triangleq \mathscr{B}_{\bot}^{-1}\Pi_{\mathbb{S}_{0}}^{\bot}\mathscr{B}\mathtt{E}_{n}^{(\mathtt{TR})}\Pi_{\mathbb{S}_{0}}^{\bot},
	\end{equation}
we obtain, in restriction to the Cantor set $\mathcal{O}_{n}^{(\mathtt{TR})}(\gamma,\tau_1,i_0)$, the following identity
\begin{align*}
	\mathscr{B}_{\bot}^{-1}\mathcal{L}_{\bot}\mathscr{B}_{\bot}&=\mathscr{B}_{\bot}^{-1} \Pi_{\mathbb{S}_0}^\bot \big(  \mathcal{L}_{\varepsilon r} - \varepsilon  \partial_{\theta} \mathcal{R} \big)\Pi_{\mathbb{S}_0}^\bot \mathscr{B}\Pi_{\mathbb{S}_0}^\bot \\
    & = \mathscr{B}_{\bot}^{-1}\Pi_{\mathbb{S}_0}^\bot \mathcal{L}_{\varepsilon r} \mathscr{B} \Pi_{\mathbb{S}_0}^\bot - \mathscr{B}_{\bot}^{-1} \mathcal{L}_{\varepsilon r} \Pi_{\mathbb{S}_0} \mathscr{B} \Pi_{\mathbb{S}_0}^\bot - \varepsilon \mathscr{B}_{\bot}^{-1} \partial_{\theta}\mathcal{R} \mathscr{B}_{\bot}  \\
    & = \mathscr{B}_{\bot}^{-1} \Pi_{\mathbb{S}_0}^\bot\mathscr{B} \big(\omega\cdot\partial_{\varphi}+V_{i_{0}}^{\infty}\partial_{\theta}+\tfrac{1}{2}\mathcal{H}-\partial_{\theta}\mathcal{K}_{b}\ast\cdot+\mathfrak{R}_{\varepsilon r}\big)\Pi_{\mathbb{S}_0}^\bot\\
    &\quad- \mathscr{B}_{\bot}^{-1} \mathcal{L}_{\varepsilon r} \Pi_{\mathbb{S}_0} (\mathscr{B}-\textnormal{Id}) \Pi_{\mathbb{S}_0}^\bot - \varepsilon \mathscr{B}_{\bot}^{-1} \partial_{\theta}\mathcal{R} \mathscr{B}_{\bot}+\mathtt{E}_{n}^{(\mathtt{Nor})}\\
     & = \mathscr{B}_{\bot}^{-1} \Pi_{\mathbb{S}_0}^\bot\mathscr{B}(\Pi_{\mathbb{S}_0}^\bot+\Pi_{\mathbb{S}_0}) \big(\omega\cdot\partial_{\varphi}+V_{i_{0}}^{\infty}\partial_{\theta}+\tfrac{1}{2}\mathcal{H}-\partial_{\theta}\mathcal{K}_{b}\ast\cdot+\mathfrak{R}_{\varepsilon r}\big)\Pi_{\mathbb{S}_0}^\bot\\
     &\quad- \mathscr{B}_{\bot}^{-1} \mathcal{L}_{\varepsilon r} \Pi_{\mathbb{S}_0} (\mathscr{B}-\textnormal{Id})\Pi_{\mathbb{S}_0}^\bot - \varepsilon \mathscr{B}_{\bot}^{-1} \partial_{\theta}\mathcal{R} \mathscr{B}_{\bot}+\mathtt{E}_{n}^{(\mathtt{Nor})}\\
    &= \mathscr{D}_0 + \mathscr{R}_{0}+\mathtt{E}_{n}^{(\mathtt{Nor})},
\end{align*}
where 
$$\mathscr{D}_0\triangleq \big(V_{i_{0}}^{\infty}\partial_{\theta}+\tfrac{1}{2}\mathcal{H}-\partial_{\theta}\mathcal{K}_{b}\ast\cdot\big)\Pi_{\mathbb{S}_0}^{\bot}$$
and
\begin{equation}\label{new.finiterank}
	\begin{aligned}
		\mathscr{R}_{0} & \triangleq \Pi_{\mathbb{S}_0}^\bot\mathfrak{R}_{\varepsilon r}\Pi_{\mathbb{S}_0}^\bot+\mathscr{B}_\bot^{-1} \Pi_{\mathbb{S}_0}^\bot \mathscr{B}\Pi_{\mathbb{S}_0}\mathfrak{R}_{\varepsilon r} \Pi_{\mathbb{S}_0}^\bot - \mathscr{B}_\bot^{-1} \Pi_{\mathbb{S}_0}^\bot \mathcal{L}_{\varepsilon r}  \Pi_{\mathbb{S}_0} (\mathscr{B}-\textnormal{Id})\Pi_{\mathbb{S}_0}^\bot - \varepsilon \mathscr{B}_\bot^{-1} \Pi_{\mathbb{S}_0}^\bot \partial_{x} \mathcal{R} \mathscr{B}_\bot .
	\end{aligned}
\end{equation}
	The reversibility of $\mathscr{R}_0$ is a consequence of the reversibility of $\mathcal{L}_{\varepsilon r},\mathfrak{R}_{\varepsilon r}$, \eqref{sym:K} with Lemma \ref{lem:rever_integr}, the fact that $\mathscr{B}$ is a reversibility preserving operator because of \eqref{oddness_beta} and Remark \ref{rem:compo_rev}. For the Toeplitz-in-time property: the first term follows from the Toeplitz-in-time property of $\mathfrak{R}_{\varepsilon r}.$ Thanks to the projection $\Pi_{\mathbb{S}_0}$ on finite Fourier modes, the middle terms can be written as integral operators. The last term can be written as an integral operator since $\mathcal{R}$ is an integral operator. Then, the estimates \eqref{stima:R0}-\eqref{LipschitzR0}, follow from \eqref{stima:Rfrac}- \eqref{stima:Rfrac12}, \eqref{stima:K}-\eqref{stima:K12} with Lemma \ref{lem:rever_integr} and the various estimates on $\mathscr{B}$. For more details, we refer to we refer to \cite[Prop 6.3]{HR21}. Finally, the bound \eqref{stima:ttEn_Nor} follows from \eqref{def:ttEn_nor}, \eqref{stima:CVAR}, \eqref{stima:EnTR} and \eqref{stima:Bbot}. This concludes the proof of Proposition \ref{prop:normal_projection}.
\end{proof}
Finally, we make use of a KAM reducibility technique to get rid of the remainder $\mathscr{R}_0$ of Proposition \ref{prop:normal_projection}. This procedure allows to diagonalise the linearized operator in the normal directions, namely to conjugate it to a constant coefficients operator $\mathscr{L}_{\infty},$ up to fast decaying terms. The result reads as follows.
\begin{prop}\label{prop:KAM}
There exists $\sigma\triangleq\sigma(q,d,\tau_1)>0$ such that under the smallness condition \eqref{petitesse} with $\sigma\leqslant\sigma_{max}$, there exist
	\begin{enumerate}[label=\textbullet]
		\item an invertible linear operator $\Phi_{\infty}:\mathscr{O}\to \mathcal{L}\big(H^{s}\cap L_{\bot}^2\big)$ satisfying
		\begin{equation}\label{stima:Phi_infty}
			\forall s\in[s_{0},S],\quad \mbox{ }\|\Phi_{\infty}^{\pm 1}h\|_{q,s}^{\gamma ,\mathscr{O}}\lesssim \|h\|_{q,s}^{\gamma,\mathscr{O}}+\varepsilon\gamma^{-2}\| \mathfrak{I}_{0}\|_{q,s+\sigma}^{\gamma,\mathscr{O}}\|h\|_{q,s_{0}}^{\gamma,\mathscr{O}};
		\end{equation}
    \item a diagonal operator $\mathscr{L}_\infty=\mathscr{L}_{\infty}(b,\omega,i_{0})$ taking the form
		\begin{equation}\label{def:L_infty}\mathscr{L}_{\infty}=\omega\cdot\partial_{\varphi}\Pi_{\mathbb{S}_0}^{\bot}+\mathscr{D}_{\infty}
		\end{equation}
		where $\mathscr{D}_{\infty}=\mathscr{D}_{\infty}(b,\omega,i_{0})=\Pi_{\mathbb{S}_0}^\bot\mathscr{D}_\infty\Pi_{\mathbb{S}_{0}}^\bot$ is a reversible Fourier multiplier operator given by
		\begin{equation}\label{def:D_infty}
			\forall (l,j)\in \mathbb{Z}^{d}\times\mathbb{S}_{0}^{c},\quad  \mathscr{D}_{\infty}\mathbf{e}_{l,j}=\ii\,\mathtt{d}_{j}^{\infty}\,\mathbf{e}_{l,j},\qquad\mathtt{d}_{j}^{\infty}(b,\omega,i_{0})=\mathtt{d}_{j}^{0}(b,\omega,i_{0})+r_{j}^{\infty}(b,\omega,i_{0})
		\end{equation}
		with
        \begin{equation}\label{Lipschitz_dinfty}
			\forall j\in\mathbb{S}_{0}^{c},\quad\|\Delta_{12}\mathtt{d}_{j}^{\infty}\|_{q}^{\gamma,\mathscr{O}}\lesssim\varepsilon\gamma^{-1}|j|\| \Delta_{12}i\|_{q,\overline{s}_{h}+\sigma}^{\gamma,\mathscr{O}}
		\end{equation}
        and
		\begin{align}\label{stima restes vp}
			\sup_{j\in\mathbb{S}_{0}^{c}}|j|\| r_{j}^{\infty}\|_{q}^{\gamma ,\mathscr{O}}&\lesssim\varepsilon\gamma^{-1},\\
            \label{Lipschitz:rj_infty}
			\forall j\in\mathbb{S}_{0}^{c},\quad\|\Delta_{12}r_{j}^{\infty}\|_{q}^{\gamma,\mathscr{O}}&\lesssim\varepsilon\gamma^{-1}\|\Delta_{12}i\|_{q,\overline{s}_{h}+\sigma}^{\gamma,\mathscr{O}}
		\end{align}
        \end{enumerate}
		such that in the Cantor set
		\begin{align*}
			\mathscr{O}_{n}^{(\mathtt{KAM})}(\gamma,\tau_{1},\tau_2,i_{0})\triangleq \bigcap_{\underset{|l|\leqslant N_{n}}{(l,j,j_{0})\in\mathbb{Z}^{d}\times(\mathbb{S}_0^{c})^{2}}\atop(l,j)\neq(0,j_{0})}\Big\{&(b,\omega)\in\mathscr{O}_{n}^{(\mathtt{TR})}(\gamma,\tau_{1},i_{0}):\big|\omega\cdot l+\mathtt{d}_{j}^{\infty}(b,\omega,i_{0})-\mathtt{d}_{j_{0}}^{\infty}(b,\omega,i_{0})\big|>\tfrac{2\gamma \langle j-j_{0}\rangle}{\langle l\rangle^{\tau_{2}}}\Big\}
		\end{align*}
		we have 
		\begin{align*}
			\Phi_{\infty}^{-1}\mathscr{L}_{0}\Phi_{\infty}&=\mathscr{L}_{\infty}+\mathtt{E}_{n}^{(\mathtt{KAM})},
		\end{align*}
		where 
        \begin{enumerate}[label=\textbullet]
            \item with the notations of Proposition \ref{prop:normal_projection},
            \begin{equation}\label{def:initialKAM_op}
                \mathscr{L}_0\triangleq\omega\cdot\partial_{\varphi}\Pi_{\mathbb{S}_0}^{\bot}+\mathscr{D}_0+\mathscr{R}_0;
            \end{equation}
            \item the linear operator  $\mathtt{E}_{n}^{(\mathtt{KAM})}=\Pi_{\mathbb{S}_0}^{\bot}\mathtt{E}_{n}^{(\mathtt{KAM})}\Pi_{\mathbb{S}_0}^{\bot}$ satisfies the estimate
		\begin{equation}\label{stima:ttEn_KAM}
			\|\mathtt{E}_{n}^{(\mathtt{KAM})}h\|_{q,s_0}^{\gamma ,\mathscr{O}}\lesssim \varepsilon\gamma^{-2}N_{0}^{{\mu}_{2}}N_{n+1}^{-\mu} \|h\|_{q,s_0+1}^{\gamma,\mathscr{O}}.
		\end{equation}	
        \end{enumerate}
\end{prop}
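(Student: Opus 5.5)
The plan is the standard KAM reducibility scheme for the remainder, in the spirit of \cite[Prop. 6.5]{HR21} and \cite{HR21-1}. We start from $\mathscr{L}_0=\omega\cdot\partial_{\varphi}\Pi_{\mathbb{S}_0}^{\bot}+\mathscr{D}_0+\mathscr{R}_0$ given by Proposition \ref{prop:normal_projection}, and we build inductively a sequence of operators
$$\mathscr{L}_m=\omega\cdot\partial_{\varphi}\Pi_{\mathbb{S}_0}^{\bot}+\mathscr{D}_m+\mathscr{R}_m,$$
where $\mathscr{D}_m$ is a reversible Fourier multiplier with eigenvalues $\ii\,\mathtt{d}_j^m$, $\mathtt{d}_j^m=\mathtt{d}_j^0+r_j^m$, and $\mathscr{R}_m$ is a real reversible Toeplitz-in-time remainder. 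At each step we conjugate by $\Phi_m=\textnormal{Id}+\Psi_m$, where $\Psi_m$ solves the homological equation
$$\omega\cdot\partial_{\varphi}\Psi_m+[\mathscr{D}_m,\Psi_m]+\Pi_{N_m}\mathscr{R}_m-\lfloor\Pi_{N_m}\mathscr{R}_m\rfloor=0,$$
with $\lfloor\cdot\rfloor$ the diagonal part in $(l,j)=(0,j_0)$. In Fourier coefficients this gives
$$(\Psi_m)_{l,j}^{j_0}=\frac{-(\mathscr{R}_m)_{l,j}^{j_0}}{\ii\big(\omega\cdot l+\mathtt{d}_j^m-\mathtt{d}_{j_0}^m\big)}.$$
The denominators are controlled on the second Melnikov set at level $m$ by $2\gamma\langle j-j_0\rangle\langle l\rangle^{-\tau_2}$. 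We extend the solution to all of $\mathscr{O}$ with a cut-off, so that the Cantor set only enters through the error terms. The new diagonal part is $\mathscr{D}_{m+1}=\mathscr{D}_m+\lfloor\mathscr{R}_m\rfloor$. The new remainder collects $\Pi_{N_m}^{\bot}\mathscr{R}_m$ and the quadratic terms in $\Psi_m\mathscr{R}_m$. Reversibility of $\mathscr{R}_m$ forces $r_j^{m}$ to be real (purely imaginary eigenvalues), and it also makes $\Psi_m$ reversibility preserving. Here we use Lemma \ref{lem:rever_integr} and Remark \ref{rem:compo_rev}.

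The key quantitative step is the induction. The gain $\langle j-j_0\rangle$ in the denominator, together with the control of $\partial_{\theta}\mathscr{R}_0$ in \eqref{stima:R0} (the case $k=1$), lets us work in the off-diagonal norm $\|\cdot\|_{\textnormal{\tiny{O-d}},q,s}^{\gamma,\mathscr{O}}$ weighted by $|j-j_0|$. We lose $\gamma^{-1}N_m^{\tau_2 q+\tau_2}$ per step because $q$ derivatives in $(b,\omega)$ hit the small divisors. We then prove the classical pair of estimates
$$\delta_m(s_l)\lesssim\delta_0(s_h)N_0^{\mu_2}N_m^{-\mu_2},\qquad \delta_m(s_h)\lesssim\delta_0(s_h)\Big(2-\tfrac{1}{m+1}\Big),\qquad \delta_m(s)\triangleq\gamma^{-1}\|\mathscr{R}_m\|_{\textnormal{\tiny{O-d}},q,s}^{\gamma,\mathscr{O}},$$
using smoothing $\|\Pi_N^{\bot}\mathscr{R}\|_{s_l}\leqslant N^{s_l-s_h}\|\mathscr{R}\|_{s_h}$ and the choice of parameters \eqref{prmtrrr}, with $\delta_0(s_h)\lesssim\varepsilon\gamma^{-2}$ by \eqref{stima:R0} and \eqref{petitesse}. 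Summing the diagonal corrections gives $r_j^\infty$ with $|j|\,\|r_j^\infty\|_q^{\gamma,\mathscr{O}}\lesssim\varepsilon\gamma^{-1}$. Here the factor $|j|$ comes from the weighted norm: the diagonal entries of an operator with bounded $\partial_{\theta}$-norm decay like $|j|^{-1}$. This yields \eqref{stima restes vp}. The product $\Phi_\infty=\lim\Phi_0\circ\cdots\circ\Phi_m$ converges, and the tame estimate \eqref{stima:Phi_infty} follows from the composition lemmas.

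On $\mathscr{O}_n^{(\mathtt{KAM})}$, which lies inside the level-$m$ Melnikov sets for $m\leqslant n$ up to a perturbation controlled by $|\mathtt{d}_j^m-\mathtt{d}_j^\infty|\lesssim\varepsilon\gamma^{-1}N_m^{-\mu_2}|j|^{-1}$, the conjugation identity is exact up to step $n$. The error $\mathtt{E}_n^{(\mathtt{KAM})}$ is then the remaining $\mathscr{R}_{n+1}$ transported by the tails of $\Phi_\infty$, and this gives \eqref{stima:ttEn_KAM}. The Lipschitz bounds \eqref{Lipschitz_dinfty} and \eqref{Lipschitz:rj_infty} come from running the same induction on $\Delta_{12}\mathscr{R}_m$ at regularity $\overline{s}_h+\mathtt{p}$, using \eqref{LipschitzR0} and \eqref{Lipschitz V}, and from \eqref{initial diago} for the term $j\Delta_{12}V^\infty$.

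I expect the main obstacle to be the inductive high/low-norm bookkeeping. The $q$-fold parameter derivatives of the small divisors cost powers of $N_m$, and the Lipschitz variation in $i_0$ must be tracked while keeping the eigenvalue corrections of size $|j|^{-1}$ uniformly in $m$. Everything else is identical to \cite{HR21-1}, because the domain perturbation only entered through $\mathscr{R}_0$, whose estimates are already recorded in Proposition \ref{prop:normal_projection}.
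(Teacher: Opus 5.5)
Your proposal is correct and follows the paper's proof. The same ingredients appear in both: the KAM scheme with the cut-off generator $\Psi_m$, the diagonal update $\mathtt{d}_j^{m+1}=\mathtt{d}_j^m+r_j^m$, the inductive low/high-norm estimates on $\delta_m$ together with the $\partial_\theta$-controlled version that gives the $|j|^{-1}$ decay of $r_j^\infty$, and the inclusion of $\mathscr{O}_n^{(\mathtt{KAM})}$ into the finite-step Melnikov sets via the triangle inequality. The one bookkeeping point you skip is that $\mathtt{E}_n^{(\mathtt{KAM})}$ also contains $\mathscr{D}_{n+1}-\mathscr{D}_\infty$; this term is harmless thanks to the tail bound on $\mathtt{d}_j^\infty-\mathtt{d}_j^{n+1}$.
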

\begin{proof}
$\blacktriangleright$ \textbf{Algebraic scheme :}
We design a KAM reducibility procedure as in \cite[Prop. 6.5]{HR21} allowing to get a sequence
$$\mathscr{L}_{m}=\omega\cdot\partial_{\varphi}\Pi_{\mathbb{S}_0}^{\bot}+\mathscr{D}_m+\mathscr{R}_m$$
where
\begin{enumerate}[label=\textbullet]
    \item $\mathscr{D}_m=\Pi_{\mathbb{S}_0}^{\bot}\mathscr{D}_m
\Pi_{\mathbb{S}_0}^{\bot}$ is a reversible Fourier multiplier given by
$$\mathscr{D}_m\mathbf{e}_{l,j}=\ii\mathtt{d}_{j}^m(b,\omega)\mathbf{e}_{l,j},\qquad\mathtt{d}_{-j}^m(b,\omega)=-\mathtt{d}_{j}^m(b,\omega);$$
    \item $\mathscr{R}_m=\Pi_{\mathbb{S}_0}^{\bot}\mathscr{R}_m\Pi_{\mathbb{S}_0}^{\bot}$ is real, reversible and Toeplitz-in-time.
\end{enumerate}
This sequence starts with $\mathscr{L}_0$ in \eqref{def:initialKAM_op} and the aim is to get in the Toeplits-in-time topology
$$\lim_{m\to\infty}\mathscr{R}_m=0.$$
To get the next step, we choose well a transformation  
$$\Phi_m=\Pi_{\mathbb{S}_0}^{\bot}+\Psi_m$$
such that
\begin{equation}\label{def:next_step}
    \mathscr{L}_{m+1}=\Phi_m^{-1}\mathscr{L}_m\Phi_m\omega\cdot\partial_\varphi\Pi_{\mathbb S_0}^{\bot}
+\mathscr D_{m+1}+\mathscr R_{m+1},
\end{equation}
with a remainder $\mathscr{R}_{m+1}$ "quadratically" smaller than $\mathscr{R}_m.$ A direct calculations shows that
$$\Phi_m^{-1}\mathscr{L}_m\Phi_m=
\omega\cdot\partial_{\varphi}\Pi_{\mathbb{S}_0}^{\bot}+\mathscr{D}_m+\Phi_m^{-1}
\Big([
\omega\cdot\partial_{\varphi}\Pi_{\mathbb{S}_0}^{\bot}+\mathscr{D}_m,\Psi_m]+P_{N_m}\mathscr{R}_m+P_{N_m}^{\bot}\mathscr{R}_m+\mathscr{R}_m\Psi_m\Big).$$
The natural choice of $\Psi_m$ is dictated by the homological equation
\begin{equation}\label{homological-KAM-equation}
\Big[\omega\cdot\partial_{\varphi}\Pi_{\mathbb{S}_0}^{\bot}+\mathscr{D}_m,\Psi_m
\Big]+P_{N_m}\mathscr{R}_m=\lfloor P_{N_m}\mathscr{R}_m\rfloor.
\end{equation}
Here $\lfloor\cdot\rfloor$ denotes the diagonal part. More precisely, if
$$A\mathbf{e}_{l_0,j_0}=\sum_{(l,j)}
A_{l_0,j_0}^{l,j}\mathbf{e}_{l,j}.$$
then
\begin{equation}\label{diagonal-projection-definition}
\lfloor A\rfloor\mathbf{e}_{l_0,j_0}
=A_{l_0,j_0}^{l_0,j_0}\mathbf{e}_{l_0,j_0},\qquad A_{l_0,j_0}^{l_0,j_0}\triangleq\big\langle
A\mathbf{e}_{l_0,j_0},
\mathbf{e}_{l_0,j_0}
\big\rangle_{L^2(\mathbb T^{d+1},\mathbb{C})}.
\end{equation}
The iteration will progressively absorb the diagonal part of the remainder into the normal frequencies and eliminate its off-diagonal component. More precisely, we get \eqref{def:next_step} with
\begin{equation}\label{KAM-remainder-update}
\mathscr D_{m+1}=\mathscr D_m+\lfloor P_{N_m}\mathscr R_m\rfloor,\qquad\mathscr R_{m+1}=\Phi_m^{-1}\Big(-\Psi_m\lfloor P_{N_m}\mathscr R_m\rfloor+P_{N_m}^{\bot}\mathscr R_m+\mathscr R_m\Psi_m\Big).
\end{equation}
The homological equation \eqref{homological-KAM-equation} can be solved modulo some non-resonance conditions. Indeed, observing that the real, reversibility, Toeplitz-in-time nature of $\mathscr{R}_{m}$ allows to write its coefficients as
\begin{equation}\label{KAM-remainder-coefficients}
(\mathscr{R}_m)_{l_0,j_0}^{l,j}
=\ii r_{j_0,m}^{j}(b,\omega,l_0-l)
\in\ii\mathbb R,\qquad(\mathscr{R}_m)_{-l_0,-j_0}^{-l,-j}=-(\mathscr{R}_m)_{l_0,j_0}^{l,j},
\end{equation}
the equation \eqref{homological-KAM-equation} imposes the choice, for $j,j_0\in\mathbb{S}_0^c$ and $|l|\leqslant N_n$
\begin{equation}\label{KAM-generator-definition}
(\Psi_m)_{j_0}^{j}(b,\omega,l)
=\begin{cases}
-\mathtt{c}_{j_0,m}^{j}(b,\omega,l)\,
r_{j_0,m}^{j}(b,\omega,l),&\textnormal{if }(l,j)\neq(0,j_0),\\
0,& \textnormal{if }(l,j)=(0,j_0),
\end{cases}
\end{equation}
and elsewhere
$$(\Psi_m)_{j_0}^{j}(b,\omega,l)=0,$$
where
\begin{equation}\label{KAM-small-divisor-cutoff}
\mathtt{c}_{j_0,m}^{j}(b,\omega,l)\triangleq \frac{\chi\left(
(\omega\cdot l+\mathtt{d}_{j}^m(b,\omega)-\mathtt{d}_{j_0}^m(b,\omega))(\gamma\langle j-j_0\rangle)^{-1}
\langle l\rangle^{\tau_2}
\right)}{\omega\cdot l+\mathtt{d}_{j}^m(b,\omega)-\mathtt{d}_{j_0}^m(b,\omega)},
\end{equation}
with $\chi\in C^\infty(\mathbb R,[0,1])$ an even cut-off function such that
\begin{equation}\label{KAM-cutoff-properties}
\chi(\xi)=0\quad\textnormal{for }|\xi|\leqslant\frac13,
\qquad\chi(\xi)=1\quad\textnormal{for }|\xi|\geqslant\frac12\cdot
\end{equation}
The cut-off is introduced such that the operator $\Psi_m$ is well-defined on the whole set $\mathscr{O}$ and therefore, the homological equation is solved within the non-resonance Cantor set (with the convention $\mathscr O_0^\gamma\triangleq\mathscr{O}$)
\begin{equation}\label{KAM-Cantor-step}
\mathscr O_{m+1}^\gamma\triangleq\bigcap_{\substack{
(l,j,j_0)\in
\mathbb Z^d\times(\mathbb S_0^c)^2\\
\substack{(l,j)\neq(0,j_0)}\\|l|\leqslant N_m}}
\left\{
(b,\omega)\in\mathscr O_m^\gamma\quad\textnormal{s.t.}\quad
\left|\omega\cdot l+\mathtt{d}_{j}^m(b,\omega)-\mathtt{d}_{j_0}^m(b,\omega)\right|>\frac{\gamma\langle j-j_0\rangle}
{\langle l\rangle^{\tau_2}}\right\}.
\end{equation}
In addition, this construction ensures that $\Psi_m$ is real and reversibility preserving with $\Psi_m=\Pi_{\mathbb S_0}^{\bot}\Psi_m
\Pi_{\mathbb S_0}^{\bot}.$ Note that \eqref{KAM-remainder-coefficients} implies that $\lfloor P_{N_m}\mathscr R_m\rfloor$ acts like
\begin{equation}\label{diagonal-frequency-identification}
\lfloor P_{N_m}\mathscr R_m\rfloor\mathbf{e}_{l,j}
=\ii r_j^m\mathbf{e}_{l,j},\qquad r_{j}^m\triangleq r_{j,m}^j.
\end{equation}
Therefore, in view of \eqref{KAM-remainder-update}
\begin{equation}\label{frequency-recursion}
\mathtt{d}_{j}^{m+1}
=\mathtt{d}_{j}^m+r_j^m.
\end{equation}

\medskip

$\blacktriangleright$ \textbf{Topological considerations and convergence :}
Introduce
$$\delta_m(s)\triangleq\gamma^{-1}\|\mathscr{R}_0\|_{\textnormal{\tiny{O-d}},q,s}^{\gamma,\mathscr{O}},\qquad\widehat\delta_m(s)\triangleq\max\left\{\gamma^{-1}\|\partial_\theta\mathscr R_m\|_{\textnormal{\tiny{O-d}},q,s}^{\gamma,\mathscr{O}},\delta_m(s)\right\}.$$
The estimate \eqref{stima:R0} and \eqref{petitesse} imply 
\begin{equation}\label{initial-smallness}
N_0^{\mu}\delta_0(s_h)\leqslant
CN_0^{\mu}\varepsilon\gamma^{-2}\leqslant
C\varepsilon_0.
\end{equation}
By the linear growth in $j$ of $\mathtt{d}_j^0$ in \eqref{initial diago} and the a priori convergence of the scheme, one has
\begin{equation}\label{frequency-gap-control}
\max_{|\alpha|\leqslant q}
\sup_{(b,\omega)\in\mathscr{O}}\left|\partial_{b,\omega}^{\alpha}\big(\mathtt{d}_{j}^m(b,\omega)-\mathtt{d}_{j_0}^m(b,\omega)
\big)\right|\leqslant C|j-j_0|.
\end{equation}
This together with \eqref{KAM-generator-definition} and Lemma \ref{lem:funct_topo}-$(v)$ implies
\begin{equation}\label{generator-estimate}
\|\Psi_m\|_{\textnormal{\tiny{O-d}},q,s}^{\gamma,\mathscr{O}}\leqslant
C\gamma^{-1}\|P_{N_m}\mathscr R_m\|_{\textnormal{\tiny{O-d}},q,s+\tau_2q+\tau_2}^{\gamma,\mathscr{O}}.
\end{equation}
Combining \eqref{KAM-remainder-update}, \eqref{generator-estimate} and Lemma \ref{lem:Toeplitz_topo} gives the following recursive estimates, whenever
$s_0\leqslant s\leqslant\bar s\leqslant S$,
\begin{align}
\delta_{m+1}(s)
&\leqslant
N_m^{s-\bar s}\delta_m(\bar s)+CN_m^{\tau_2q+\tau_2}\delta_m(s_0)\delta_m(s),
\label{remainder-recursion-basic}\\
\widehat\delta_{m+1}(s)&\leqslant
N_m^{s-\bar s}\widehat\delta_m(\bar s)+CN_m^{\tau_2q+\tau_2+1}\widehat\delta_m(s_0)\widehat\delta_m(s).
\label{remainder-recursion-derivative}
\end{align}
With the choices of $\bar s_l$ and $s_h$ fixed by \eqref{prmtrrr} and the initial smallness \eqref{initial-smallness}, the inequalities \eqref{remainder-recursion-basic} and \eqref{remainder-recursion-derivative} yield the inductive estimates
\begin{equation}\label{remainder-induction}
\delta_m(\bar s_l)\leqslant\delta_0(s_h)N_0^{\mu}N_m^{-\mu},\qquad\delta_m(s_h)\leqslant\left(2-\frac1{m+1}\right)\delta_0(s_h),
\end{equation}
and
\begin{equation}\label{derivative-remainder-induction}
\widehat\delta_m(s_0)
\leqslant\widehat\delta_0(s_h)
N_0^{\mu}N_m^{-\mu},\qquad\widehat\delta_m(s_h)
\leqslant\left(2-\frac1{m+1}\right)\widehat\delta_0(s_h).
\end{equation}
We next control the variation of the normal frequencies. From the real-Toeplitz representation and \eqref{frequency-recursion},
\begin{align*}
\mathtt{d}_{j}^{m+1}-\mathtt{d}_{j}^m
&=
r_j^m\\
&=
-\ii
\left\langle
P_{N_m}\mathscr R_m\mathbf{e}_{l,j},
\mathbf{e}_{l,j}
\right\rangle_{L^2(\mathbb T^{d+1},\mathbb{C})}\\
&=
-\ii
\left\langle
P_{N_m}\mathscr R_m\mathbf{e}_{0,j},
\mathbf{e}_{0,j}
\right\rangle_{L^2(\mathbb T^{d+1},\mathbb{C})}.
\end{align*}
A duality estimate, combined with Lemma \ref{lem:Toeplitz_topo}, \eqref{remainder-induction},  and \eqref{initial-smallness}, gives
\begin{equation}\label{frequency-increment-bound}
\|\mathtt{d}_{j}^{m+1}-\mathtt{d}_{j}^m\|_q^{\gamma,\mathscr{O}}\leqslant
C\varepsilon\gamma^{-1}N_0^{\mu}N_m^{-\mu}.
\end{equation}
In particular, the series of frequency corrections converges. Moreover, \eqref{frequency-gap-control} propagates through the iteration, since
\begin{align*}
\max_{|\alpha|\leqslant q}
\sup_{(b,\omega)\in\mathscr{O}}
\left|\partial_{b,\omega}^{\alpha}
\big(\mathtt{d}_{j}^{m+1}-\mathtt{d}_{j_0}^{m+1}\big)
\right|\leqslant
C\left(1+\varepsilon\gamma^{-1-q}
N_0^{\mu}N_m^{-\mu}\right)|j-j_0|.
\end{align*}
Hence $(\mathtt{d}_{j}^m)_{m\geq0}$ converges in
$W^{q,\infty,\gamma}(\mathscr{O},\mathbb C)$. Denote its limit by $\mathtt{d}_{j}^\infty$ and set
$$\mathscr{D}_\infty\triangleq\big(\ii\mathtt{d}_{j}^\infty\big)_{j\in\mathbb S_0^c},\qquad\textnormal{i.e.}\qquad\mathscr{D}_\infty\mathbf{e}_{l,j}
=\ii\mathtt{d}_{j}^\infty\mathbf{e}_{l,j}.$$
Furthermore, \eqref{frequency-increment-bound} gives
\begin{equation}\label{frequency-tail-estimate}
\|\mathtt{d}_{j}^\infty-\mathtt{d}_{j}^m\|_q^{\gamma,\mathscr{O}}\leqslant
C\gamma^{-1}\delta_0(s_h)N_0^{\mu}N_m^{-\mu}.
\end{equation}
Consequently,
\begin{equation}\label{limiting-frequency-decomposition}
\mathtt{d}_{j}^\infty
=\mathtt{d}_{j}^0+\sum_{m=0}^{\infty}
(\mathtt{d}_{j}^{m+1}-\mathtt{d}_{j}^m)
=\mathtt{d}_{j}^0+r_j^\infty.
\end{equation}
By the definition of the diagonal operator norm and \eqref{frequency-tail-estimate},
\begin{equation}\label{diagonal-convergence-rate}
    \|\mathscr D_m-\mathscr D_\infty\|_{\textnormal{\tiny{O-d}},q,s_0}^{\gamma,\mathscr{O}}=\sup_{j\in\mathbb S_0^c}\|\mathtt{d}_{j}^m-\mathtt{d}_{j}^\infty\|_q^{\gamma,\mathscr{O}}\leqslant
C\gamma^{-1}\delta_0(s_h)
N_0^{\mu}N_m^{-\mu}.
\end{equation}
It remains to construct the limiting conjugation. Define
\begin{equation}\label{composition-of-conjugations}
\widehat\Phi_0\triangleq \Phi_0,
\qquad
\widehat\Phi_m
\triangleq 
\Phi_0\circ\Phi_1\circ\cdots\circ\Phi_m,
\qquad m\geq1.
\end{equation}
Since $\Phi_m=\mathrm{Id}+\Psi_m$, one has
\begin{equation}\label{composition-recursion}
\widehat\Phi_{m+1}
=\widehat\Phi_m+\widehat\Phi_m\Psi_{m+1}.
\end{equation}
The summability resulting from \eqref{generator-estimate}, together with the smallness assumptions of the KAM construction, shows that $(\widehat\Phi_m)_m$ converges in the Toeplitz operator topology to an invertible operator $\Phi_\infty$. As in \cite{HR21}, one also obtains
\begin{equation}\label{conjugation-convergence}
\|\widehat\Phi_m^{-1}-\Phi_\infty^{-1}\|_{\textnormal{\tiny{O-d}},q,s_0}^{\gamma,\mathscr{O}}+\|\widehat\Phi_m-\Phi_\infty\|_{\textnormal{\tiny{O-d}},q,s_0}^{\gamma,\mathscr{O}}
\leqslant
C\delta_0(s_h)N_0^{\mu}N_{m+1}^{-\mu}.
\end{equation}
We now pass to the limiting operator. Put
$$\mathscr L_\infty\triangleq \omega\cdot\partial_\varphi\Pi_{\mathbb S_0}^{\bot}+\mathscr D_\infty.$$
Since
$$\mathscr L_m-\mathscr L_\infty=\mathscr D_m-\mathscr D_\infty+\mathscr R_m,$$
the estimates \eqref{diagonal-convergence-rate} and \eqref{remainder-induction} imply the following convergence in the Toeplitz operator topology
$$\mathscr L_m\underset{m\to\infty}{\longrightarrow}\mathscr L_\infty.$$
For a fixed $n\in\mathbb N$, by means of triangular inequality, the limiting non-resonance set defined in Proposition \ref{prop:KAM} satisfies
\begin{equation}\label{limiting-Cantor-inclusion}
\mathscr O_{n}^{(\mathtt{KAM})}(\gamma,\tau_1,\tau_2,i_0)\subset\mathscr O_{n+1}^{\gamma}.
\end{equation}
From \eqref{composition-of-conjugations} and the finite-step identities, for
$(b,\omega)\in\mathscr O_{n+1}^{\gamma}$ we have
\begin{align*}
\widehat\Phi_n^{-1}\mathscr L_0\widehat\Phi_n
&=
\omega\cdot\partial_\varphi\Pi_{\mathbb S_0}^{\bot}
+\mathscr D_{n+1}+\mathscr R_{n+1}=
\mathscr L_\infty
+
(\mathscr D_{n+1}-\mathscr D_\infty)
+
\mathscr R_{n+1}.
\end{align*}
Passing from $\widehat\Phi_n$ to $\Phi_\infty$ gives
$$\Phi_\infty^{-1}\mathscr L_0\Phi_\infty=\mathscr L_\infty+\mathtt E_n^{(\mathtt{KAM})},$$
where we denote the whole error term by
\begin{equation}\label{final-conjugated-error}
\mathtt E_n^{(\mathtt{KAM})}
\triangleq\mathscr D_{n+1}-\mathscr D_\infty+\mathscr R_{n+1}+\Phi_\infty^{-1}\mathscr L_0
(\Phi_\infty-\widehat\Phi_n)+(\Phi_\infty^{-1}-\widehat\Phi_n^{-1})
\mathscr L_0\widehat\Phi_n.
\end{equation}
The estimates in Lemma \ref{lem:Toeplitz_topo}, together with \eqref{diagonal-convergence-rate}, \eqref{initial-smallness}, and \eqref{conjugation-convergence}, give precisely \eqref{stima:ttEn_KAM}.
We finally establish the estimate for $r_j^\infty$. By definition,
$$r_j^\infty=\sum_{m=0}^{\infty}r_j^m,
\qquad
r_j^m=-\ii\left\langle
P_{N_m}\mathscr R_m\mathbf{e}_{0,j},
\mathbf{e}_{0,j}\right\rangle_{L^2(\mathbb T^{d+1})}.$$
Since $\partial_\theta\mathbf{e}_{0,j}=\ii j\,\mathbf{e}_{0,j},$ an integration by parts gives
\begin{align*}
\left\langle
P_{N_m}\mathscr R_m\mathbf{e}_{0,j},
\mathbf{e}_{0,j}
\right\rangle_{L^2(\mathbb T^{d+1})}
&=
\frac{\ii}{j}
\left\langle
P_{N_m}\mathscr R_m\mathbf{e}_{0,j},
\partial_\theta\mathbf{e}_{0,j}
\right\rangle_{L^2(\mathbb T^{d+1},\mathbb{C})}=-\frac{\ii}{j}
\left\langle
P_{N_m}\partial_\theta\mathscr R_m\mathbf{e}_{0,j},\mathbf{e}_{0,j}
\right\rangle_{L^2(\mathbb T^{d+1},\mathbb{C})}.
\end{align*}
The duality estimate between $H^{s_0}$ and $H^{-s_0}$, together with Lemma \ref{lem:Toeplitz_topo}, \eqref{derivative-remainder-induction}, \eqref{stima:R0}, and \eqref{initial-smallness}, therefore yields
\begin{equation}\label{limiting-frequency-correction}
\|r_j^\infty\|_q^{\gamma,\mathscr{O}}
\lesssim
|j|^{-1}\varepsilon\gamma^{-1}.
\end{equation}
Finally, the estimates \eqref{Lipschitz:rj_infty} and \eqref{Lipschitz_dinfty} follow in a similar fashion. For more details, we refer to \cite{HR21}. This completes the proof of Proposition \ref{prop:KAM}.
\end{proof}

With this in hand, we can obtain an almost approximate right inverse for $\mathcal{L}_{\bot}$.

\begin{prop}\label{prop:inverse_normal}
	There exists $\sigma\triangleq \sigma(\tau_1,\tau_2,q,d)$ such that under the smallness condition \eqref{petitesse} with $\sigma\leqslant\sigma_{max},$ the following assertions hold.
	\begin{enumerate}[label=(\roman*)]
		\item There exists a family of  linear operators $\big(\mathtt{T}_n\big)_{n\in\mathbb{N}}$  defined in $\mathscr{O}$ satisfying
		\begin{equation}\label{stima:ttTn}
		    \forall s\in[s_0,S],\quad \sup_{n\in\mathbb{N}}\|\mathtt{T}_{n}h\|_{q,s}^{\gamma ,\mathscr{O}}\lesssim \gamma ^{-1}\|h\|_{q,s+\tau_{1}q+\tau_{1}}^{\gamma ,\mathscr{O}}
		\end{equation}
		and such that, for any $n\in\mathbb{N}$, in restriction to the Cantor set
		$$\mathscr{O}^{(\mathtt{Inv})}_n(\gamma,\tau_{1},i_{0})\triangleq\bigcap_{(l,j)\in\mathbb{Z}^{d }\times\mathbb{S}_{0}^{c}\atop |l|\leqslant N_{n}}\left\lbrace(b,\omega)\in\mathscr{O}\quad\textnormal{s.t.}\quad\left|\omega\cdot l+\mathtt{d}_{j}^{\infty}(b,\omega,i_{0})\right|>\tfrac{\gamma \langle j\rangle }{\langle l\rangle^{\tau_{1}}}\right\rbrace,$$
		the following identity holds
		$$\mathscr{L}_{\infty}\mathtt{T}_n=\textnormal{Id}+\mathtt{E}_{n}^{(\mathtt{Inv})},$$
		where
        \begin{enumerate}[label=\textbullet]
            \item $\mathscr{L}_{\infty}$ and $\mathtt{d}_j^{\infty}$ are defined in \eqref{def:L_infty} and \eqref{def:D_infty}, respectively;
            \item The linear operator $\mathtt{E}_{n}^{(\mathtt{Inv})}$ satisfies
            \begin{equation}\label{stima:ttEInv}
                \forall s_{0}\leqslant s\leqslant\overline{s}\leqslant S, \quad \|\mathtt{E}_{n}^{(\mathtt{Inv})}h\|_{q,s}^{\gamma ,\mathscr{O}} \lesssim 
		N_n^{s-\overline{s}}\gamma^{-1}\|h\|_{q,\overline{s}+1+\tau_{1}q+\tau_{1}}^{\gamma ,\mathscr{O}}.
            \end{equation}
        \end{enumerate}
		
		\item 
		There exists a sequence of linear operators $\big(\mathtt{I}_{\bot,n}\big)_{n\in\mathbb{N}}$ enjoying, for any $s\in[s_0,S],$
		\begin{equation}\label{stima-inverse_nor}
			\sup_{n\in\mathbb{N}}\|\mathtt{I}_{\bot,n}h\|_{q,s}^{\gamma ,\mathscr{O}}\lesssim\gamma^{-1}\left(\|h\|_{q,s+\sigma}^{\gamma ,\mathscr{O}}+\| \mathfrak{I}_{0}\|_{q,s+\sigma}^{\gamma ,\mathscr{O}}\|h\|_{q,s_{0}+\sigma}^{\gamma,\mathscr{O}}\right)
		\end{equation}
		such that in restriction to the Cantor set
		\begin{equation}\label{def:Cant-red}
			\mathcal{C}_n(\gamma,\tau_{1},\tau_{2},i_{0})\triangleq \mathscr{O}_{n}^{(\mathtt{TR})}(\gamma,\tau_1,i_{0})\cap\mathscr{O}_{n}^{(\mathtt{KAM})}(\gamma,\tau_{1},\tau_{2},i_{0})\cap\mathscr{O}_{n}^{(\mathtt{Inv})}(\gamma,\tau_{1},i_{0}),
		\end{equation}
		the following identity holds
		\begin{equation}\label{Inv:Lbot}
		    \mathcal{L}_{\bot}\mathtt{I}_{\bot,n}=\textnormal{Id}+\mathtt{E}_n,
		\end{equation}
		where $\mathtt{E}_n$ satisfies for any $s\in [s_0,S],$
		\begin{equation}\label{stima:ttEn}
			\|\mathtt{E}_nh\|_{q,s_0}^{\gamma ,\mathscr{O}}\lesssim N_n^{s_0-s}\gamma^{-1}\Big( \|h\|_{q,s+\sigma}^{\gamma,\mathscr{O}}+\varepsilon\gamma^{-2}\| \mathfrak{I}_{0}\|_{q,s+\sigma}^{\gamma,\mathscr{O}}\|h\|_{q,s_{0}}^{\gamma,\mathscr{O}} \Big)+ \varepsilon\gamma^{-3}N_{0}^{{\mu}_{2}}N_{n+1}^{-\mu} \|h\|_{q,s_0+\sigma}^{\gamma,\mathscr{O}}.
		\end{equation}
		\item In restriction to the Cantor set $\mathcal{C}_{n}(\gamma,\tau_{1},\tau_{2},i_{0})$, the following decomposition holds
\begin{equation}\label{decomposition:Lbot}
    \mathcal{L}_{\bot}=\mathcal{L}_{\bot,n}+\mathcal{R}_{\bot,n},
\end{equation}
		where the operators $\mathcal{L}_{\bot,n}$ and $\mathcal{R}_{\bot,n}$ are defined in  $\mathscr{O}$ and satisfy for all $s\in[s_{0},S],$
		\begin{align}
        \sup_{n\in\mathbb{N}}\|\mathcal{L}_{\bot,n}h\|_{q,s}^{\gamma,\mathscr{O}}&\lesssim\|h\|_{q,s+1}^{\gamma,\mathscr{O}}+\varepsilon\gamma^{-2}\|\mathfrak{I}_{0}\|_{q,s+\sigma}^{\gamma,\mathscr{O}}\|h\|_{q,s_{0}+1}^{\gamma,\mathscr{O}},\label{stima-tronc:lin_nor}\\
\|\mathcal{R}_{\bot,n}h\|_{q,s_{0}}^{\gamma,\mathscr{O}}&\lesssim N_{n}^{s_{0}-s}\gamma^{-1}\left(\|h\|_{q,s+\sigma}^{\gamma,\mathscr{O}}+\varepsilon\gamma^{-2}\|\mathfrak{I}_{0}\|_{q,s+\sigma}^{\gamma,\mathscr{O}}\|h\|_{q,s_{0}+\sigma}^{\gamma,\mathscr{O}}\right)+\varepsilon\gamma^{-3}N_{0}^{\mu}N_{n+1}^{-\mu}\|h\|_{q,s_{0}+\sigma}^{\gamma,\mathscr{O}}.\label{stima-restn}
		\end{align}
	\end{enumerate}
\end{prop}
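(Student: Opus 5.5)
The plan is to reproduce the standard construction of \cite[Prop. 6.6]{HR21} (see also \cite{HR21-1}). First, for (i) we invert the diagonal operator $\mathscr{L}_{\infty}$ truncated in time frequencies. Define $\mathtt{T}_n$ as the Fourier multiplier
$$\mathtt{T}_n h\triangleq-\ii\sum_{(l,j)\in\mathbb{Z}^d\times\mathbb{S}_0^c}\frac{\chi\big((\omega\cdot l+\mathtt{d}_j^\infty)\gamma^{-1}\langle j\rangle^{-1}\langle l\rangle^{\tau_1}\big)}{\omega\cdot l+\mathtt{d}_j^\infty}h_{l,j}\,\mathbf{e}_{l,j},$$
with the cut-off $\chi$ of \eqref{KAM-cutoff-properties}. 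This makes $\mathtt{T}_n$ defined on all of $\mathscr{O}$. The estimate \eqref{stima:ttTn} follows from Lemma \ref{lem:funct_topo}-$(v)$: each $(b,\omega)$-derivative of the truncated divisor costs a factor $\gamma^{-1}\langle l\rangle^{\tau_1}$, and \eqref{frequency-gap-control} together with \eqref{stima restes vp} control the derivatives of $\mathtt{d}_j^\infty$. Restricted to $\mathscr{O}_n^{(\mathtt{Inv})}$, the cut-off equals $1$ for $|l|\leqslant N_n$. Hence $\mathscr{L}_\infty\mathtt{T}_n-\textnormal{Id}$ is supported on the modes $|l|>N_n$, and can be written as $-\Pi_{N_n}^\bot+\mathscr{L}_\infty\Pi_{N_n}^\bot\mathtt{T}_n$ up to a harmless redefinition. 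The smoothing bound on high modes then gives \eqref{stima:ttEInv}, where the extra derivative accounts for $\mathscr{L}_\infty$ being of order one.

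For (ii) we undo the conjugations. Write $\mathcal{L}_\bot=\mathscr{B}_\bot\Phi_\infty(\mathscr{L}_\infty+\mathtt{E}_n^{(\mathtt{KAM})})\Phi_\infty^{-1}\mathscr{B}_\bot^{-1}+\mathscr{B}_\bot\mathtt{E}_n^{(\mathtt{Nor})}\mathscr{B}_\bot^{-1}$ on $\mathscr{O}_n^{(\mathtt{TR})}\cap\mathscr{O}_n^{(\mathtt{KAM})}$, using Propositions \ref{prop:normal_projection} and \ref{prop:KAM}. Then set
$$\mathtt{I}_{\bot,n}\triangleq\mathscr{B}_\bot\Phi_\infty\mathtt{T}_n\Phi_\infty^{-1}\mathscr{B}_\bot^{-1}.$$
The tame bound \eqref{stima-inverse_nor} follows by composing \eqref{stima:ttTn}, \eqref{stima:Phi_infty} and \eqref{stima:Bbot}, and the factor $\varepsilon\gamma^{-2}\leqslant1$ is absorbed using \eqref{petitesse}. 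On $\mathcal{C}_n$ we then obtain \eqref{Inv:Lbot} with
$$\mathtt{E}_n=\mathscr{B}_\bot\Phi_\infty\big(\mathtt{E}_n^{(\mathtt{Inv})}+\mathtt{E}_n^{(\mathtt{KAM})}\mathtt{T}_n\big)\Phi_\infty^{-1}\mathscr{B}_\bot^{-1}+\mathscr{B}_\bot\mathtt{E}_n^{(\mathtt{Nor})}\mathscr{B}_\bot^{-1}\mathtt{I}_{\bot,n}.$$
Estimate \eqref{stima:ttEn} follows from \eqref{stima:ttEInv} with $\overline s=s$, from \eqref{stima:ttEn_KAM} and \eqref{stima:ttEn_Nor}, and from the tame bounds on the transformations. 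The loss $\tau_1q+\tau_1+2$ and the other losses are absorbed into $\sigma$. An extra $\gamma^{-1}$ comes from $\mathtt{T}_n$, which produces the factor $\varepsilon\gamma^{-3}$.

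For (iii) we define
$$\mathcal{L}_{\bot,n}\triangleq\mathscr{B}_\bot\Phi_\infty\big(\omega\cdot\partial_\varphi\Pi_{\mathbb{S}_0}^\bot+\Pi_{N_n}\mathscr{D}_\infty\big)\Phi_\infty^{-1}\mathscr{B}_\bot^{-1},$$
with the convention used in \cite{HR21}, and set $\mathcal{R}_{\bot,n}\triangleq\mathcal{L}_\bot-\mathcal{L}_{\bot,n}$. The latter collects the high-frequency part $\Pi_{N_n}^\bot$ together with the errors $\mathtt{E}_n^{(\mathtt{KAM})}$ and $\mathtt{E}_n^{(\mathtt{Nor})}$. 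The bound \eqref{stima-tronc:lin_nor} follows from $|\mathtt{d}_j^\infty|\lesssim|j|$ and the tame conjugation estimates. The bound \eqref{stima-restn} follows from the standard smoothing estimate $\|\Pi_{N_n}^\bot h\|_{s_0}\leqslant N_n^{s_0-s}\|h\|_s$ combined with the previous error bounds.

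The main obstacle is the careful bookkeeping of the weighted $W^{q,\infty,\gamma}$ norms in $\mathtt{T}_n$. Differentiating the small divisors $q$ times in $(b,\omega)$ produces the loss $\tau_1 q+\tau_1$ in regularity. One must also ensure that the derivatives of $\mathtt{d}_j^\infty$ are bounded uniformly with linear growth in $j$, via \eqref{frequency-gap-control} and \eqref{stima restes vp}. Everything else is composition of tame estimates, exactly as in the unperturbed case. The perturbation $\varepsilon^2G_p^\varepsilon$ plays no new role here, since it only enters through the already established Propositions \ref{prop:transport}–\ref{prop:KAM}.
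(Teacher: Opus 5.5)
Your parts (i) and (ii) follow the paper's argument. The approximate inverse $\mathtt{I}_{\bot,n}=\mathscr{B}_\bot\Phi_\infty\mathtt{T}_n\Phi_\infty^{-1}\mathscr{B}_\bot^{-1}$ is the same, and so is the error $\mathtt{E}_n$: your last term $\mathscr{B}_\bot\mathtt{E}_n^{(\mathtt{Nor})}\mathscr{B}_\bot^{-1}\mathtt{I}_{\bot,n}$ is exactly the paper's $\mathscr{B}_\bot\mathtt{E}_n^{(\mathtt{Nor})}\Phi_\infty\mathtt{T}_n\Phi_\infty^{-1}\mathscr{B}_\bot^{-1}$. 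The estimates go through in the same way.

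The one variant is $\mathtt{T}_n$. You put the cut-off on all modes, so your operator does not depend on $n$, and $\mathtt{E}_n^{(\mathtt{Inv})}$ needs your redefinition. The paper uses two regimes: the cut-off only for $|l|\leqslant N_n$, and the plain multiplier $-\ii/\mathtt{d}_j^\infty$ for $|l|>N_n$. This makes $\mathtt{T}_n$ the exact two-sided inverse, on $\mathscr{O}_n^{(\mathtt{Inv})}$, of the diagonal operator $\mathscr{L}_{\infty,n}$, in which $\omega\cdot\partial_\varphi$ is truncated to time modes $|l|\leqslant N_n$ and $\mathscr{D}_\infty$ is kept whole. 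Then $\mathtt{E}_n^{(\mathtt{Inv})}=(\mathscr{L}_\infty-\mathscr{L}_{\infty,n})\mathtt{T}_n$ is automatically defined on $\mathscr{O}$. Both versions give (i) and (ii).

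The real divergence is in (iii).

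\textbf{The paper's choice.} It sets $\mathcal{L}_{\bot,n}\triangleq\mathscr{B}_\bot\Phi_\infty\mathscr{L}_{\infty,n}\Phi_\infty^{-1}\mathscr{B}_\bot^{-1}$, which equals $\mathtt{I}_{\bot,n}^{-1}$ on $\mathcal{C}_n$, and $\mathcal{R}_{\bot,n}\triangleq\mathtt{E}_n\mathcal{L}_{\bot,n}$. The bound \eqref{stima-restn} then follows from \eqref{stima:ttEn} and \eqref{stima-tronc:lin_nor}.

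\textbf{Your choice.} You truncate $\mathscr{D}_\infty$ and keep the full $\omega\cdot\partial_\varphi$. This does satisfy the estimates of (iii) as literally stated, with one fix. You must define $\mathcal{R}_{\bot,n}$ by the explicit formula $\mathscr{B}_\bot\Phi_\infty\big(\Pi_{N_n}^\bot\mathscr{D}_\infty+\mathtt{E}_n^{(\mathtt{KAM})}\big)\Phi_\infty^{-1}\mathscr{B}_\bot^{-1}+\mathscr{B}_\bot\mathtt{E}_n^{(\mathtt{Nor})}\mathscr{B}_\bot^{-1}$, not as $\mathcal{L}_\bot-\mathcal{L}_{\bot,n}$ on all of $\mathscr{O}$. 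Off the Cantor set that difference does not decay in $N_n$, while \eqref{stima-restn} is a bound in the $\mathscr{O}$-norm. This is the same ``harmless redefinition'' you already made in (i).

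\textbf{What your choice loses.} With $\Pi_{N_n}$ as in \eqref{projectors_functions}, your $\mathcal{L}_{\bot,n}$ annihilates $\mathscr{B}_\bot\Phi_\infty\mathbf{e}_{0,j}$ for $|j|>N_n$. So it is not invertible, and $\mathtt{I}_{\bot,n}$ is not its inverse; your single-cut-off $\mathtt{T}_n$ even vanishes on high modes where $\chi=0$. The proof of Proposition \ref{prop:approx_full} inverts $\mathcal{T}_n$ precisely through $\mathcal{L}_{\bot,n}^{-1}=\mathtt{I}_{\bot,n}$ on $\mathcal{C}_n$. If you want (iii) to be usable downstream, adopt the paper's two-regime $\mathtt{T}_n$ and set $\mathcal{L}_{\bot,n}=\mathtt{I}_{\bot,n}^{-1}$. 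The truncation the paper uses is on the time derivative, not on $\mathscr{D}_\infty$.
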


\begin{proof}
$(i)$ We first construct an approximate inverse for $\mathscr L_\infty$ in \eqref{def:L_infty}. Using the decomposition associated with the projectors $\Pi_{N_n}$ and $\Pi_{N_n}^\bot$, we write
$$\mathscr L_\infty=\mathscr{L}_{\infty,n}+\mathscr{R}_{\infty,n},\qquad\mathscr{R}_{\infty,n}\triangleq -\Pi_{N_n}^\bot\omega\cdot\partial_\varphi\Pi_{N_n}^\bot\Pi_{\mathbb S_0}^{\bot},$$
where $\mathscr{L}_{\infty,n}$ is diagonal and, on the Fourier basis $\mathbf{e}_{l,j}$,
$$\mathscr{L}_{\infty,n}\mathbf{e}_{l,j}=\begin{cases}
\ii(\omega\cdot l+\mathtt{d}_j^\infty)\mathbf{e}_{l,j},& |l|\leqslant N_n,\\
\ii\mathtt{d}_j^\infty\mathbf{e}_{l,j},& |l|>N_n.
\end{cases}$$
We now define $\mathtt T_n$ by inverting these diagonal coefficients, with the usual cut-off on the low Fourier modes
$$\mathtt T_nh
\triangleq 
-\ii\sum_{\substack{(l,j)\in\mathbb Z^d\times\mathbb S_0^c\\ |l|\leqslant N_n}}
\frac{\chi\!\left((\omega\cdot l+\mathtt{d}_j^\infty)
\gamma^{-1}\langle l\rangle^{\tau_1}\right)}
{\omega\cdot l+\mathtt{d}_j^\infty}
h_{l,j}\mathbf{e}_{l,j}
-\ii\sum_{\substack{(l,j)\in\mathbb Z^d\times\mathbb S_0^c\\ |l|>N_n}}
\frac{h_{l,j}}
{\mathtt{d}_j^\infty}\mathbf{e}_{l,j}.$$
Since, according to \eqref{KAM-cutoff-properties}, the cut-off equals one on $\mathscr{O}^{(\mathtt{Inv})}_n(\gamma,\tau_{1},i_{0})$, the diagonal definition immediately gives
$$\mathscr{L}_{\infty,n}\mathtt T_n=\mathrm{Id},
\qquad\textnormal{on }\mathscr{O}^{(\mathtt{Inv})}_n(\gamma,\tau_{1},i_{0}).$$
Consequently,
\begin{equation}\label{LinftyTn-new}
\mathscr L_\infty\mathtt T_n
=\mathrm{Id}+\mathtt E_n^{(\mathtt{Inv})},\qquad \mathtt{E}_n^{(\mathtt{Inv})}\triangleq\mathscr{R}_{\infty,n}\mathtt T_n.
\end{equation}
We now turn to the estimates. It is readily seen from the expression of $\mathtt{d}_{j}^{\infty}$ in \eqref{def:D_infty} that
$$|j|\lesssim\|\mathtt{d}_j^\infty\|_{q}^{\gamma,\mathcal O}\lesssim |j|.$$
Consequently, using Lemma \ref{lem:funct_topo}-$(vi)$, we get \eqref{stima:ttTn}
The estimate \eqref{stima:ttEInv} follows from \eqref{stima:ttTn} and Lemma \ref{lem:funct_topo}-$(ii)$.\\
$(ii)$ We transport the previous approximate inverse back to the original normal variables by defining
$$\mathtt{I}_{\bot,n}
\triangleq\mathscr{B}_{\bot}\Phi_{\infty}\mathtt{T}_n\Phi_{\infty}^{-1}\mathscr{B}_{\bot}^{-1}.$$
This operator is well-defined for all $(b,\omega)\in\mathscr{O}$ and the bound \eqref{stima-inverse_nor} follows from \eqref{stima:Bbot}, \eqref{stima:Phi_infty} and \eqref{stima:ttTn}.
On $\mathscr{O}_{n}^{(\mathtt{TR})}(\gamma,\tau_1,i_0)
\cap\mathscr{O}_{n}^{(\mathtt{KAM})}(\gamma,\tau_1,\tau_2,i_0),$ Propositions \ref{prop:normal_projection} and
\ref{prop:KAM} imply
$$\Phi_\infty^{-1}\mathscr B_\bot^{-1}
\mathcal{L}_{\bot}
\mathscr B_\bot\Phi_\infty=\mathscr L_\infty+\mathtt E_n^{(\mathtt{KAM})}
+\Phi_\infty^{-1}\mathtt \mathtt{E}_{n}^{(\mathtt{Nor})}\Phi_\infty.$$
Multiplying by $\mathtt T_n$ on the right and using \eqref{LinftyTn-new}, we find, on the additional set
$\mathscr{O}_{n}^{(\mathtt{Inv})}(\gamma,\tau_1,i_0)$,
$$\Phi_\infty^{-1}\mathscr B_\bot^{-1}
\mathcal{L}_{\bot}
\mathscr B_\bot\Phi_\infty\mathtt T_n=\mathrm{Id}
+\mathtt E_n^{(\mathtt{Inv})}
+\mathtt E_n^{(\mathtt{KAM})}\mathtt T_n
+\Phi_\infty^{-1}\mathtt \mathtt{E}_{n}^{(\mathtt{Nor})}\Phi_\infty\mathtt T_n.$$
Conjugating back with $\mathscr B_\bot\Phi_\infty$ therefore gives, on
$\mathcal{C}_n(\gamma,\tau_1,\tau_2,i_0)$, the identity \eqref{Inv:Lbot}, with
$$\mathtt E_n=\mathscr B_\bot\Phi_\infty
\Big(
\mathtt E_n^{(\mathtt{Inv})}+\mathtt E_n^{(\mathtt{KAM})}\mathtt T_n
+\Phi_\infty^{-1}\mathtt \mathtt{E}_{n}^{(\mathtt{Nor})}\Phi_\infty\mathtt T_n\Big)\Phi_\infty^{-1}\mathscr B_\bot^{-1}.$$
The bound \eqref{stima:ttEn} follows from \eqref{stima:Bbot}, \eqref{stima:Phi_infty}, \eqref{stima:ttEn_Nor}, \eqref{stima:ttEn_KAM}, \eqref{stima:ttTn}, \eqref{stima:ttEInv} and \eqref{petitesse}, after increasing $\sigma$ if necessary.\\
$(iii)$ The decomposition \eqref{decomposition:Lbot} follows immediately from \eqref{Inv:Lbot} by setting
$$\mathcal{L}_{\bot,n}\triangleq\mathtt{I}_{\bot,n}^{-1}=\mathscr{B}_{\bot}\Phi_{\infty}
\mathscr{L}_{\infty,n}\Phi_{\infty}^{-1}\mathscr{B}_{\bot}^{-1},\qquad\mathcal{R}_{\bot,n}\triangleq\mathtt{E}_n\mathtt{I}_{\bot,n}^{-1}.$$
The bound \eqref{stima-tronc:lin_nor} follows from \eqref{stima:Bbot}, \eqref{stima:Phi_infty}, Lemma \ref{lem:funct_topo}-$(ii)$ and \eqref{petitesse} while the bound \eqref{stima-restn} follows from \eqref{stima:ttEn}, \eqref{stima-tronc:lin_nor} and \eqref{petitesse}. The proof of Proposition
\ref{prop:inverse_normal}
is complete.

\end{proof}
\subsubsection{Construction and tame estimates for the approximate inverse}
Now, we can finish the construction of the approximate right inverse to the full linearized operator $d_{(i,\alpha)}\mathscr{F}(i_0,\alpha_0)$. The result reads as follows.

\begin{prop}\label{prop:approx_full} 
	There exists $\overline{\sigma}= \overline{\sigma}(\tau_1,\tau_2,d,q)>0$ and a family of reversible operators $\mathtt{I}_{n}$ satisfying for any $n\in\mathbb{N}^*$ and $s\in [s_0,S],$
	\begin{equation}\label{stima:ttIn}
		\|\mathtt{I}_n\hat{\imath}\|_{q,s}^{\gamma,\mathscr{O}}\lesssim\gamma^{-1}\left(\|\hat{\imath}\|_{q,s+\overline{\sigma}}^{\gamma,\mathscr{O}}+\|\mathfrak{I}_{0}\|_{q,s+\overline{\sigma}}^{\gamma,\mathscr{O}}\|\hat{\imath}\|_{q,s_{0}+\overline{\sigma}}^{\gamma,\mathscr{O}}\right).
	\end{equation}
	Moreover, for any $n\in\mathbb{N}^*$, in restriction to the Cantor set $\mathcal{C}_{n}(\gamma,\tau_{1},\tau_{2},i_{0})$ defined by \eqref{def:Cant-red}, the operator $\mathtt{I}_n$ is an almost-approximate right inverse of $d_{(i,\alpha)}\mathscr{F}(i_0,\alpha_0)$, namely  
	\begin{equation}\label{splitting per approximate inverse}
		d_{(i,\alpha)}\mathscr{F} (i_0,\alpha_0)\circ\mathtt{I}_n-\textnormal{Id}=\mathscr{E}_{1,n}+\mathscr{E}_{2,n}+\mathscr{E}_{3,n}, 
	\end{equation}
	where $\mathscr{E}_{1,n}$, $\mathscr{E}_{2,n}$ and $\mathscr{E}_{3,n}$ are operators well-defined in the whole set $\mathscr{O}$ and such that
	\begin{align}	\|\mathscr{E}_{1,n}\hat{\imath}\|_{q,s_0}^{\gamma,\mathscr{O}}&\lesssim\gamma^{-1}\|\mathscr{F}(i_0,\alpha_0)\|_{q,s_0 +\overline{\sigma}}^{\gamma,\mathscr{O}}\|\hat{\imath}\|_{q,s_0+\overline{\sigma}}^{\gamma,\mathscr{O}},\label{stima:scrE1}\\
		\forall\mathtt{t}\geqslant 0,\quad\|\mathscr{E}_{2,n}\hat{\imath}\|_{q,s_0}^{\gamma,\mathscr{O}}&\lesssim\gamma^{-1} N_n^{-\mathtt{t}}\Big(\|\hat{\imath}\|_{q,s_0+\mathtt{t}+\overline{\sigma}}^{\gamma,\mathscr{O}}+\varepsilon\|\mathfrak{I}_{0} \|_{q,s_0+\mathtt{t}+\overline{\sigma}}^{\gamma,\mathscr{O}}\|\hat{\imath}\|_{q,s_0+\overline{\sigma}}^{\gamma,\mathscr{O}} \Big),
		\label{stima:scrE2}\\
		\forall \mathtt{t}\in [0,S-s_0],\quad\|\mathscr{E}_{3,n} \hat{\imath}\|_{q,s_0}^{\gamma,\mathscr{O}}&\lesssim N_n^{-\mathtt{t}}\gamma^{-2}\Big( \|\hat{\imath}\|_{q,s_0+\mathtt{t}+\overline{\sigma}}^{\gamma,\mathscr{O}}+\varepsilon\gamma^{-2}\|\mathfrak{I}_{0}\|_{q,s_0+\mathtt{t}+\overline{\sigma}}^{\gamma,\mathscr{O}}\|\hat{\imath}\|_{q,s_0+\overline{\sigma}}^{\gamma,\mathscr{O}}\Big)\nonumber\\
		&\quad+ \varepsilon\gamma^{-4}N_{0}^{\mu}N_{n}^{-\mu} \|\hat{i}\|_{q,s_0+\overline{\sigma}}^{\gamma,\mathscr{O}}. \label{stima:scrE3} 
	\end{align}
\end{prop}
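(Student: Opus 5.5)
We will follow the Berti--Bolle strategy and build $\mathtt{I}_n$ by solving the triangular system \eqref{triangular_system} exactly (modulo the normal-direction errors). Then we transport it back through the conjugation \eqref{conjugaison:Berti-Bolle}. We set
$$\mathtt{I}_n\triangleq D\bar G_0(i_{\textnormal{\tiny{triv}}})\circ\mathtt{T}_{0,n}\circ DG_0(i_{\textnormal{\tiny{triv}}})^{-1},$$
where $\mathtt{T}_{0,n}$ is an approximate right inverse of $\mathcal{T}$. We construct $\mathtt{T}_{0,n}$ for a right-hand side $(g_1,g_2,g_3)$ in the order dictated by the triangular structure.

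First, the action equation $\omega\cdot\partial_\varphi\hat{\mathtt y}=g_2-\mathtt P(\varphi)\hat\alpha$. We choose $\hat\alpha$ so that the right-hand side has zero $\varphi$-average; this requires $\langle\mathtt P\rangle$ to be invertible, which holds since $\mathtt P=\textnormal{Id}+O(\varepsilon\gamma^{-1}+\|\mathfrak I_0\|)$. We then invert $\omega\cdot\partial_\varphi$ through the diophantine cut-off $(\omega\cdot\partial_\varphi)^{-1}_{\textnormal{ext}}$, at a cost of $\gamma^{-1}$ and $\tau_1 q+\tau_1$ derivatives. The average of $\hat{\mathtt y}$ is left as a free constant $\mathtt y_0$.

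Second, the normal equation. We define
$$\hat{\mathtt w}\triangleq\mathtt{I}_{\bot,n}\big(g_3+\partial_\theta[H_{11}\hat{\mathtt y}-\mathtt L_2^\top\hat\alpha]\big),$$
using Proposition \ref{prop:inverse_normal}-(ii). This produces the error $\mathtt E_n$ from \eqref{Inv:Lbot}, which will feed $\mathscr E_{3,n}$.

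Third, the angle equation $\omega\cdot\partial_\varphi\hat\phi=g_1+H_{20}\hat{\mathtt y}+H_{11}^\top\hat{\mathtt w}+\mathtt L_1^\top\hat\alpha$. We fix $\mathtt y_0$ so that the right-hand side has zero average. This requires $\langle H_{20}\rangle$ to be invertible (twist condition). Here the linear part is degenerate, since $\mathcal N_\alpha$ is affine in $I$. So the twist must come from $\varepsilon\partial_{II}\mathcal P_\varepsilon$, or we instead avoid it by the modified scheme of \cite[Sec. 6]{HHM21}, where $\alpha$ absorbs this average; we follow the latter. Reversibility of $\mathtt I_n$ follows because each step preserves parity: $DG_0$ commutes with $\mathfrak S$, and $\mathtt I_{\bot,n}$ and the inverse of $\omega\cdot\partial_\varphi$ are reversibility preserving.

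For the estimates, \eqref{stima:ttIn} follows by composing \eqref{stima-inverse_nor}, the tame bounds of $DG_0^{\pm1}$ (which use $\|\mathfrak I_0\|_{q,s_0+2}\le1$), Lemma \ref{lem;tame_calXP} applied to $H_{20},H_{11}$, and the $\gamma^{-1}$ loss from the $\omega\cdot\partial_\varphi$ inversions. We then expand $d_{(i,\alpha)}\mathscr F\circ\mathtt I_n-\textnormal{Id}$ via \eqref{conjugaison:Berti-Bolle}. The term $D\bar G_0\,\mathcal E\,\mathtt T_{0,n}\,DG_0^{-1}$ becomes $\mathscr E_{1,n}$. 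Since $\mathcal E$ is linear in $\mathtt Z=\mathscr F(i_0,\alpha_0)$ and in $\mathtt Q$, bound \eqref{stima:scrE1} requires showing $\mathtt Q$ is controlled by $\mathtt Z$. This is the isotropy argument: $\mathtt Q$ is the pullback of the symplectic form, and $\omega\cdot\partial_\varphi\mathtt Q$ is expressed through $\partial_\varphi\mathtt Z$.

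The truncation of $\omega\cdot\partial_\varphi$ on frequencies $|l|>N_n$ gives $\mathscr E_{2,n}$ with $N_n^{-\mathtt t}$ decay. The term $\mathtt E_n$ from Proposition \ref{prop:inverse_normal}, with bound \eqref{stima:ttEn}, gives $\mathscr E_{3,n}$ after one more $\gamma^{-1}$ factor from the angle step.

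The main obstacle is \eqref{stima:scrE1}, namely controlling $\mathtt Q$ and the isotropy defect by $\mathscr F(i_0,\alpha_0)$. This is needed because only an approximately isotropic torus makes $G_0$ approximately symplectic. We would first try to quote \cite[Lem. 6.x]{HHM21} verbatim, since $\mathcal W$ has the same form here. Failing that, we would compute $\omega\cdot\partial_\varphi\mathtt Q=\partial_\varphi\mathtt Z^\top J\partial_\varphi i_0-\partial_\varphi i_0^\top J\partial_\varphi\mathtt Z$ and invert $\omega\cdot\partial_\varphi$ on $\mathcal C_n$.
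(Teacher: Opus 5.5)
There is a genuine gap in how you remove the two averages in the triangular system \eqref{triangular_system}. Your first step assumes $\mathtt P=\textnormal{Id}+O(\varepsilon\gamma^{-1}+\|\mathfrak I_0\|)$, and this is false. By definition $\mathtt P=(\partial_\varphi\vartheta_0)^\top\partial_\varphi I_0\,\mathtt L_1^\top+(\partial_\varphi z_0)^\top\mathtt L_2^\top$ vanishes identically at the flat torus $(\varphi,0,0)$, so it is only of size $\|\mathfrak I_0\|$. Moreover, for a reversible $i_0$ it is odd in $\varphi$, so $\langle\mathtt P\rangle=0$ exactly. Hence $\hat\alpha$ cannot be used to kill the average of the action equation. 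A near-identity coefficient in the action equation is the structure of the original Berti--Bolle counterterm $\zeta\cdot\vartheta$. The functional $\mathscr F$ here has no such term: it has only the frequency modulation $\alpha\cdot I$, which enters the \emph{angle} equation through $\mathtt L_1^\top=(\textnormal{Id}+\partial_\varphi\Theta_0)^{-1}$. In addition, your third step spends $\hat\alpha$ a second time on the average of the angle equation, so as written the conditions imposed on $\hat\alpha$ are overdetermined.

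The missing idea is that the action average vanishes for free by reversibility. One inverts only on right-hand sides coming from reversible tori, for which the action component $g_2$ is odd in $\varphi$. Together with the oddness of $\mathtt P$, this gives $\langle g_2-\mathtt P\hat\alpha\rangle=0$ for every $\hat\alpha$. The constant $\mathtt y_0$ can then be fixed arbitrarily, so no twist condition on $\langle H_{20}\rangle$ is ever needed. The vector $\hat\alpha$ is then determined by the zero-average condition of the angle equation. Since $\hat{\mathtt y}$ and $\hat{\mathtt w}$ depend on $\hat\alpha$ through $\mathtt P\hat\alpha$ and $\mathtt L_2^\top\hat\alpha$, this condition is a linear system whose matrix is a small (order $\varepsilon\gamma^{-1}$) perturbation of $\langle\mathtt L_1^\top\rangle\approx\textnormal{Id}$, and it is solved by a Neumann series. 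This is what the paper does, and it is why reversibility is structurally indispensable here rather than merely a property propagated along the construction.

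A smaller point concerns \eqref{stima:scrE1}. On $\mathcal C_n$ you only have the Diophantine condition for $0<|l|\leqslant N_n$. So the isotropy argument (express $\omega\cdot\partial_\varphi\mathtt Q$ through $\partial_\varphi\mathtt Z$, then invert $\omega\cdot\partial_\varphi$) controls only $\Pi_{N_n}\mathtt Q$ by $\mathscr F(i_0,\alpha_0)$. The paper keeps $\Pi_{N_n}\mathtt Q[\cdots]$ in $\mathcal E_{1,n}$ and moves $\Pi_{N_n}^\bot\mathtt Q[\cdots]$, together with $\mathbb D_n^\bot$, into $\mathcal E_{2,n}$, where the factor $N_n^{-\mathtt t}$ is available. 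Putting all of $\mathtt Q$ into $\mathscr E_{1,n}$, as you propose, would not give \eqref{stima:scrE1}.
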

\begin{proof}
    Let us denote
    $$\mathbb{D}_n\triangleq\omega\cdot\partial_{\varphi}\Pi_{N_n}+\Pi_{N_n,\mathtt{g}}^{\bot},\qquad\mathbb{D}_n^{\bot}\triangleq\omega\cdot\partial_{\varphi}\Pi_{N_n}^{\bot}-\Pi_{N_n,\mathtt{g}}^{\bot},$$
    where the projectors $\Pi_{N_n},\Pi_{N_n}^{\bot}$ are defined through \eqref{projectors_functions} and the projector $\Pi_{N_n,\mathtt{g}}^{\bot},$ introduced to preserve the reversibility on the high Fourier modes, is given by
    $$\Pi_{N_n,\mathtt{g}}^{\bot}u(\varphi)\triangleq\sum_{l\in\mathbb{Z}^d\atop|l|>N_n}\mathtt{g}(l)u_le^{\ii l\cdot\varphi},\qquad\mathtt{g}(-l)=-\mathtt{g}(l).$$
    Accordingly, the conjugation \eqref{conjugaison:Berti-Bolle} becomes
    $$DG_0(i_{\textnormal{\tiny{triv}}})^{-1}d_{(i,\alpha)}\mathscr{F}(i_0,\alpha_0)D\bar{G}_0(i_{\textnormal{\tiny{triv}}})=\mathcal{T}_n+\mathcal{E}_{1,n}+\mathcal{E}_{2,n}+\mathcal{E}_{3,n},$$
    with
    \begin{align*}
        \mathcal{T}_n[\hat{\phi},\hat{\mathtt{y}},\hat{\mathtt{w}},\hat{\alpha}]&\triangleq\begin{bmatrix}
        \mathbb{D}_n\hat{\phi}-\big[H_{20}(\varphi)\hat{\mathtt{y}}+H_{11}^{\top}(\varphi)\hat{\mathtt{w}}+\mathtt{L}_1^{\top}(\varphi)\hat{\alpha}\big]\\
        \mathbb{D}_n\mathtt{y}+\mathtt{P}(\varphi)\hat{\alpha}\\
        \mathcal{L}_{\bot,n}\hat{\mathtt{w}}-\partial_{\theta}\big[H_{11}(\varphi)\hat{\mathtt{y}}-\mathtt{L}_{2}^{\top}\hat{\alpha}\big]
    \end{bmatrix}
    \end{align*}
    and
    \begin{align*}
        \mathcal{E}_{1,n}&\triangleq DG_0(i_{\textnormal{\tiny{triv}}})^{-1}\partial_{\varphi}\mathtt{Z}(\varphi)\hat{\phi}\\
    &\quad+\begin{bmatrix}
0\\
\Pi_{N_n}\mathtt{Q}(\varphi)\big[\big(\partial_{\theta}( \partial_{z}\nabla_z H_\varepsilon^{\alpha_0}) \big(i_0(\varphi)\big)+\mathcal{R}(\varphi)\big)\hat{\mathtt{y}}+H_{11}^{\top}(\varphi)\hat{\mathtt{w}}\big]-R_{10}(\varphi)\hat{\mathtt{y}}-R_{01}(\varphi)\hat{\mathtt{w}}\\
0
\end{bmatrix},\\
        \mathcal{E}_{2,n}&\triangleq\begin{bmatrix}
            \mathbb{D}_{n}^{\bot}\hat{\phi}\\
            \mathbb{D}_n^{\bot}\hat{\mathtt{y}}+\Pi_{n_N}^{\bot}\mathtt{Q}(\varphi)\big[\big(\partial_{\theta}( \partial_{z}\nabla_z H_\varepsilon^{\alpha_0}) \big(i_0(\varphi)\big)+\mathcal{R}(\varphi)\big)\hat{\mathtt{y}}+H_{11}^{\top}(\varphi)\hat{\mathtt{w}}\big]\\
            0
        \end{bmatrix},\\
        \mathcal{E}_{3,n}&\triangleq\begin{bmatrix}
            0\\
            0\\
            \mathcal{R}_{\bot,n}
        \end{bmatrix}.
    \end{align*}
    The operator $\mathbb{D}_n$ is well-defined on the whole parameter set $\mathscr{O}$ but invertible on the Diophantine set
    $$\mathtt{DC}_{N_n}(\gamma,\tau_1)\triangleq\bigcap_{\substack{l\in\mathbb{Z}^d\setminus\{0\}\\|l|\leqslant N_n}}\Big\{\omega\in\mathbb{R}^d\quad\textnormal{s.t.}\quad|\omega\cdot l|>\frac{\gamma}{\langle l\rangle^{\tau_1}}\Big\},$$
    provided the image is with zero time average. Note that 
    $$\mathtt{DC}_{N_n}(\gamma,\tau_1)\subset\mathcal{C}_n(\gamma,\tau_1,\tau_2,i_0).$$ One can check that $\varphi\mapsto\mathtt{P}(\varphi)$ is odd (so has zero average). One can choose $\hat{\alpha}$ so that the time-average of the second term in the first entry vanishes. The operator $\mathcal{L}_{\bot,n}$ can be inverted thanks to Proposition \ref{prop:inverse_normal}.  Consequently, one can invert the triangular system when seen with values onto the space of reversible tori (to have zero average in the action variable). This provides the inverse operator $\mathcal{T}_n^{-1}.$ Then, we define the approximate inverse as
    $$\mathtt{I}_n\triangleq DG_0(i_{\textnormal{\tiny{triv}}})\mathcal{T}_n^{-1}D\bar{G}_0(i_{\textnormal{\tiny{triv}}})^{-1}.$$
    One obtain the desired result by setting
    \begin{align*}
        \mathscr{E}_{1,n}&\triangleq DG_0(i_{\textnormal{\tiny{triv}}})\mathcal{E}_{1,n}D\bar{G}_0(i_{\textnormal{\tiny{triv}}})^{-1}\mathtt{I}_n,\\
        \mathscr{E}_{2,n}&\triangleq DG_0(i_{\textnormal{\tiny{triv}}})\mathcal{E}_{2,n}D\bar{G}_0(i_{\textnormal{\tiny{triv}}})^{-1}\mathtt{I}_n,\\
        \mathscr{E}_{3,n}&\triangleq DG_0(i_{\textnormal{\tiny{triv}}})\mathcal{E}_{3,n}D\bar{G}_0(i_{\textnormal{\tiny{triv}}})^{-1}\mathtt{I}_n.
    \end{align*}
    The various estimates follow from Lemma \ref{lem;tame_calXP}, Lemma \ref{lem:funct_topo}-$(ii)$-$(iii)$-$(iv)$-$(v)$, \eqref{stima-inverse_nor}, \eqref{stima-tronc:lin_nor} and \eqref{stima-restn}. For more details, we refer to \cite[Sec. 6]{HR21}.
\end{proof}
\subsection{Conclusion : construction of solutions}
We can finally perform the construction of a nontrivial solution. The result is given in the following proposition.

\begin{prop}\label{prop:final_solution}
Define
\begin{equation}\label{def:q_0-q}
    q_{0}\triangleq2\max(\mathbb{S})+2,\qquad q=q_0+2.
\end{equation}
The following hold true.
    \begin{enumerate}[label=(\roman*)]
        \item There exists a Cantor set 
        $$\mathcal{C}_{\infty}^{\varepsilon}\subset\mathscr{O}$$
        and a $q$-times differentiable function
        $$\mathtt{X}_{\infty}:\begin{array}[t]{rcl}
             \mathscr{O} & \rightarrow & \left(\mathbb{T}^d\mapsto\mathbb{T}^{d}\times\mathbb{R}^{d}\times (H^{s_0}\cap L_{\bot}^2)\right)\times\mathbb{R}^{d}\\
		(b,\omega) & \mapsto & \big(i_{\infty}(b,\omega),\alpha_{\infty}(b,\omega)\big)
        \end{array}$$
        such that
        \begin{equation}\label{Xinfty_solution}
            \forall(b,\omega)\in\mathcal{C}_{\infty}^{\varepsilon},\quad\mathscr{F}\big(\mathtt{X}_{\infty}(b,\omega),(b,\omega),\varepsilon\big)=0.
        \end{equation}
        In addition, the torus $i_{\infty}$ is reversible, namely, with the notation \eqref{reversiblility_op:action-angle},
        \begin{equation}\label{reversibility_infty}
            \forall\varphi\in\mathbb{T}^d,\quad\mathfrak{S}\big(i_{\infty}(b,\omega)\big)(\varphi)=i_{\infty}(b,\omega)(-\varphi).
        \end{equation}
    \item There exists a $q$-times differentiable function 
    $$\begin{array}{rcl}
         (b_0,b_1) & \rightarrow & \mathbb{R}^d\\
         b & \mapsto & \omega(b,\varepsilon)
    \end{array}$$
    such that
    \begin{equation}\label{cv:omg}
        \alpha_{\infty}\big(b,\omega(b,\varepsilon)\big)=-\omega_{\textnormal{Eq}}(b),\qquad\lim_{\varepsilon\to0}\omega(b,\varepsilon)=-\omega_{\textnormal{Eq}}(b).
    \end{equation}
    In addition, defining
	\begin{equation}\label{def:fin_Cant}
	    \mathscr{C}_{\infty}^{\varepsilon}\triangleq\Big\{b\in(b_0,b_1)\quad\textnormal{s.t.}\quad\big(b,\omega(b,\varepsilon)\big)\in\mathcal{C}_{\infty}^{\varepsilon}\Big\},
	\end{equation}
	there exists $C>0$ and $\delta\triangleq\delta(\mathbb{S})>0$ such that 
	$$\big|\mathscr{C}_{\infty}^{\varepsilon}\big|\geqslant (b_1-b_0)-C \varepsilon^{\delta}.$$
	In particular, 
	$$\displaystyle \lim_{\varepsilon\to0}\big|\mathscr{C}_{\infty}^{\varepsilon}\big|=b_1-b_0.$$
    Finally,
    $$\forall b\in\mathscr{C}_{\infty}^{\varepsilon},\quad\mathscr{F}\Big(\mathtt{X}_{\infty}\big(b,\omega(b,\varepsilon)\big),\big(b,\omega(b,\varepsilon)\big),\varepsilon\Big)=0.$$
    \end{enumerate}
\end{prop}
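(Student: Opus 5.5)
The proof of Proposition \ref{prop:final_solution} follows the standard Nash--Moser scheme of \cite{HR21,HR21-1}, driven by the approximate right inverse of Proposition \ref{prop:approx_full} and the tame estimates of Lemma \ref{lem;tame_calXP}.

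\textbf{Part (i): the iteration.} We start from the flat torus $i_{\textnormal{flat}}$ of \eqref{def iflat} with $\alpha_0\triangleq-\omega$, extended to all of $\mathscr{O}$. By Lemma \ref{lem;tame_calXP}-(i), $\|\mathscr{F}(\mathtt{X}_0)\|_{q,s}^{\gamma,\mathscr{O}}\lesssim\varepsilon$. We choose $\gamma=\varepsilon^{a}$ with $a$ small enough that $\varepsilon\gamma^{-2-q}N_0^{\mu}\leqslant\varepsilon_0$, so that \eqref{petitesse} holds.

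Given $\mathtt{X}_n=(i_n,\alpha_n)$ reversible, we set
$$\mathtt{X}_{n+1}\triangleq\mathtt{X}_n-\Pi_{N_{n}}\mathtt{I}_n\Pi_{N_n}\mathscr{F}(\mathtt{X}_n),$$
where $\mathtt{I}_n$ is built at $i_n$ by Proposition \ref{prop:approx_full}. This step solves the Newton equation only on the Cantor set $\mathcal{C}_n(\gamma,\tau_1,\tau_2,i_n)$; the increment is then extended smoothly to $\mathscr{O}$ by a cut-off in the parameters.

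We prove by induction the standard estimates:
\begin{enumerate}[label=\textbullet]
\item a high norm bound $\|\mathfrak{I}_n\|_{q,s_0+\overline{\sigma}}\lesssim\varepsilon\gamma^{-1}$, together with controlled growth $\|\mathfrak{I}_n\|_{q,b_1}\lesssim\varepsilon\gamma^{-1}N_{n}^{\mu_1}$ in a high norm $b_1$;
\item superexponential decay $\|\mathscr{F}(\mathtt{X}_n)\|_{q,s_0}\lesssim\varepsilon N_{n-1}^{-a_1}$.
\end{enumerate}

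The error decomposes into three pieces:
\begin{enumerate}[label=\textbullet]
\item the quadratic Taylor remainder, controlled by Lemma \ref{lem;tame_calXP}-(iii);
\item $\mathscr{E}_{1,n}$, which is quadratic in $\mathscr{F}(\mathtt{X}_n)$ by \eqref{stima:scrE1};
\item $\mathscr{E}_{2,n}$ and $\mathscr{E}_{3,n}$, which gain a power $N_n^{-\mathtt{t}}$ against the high norm, as in \eqref{stima:scrE2}--\eqref{stima:scrE3}.
\end{enumerate}

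The exponents $\mu,a_1,\mu_1,b_1$ are tuned as in \cite[Sec. 7]{HR21}. The limit $\mathtt{X}_\infty$ exists in the $q$-Lipschitz topology, and $\mathscr{F}(\mathtt{X}_\infty)=0$ holds on $\mathcal{C}_\infty^\varepsilon\triangleq\bigcap_n\mathcal{C}_n(\gamma,\tau_1,\tau_2,i_n)$. Reversibility \eqref{reversibility_infty} propagates because $\mathtt{I}_n$ is reversible and $\mathscr{F}$ commutes with $\mathfrak{S}$.

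\textbf{Part (ii): frequencies and measure.} By \eqref{stima:ttIn} and the construction, $\alpha_\infty(b,\omega)=-\omega+\mathtt{r}_\varepsilon(b,\omega)$ with $\|\mathtt{r}_\varepsilon\|_q^{\gamma,\mathscr{O}}\lesssim\varepsilon\gamma^{-1}$. The implicit function theorem in $\omega$ then gives $\omega(b,\varepsilon)$ with $\alpha_\infty(b,\omega(b,\varepsilon))=-\omega_{\textnormal{Eq}}(b)$ and $\omega(b,\varepsilon)=-\omega_{\textnormal{Eq}}(b)+O(\varepsilon\gamma^{-1})$, which is \eqref{cv:omg}.

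For the measure estimate, we bound the complement of $\mathscr{C}_\infty^\varepsilon$ by a union over $(l,j,j_0)$ of the resonant sets in $b$, coming from the Diophantine, transport, KAM and inversion conditions. Each set is estimated via Rüssmann's lemma: its measure is $\lesssim(\gamma\langle l\rangle^{-\tau})^{1/q_0}$, provided the perturbed eigenvalues $\omega(b,\varepsilon)\cdot l+\mathtt{d}^\infty_j-\mathtt{d}^\infty_{j_0}$ satisfy a transversality condition: some derivative of order $\leqslant q_0$ in $b$ is bounded below by $\rho\langle l\rangle$. This transversality is inherited from the unperturbed frequencies $\Omega_j(b)$, whose non-degeneracy (polynomial in $b^2$, Vandermonde-type argument) is established in \cite[Lem. 3.3]{HR21-1}. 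It persists under $O(\varepsilon\gamma^{-1})$ perturbations, using \eqref{stima:Vinfty} and \eqref{stima restes vp}.

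Summation then requires $\tau_1,\tau_2$ large, and exploits the structure $j(V^\infty_{i}-\frac12)$ of the dominant corrections, which reduces the KAM conditions to $|j-j_0|\lesssim\langle l\rangle$ once $|j|,|j_0|$ are large. This gives a total excluded measure $\lesssim\gamma^{\delta'}=\varepsilon^{\delta}$. The main obstacle is this transversality/measure step. All perturbations from $G^\varepsilon_p$ are of size $\varepsilon^2$ or smoothing, so the argument of \cite[Sec. 7]{HR21-1} applies verbatim, and I would check carefully only that the $\varepsilon$-corrections do not alter the leading-order derivatives in $b$.
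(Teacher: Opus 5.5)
Your overall plan matches the paper's. You run Nash--Moser with the approximate inverse of Proposition \ref{prop:approx_full} and $\gamma=\varepsilon^{a}$, impose the rigidity condition by inverting $\omega\mapsto\alpha_\infty(b,\omega)$, and use R\"ussmann's lemma with transversality inherited from $\omega_{\textnormal{Eq}}$. However, the measure estimate as you describe it does not close.

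\textbf{The gap: summing over Nash--Moser steps.} Your set $\mathcal{C}_\infty^\varepsilon=\bigcap_n\mathcal{C}_n(\gamma,\tau_1,\tau_2,i_n)$ is an intersection over infinitely many steps. Each step is defined through the frequencies $V^\infty_{i_n}$, $\mathtt{d}_j^\infty(b,\omega,i_n)$ of a different torus. So the complement is a union over $n$ as well as over $(l,j,j_0)$, and a R\"ussmann bound of size $\varepsilon^{\delta}$ per step, summed over $n$, diverges.

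The missing ingredient is the paper's \eqref{elim1}. One writes the complement as $\bigsqcup_n(\mathscr{C}_n^\varepsilon\setminus\mathscr{C}_{n+1}^\varepsilon)$ and shows that no new resonant parameters with $|l|\leqslant N_{n-1}$ appear at step $n$. This combines the Lipschitz-in-torus estimates \eqref{Lipschitz V}, \eqref{Lipschitz_dinfty}, \eqref{Lipschitz:rj_infty} with the Cauchy bound \eqref{CauchyNM}. Then only $|l|>N_{n-1}$ contributes and the sum over $n$ converges.

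This argument needs a margin in the non-resonance constants, which is why the paper uses $\gamma_n=\gamma(1+2^{-n})$ in \eqref{Inter_Cantor_NM}. With your fixed $\gamma$ there is no room to absorb the error $|j|\,|V^\infty_{i_n}-V^\infty_{i_{n-1}}|$, and the inclusion fails. Alternatively, you could show that a single set built on $i_\infty$ with constant $2\gamma$ lies inside every $\mathcal{C}_n(\gamma,\tau_1,\tau_2,i_n)$, but some such argument must be supplied.

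\textbf{Finiteness of the KAM sum.} The bound $|j-j_0|\lesssim\langle l\rangle$ alone still leaves infinitely many pairs $(j,j_0)$ for each $l$. What makes the second-Melnikov sum finite is the inclusion \eqref{elim3}: $\mathcal{R}^{(\mathtt{KAM})}_{l,j,j_0}(i_n)\subset\mathcal{R}^{(\mathtt{TR})}_{l,j-j_0}(i_n)$ for $\min(|j|,|j_0|)\gtrsim\gamma^{-\upsilon}\langle l\rangle^{\tau_1}$. It uses $\Omega_j(b)=\tfrac{j-1}{2}+O(b^{2j})$ and $|r^\infty_j|\lesssim\varepsilon\gamma^{-1}|j|^{-1}$. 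It also needs the weaker constant $\gamma^{\upsilon}$ in the transport conditions and $\tau_2>\tau_1$.

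\textbf{A sign error.} With the paper's $\mathscr{F}$, the first component at the flat torus is $\omega-\alpha-\varepsilon\partial_I\mathcal{P}_\varepsilon(i)$. So the initial guess must be $\alpha_0=\omega$, as in \eqref{initial_guess}. With your $\alpha_0=-\omega$ that component equals $2\omega+O(\varepsilon)$, so the claim $\|\mathscr{F}(\mathtt{X}_0)\|\lesssim\varepsilon$ is false. Correspondingly $\alpha_\infty(b,\omega)=\omega+O(\varepsilon^{\kappa})$. Your expansion $\alpha_\infty=-\omega+\mathtt{r}_\varepsilon$ would force $\omega(b,\varepsilon)\to+\omega_{\textnormal{Eq}}(b)$, contradicting the conclusion you state.

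\textbf{Derivative losses.} When you invert $\omega\mapsto\alpha_\infty$ and transfer transversality, $\|\mathtt{r}_\varepsilon\|_q^{\gamma,\mathscr{O}}\lesssim\varepsilon\gamma^{-1}$ only gives $|\partial^k\mathtt{r}_\varepsilon|\lesssim\varepsilon\gamma^{-1-k}$. This stays small up to $k=q$ only because $a<\frac{1}{\mu+q+2}$, and you should say so.
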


\begin{proof} We provide the main steps and arguments. For detailed computations, we refer the reader to \cite[Prop. 7.1, Cor. 7.1 and Prop. 7.2]{HR21}.

\medskip

$\blacktriangleright$ \textbf{Nash-Moser iteration :} The first approximate solution is given by the equilibrium reversible quasi-periodic solutions in \eqref{rev sol eq:EDP}. Let us recall that in action-angle-normal variables they correspond to the trivial embedding $i_{\textnormal{\tiny{triv}}}$ in \eqref{def iflat}. Therefore, we take
\begin{equation}\label{initial_guess}
    \mathtt{X}_0\triangleq(i_0,\alpha_0)\triangleq(i_{\textnormal{\tiny{triv}}},\omega).
\end{equation}
Then, we design the iterative scheme
\begin{equation}\label{iter}
    \mathtt{X}_n=(i_n,\alpha_n),\qquad\mathtt{X}_{n+1}=\mathtt{X}_n-\mathbf{\Pi}_{N_n}\mathtt{I}_n\Pi_{N_n}\mathscr{F}(\mathtt{X}_n),
\end{equation}
where
\begin{equation}\label{scale:Nash}
    N_n\triangleq N_0^{\left(\frac{3}{2}\right)^{n}},\qquad N_{0}\triangleq \gamma^{-1},\qquad\gamma\triangleq \varepsilon^{a},\qquad 0<a<\tfrac{1}{\mu+q+2},
\end{equation}
the Fourier projection $\Pi_{N_n}$ is defined in \eqref{projectors_functions},
$$\mathbf{\Pi}_{N_n}(\mathfrak{I},\alpha)\triangleq(\Pi_{N_n}\mathfrak{I},\alpha),\qquad\mathbf{\Pi}_{N_n}^{\bot}(\mathfrak{I},\alpha)\triangleq(\Pi_{N_n}^{\bot}\mathfrak{I},0)$$
and $\mathtt{I}_{n}$ is a reversible approximate right inverse of $\mathtt{L}_n\triangleq d_{(i,\alpha)}\mathscr{F}(\mathtt{X}_n)$ when restricted to the Cantor set, see the definition \eqref{def:Cant-red},
\begin{equation}\label{Inter_Cantor_NM}
    \mathcal{C}_{n}^{\varepsilon}\triangleq\mathcal{C}_{n}\left(\gamma_n,\tau_1,\tau_2,i_{n-1}\right),\qquad\gamma_{n}\triangleq\gamma(1+2^{-n})
\end{equation}
enjoying the tame estimates \eqref{stima:ttIn}, \eqref{stima:scrE1}, \eqref{stima:scrE2}, \eqref{stima:scrE3} and obtained following the construction of Section \ref{sec:approx_inv}. The reversibility to initial guess $\mathtt{i_0}$, of the operator $\mathtt{I}_n$ and of the equation itself allows to propagate the reversibility along the iteration \eqref{iter}, namely
\begin{equation}\label{reversibility_NM}
    \mathfrak{S}i_n(\varphi)=i_n(-\varphi).
\end{equation}
By Taylor's formula, we get
$$\mathscr{F}(\mathtt{X}_{n+1})=\mathscr{F}(\mathtt{X}_n)-\Pi_{N_n}(\mathtt{L}_n\mathtt{I}_n-\textnormal{Id})\Pi_{N_n}\mathscr{F}(\mathtt{X}_n)+(\mathtt{L}_n\mathbf{\Pi}_{N_n}^{\bot}-\Pi_{N_n}^{\bot}\mathtt{L}_n)\mathtt{I}_n\Pi_{N_n}\mathscr{F}(\mathtt{X}_n)+Q_n,$$
where
$$Q_n\triangleq\mathscr{F}(\mathtt{X}_{n+1})-\mathscr{F}(\mathtt{X}_n)-\mathtt{L}_n(\mathtt{X}_{n+1}-\mathtt{X}_n).$$
Under the following choice of parameters
\begin{equation}\label{choice tau 1 tau2 upsilon} 
		\tau_{1}\triangleq dq_{0}+1,\qquad \tau_{2}\triangleq\tau_{1}+dq_{0}+1,\qquad\upsilon\triangleq\frac{1}{q_{0}+3}
	\end{equation}
    and (recall that $\overline{\sigma}$ is the fixed loss of derivative in the construction of the approximate right inverse in Proposition \ref{prop:approx_full})
\begin{equation}\label{parNash}
    \begin{cases}
		\mathtt{p}_{1}=6q(\tau_{2}+2)+12\overline{\sigma}+15,\\
		\mathtt{p}_{2}=3q(\tau_{2}+2)+6\overline{\sigma}+9,\\
        \mathtt{p}_3=3q(\tau_{2}+2)+6\overline{\sigma}+6,\\
		\mu=2q(\tau_{2}+2)+5\overline{\sigma}+7,\\
		s_{h}=s_{0}+4q(\tau_{2}+2)+9\overline{\sigma}+11,\\
		s_{m}=2s_{h}-s_{0},
	\end{cases}
\end{equation}
using the estimates in Lemma \ref{lem;tame_calXP} and Proposition \ref{prop:approx_full}, one can prove by induction
\begin{enumerate}[label=\textbullet]
\item (Cauchy in intermediate norm) 
\begin{equation}\label{CauchyNM}
    \|\mathtt{X}_{n}-\mathtt{X}_{n-1}\|_{q,\overline{s}_h+\overline{\sigma}}^{\gamma,\mathscr{O}}\leqslant C\varepsilon\gamma^{-1}N_{n-1}^{-\mathtt{p}_{2}};
\end{equation}
    \item (Approximate solution in low norm) 
    \begin{equation}\label{approximate_zero}
        \|\mathscr{F}(\mathtt{X}_n)\|_{q,s_0}^{\gamma,\mathscr{O}}\leqslant C\varepsilon N_{n-1}^{-\mathtt{p}_1};
    \end{equation}
    \item (Growth control in high norm) 
    $$\|\mathtt{X}_n-\mathtt{X}_0\|_{q,s_m+\overline{\sigma}}^{\gamma,\mathscr{O}}\leqslant C\varepsilon\gamma^{-1}N_{n-1}^{\mathtt{p_3}}.$$
\end{enumerate}
We define
\begin{equation}\label{inter-Cant}
    \mathcal{C}_{\infty}^{\varepsilon}\triangleq\bigcap_{n\in\mathbb{N}}\mathcal{C}_{n}^{\varepsilon},\qquad\mathcal{C}_{0}^{\varepsilon}\triangleq\mathscr{O}.
\end{equation}
The Cauchy estimate above allows to conclude the convergence of the sequence of approximate solutions
$$\mathtt{X}_n\underset{n\to\infty}{\longrightarrow}\mathtt{X}_{\infty}\triangleq(i_{\infty},\alpha_{\infty}).$$
The reversibility property \eqref{reversibility_infty} is obtained passing to the limit in \eqref{reversibility_NM}. Passing in the limit in the low norm estimate also provides the fact that $\mathtt{X}_{\infty}$ is a solution, see \eqref{Xinfty_solution}. Now, we obtain a solution of the original problem through the rigidity condition
$$\alpha_{\infty}(b,\omega)=-\omega_{\textnormal{Eq}}(b).$$
From the Cauchy convergence, \eqref{initial_guess} and \eqref{scale:Nash}, we deduce that
\begin{equation}\label{perturb_freq_implicit}
    \alpha_{\infty}(b,\omega)=\omega+O(\varepsilon^{\kappa}), \qquad\kappa\triangleq\kappa(\mathbb{S})>0.
\end{equation}
This implies that $\omega\mapsto\alpha_{\infty}(b,\omega)$ is invertible a we get the desired results by setting
$$\omega(b,\varepsilon)\triangleq\alpha_{\infty}^{-1}\big(b,-\omega_{\textnormal{Eq}}(b)\big).$$

\medskip

    $\blacktriangleright$ \textbf{Measure of the admissible parameters :}
    Consider the set $\mathscr{C}_{\infty}^{\varepsilon}$ defined through \eqref{def:fin_Cant} and \eqref{inter-Cant}. We can write
$$\mathscr{C}_{\infty}^{\varepsilon}
=\bigcap_{n\in\mathbb{N}}\mathscr{C}_{n}^{\varepsilon},
\qquad
\mathscr{C}_{n}^{\varepsilon}
\triangleq\Big\{b\in(b_0,b_1)\quad\textnormal{s.t.}\quad\big(b,\omega(b,\varepsilon)\big)\in
\mathcal{C}_{n}^{\varepsilon}\Big\}.$$
Note that from the convergence \eqref{cv:omg}, we have by continuity that 
\begin{equation}\label{perturb_freq_in_ball}
    \omega([b_0,b_1],\varepsilon)\subset B_{\mathbb{R}^{d}}(0,R_0),
\end{equation}
so that $\mathscr{C}_{0}^{\varepsilon}=(b_0,b_1).$ Therefore, the complement writes
$$(b_0,b_1)\setminus\mathscr{C}_{\infty}^{\varepsilon}=
\bigsqcup_{n\in\mathbb{N}}
\big(\mathscr{C}_{n}^{\varepsilon}\setminus
\mathscr{C}_{n+1}^{\varepsilon}\big).$$
By construction,
$$\mathscr{C}_{n}^{\varepsilon}\setminus
\mathscr{C}_{n+1}^{\varepsilon}=\bigcup_{(l,j)\in\mathbb{Z}^{d+1}\setminus\{(0,0)\}\atop|l|\leqslant N_n}\mathcal R_{l,j}^{(\texttt{TR})}(i_n)\bigcup_{(l,j,j_0)\in\mathbb{Z}^{d}\times(\mathbb{S}_0^c)^2\atop|l|\leqslant N_n}\mathcal R_{l,j,j_0}^{(\texttt{KAM})}(i_n)\bigcup_{(l,j)\in\mathbb{Z}^{d}\times\mathbb{S}_0^c\atop|l|\leqslant N_n}\mathcal R_{l,j}^{(\texttt{Inv})}(i_n),$$
with the resonant sets are given by
$$\begin{aligned}
\mathcal R_{l,j}^{(\texttt{TR})}(i_n)
&\triangleq
\left\{
b\in\mathscr{C}_n^\varepsilon\quad\textnormal{s.t.}\quad
|\omega(b,\varepsilon)\cdot l+jV_n^\infty(b,\varepsilon)|
\leqslant
\frac{4\gamma_{n+1}^{\upsilon}\langle j\rangle}
{\langle l\rangle^{\tau_1}}
\right\},\\
\mathcal R_{l,j,j_0}^{(\texttt{KAM})}(i_n)
&\triangleq
\left\{
b\in\mathscr{C}_n^\varepsilon\quad\textnormal{s.t.}\quad
|\omega(b,\varepsilon)\cdot l+\mathtt{d}_j^{\infty,n}(b,\varepsilon)-\mathtt{d}_{j_0}^{\infty,n}(b,\varepsilon)|
\leqslant
\frac{2\gamma_{n+1}\langle j-j_0\rangle}
{\langle l\rangle^{\tau_2}}
\right\},\\
\mathcal R_{l,j}^{(\texttt{Inv})}(i_n)
&\triangleq
\left\{
b\in\mathscr{C}_n^\varepsilon\quad\textnormal{s.t.}\quad|\omega(b,\varepsilon)\cdot l+\mathtt{d}_j^{\infty,n}(b,\varepsilon)|
\leqslant
\frac{\gamma_{n+1}\langle j\rangle}
{\langle l\rangle^{\tau_1}}
\right\},
\end{aligned}$$
where we denoted
\begin{equation}\label{ttdj_measure}
    \mathtt{d}_j^{\infty,n}(b,\varepsilon)\triangleq
\mathtt{d}_j^\infty\big(b,\omega(b,\varepsilon),i_n\big)=\Omega_j(b)+j\big(V_{i_n}^{\infty}(b,\varepsilon)-\tfrac{1}{2}\big)
+r_j^{\infty,n}(b,\varepsilon),
\end{equation}
with
$$V_n^\infty(b,\varepsilon)\triangleq V_{i_n}^\infty\big(b,\omega(b,\varepsilon)\big),\qquad r_j^{\infty,n}(b,\varepsilon)\triangleq r_j^{\infty}\big(b,\omega(b,\varepsilon),i_n\big).$$
\ding{226} We observe that for $n\geqslant2$, we have
\begin{equation}\label{elim1}
    |l|\leqslant N_{n-1}\qquad\Rightarrow\qquad\mathcal R_{l,j}^{(\texttt{TR})}(i_n)=\mathcal R_{l,j}^{(\texttt{KAM})}(i_n)=\mathcal R_{l,j}^{(\texttt{Inv})}(i_n)=\varnothing.
\end{equation}
Let us give the details for the first one. By construction, it is sufficient to prove that $\mathcal R_{l,j}^{(\texttt{TR})}(i_n)\subset\mathcal R_{l,j}^{(\texttt{TR})}(i_{n-1}).$ By using \eqref{Lipschitz V}, \eqref{CauchyNM} and \eqref{scale:Nash}, we obtain
\begin{align*}
    |\omega(b,\varepsilon)\cdot l+jV_{n-1}^{\infty}(b,\varepsilon)|&\leqslant|\omega(b,\varepsilon)\cdot l+jV_{n}^{\infty}(b,\varepsilon)|+|j|\cdot|V_{n-1}^{\infty}(b,\varepsilon)-V_{n}^{\infty}(b,\varepsilon)|\\
    &\leqslant\tfrac{4\gamma_{n+1}^{\upsilon}|j|}{\langle l\rangle^{\tau_1}}+|j|\cdot\|V_{n-1}^{\infty}-V_{n}^{\infty}\|_{q}^{\gamma,\mathscr{O}}\\
    &\leqslant\tfrac{4\gamma_{n+1}^{\upsilon}|j|}{\langle l\rangle^{\tau_1}}+\varepsilon|j|\cdot\|i_{n-1}-i_{n}\|_{q,\overline{s}_h+\overline{\sigma}}^{\gamma,\mathscr{O}}\\
    &\leqslant\tfrac{4\gamma_{n+1}^{\upsilon}|j|}{\langle l\rangle^{\tau_1}}+C\varepsilon^{2-a}N_{n-1}^{-\mathtt{p}_2}|j|.
\end{align*}
By construction \eqref{Inter_Cantor_NM}, 
$$\gamma_{n+1}^{\upsilon}-\gamma_{n}^{\upsilon}\lesssim-\gamma^{\upsilon}2^{-n}.$$
Together with our choice of parameters \eqref{parNash} and the scale \eqref{scale:Nash}, we deduce for $\varepsilon$ small enough
$$|\omega(b,\varepsilon)\cdot l+jV_{n-1}^{\infty}(b,\varepsilon)|\leqslant\tfrac{4\gamma_{n+1}^{\upsilon}|j|}{\langle l\rangle^{\tau_1}}\cdot$$
This provides the desired claim. The other ones are obtained in a similar way using the Lipschitz estimates \eqref{Lipschitz_dinfty} (This is where they are used).\\
\ding{226} Besides, the following facts hold
\begin{equation}\label{elim2}
    \begin{aligned}
    \mathcal R_{l,j}^{(\texttt{TR})}(i_n)\neq\varnothing\quad&\Rightarrow\quad |j|\leqslant C\langle l\rangle,\\
    \mathcal R_{l,j,j_0}^{(\texttt{KAM})}(i_n)\neq\varnothing\quad&\Rightarrow\quad |j-j_0|\leqslant C\langle l\rangle,\\
    \mathcal R_{l,j}^{(\texttt{Inv})}(i_n)\neq\varnothing\quad&\Rightarrow\quad |j|\leqslant C\langle l\rangle.
\end{aligned}
\end{equation}
Indeed, let us prove the first implication, the other ones being obtained in a similar fashion. If $\mathcal R_{l,j}^{(\texttt{TR})}(i_n)\neq\varnothing,$ then there is some $\bar{b}\in(b_0,b_1)$ such that
$$|\omega(\bar{b},\varepsilon)\cdot l+j V_{n}^{\infty}(\bar{b},\varepsilon)|\leqslant\tfrac{4\gamma_{n+1}^{\upsilon}|j|}{\langle l\rangle^{\tau_1}}\cdot$$
Then, by triangular and Cauchy-Schwarz inequalities together with \eqref{stima:Vinfty}, \eqref{Inter_Cantor_NM}, \eqref{scale:Nash} and \eqref{perturb_freq_in_ball}, we get
$$\tfrac{1}{4}|j|\leqslant|j|\cdot|V_n^{\infty}(\bar{b},\varepsilon)|\leqslant R_0\langle l\rangle+4|j|\gamma_{n+1}^{\upsilon}\leqslant R_0\langle l\rangle+8|j|\varepsilon^{a\upsilon},$$
which provides the desired claim for $\varepsilon$ small enough.\\
\ding{226} Moreover, we also have the following inclusion
\begin{equation}\label{elim3}
    \exists\mathtt{c}>0,\quad\min(|j|,|j_0|)\geqslant\mathtt{c}\gamma_{n+1}^{\upsilon}\langle l\rangle^{\tau_1}\quad\Rightarrow\quad\mathcal{R}_{l,j,j_0}^{(\texttt{KAM})}(i_n)\subset\mathcal{R}_{l,j-j_0}^{(\texttt{TR})}(i_n).
\end{equation}
Let us prove this fact. Due to the symmetry $\mathtt{d}_{-j}^{\infty,n}=-\mathtt{d}_j^{\infty,n}$, we can restrict the discussion to $0<j<j_0.$ Take $b\in\mathcal{R}_{l,j,j_{0}}^{(\mathtt{KAM})}(i_{n}).$ Then, according to \eqref{ttdj_measure} and \eqref{equilibrium_spectrum}, we get
	\begin{align*}
		\big|{\omega}(b,\varepsilon)\cdot l+(j\pm j_{0})V_{n}^{\infty}(b,\varepsilon)\big|  &\leqslant  \big|{\omega}(b,\varepsilon)\cdot l+\mathtt{d}_{j}^{\infty,n}(b,\varepsilon)\pm\mathtt{d}_{j_{0}}^{\infty,n}(b,\varepsilon)\big|+\tfrac12|b^{2j}\pm b^{2j_0}|\\
		&\quad+\tfrac12\big|(j-1)\pm (j_0-1)-(j\pm j_0) \big|+\big|r_{j}^{\infty,n}(b,\varepsilon)\pm r_{j_{0}}^{\infty,n}(b,\varepsilon)\big|\\
        &\leqslant \tfrac{2\gamma_{n+1}\langle j\pm j_{0}\rangle}{\langle l\rangle^{\tau_{2}}}+\tfrac12|b^{2j}\pm b^{2j_0}|\nonumber\\
		 &\quad+\tfrac12\big|(j-1)\pm (j_0-1)-(j\pm j_0) \big|+\big|r_{j}^{\infty,n}(b,\varepsilon)\pm r_{j_{0}}^{\infty,n}(b,\varepsilon)\big|.
	\end{align*}
	We have the trivial bounds
	$$b^{2j}+b^{2j_0}\leqslant C\tfrac{\langle j+j_0\rangle}{j},\qquad b^{2j}-b^{2j_0}\leqslant-2\ln(b)\int_{j}^{j_0}b^{2x}dx
 \leqslant \tfrac{\mathtt{c}_1\langle j-j_{0}\rangle}{j},\qquad\mathtt{c}_1\triangleq\sup_{\substack{j\in\mathbb{N}\\b\in(0,1)}}\big(-2\ln(b)jb^{2j}\big)>0$$
and
$$ \big|(j-1)\pm (j_0-1)-(j\pm j_0) \big| =1\pm 1\leqslant \tfrac{\langle j+j_0\rangle}{j}\cdot$$
	Besides, \eqref{stima restes vp} provides
	\begin{align*}
		\big|r_{j}^{\infty,n}(b,\varepsilon)\pm r_{j_{0}}^{\infty,n}(b,\varepsilon)\big|  \leqslant &C \varepsilon^{1-a}\big(|j|^{-1}+|j_0|^{-1}\big)\\
		\leqslant & C \varepsilon^{1-a} \tfrac{\langle j\pm j_{0}\rangle}{j}\cdot
	\end{align*}
	Putting together the previous estimates yields 
	$$\big|{\omega}(b,\varepsilon)\cdot l+(j\pm j_{0})V_{n}^{\infty}(b,\varepsilon)\big|\leqslant \tfrac{2\gamma_{n+1}\langle j\pm j_{0}\rangle}{\langle l\rangle^{\tau_{2}}}+ C \tfrac{\langle j\pm j_{0}\rangle}{j}\cdot$$
	Now, assuming $\displaystyle j\geqslant\tfrac{1}{2} C\gamma_{n+1}^{-\upsilon}\langle l\rangle^{\tau_{1}}$ and using the fact that $\tau_{2}>\tau_{1}$, we get
	$$\big|{\omega}(b,\varepsilon)\cdot l+(j\pm j_{0})V_{n}^{\infty}(b,\varepsilon)\big| \leqslant  \tfrac{4\gamma_{n+1}^{\upsilon}\langle j\pm j_{0}\rangle}{\langle l\rangle^{\tau_{1}}}\cdot$$

Consequently, combining \eqref{elim1}, \eqref{elim2} and \eqref{elim3}, we can write
$$\mathscr{C}_{n}^{\varepsilon}\setminus
\mathscr{C}_{n+1}^{\varepsilon}\subset\bigcup_{(l,j)\in\mathbb{Z}^{d+1}\setminus\{(0,0)\}\atop \underset{|j|\leqslant C\langle l\rangle}{N_{n-1}\leqslant|l|\leqslant N_n}}\mathcal R_{l,j}^{(\texttt{TR})}(i_n)\bigcup_{(l,j,j_0)\in\mathbb{Z}^{d}\times(\mathbb{S}_0^c)^2\atop \underset{\min(|j|,|j_0|)\leqslant\mathtt{c}\gamma_{n+1}^{\upsilon}\langle l\rangle^{\tau_1};|j-j_0|\leqslant C\langle l\rangle}{N_{n-1}\leqslant|l|\leqslant N_n}}\mathcal R_{l,j,j_0}^{(\texttt{KAM})}(i_n)\bigcup_{(l,j)\in\mathbb{Z}^{d}\times\mathbb{S}_0^c\atop\underset{|j|\leqslant C\langle l\rangle}{N_{n-1}\leqslant|l|\leqslant N_n}}\mathcal R_{l,j}^{(\texttt{Inv})}(i_n).$$
We now estimate the measure of the resonant sets. Since
$q=q_0+2$, the resonance functions are of class $C^{q_0+1}$, and the
transversality estimates for the perturbed frequencies follow from those of $\omega_{\mathrm{Eq}}$ \cite[Lem. 3.5]{HR21-1} (this is where appears the choice of $q_0$ in \eqref{def:q_0-q}) by the above perturbative bounds, see \eqref{perturb_freq_implicit}, \eqref{stima:Vinfty} and \eqref{stima restes vp}. More precisely, there exists $\rho_0>0$ such that, for
all relevant Fourier indices,
$$\begin{aligned}
\inf_{b\in[b_0,b_1]}\max_{0\leqslant q\leqslant q_0}
|\partial_b^q(\omega(b,\varepsilon)\cdot l)|
&\geqslant \frac{\rho_0}{2}\langle l\rangle,\\
\inf_{b\in[b_0,b_1]}\max_{0\leqslant q\leqslant q_0}
|\partial_b^q(\omega(b,\varepsilon)\cdot l+jV_n^\infty(b,\varepsilon))|
&\geqslant \frac{\rho_0}{2}\langle l\rangle,\\
\inf_{b\in[b_0,b_1]}\max_{0\leqslant q\leqslant q_0}
|\partial_b^q(\omega(b,\varepsilon)\cdot l+\mu_j^{\infty,n}(b,\varepsilon))|
&\geqslant \frac{\rho_0}{2}\langle l\rangle,\\
\inf_{b\in[b_0,b_1]}\max_{0\leqslant q\leqslant q_0}
|\partial_b^q(\omega(b,\varepsilon)\cdot l+\mu_j^{\infty,n}(b,\varepsilon)-\mu_{j_0}^{\infty,n}(b,\varepsilon))|
&\geqslant \frac{\rho_0}{2}\langle l\rangle.
\end{aligned}$$
Rüssmann's lemma \cite[Thm.~17.1]{R01} consequently gives
$$
\begin{aligned}
|\mathcal R_{l,j}^{(\mathtt{TR})}(i_n)|
&\lesssim
\gamma^{\upsilon/q_0}
\langle j\rangle^{1/q_0}
\langle l\rangle^{-1-(\tau_1+1)/q_0}
\langle l,j\rangle,\\
|\mathcal R_{l,j}^{(\mathtt{Inv})}(i_n)|
&\lesssim
\gamma^{1/q_0}
\langle j\rangle^{1/q_0}
\langle l\rangle^{-1-(\tau_1+1)/q_0}
\langle l,j\rangle,\\
|\mathcal R_{l,j,j_0}^{(\mathtt{KAM})}(i_n)|
&\lesssim
\gamma^{1/q_0}
\langle j-j_0\rangle^{1/q_0}
\langle l\rangle^{-1-(\tau_2+1)/q_0}
\langle l,j-j_0\rangle.
\end{aligned}
$$
Also observe that, 
	$$\begin{cases}
	    |j-j_{0}|\leqslant C\langle l\rangle,\\
        \min(|j|,|j_{0}|)\leqslant\gamma_{n+1}^{-\upsilon}\langle l\rangle^{\tau_{1}}
\end{cases}\quad\Rightarrow\quad\max(|j|,|j_{0}|)=\min(|j|,|j_{0}|)+|j-j_{0}|\leqslant\gamma_{n+1}^{-\upsilon}\langle l\rangle^{\tau_{1}}+C_{0}\langle l\rangle\lesssim\gamma^{-\upsilon}\langle l\rangle^{\tau_{1}}.$$
Hence, combining the foregoing estimates, we find, for $n\geqslant2$,
$$|\mathscr{C}_{n}^{\varepsilon}\setminus\mathscr{C}_{n+1}^{\varepsilon}|
\lesssim
\gamma^{1/q_0}
\left(
\sum_{|l|>N_{n-1}}\langle l\rangle^{-\tau_1/q_0}+\gamma^{-\upsilon}
\sum_{|l|>N_{n-1}}
\langle l\rangle^{\tau_1-\tau_2/q_0}
\right)+\gamma^{\upsilon/q_0}
\sum_{|l|>N_{n-1}}
\langle l\rangle^{-\tau_1/q_0}.$$
The cases $n=0,1$ are treated similarly (and are actually simpler since one does not need to track the decay with respect to $N_{n-1}$). Therefore, by virtue of \eqref{choice tau 1 tau2 upsilon}, we infer
$$\big|(b_0,b_1)\setminus\mathscr{C}_\infty^\varepsilon\big|\leqslant\sum_{n\in\mathbb{N}}\mathcal |\mathscr{C}_{n}^{\varepsilon}\setminus\mathscr{C}_{n+1}^{\varepsilon}|\lesssim
\gamma^{\min\{\upsilon/q_0,\,1/q_0-\upsilon\}}=\varepsilon^{\frac{a\upsilon}{q_0}}.$$
This ends the proof of Proposition \ref{prop:final_solution}.
\end{proof}
\appendix
\section{Topological and algebraical materials for Section \ref{sec:QP}}\label{appendix:topo}
This appendix collects the fundamental definitions and results concerning the functional analysis used in Section \ref{sec:QP}. First we introduce the weighted Sobolev function spaces. Then, we provide the operator topology for linear Toeplitz transformations. A special focus is made on integral operators and their reversibility.

\subsection{Weighted Sobolev spaces}\label{Functionspsaces}
We denote $L^{2}(\mathbb{T}^{d+1},\mathbb{C})$ the space of square integrable complex-valued function in the variables $(\varphi,\theta).$ It has a Hilbert structure with inner product
$$\big\langle u_{1},u_{2}\big\rangle_{L^{2}(\mathbb{T}^{d+1},\mathbb{C})}=\bigintssss_{\mathbb{T}^{d+1}}u_{1}(\varphi,\theta)\overline{u_{2}(\varphi,\theta)}d\varphi d\theta$$ so that
It also possesses the exponential Hilbert basis $(\mathbf{e}_{l,j})_{(l,j)\in\mathbb{Z}^{d}\times\mathbb{Z}}$ given by
$$\mathbf{e}_{l,j}(\varphi,\theta)\triangleq e^{\ii(l\cdot\varphi+j\theta)},$$
so that the following decomposition holds
$$u=\sum_{(l,j)\in\mathbb{Z}^{d+1 }}u_{l,j}\,\mathbf{e}_{l,j},\qquad u_{l,j}\triangleq\big\langle u,\mathbf{e}_{l,j}\big\rangle_{L^{2}(\mathbb{T}^{d+1},\mathbb{C})}.$$
Given $s\in\mathbb{R},$ we define the Sobolev space
$$H^{s}(\mathbb{T}^{d+1},\mathbb{C})\triangleq\Big\{u\in L^2(\mathbb{T}^{d+1},\mathbb{C})\quad\textnormal{s.t.}\quad\|u\|_{s}<\infty\Big\},$$
where
$$\|u\|_{H^{s}}^{2}\triangleq \sum_{(l,j)\in\mathbb{Z}^{d+1 }}\langle l,j\rangle^{2s}|u_{l,j}|^{2},\qquad\langle l,j\rangle\triangleq \max(1,|l|,|j|).$$
The real-valued Sobolev space is simply denoted
$$H^{s}\triangleq\Big\{u\in H^{s}(\mathbb{T}^{d+1},\mathbb{C})\quad\textnormal{s.t.}\quad\overline{u}=u\Big\}.$$
Now we introduce the weighted Sobolev spaces for a given $\gamma>0$ and onpen set $\mathscr{O}\subset\mathbb{R}^{d+1}.$  
\begin{align*}
	W^{q,\infty,\gamma}(\mathscr{O},H^{s})&\triangleq\Big\lbrace u:\mathscr{O}\rightarrow H^{s}\quad\textnormal{s.t.}\quad\|u\|_{q,s}^{\gamma,\mathscr{O}}<\infty\Big\rbrace,\\
	W^{q,\infty,\gamma}(\mathscr{O},\mathbb{C})&\triangleq\Big\lbrace u:\mathscr{O}\rightarrow\mathbb{C}\quad\textnormal{s.t.}\quad\|u\|_{q}^{\gamma,\mathscr{O}}<\infty\Big\rbrace,
\end{align*}
where 
$$\|u\|_{q,s}^{\gamma,\mathscr{O}}\triangleq\sum_{\underset{|\alpha|\leqslant q}{\alpha\in\mathbb{N}^{d+1}}}\gamma^{|\alpha|}\sup_{\mathtt{p}\in{\mathscr{O}}}\|\partial_{\mathtt{p}}^{\alpha}u(\mathtt{p},\cdot)\|_{s-|\alpha|},\qquad\|u\|_{q}^{\gamma,\mathscr{O}}\triangleq\sum_{\underset{|\alpha|\leqslant q}{\alpha\in\mathbb{N}^{d+1}}}\gamma^{|\alpha|}\sup_{\mathtt{p}\in{\mathscr{O}}}|\partial_{\mathtt{p}}^{\alpha}u(\mathtt{p})|.$$
In the following lemma, we collect some classical properties of weighted Sobolev norms.
\begin{lem}\label{lem:funct_topo}
	The following assertions hold true.
		\begin{enumerate}[label=(\roman*)]
			\item Space translation invariance:  Define for $\eta\in\mathbb{T},$ the translation
            $$(\tau_{\eta}u)(\mathtt{p},\theta)\triangleq u(\mathtt{p},\theta+\eta).$$Let $u\in W^{q,\infty,\gamma}(\mathscr{O},H^{s}).$ Then, for any $\eta\in\mathbb{T},$ we have $\tau_{\eta}u\in W^{q,\infty,\gamma}(\mathscr{O},H^{s})$ and
			$$\|\tau_{\eta}u\|_{q,s}^{\gamma,\mathscr{O}}=\|u\|_{q,s}^{\gamma,\mathscr{O}}.$$
			\item Projectors : Define 
            for $N\in\mathbb{N}^{*},$ the frequency projectors  
\begin{equation}\label{projectors_functions}
	\Pi_{N}u\triangleq\sum_{\underset{\langle l,j\rangle\leqslant N}{(l,j)\in\mathbb{Z}^{d+1 }}}u_{l,j}\mathbf{e}_{l,j}\qquad \textnormal{and}\qquad \Pi^{\bot}_{N}\triangleq\textnormal{Id}-\Pi_{N}.
\end{equation}
Let $u\in W^{q,\infty,\gamma}(\mathscr{O},H^{s}).$ Then, for all $N\in\mathbb{N}^{*}$ and for all $t\geqslant0,$
			$$\|\Pi_{N}u\|_{q,s+t}^{\gamma,\mathscr{O}}\leqslant N^{t}\|u\|_{q,s}^{\gamma,\mathscr{O}}\qquad\textnormal{and}\qquad\|\Pi_{N}^{\bot}u\|_{q,s}^{\gamma,\mathscr{O}}\leqslant N^{-t}\|u\|_{q,s+t}^{\gamma,\mathscr{O}}.$$
			\item Law products:
			\begin{enumerate}[label=(\alph*)]
				\item Let $u_{1},u_{2}\in W^{q,\infty,\gamma}(\mathscr{O},H^{s}).$ Then $u_{1}u_{2}\in W^{q,\infty,\gamma}(\mathscr{O},H^{s})$ and 
				$$\| u_{1}u_{2}\|_{q,s}^{\gamma,\mathscr{O}}\lesssim\| u_{1}\|_{q,s_{0}}^{\gamma,\mathscr{O}}\| u_{2}\|_{q,s}^{\gamma,\mathscr{O}}+\| u_{1}\|_{q,s}^{\gamma,\mathscr{O}}\| u_{2}\|_{q,s_{0}}^{\gamma,\mathscr{O}}.$$
				\item Let $u_{1},u_{2}\in W^{q,\infty,\gamma}(\mathscr{O},\mathbb{C}).$ Then $u_{1}u_{2}\in W^{q,\infty,\gamma}(\mathscr{O},\mathbb{C})$ and $$\| u_{1}u_{2}\|_{q}^{\gamma,\mathscr{O}}\lesssim\| u_{1}\|_{q}^{\gamma,\mathscr{O}}\| u_{2}\|_{q}^{\gamma,\mathscr{O}}.$$
				\item Let $(u_{1},u_{2})\in W^{q,\infty,\gamma}(\mathscr{O},\mathbb{C})\times W^{q,\infty,\gamma}(\mathscr{O},H^{s}).$ Then $u_{1}u_{2}\in W^{q,\infty,\gamma}(\mathscr{O},H^{s})$ and 
				$$\| u_{1}u_{2}\|_{q,s}^{\gamma,\mathscr{O}}\lesssim\| u_{1}\|_{q}^{\gamma,\mathscr{O}}\| u_{2}\|_{q,s}^{\gamma,\mathscr{O}}.$$
			\end{enumerate}
			\item Composition law: Let $F\in C^{\infty}(\mathscr{O}\times\mathbb{R},\mathbb{R})$ and  $u_{1},u_{2}\in W^{q,\infty,\gamma}(\mathscr{O},H^{s})$  such that $$\| u_{1}\|_{q,s}^{\gamma,\mathscr{O}},\|u_{2}\|_{q,s}^{\gamma,\mathscr{O}}\leqslant C_{0}$$ for an  arbitrary  constant  $C_{0}>0$ and define the pointwise composition $$\forall (\mathtt{p},\varphi,\theta)\in \mathscr{O}\times\mathbb{T}^{d+1},\quad F(u)(\mathtt{p},\varphi,\theta)\triangleq  F\big(\mathtt{p},u(\mathtt{p},\varphi,\theta)\big).$$
			Then $F(u_{1})-F(u_{2})\in W^{q,\infty,\gamma}(\mathscr{O},H^{s})$ with 
			$$\|F(u_{1})-F(u_{2})\|_{q,s}^{\gamma,\mathscr{O}}\leqslant C(s,d,q,F,C_{0})\|u_{1}-u_{2}\|_{q,s}^{\gamma,\mathscr{O}}.$$
			\item Composition law 2: Let $F\in C^{\infty}(\mathbb{R},\mathbb{R})$ with bounded derivatives. Let $u\in W^{q,\infty,\gamma}(\mathscr{O},\mathbb{C}).$ Then 
			$$\|F(u)-F(0)\|_{q}^{\gamma,\mathscr{O}}\leqslant C(q,d,F)\|u\|_{q}^{\gamma,\mathscr{O}}\left(1+\|u\|_{L^{\infty}(\mathscr{O})}^{q-1}\right).
			$$
			This estimate is also true for $\gamma=1$, corresponding to the classical Sobolev space $W^{q,\infty}(\mathscr{O},\mathbb{C}).$
	\end{enumerate}
\end{lem}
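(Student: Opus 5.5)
The plan is to prove the five items of Lemma \ref{lem:funct_topo} one at a time. Each parameter derivative $\partial_{\mathtt{p}}^{\alpha}$ with $|\alpha|\leqslant q$ is estimated at a fixed $\mathtt{p}$ in $H^{s-|\alpha|}$. Then I multiply by $\gamma^{|\alpha|}$ and take suprema, so every claim reduces to a classical fixed-parameter Sobolev estimate combined with the Leibniz rule or the Faà di Bruno formula in $\mathtt{p}$.

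For (i), $\partial_{\mathtt{p}}^{\alpha}$ commutes with $\tau_\eta$. On the Fourier side, $(\tau_\eta u)_{l,j}=e^{\ii j\eta}u_{l,j}$, so every $H^{s-|\alpha|}$ norm is preserved exactly. For (ii), $\Pi_N$ and $\Pi_N^{\bot}$ are Fourier multipliers independent of $\mathtt{p}$, so they commute with $\partial_{\mathtt{p}}^{\alpha}$. The bounds then follow from $\langle l,j\rangle^{t}\leqslant N^{t}$ on the support of $\Pi_N$ and $\langle l,j\rangle^{-t}\leqslant N^{-t}$ on the support of $\Pi_N^{\bot}$, applied in each norm $\|\cdot\|_{s-|\alpha|}$.

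For (iii)(a), I expand $\partial^{\alpha}(u_1u_2)=\sum_{\beta\leqslant\alpha}\binom{\alpha}{\beta}\partial^{\beta}u_1\,\partial^{\alpha-\beta}u_2$ and apply the classical tame product estimate in $H^{s-|\alpha|}(\mathbb{T}^{d+1})$. The key point is the weighting: the index must be lowered so that the factors land in $H^{s-|\beta|}$ and $H^{s_0-|\alpha-\beta|}$. This works because $s_0>\tfrac{d+1}{2}+q$, which guarantees $s_0-|\alpha|\geqslant s_0-q>\tfrac{d+1}{2}$. The precise inequality I will use is
$$\|fg\|_{s-|\alpha|}\lesssim\|f\|_{s-|\beta|}\|g\|_{s_0-|\alpha-\beta|}+\|f\|_{s_0-|\beta|}\|g\|_{s-|\alpha-\beta|},$$
which holds because of the slack $s_0-q-\tfrac{d+1}{2}\geqslant 2$. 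Multiplying by $\gamma^{|\beta|}\gamma^{|\alpha-\beta|}$ gives the claim. Items (b) and (c) follow from the same Leibniz expansion using only $L^\infty$ and trivial bounds.

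For (iv), I write $F(u_1)-F(u_2)=(u_1-u_2)\int_0^1(\partial_uF)(\mathtt{p},u_2+t(u_1-u_2))\,dt$ and reduce to (iii)(a) together with the composition estimate $\|G(\mathtt{p},u)\|_{q,s}^{\gamma,\mathscr{O}}\leqslant C(C_0)$. That estimate I will prove by Faà di Bruno in $\mathtt{p}$, combined with the fixed-parameter Moser estimate $\|G(u)\|_{H^{\sigma}}\leqslant C(\|u\|_{L^\infty})(1+\|u\|_{H^\sigma})$ and $\|u\|_{L^\infty}\lesssim\|u\|_{s_0-q}$. For (v), Faà di Bruno gives $\partial^{\alpha}F(u)$ as a sum of terms $F^{(k)}(u)\prod_{i\leqslant k}\partial^{\beta_i}u$ with $\sum_i\beta_i=\alpha$ and $k\geqslant1$. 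I bound one factor by $\|u\|_q^{\gamma}$ and each of the remaining $k-1\leqslant q-1$ factors by its weighted norm. I expect the bookkeeping to be the main difficulty here: to obtain the stated factor $(1+\|u\|_{L^\infty}^{q-1})$, one has to interpolate the intermediate weighted derivatives $\gamma^{|\beta|}\|\partial^\beta u\|_{L^\infty}$ between the $L^\infty$ norm and $\|u\|_q^{\gamma}$, using a Gagliardo–Nirenberg-type inequality on $\mathscr{O}$. If that interpolation is unavailable on a general open set, I would settle for the cruder power $(\|u\|_q^{\gamma})^{q-1}$, which is harmless in every later application because $\|u\|_q^{\gamma}\lesssim1$ there. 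The $\gamma=1$ remark requires no change to the argument.
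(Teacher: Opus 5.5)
\textbf{(i)--(iv).} The paper gives no proof of this lemma. It is recorded in the appendix as a list of classical facts taken implicitly from the cited KAM literature, so your proposal has to stand on its own. For (i)--(iv) it does: this is the standard argument, and it is correct.

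In (iii)(a) your shifted tame inequality holds exactly as written. In the usual convolution splitting you bound the high factor by $\|f\|_{s-|\alpha|}\leqslant\|f\|_{s-|\beta|}$. You bound the other factor by its $\ell^1$ Fourier norm, which is controlled by $\|g\|_{s_0-|\alpha-\beta|}$ because $s_0-q>\tfrac{d+1}{2}$.

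In (iv) you should state two hypotheses that the lemma leaves implicit. First, Fa\`a di Bruno also lets $\mathtt{p}$-derivatives fall on the first slot of $F$. You therefore need $\sup_{\mathtt{p}\in\mathscr{O},\,|y|\leqslant M}|\partial_{\mathtt{p}}^{\alpha_0}\partial_y^{k}F(\mathtt{p},y)|<\infty$, which is not automatic for $F\in C^{\infty}(\mathscr{O}\times\mathbb{R})$ on an open set. Second, those terms carry an unmatched weight $\gamma^{|\alpha_0|}$, which is harmless only because $\gamma\leqslant1$.

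\textbf{(v): the gap.} Your fallback bound with $(\|u\|_q^{\gamma,\mathscr{O}})^{q-1}$ is not the stated estimate. The justification that $\|u\|_q^{\gamma,\mathscr{O}}\lesssim1$ in later uses is also unavailable. The lemma is stated for arbitrary $u$, and, for instance, the cut-off $\chi$ in \eqref{KAM-small-divisor-cutoff} is composed with the rescaled small divisor $(\omega\cdot l+\mathtt{d}_j^m-\mathtt{d}_{j_0}^m)(\gamma\langle j-j_0\rangle)^{-1}\langle l\rangle^{\tau_2}$, whose weighted norm grows with $|l|$.

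The interpolation you hesitate about is exactly the content of (v), and it is available for the parameter set of the paper. Put $v(\mathtt{p}')\triangleq u(\gamma\mathtt{p}')$ on $\gamma^{-1}\mathscr{O}$, so that $\|v\|_{W^{q,\infty}(\gamma^{-1}\mathscr{O})}=\|u\|_{q}^{\gamma,\mathscr{O}}$. Since $\mathscr{O}=(b_0,b_1)\times B_{\mathbb{R}^d}(0,R_0)$ is convex, its dilates $\gamma^{-1}\mathscr{O}$ with $0<\gamma\leqslant1$ satisfy a cone condition with the same cone as $\mathscr{O}$. The $L^\infty$ Gagliardo--Nirenberg (Landau--Kolmogorov) inequality then gives, for $1\leqslant|\beta|\leqslant q$ and with a constant depending on $\mathscr{O}$,
\[
\gamma^{|\beta|}\sup_{\mathscr{O}}|\partial^{\beta}u|\lesssim A^{1-\frac{|\beta|}{q}}B^{\frac{|\beta|}{q}},\qquad A\triangleq\|u\|_{L^{\infty}(\mathscr{O})},\quad B\triangleq\|u\|_{q}^{\gamma,\mathscr{O}}.
\]
Consider a Fa\`a di Bruno term with $k\geqslant1$ factors and $\sum_i|\beta_i|=|\alpha|$. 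Using $A\leqslant B$ and $k\leqslant|\alpha|\leqslant q$, it is bounded by
\[
A^{k-|\alpha|/q}B^{|\alpha|/q}=B\,A^{k-1}(A/B)^{1-|\alpha|/q}\leqslant B\,A^{k-1}\leqslant B(1+A^{q-1}).
\]
The $\alpha=0$ term is at most $\|F'\|_{L^{\infty}}A$.

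\textbf{Why the geometry matters.} Your worry about general open sets is justified: the constant must depend on $\mathscr{O}$, and $\gamma$ cannot be large relative to it. Take $\mathscr{O}=(-\ell,\ell)^{d+1}$, $u(\mathtt{p})=\mathtt{p}_1/\ell$, $F=\cos$ and $q\geqslant2$. The left-hand side is $\gtrsim(\gamma/\ell)^{q}$, while the right-hand side is $O(1+\gamma/\ell)$, so the estimate fails as $\gamma/\ell\to\infty$. In the paper this is harmless, since $\mathscr{O}$ is fixed and $\gamma\to0$.
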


\subsection{Operators}\label{section-ope}
Here we discuss the Toeplitz structure for operators and their associated topology. In particular, we provide important estimates for integral-type operators.\\

Consider a family of bounded operators over $H^s(\mathbb{T}^{d+1},\mathbb{C})$ and depending smoothly on a parameter $\mathtt{p}\in\mathscr{O}$: 
$$A:\mathtt{p}\in\mathscr{O}\mapsto A(\mathtt{p})\in\mathcal{L}\big(H^s(\mathbb{T}^{d+1},\mathbb{C})\big).$$
In Section \ref{sec:QP}, such can of operators act on $W^{q,\infty,\gamma}\big(\mathscr{O},H^{s}(\mathbb{T}^{d+1},\mathbb{C})\big)$ through
$$(Au)(\mathtt{p},\varphi,\theta)\triangleq A(\mathtt{p})u(\mathtt{p},\varphi,\theta).$$
We can characterize the linear operator $A(\mathtt{p})$ by its action on the Fourier basis: for any $(l_0,j_0)\in\mathbb{Z}^{d+1},$
$$A(\mathtt{p})\mathbf{e}_{l_{0},j_{0}}=\sum_{(l,j)\in\mathbb{Z}^{d+1}}A_{l_{0},j_{0}}^{l,j}(\mathtt{p})\mathbf{e}_{l,j},\qquad A_{l_{0},j_{0}}^{l,j}(\mathtt{p})\triangleq\big\langle A(\mathtt{p})\mathbf{e}_{l_{0},j_{0}},\mathbf{e}_{l,j}\big\rangle_{L^{2}(\mathbb{T}^{d+1},\mathbb{C})}.$$
\begin{defin}
	Consider the involution
	$$(\mathscr{S}_{2}u)(\varphi,\theta)\triangleq u(-\varphi,-\theta).$$
	We say that an operator $A$ is 
	\begin{enumerate}[label=\textbullet]
		\item \textit{real} if it sends real-valued function onto real-valued functions, namely 
$$\overline{u}=u\quad\Longrightarrow\quad\overline{Au}=Au.$$
		\item \textit{reversible} if it sends even (resp. odd) functions of $(\varphi,\theta)$ onto odd (resp. even) functions of $(\varphi,\theta)$, namely
		$$A\circ\mathscr{S}_{2}=-\mathscr{S}_{2}\circ A.$$
		\item \textit{reversibility preserving} if it sends even (resp. odd) functions of $(\varphi,\theta)$ onto even (resp. odd) functions of $(\varphi,\theta)$, namely
		$$A\circ\mathscr{S}_{2}=\mathscr{S}_{2}\circ A.$$
	\end{enumerate}
\end{defin}
\begin{remark}\label{rem:compo_rev}
    The composition of reversible and reversibility preserving operators follows the sign rule with
    $$\textnormal{reversible}=-\qquad\textnormal{and}\qquad\textnormal{reversibility preserving}=+.$$
\end{remark}
\subsubsection{Time-Toeplitz operators and their topology}\label{Top-Sec-11}
Now, we present the special class of Toeplitz-in-time operators defined as follows.
\begin{defin}
	An operator $A(\mathtt{p})$ is said  Toeplitz-in-time (i.e. in the variable $\varphi\in\mathbb{T}^d$) if and only if we can write 
	$$A_{l_{0},j_{0}}^{l,j}(\mathtt{p})=A_{j_{0}}^{j}(\mathtt{p},l-l_{0}),\qquad A_{j_{0}}^{j}(\mathtt{p},l)\triangleq A_{0,j_{0}}^{l,j}(\mathtt{p}).$$
\end{defin}

The class of Toeplitz-in-time operators is endowed with the off-diagonal decay normed topology
$$\|A\|_{\textnormal{\tiny{O-d}},q,s}^{\gamma,\mathscr{O}}\triangleq\sum_{\underset{|\alpha|\leqslant q}{\alpha\in\mathbb{N}^{d+1}}}\gamma^{|\alpha|}\sup_{\mathtt{p}\in{\mathscr{O}}}\|\partial_{\mathtt{p}}^{\alpha}(A)(\mathtt{p})\|_{\textnormal{\tiny{O-d}},s-|\alpha|},\qquad \|A\|_{\textnormal{\tiny{O-d}},s}^{2}\triangleq\sum_{(l,m)\in\mathbb{Z}^{d+1}}\langle l,m\rangle^{2s}\sup_{j-k=m}|A_{j}^{k}(l)|^{2}.$$
The Toeplitz-in-time operatorial norm presented above enjoys several properties listed below. We refer the reader to \cite{BM18}.
\begin{lem}\label{lem:Toeplitz_topo}
	Let $A,$ $A_{1}$ and $A_{2}$ be Toeplitz-in-time operators.
	\begin{enumerate}[label=(\roman*)]
		\item Projectors : Given $N\in\mathbb{N}^{*},$ we define the projectors
        $$\left(P_{N}A(\mathtt{p})\right)\mathbf{e}_{l_{0},j_{0}}\triangleq\sum_{\underset{|l-l_{0}|,|j-j_{0}|\leqslant N}{(l,j)\in\mathbb{Z}^{d+1}}}A_{j_{0}}^{j}(\mathtt{p},l-l_0)\mathbf{e}_{l,j}\qquad\mbox{and}\qquad P_{N}^{\bot}A\triangleq A-P_{N}A.$$
        Then, for any $t\geqslant0,$ we have
		$$\|P_{N}A\|_{\textnormal{\tiny{O-d}},q,s+t}^{\gamma,\mathscr{O}}\leqslant N^{t}\| A\|_{\textnormal{\tiny{O-d}},q,s}^{\gamma,\mathscr{O}}\qquad\mbox{and}\qquad\| P_{N}^{\bot}A\|_{\textnormal{\tiny{O-d}},q,s}^{\gamma,\mathscr{O}}\leqslant N^{-t}\|A\|_{\textnormal{\tiny{O-d}},q,s+t}^{\gamma,\mathscr{O}}.$$
		\item Composition law :
		$$\|A_{1}A_{2}\|_{\textnormal{\tiny{O-d}},q,s}^{\gamma,\mathscr{O}}\lesssim\|A_{1}\|_{\textnormal{\tiny{O-d}},q,s}^{\gamma,\mathscr{O}}\| A_{2}\|_{\textnormal{\tiny{O-d}},q,s_{0}}^{\gamma,\mathscr{O}}+\|A_{1}\|_{\textnormal{\tiny{O-d}},q,s_{0}}^{\gamma,\mathscr{O}}\|A_{2}\|_{\textnormal{\tiny{O-d}},q,s}^{\gamma,\mathscr{O}}.$$ 
		\item The off-diagonal norm controls the operator norm :
		$$\|Au\|_{q,s}^{\gamma,\mathscr{O}}\lesssim\|A\|_{\textnormal{\tiny{O-d}},q,s_{0}}^{\gamma,\mathscr{O}}\|u\|_{q,s}^{\gamma,\mathscr{O}}+\|A\|_{\textnormal{\tiny{O-d}},q,s}^{\gamma,\mathscr{O}}\|u\|_{q,s_{0}}^{\gamma,\mathscr{O}}.$$
	\end{enumerate}
\end{lem}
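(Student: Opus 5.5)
The statement to prove is Lemma \ref{lem:Toeplitz_topo}: projector bounds, the composition law, and the fact that the off-diagonal norm controls the operator norm. My plan is to prove everything at a fixed parameter $\mathtt{p}$ for the unweighted norm $\|\cdot\|_{\textnormal{\tiny{O-d}},s}$, and then lift to the weighted norms by the Leibniz rule in $\mathtt{p}$. The starting point is the Toeplitz representation $A_{l_0,j_0}^{l,j}=A_{j_0}^{j}(l-l_0)$. For each off-diagonal index $(l,m)$ I set
$$a(l,m)\triangleq\sup_{j-k=m}|A_{j}^{k}(l)|,$$
so that $\|A\|_{\textnormal{\tiny{O-d}},s}$ is just the weighted $\ell^2$ norm of the majorant sequence $a$ with weight $\langle l,m\rangle^{s}$. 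All three statements then reduce to weighted convolution inequalities on $\mathbb{Z}^{d+1}$, exactly as for products of functions.

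\textbf{Projectors (i).} $P_N$ keeps only the entries with $|l|,|m|\leqslant N$, so $\langle l,m\rangle\leqslant N$ on its support. This gives $\langle l,m\rangle^{2(s+t)}\leqslant N^{2t}\langle l,m\rangle^{2s}$ there. For $P_N^\bot$ the support has $\langle l,m\rangle>N$, which gives $\langle l,m\rangle^{2s}\leqslant N^{-2t}\langle l,m\rangle^{2(s+t)}$. Parameter derivatives commute with $P_N$, and the norm indices $s-|\alpha|$ shift uniformly, so the weighted versions follow directly.

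\textbf{Composition (ii).} I write
$$(A_1A_2)_{j_0}^{j}(l)=\sum_{l',k}(A_1)_{k}^{j}(l-l')\,(A_2)_{j_0}^{k}(l').$$
Taking the supremum over $j-j_0=m$ gives the majorant convolution bound $a_{12}(l,m)\leqslant\sum_{l',m'}a_1(l-l',m-m')\,a_2(l',m')$.

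- Splitting with $\langle l,m\rangle^{s}\lesssim\langle l-l',m-m'\rangle^{s}+\langle l',m'\rangle^{s}$ reduces the claim to the two pieces $\|\cdot\|_{s}\|\cdot\|_{s_0}$.
- Each piece follows from Young's inequality $\ell^2*\ell^1\subset\ell^2$ together with the Cauchy–Schwarz bound $\sum\langle\cdot\rangle^{-s_0}|x|\lesssim\|x\|_{s_0}$, which holds since $s_0>\frac{d+1}{2}$.
- For the weighted version I apply Leibniz to $\partial^\alpha(A_1A_2)$, distributing $\gamma^{|\alpha|}$. The Sobolev-index losses $s-|\alpha|$ are absorbed because $s_0\geqslant\frac{d+1}{2}+q+2$.

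\textbf{Action (iii).} I write $(Au)_{l,j}=\sum_{l_0,j_0}A_{j_0}^{j}(l-l_0)\,u_{l_0,j_0}$, bound it by $\sum a(l-l_0,j-j_0)\,|u_{l_0,j_0}|$, and repeat the same splitting and convolution argument.

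\textbf{Main obstacle.} The delicate point is this last estimate, specifically the summability in $j_0$. In the composition law the intermediate index $k$ is tied to $m'$, so there is no free sum. In (iii), by contrast, the supremum over $j-k=m$ in the definition of $a$ is exactly what lets me majorize uniformly in $j$ and turn the double sum into a genuine convolution on $\mathbb{Z}^{d+1}$. So I need to check carefully that $j_0$ is determined by $m=j-j_0$ and is therefore not summed twice.

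In the weighted setting the same Leibniz bookkeeping applies as in (ii). Here I expect the loss of $|\alpha|$ derivatives to be matched consistently between $A$ and $u$ through the convention $\|\cdot\|_{s-|\alpha|}$ in both norms, using $\langle l,j\rangle^{s-|\alpha|}\leqslant\langle l,j\rangle^{s-|\beta|}$ for $\beta\leqslant\alpha$.
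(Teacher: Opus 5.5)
Your proposal is correct and is essentially the standard argument behind this lemma, which the paper does not prove but takes from \cite{BM18}. You majorize the Toeplitz entries by the off-diagonal sequence $a(l,m)$ and reduce (i) to the support condition $\langle l,m\rangle\leqslant N$ (resp.\ $>N$). You reduce (ii) and (iii) to weighted convolution inequalities on $\mathbb{Z}^{d+1}$ via $\langle x+y\rangle^{s}\lesssim\langle x\rangle^{s}+\langle y\rangle^{s}$, Young's inequality and the bound $\|a\|_{\ell^{1}}\lesssim\|\langle\cdot\rangle^{s_0-q}a\|_{\ell^{2}}$ (valid since $s_0-q>\frac{d+1}{2}$), and you lift to the $\gamma$-weighted norms by Leibniz in $\mathtt{p}$. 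The step you flag as the main obstacle in (iii) is harmless: $|(Au)_{l,j}|\leqslant\sum_{(l_0,j_0)}a(l-l_0,j_0-j)\,|u_{l_0,j_0}|$ is already a single convolution sum over $(l_0,j_0)$, so nothing is counted twice.
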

\subsubsection{Integral operators}
	Integral operators form a special subclass of the Toeplitz-in-time family, that can be written as
	\begin{equation}\label{def:integr}
	    (\mathcal{I}_Ku)(\mathtt{p},\varphi,\theta)\triangleq\int_{\mathbb{T}}K(\mathtt{p},\varphi,\theta,\eta)u(\mathtt{p},\varphi,\eta)d\eta
	\end{equation}
    and the function $K$ in the integrand is called \textit{kernel}.
	We have the following symmetry result, see \cite[Lem. 4.4]{HR21}.
	\begin{lem}\label{lem:rever_integr}
		Let $\mathcal{I}_K$ be an integral operator with a real-valued kernel $K$.
		\begin{enumerate}[label=\textbullet]
			\item If $K$ is even in $(\varphi,\theta,\eta)$, then $\mathcal{I}_K$ is a reversibility preserving Toeplitz-in-time operator.
			\item If $K$ is odd in $(\varphi,\theta,\eta)$,
			then $\mathcal{I}_K$ is a reversible Toeplitz-in-time operator.
		\end{enumerate}
		In addition, 
	$$\|\mathcal{I}_K\|_{\textnormal{\tiny{O-d}},q,s}^{\gamma,\mathscr{O}}\lesssim\|K\|_{q,s}^{\gamma,\mathscr{O}}.$$
	\end{lem}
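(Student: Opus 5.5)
The statement to prove is Lemma \ref{lem:rever_integr}: the symmetry and norm properties of integral operators $\mathcal{I}_K$. I would first verify the Toeplitz-in-time property, then the reality and (reversibility) symmetry, and finally the off-diagonal estimate, all by direct Fourier computation.

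\textbf{Step 1: Fourier coefficients.} Expand $K(\mathtt{p},\varphi,\theta,\eta)=\sum_{l,j,k}K_{l,j,k}(\mathtt{p})e^{\ii(l\cdot\varphi+j\theta+k\eta)}$. Applying $\mathcal{I}_K$ to $\mathbf{e}_{l_0,j_0}$ gives
$$\mathcal{I}_K\mathbf{e}_{l_0,j_0}=\sum_{l,j}K_{l-l_0,j,-j_0}\,\mathbf{e}_{l,j}.$$
Hence $(\mathcal{I}_K)_{l_0,j_0}^{l,j}=K_{l-l_0,j,-j_0}$. This depends on $(l,l_0)$ only through $l-l_0$, which is exactly the Toeplitz-in-time property, with $(\mathcal{I}_K)_{j_0}^{j}(l)=K_{l,j,-j_0}$.

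\textbf{Step 2: reality and symmetry.} Reality is immediate, since a real kernel integrated against a real $u$ gives a real output. For the symmetry, change variables $\eta\mapsto-\eta$:
$$(\mathcal{I}_K\mathscr{S}_2u)(\varphi,\theta)=\int_{\mathbb{T}}K(\varphi,\theta,\eta)u(-\varphi,-\eta)\,d\eta=\int_{\mathbb{T}}K(\varphi,\theta,-\eta)u(-\varphi,\eta)\,d\eta.$$
On the other hand,
$$\mathscr{S}_2(\mathcal{I}_Ku)(\varphi,\theta)=\int_{\mathbb{T}}K(-\varphi,-\theta,\eta)u(-\varphi,\eta)\,d\eta.$$
If $K$ is even in $(\varphi,\theta,\eta)$, then $K(\varphi,\theta,-\eta)=K(-\varphi,-\theta,\eta)$. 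The two expressions coincide, so $\mathcal{I}_K$ is reversibility preserving. If $K$ is odd, the same identity holds with a minus sign, so $\mathcal{I}_K$ is reversible.

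\textbf{Step 3: the off-diagonal norm bound.} This is the only quantitative point. Using the coefficients from Step 1,
$$\|\mathcal{I}_K\|_{\textnormal{\tiny{O-d}},s}^2=\sum_{l,m}\langle l,m\rangle^{2s}\sup_{j-k=m}|K_{l,k,-j}|^2\le\sum_{l,m}\langle l,m\rangle^{2s}\sum_{j}|K_{l,j-m,-j}|^2.$$
I then reindex the sum in $(j,m)$ as a sum over the pair of frequencies $(j-m,-j)$. The key inequality is $\langle l,m\rangle\lesssim\langle l,j-m,-j\rangle$, where the right-hand side is the frequency weight in $(\varphi,\theta,\eta)$. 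This gives $\|\mathcal{I}_K\|_{\textnormal{\tiny{O-d}},s}\lesssim\|K\|_{H^s_{\varphi,\theta,\eta}}$.

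The parameter derivatives $\partial_{\mathtt{p}}^\alpha$ commute with the construction, since $\partial^\alpha_{\mathtt{p}}\mathcal{I}_K=\mathcal{I}_{\partial^\alpha_{\mathtt{p}}K}$. Applying the same bound at regularity $s-|\alpha|$ and summing over $|\alpha|\le q$ with the weights $\gamma^{|\alpha|}$ yields the stated estimate.

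\textbf{Main obstacle.} The only delicate point is the bookkeeping in Step 3. The norm $\|K\|_{q,s}^{\gamma,\mathscr{O}}$ must be understood as the Sobolev norm in all three variables $(\varphi,\theta,\eta)$, which is the convention used in \eqref{e-Kerp}. I would state this explicitly. The sup over the diagonal is then controlled by the $\ell^2$ sum along it, and the weight inequality $\langle l,m\rangle\lesssim\langle l,j-m,-j\rangle$ follows from the triangle inequality, since $|m|\le|j-m|+|j|$.
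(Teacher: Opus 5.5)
Your proof is correct and complete. The paper gives no argument of its own for this lemma and defers to \cite[Lem. 4.4]{HR21}; your direct verification is the standard argument behind it: the change of variables $\eta\mapsto-\eta$ for the two parity statements, and the explicit coefficients $(\mathcal{I}_K)_{l_0,j_0}^{l,j}=K_{l-l_0,j,-j_0}$ for both the Toeplitz structure and the loss-free off-diagonal bound, with $\|K\|_{q,s}^{\gamma,\mathscr{O}}$ read as the Sobolev norm in $(\varphi,\theta,\eta)$. The only implicit point is that the weight comparison $\langle l,m\rangle^{2(s-|\alpha|)}\lesssim\langle l,j-m,-j\rangle^{2(s-|\alpha|)}$ needs $s-|\alpha|\geqslant0$, which holds in the relevant range because $s\geqslant s_0>q$.
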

	In the sequel, we provide some results dealing with the action of symplectic diffeomorphisms on Toeplitz-in-time integral operators. The first one treats the smooth kernel case.
	\begin{lem}\label{lem:conj_intgral}
		Consider a smooth real-valued kernel
		$$K:(\mathtt{p},\varphi,\theta,\eta)\mapsto K(\mathtt{p},\varphi,\theta,\eta).$$
		Let $\beta\in W^{q,\infty,\gamma}(\mathscr{O},H^{\infty})$ odd in the variables $(\varphi,\theta)$ and such that there exists $\varepsilon_0>0$ small enough such that
		\begin{equation}\label{small_beta_technic}
		    \|\beta\|_{q,2s_0}^{\gamma,\mathscr{O}}\leqslant \varepsilon_0.
		\end{equation}
		Consider the quasi-periodic change of variables
		$$\mathscr{B}\triangleq (1+\partial_{\theta}\beta)\mathcal{B},\qquad\mathcal{B}u(\mathtt{p},\varphi,\theta)\triangleq u\big(\mathtt{p},\varphi,\theta+\beta(\mathtt{p},\varphi,\theta)\big),$$
		Then, the following holds.
		\begin{enumerate}[label=(\roman*)]
			\item The operator $\mathscr{B}^{-1}\mathcal{I}_K\mathscr{B}$ is an integral operator satisfying the estimates: for $s\geqslant s_0,$ 
			\begin{align}
				\|\mathscr{B}^{-1}\mathcal{I}_K\mathscr{B}\|_{\textnormal{\tiny{O-d}},q,s}^{\gamma,\mathscr{O}}&\lesssim\|K\|_{q,s}^{\gamma,\mathscr{O}}+\|K\|_{q,s_0}^{\gamma,\mathscr{O}}\|\beta\|_{q,s+1}^{\gamma,\mathscr{O}}, \label{stima:conj_int}\\
			\|\mathscr{B}^{-1}\mathcal{I}_K\mathscr{B}-\mathcal{I}_K\|_{\textnormal{\tiny{O-d}},q,s}^{\gamma,\mathscr{O}}&\lesssim\|K\|_{q,s+1}^{\gamma,\mathscr{O}}\|\beta\|_{q,s_0}^{\gamma,\mathscr{O}}+\|K\|_{q,s_0}^{\gamma,\mathscr{O}}\|\beta\|_{q,s+1}^{\gamma,\mathscr{O}}.	\label{stima:conj_int_bis}
			\end{align}
            \item If $K$ is even in all the variables $(\varphi,\theta,\eta)$ (resp. odd), then $\mathscr{B}^{-1}\mathcal{I}_K\mathscr{B}$ is a reversibility preserving (resp. reversible) integral operator.			
			\item Assume the smooth functional dependence $r\in W^{q,\infty,\gamma}(\mathscr{O},H^{s})\mapsto K(r), \beta(r)$. Then, take two states $r^{[1]},r^{[2]}\in W^{q,\infty,\gamma}(\mathscr{O},H^s)$ and denote
			$$\forall k\in\{1,2\},\quad f^{[k]}\triangleq f(r^{[k]})\qquad\textnormal{and}\qquad\Delta_{12}f\triangleq f^{[1]}-f^{[2]}.$$ 
			Assume that there exists $\varepsilon_0>0$ small enough such that 
			$$\max_{k\in\{1,2\}}\|\beta^{[k]}\|_{q,2s_0}^{\gamma,\mathscr{O}}+\max_{k\in\{1,2\}}\|K^{[k]}\|_{q,s_0+1}^{\gamma,\mathscr{O}}\leqslant\varepsilon_0.$$
			Then, the following estimate holds, for $s\geqslant s_0$,
			\begin{equation}\label{stima:D12_conj}
				\begin{aligned}
				    \|\Delta_{12}\mathscr{B}^{-1}\mathcal{I}_{K}\mathscr{B}\|_{\textnormal{\tiny{O-d}},q,s}^{\gamma,\mathscr{O}}&\lesssim\|\Delta_{12}K\|_{q,s}^{\gamma,\mathscr{O}}+\|\Delta_{12}K\|_{q,s_0}^{\gamma,\mathscr{O}}\max_{k\in\{1,2\}}\|\beta^{[k]}\|_{q,s}^{\gamma,\mathscr{O}}\\
				&\quad+\Big(\max_{k\in\{1,2\}}\|K^{[k]}\|_{q,s+1}^{\gamma,\mathscr{O}}+\max_{k\in\{1,2\}}\|\beta^{[k]} \|_{q,s+1}^{\gamma,\mathscr{O}}\Big)\|\Delta_{12}\beta\|_{q,s_0+1}^{\gamma,\mathscr{O}}+\|\Delta_{12}\beta\|_{q,s+1}^{\gamma,\mathscr{O}}.
				\end{aligned} 
            \end{equation}
		\end{enumerate}
	\end{lem}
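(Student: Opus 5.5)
The statement to prove is Lemma~\ref{lem:conj_intgral}. The plan is to compute the conjugated operator explicitly by changes of variables. This shows directly that it is an integral operator with an explicit kernel, and that kernel is then estimated with the composition laws of Lemma~\ref{lem:funct_topo}.

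\textbf{Step 1: explicit kernel.} Let $\widehat{\beta}$ be defined by inverting $\theta\mapsto y=\theta+\beta(\mathtt{p},\varphi,\theta)$, namely $\theta=y+\widehat{\beta}(\mathtt{p},\varphi,y)$. This inversion is possible thanks to the smallness \eqref{small_beta_technic}. Then $\mathcal{B}^{-1}u(\theta)=u(\theta+\widehat\beta(\theta))$ and $\mathscr{B}^{-1}=(1+\partial_\theta\widehat\beta)\mathcal{B}^{-1}$. Write
$$\mathcal{I}_K\mathscr{B}u(\theta)=\int_{\mathbb{T}}K(\theta,\eta)\,u(\eta+\beta(\eta))\,(1+\partial_\eta\beta(\eta))\,d\eta,$$
and substitute $\eta'=\eta+\beta(\eta)$. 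The Jacobian cancels exactly, which is why the symplectic factor $(1+\partial_\theta\beta)$ appears in $\mathscr{B}$. This gives
$$\mathcal{I}_K\mathscr{B}u(\theta)=\int_{\mathbb{T}} K\big(\theta,\eta'+\widehat\beta(\eta')\big)\,u(\eta')\,d\eta'.$$
Applying $\mathscr{B}^{-1}$ then yields $\mathscr{B}^{-1}\mathcal{I}_K\mathscr{B}=\mathcal{I}_{\widetilde K}$ with
$$\widetilde K(\mathtt{p},\varphi,\theta,\eta)=\big(1+\partial_\theta\widehat\beta(\theta)\big)\,K\big(\theta+\widehat\beta(\theta),\,\eta+\widehat\beta(\eta)\big).$$

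\textbf{Step 2: estimates (i).} The bounds on $\widehat\beta$ in terms of $\beta$ are standard: $\|\widehat\beta\|_{q,s}^{\gamma,\mathscr{O}}\lesssim\|\beta\|_{q,s}^{\gamma,\mathscr{O}}$, obtained from a fixed-point argument together with the composition law. The estimate \eqref{stima:conj_int} then follows from Lemma~\ref{lem:rever_integr}, the product law (iii), and a tame composition estimate for $K(\theta+\widehat\beta,\eta+\widehat\beta)$. That composition estimate is the Moser-type estimate in (iv), applied in the variables $(\theta,\eta)$ to the diffeomorphism $(\theta,\eta)\mapsto(\theta+\widehat\beta(\theta),\eta+\widehat\beta(\eta))$; it loses one derivative on $\beta$ through $\partial_\theta\widehat\beta$. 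For \eqref{stima:conj_int_bis}, I would write
$$\widetilde K-K=\partial_\theta\widehat\beta\,K(\cdots)+\big[K(\theta+\widehat\beta,\eta+\widehat\beta)-K(\theta,\eta)\big]$$
and express the bracket by Taylor's formula as $\int_0^1\nabla K(\theta+t\widehat\beta,\eta+t\widehat\beta)\cdot(\widehat\beta(\theta),\widehat\beta(\eta))\,dt$. This produces the loss of one derivative on $K$ and the factor $\|\beta\|$.

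\textbf{Step 3: symmetry (ii).} Since $\beta$ is odd in $(\varphi,\theta)$, so is $\widehat\beta$, by uniqueness of the inverse. Hence $\partial_\theta\widehat\beta$ is even, and $(\theta,\eta)\mapsto(\theta+\widehat\beta,\eta+\widehat\beta)$ commutes with $(\varphi,\theta,\eta)\mapsto(-\varphi,-\theta,-\eta)$. It follows that $\widetilde K$ inherits the parity of $K$, and Lemma~\ref{lem:rever_integr} concludes.

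\textbf{Step 4: differences (iii).} Write $\Delta_{12}\widetilde K$ telescopically: a difference in $K$ at fixed $\widehat\beta^{[1]}$, plus differences in $\widehat\beta$ handled by the same Taylor formula as in Step 2. This requires $\|\Delta_{12}\widehat\beta\|\lesssim\|\Delta_{12}\beta\|$, which comes from the identity $\widehat\beta^{[k]}(y)=-\beta^{[k]}(y+\widehat\beta^{[k]}(y))$ and the smallness assumption.

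The main obstacle is the tame weighted-Sobolev estimate for the composition $K\circ(\mathrm{Id}+\widehat\beta)$, uniformly in the $\gamma$-weighted parameter derivatives; this holds with the indicated loss via a Moser composition lemma as in \cite[Lem. 4.2]{HR21}. The rest is bookkeeping.
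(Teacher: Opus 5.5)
Your Steps 1--3 are correct and follow the standard route. The paper states this lemma without proof, importing it from earlier KAM works, and there it is proved exactly as you propose: compute the kernel $(1+\partial_\theta\widehat\beta(\varphi,\theta))\,K(\varphi,\theta+\widehat\beta(\varphi,\theta),\eta+\widehat\beta(\varphi,\eta))$, then estimate it.

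Two small fixes there:
\begin{enumerate}
\item Lemma~\ref{lem:funct_topo}-(iv) is a Moser estimate for $F(u)$ with a fixed smooth $F$, and its constant depends on $F$. It therefore cannot give bounds linear in $K$. What you need is the tame change-of-variables estimate $\|u\circ(\mathrm{Id}+\widehat\beta)\|_{q,s}^{\gamma,\mathscr{O}}\lesssim\|u\|_{q,s}^{\gamma,\mathscr{O}}+\|\widehat\beta\|_{q,s}^{\gamma,\mathscr{O}}\|u\|_{q,s_0}^{\gamma,\mathscr{O}}$, applied in the variables $(\theta,\eta)$.
\item Your Taylor argument for \eqref{stima:conj_int_bis} produces cross terms $\|K\|_{q,s_0+1}\|\beta\|_{q,s}$ and $\|\beta\|_{q,s_0+1}\|K\|_{q,s}$. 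These have to be absorbed by an interpolation inequality.
\end{enumerate}

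The genuine gap is Step 4. There you assert \eqref{stima:D12_conj} without tracking indices, but the estimate as printed is false, so no bookkeeping can produce it. Superscripts $\gamma,\mathscr{O}$ are dropped below.

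\emph{The $\Delta_{12}K$ term.} Take $\beta^{[1]}=\beta^{[2]}=\beta$ and small constant kernels $K^{[k]}=c_k$. By your own Step 1, $\Delta_{12}\mathscr{B}^{-1}\mathcal{I}_K\mathscr{B}$ is the operator $u\mapsto (c_1-c_2)(1+\partial_\theta\widehat\beta)\int_{\mathbb{T}}u\,d\eta$. Its off-diagonal $s$-norm equals $|c_1-c_2|\,\|1+\partial_\theta\widehat\beta\|_{H^s}$. The right-hand side of \eqref{stima:D12_conj}, by contrast, reduces to $|c_1-c_2|(1+\|\beta\|_{H^s})$. Now take $\beta=\delta\sin(N\theta)$ with $\delta=\varepsilon_0N^{-2s_0}$ and $s=2s_0$. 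Since $\delta N\ll1$, one has $\widehat\beta\approx-\beta$. The left side is then of order $|c_1-c_2|\varepsilon_0 N$, while the right side is of order $|c_1-c_2|$. Put differently, when $\Delta_{12}\beta=0$, statement (iii) would strictly improve \eqref{stima:conj_int}, which is sharp. You noted this yourself in Step 2: the factor $1+\partial_\theta\widehat\beta$ costs one derivative of $\beta$.

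\emph{The $\Delta_{12}\beta$ terms fail in the same way.} Your bound $\|\Delta_{12}\widehat\beta\|\lesssim\|\Delta_{12}\beta\|$ holds only in low norm. Write $(1+m)\Delta_{12}\widehat\beta=-(\Delta_{12}\beta)\circ(\mathrm{Id}+\widehat\beta^{[1]})$, where $m=\int_0^1(\partial_\theta\beta^{[2]})\circ(\mathrm{Id}+\widehat\beta^{[2]}+t\Delta_{12}\widehat\beta)\,dt$. From this, the tame bound is $\|\Delta_{12}\widehat\beta\|_{q,\sigma}\lesssim\|\Delta_{12}\beta\|_{q,\sigma}+\|\Delta_{12}\beta\|_{q,s_0}\max_k\|\beta^{[k]}\|_{q,\sigma+1}$. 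This bound is sharp: take $K\equiv c$ and $\beta^{[2]}=\beta^{[1]}+\lambda\sin\theta$, with $\beta^{[1]}$ as above and $\lambda$ small. Consequently the term $(\partial_\theta\Delta_{12}\widehat\beta)\,K^{[2]}(\theta+\widehat\beta^{[1]}(\theta),\eta+\widehat\beta^{[1]}(\eta))$ necessarily costs $\max_k\|\beta^{[k]}\|_{q,s+2}$.

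Carried out correctly, your telescoping yields
\[
\begin{aligned}
\|\Delta_{12}\mathscr{B}^{-1}\mathcal{I}_K\mathscr{B}\|_{\textnormal{\tiny{O-d}},q,s}&\lesssim\|\Delta_{12}K\|_{q,s}+\|\Delta_{12}K\|_{q,s_0}\max_{k}\|\beta^{[k]}\|_{q,s+1}\\
&\quad+\Big(\max_{k}\|K^{[k]}\|_{q,s+1}+\max_{k}\|\beta^{[k]}\|_{q,s+2}\Big)\|\Delta_{12}\beta\|_{q,s_0+1}+\|\Delta_{12}\beta\|_{q,s+1}.
\end{aligned}
\]
This is the statement you should prove instead. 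It is all the paper needs: in \eqref{stima:Rfrac12}, a fixed loss $\sigma$ absorbs the extra derivatives.
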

	
	The next result treats the conjugation of the Hilbert transform via diffeomorphisms. 
	\begin{lem} \label{lem:Hilbert}
    Consider $\mathcal{H}$ the $2\pi$-periodic Hilbert transform defined by
	\begin{equation}\label{def Hilbert}
		\mathcal{H}(1)=0,\qquad\forall j\in\mathbb{Z}^*,\quad\mathcal{H}\mathbf{e}_j=-\ii\, \mathtt{sgn}(j)\mathbf{e}_j,
	\end{equation}
    where $\mathtt{sgn}$ denotes the usual sign function.
		Let $\beta\in W^{q,\infty,\gamma}(\mathscr{O},H^{\infty})$ odd in the variables $(\varphi,\theta)$ and subject to the smallness condition \eqref{small_beta_technic}. Then,
		$$(\mathscr{B}^{-1}\mathcal{H}\mathscr{B}-\mathcal{H})u (\mathtt{p},\varphi,\theta) 
		=\int_{\mathbb{T}}  \,K(\mathtt{p},\varphi,\theta,\eta)u(\mathtt{p},\varphi,\eta)\,d\eta,$$
		defines a reversible integral operator with the estimates : for all $s\geqslant s_0,$  
		\begin{equation}\label{stime:Hilbert}
		    \begin{aligned}
		    \|K\|_{q,s}^{\gamma,\mathscr{O}}&\lesssim\|\beta\|_{q,s+2}^{\gamma,\mathscr{O}},\\
            \|\mathscr{B}^{-1}\mathcal{H}\mathscr{B}-\mathcal{H}\|_{\textnormal{\tiny{O-d}},q,s}^{\gamma,\mathscr{O}}&\lesssim\|\beta \|_{q,s+2}^{\gamma,\mathscr{O}},\\
            \|\Delta_{12}(\mathscr{B}^{-1}\mathcal{H}\mathscr{B}-\mathcal{H})\|_{\textnormal{\tiny{O-d}},q,s}^{\gamma,\mathscr{O}}&\lesssim\Big[\|\Delta_{12}\beta\|_{q,s+2}^{\gamma,\mathscr{O}}+\|\Delta_{12}\beta\|_{q,s_0+1}^{\gamma,\mathscr{O}}\max_{k\in\{1,2\}}\|\beta^{[k]}\|_{q,s+3}^{\gamma,\mathscr{O}}\Big].
		\end{aligned}
		\end{equation}
	\end{lem}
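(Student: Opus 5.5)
The plan is to reduce the conjugation to the commutator structure of the Hilbert transform with a diffeomorphism, writing everything as an explicit integral kernel. First I will recall that $\mathcal{B}$ is the composition with the diffeomorphism $\theta\mapsto\theta+\beta(\mathtt{p},\varphi,\theta)$ of $\mathbb{T}$ (invertible thanks to \eqref{small_beta_technic}). Its inverse is $\mathcal{B}^{-1}u(\theta)=u(\theta+\widehat\beta(\theta))$, where $\widehat\beta$ is defined by $y=\theta+\beta(\theta)\Leftrightarrow\theta=y+\widehat\beta(y)$. Standard fixed-point estimates give $\|\widehat\beta\|_{q,s}^{\gamma,\mathscr O}\lesssim\|\beta\|_{q,s}^{\gamma,\mathscr O}$, and $\widehat\beta$ is again odd in $(\varphi,\theta)$. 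Since $\mathscr{B}=(1+\partial_\theta\beta)\mathcal{B}$, we have $\mathscr{B}^{-1}=\mathcal{B}^{-1}(1+\partial_\theta\beta)^{-1}$. Using the kernel representation $\mathcal{H}u(\theta)=\int_{\mathbb T}\cot(\frac{\theta-\eta}{2})u(\eta)d\eta$ (principal value), a change of variables $\eta\mapsto\eta+\beta(\eta)$ in the integral absorbs the Jacobian factor $(1+\partial_\eta\beta)$. This yields the exact formula
$$\mathscr{B}^{-1}\mathcal{H}\mathscr{B}u(\theta)=\int_{\mathbb T}\cot\Big(\tfrac{\theta-\eta+\beta(\theta)-\beta(\eta)}{2}\Big)u(\eta)\,d\eta,$$
after renaming $\theta\mapsto \theta+\beta(\theta)$ through $\widehat\beta$. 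Hence
$$K(\theta,\eta)=\cot\Big(\tfrac{\theta-\eta+\beta(\theta)-\beta(\eta)}{2}\Big)-\cot\Big(\tfrac{\theta-\eta}{2}\Big).$$

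The second step is to show that $K$ is smooth. I will write $\beta(\theta)-\beta(\eta)=(\theta-\eta)\int_0^1\partial_\theta\beta(\eta+t(\theta-\eta))dt$. Both cotangent singularities are simple poles at $\theta=\eta$ with the same residue structure, so $K$ expresses as a smooth function $F$ evaluated at $\big(\sin\frac{\theta-\eta}{2},\cos\frac{\theta-\eta}{2},\Delta\beta(\theta,\eta)\big)$. Here $\Delta\beta\triangleq\frac{\beta(\theta)-\beta(\eta)}{2\sin(\frac{\theta-\eta}{2})}$, and $F$ vanishes when $\Delta\beta=0$. Concretely, the addition formula for $\cot$ gives
$$K=-\frac{(1+\cot^2 x)\tan y}{\cot x\tan y+1}\cdot$$
I then rewrite this in terms of $\tan y/\sin x$, which is regular. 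Here $x=\frac{\theta-\eta}{2}$ and $y=\frac{\beta(\theta)-\beta(\eta)}{2}$.

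The third step gives the estimates. I will perform the change of variables $\eta\mapsto\theta+\eta$ and use Lemma \ref{lem:funct_topo}-(i). The difference quotient $\Delta\beta$, written as $\int_0^1\partial_\theta\beta(\theta+t\eta)dt$ up to a smooth factor, satisfies $\|\Delta\beta\|_{q,s}\lesssim\|\beta\|_{q,s+1}$. The composition law Lemma \ref{lem:funct_topo}-(iv), with $F(0)=0$, gives $\|K\|_{q,s}\lesssim\|\beta\|_{q,s+2}$, where one more derivative is lost in the composition with $\widehat\beta$. The off-diagonal bound then follows from Lemma \ref{lem:rever_integr}. Reversibility follows from the oddness of $\beta$: $K(-\varphi,-\theta,-\eta)=-K(\varphi,\theta,\eta)$ because $\cot$ is odd, so Lemma \ref{lem:rever_integr} applies. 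Realness is clear. For the $\Delta_{12}$ bound I will write $K^{[1]}-K^{[2]}=\int_0^1 \partial F(\ldots)\,dt\cdot\Delta_{12}\Delta\beta$ (plus the $\widehat\beta$ difference) and use the product and composition laws, which produce the stated tame form.

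The main obstacle is the bookkeeping around the inverse diffeomorphism. The clean kernel formula holds only after the outer variable is re-expressed through $\widehat\beta$, so the regularity of $K$ depends on controlling $\widehat\beta$ and $\Delta_{12}\widehat\beta$ tamely with the right derivative losses. I would handle this first via the implicit relation $\widehat\beta(y)=-\beta(y+\widehat\beta(y))$ and a contraction argument in $W^{q,\infty,\gamma}(\mathscr O,H^{s})$, as in \cite{HR21}.
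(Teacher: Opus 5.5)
The paper gives no proof of this lemma (it is imported from earlier works, e.g.\ \cite{HR21}), so your argument has to stand on its own. It fails at the first step, because the kernel formula is wrong. You correctly absorb the Jacobian of $\mathscr{B}$ into the inner integral. However, you then drop the multiplier carried by $\mathscr{B}^{-1}=\mathcal{B}^{-1}(1+\partial_\theta\beta)^{-1}$, and the substitution $\xi=\eta+\beta(\eta)$ puts $\widehat\beta$, not $\beta$, in the inner slot. The correct identity is
\[
\mathscr{B}^{-1}\mathcal{H}\mathscr{B}u(\theta)=\big(1+\partial_\theta\widehat\beta(\theta)\big)\int_{\mathbb{T}}\cot\Big(\tfrac{\theta-\eta+\widehat\beta(\theta)-\widehat\beta(\eta)}{2}\Big)u(\eta)\,d\eta.
\]
Your formula is in fact the kernel of $\mathcal{B}\mathcal{H}\mathscr{B}^{-1}$.

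The prefactor is not cosmetic. Your assertion that the two cotangents have ``the same residue structure'' is false. Near the diagonal, $\cot\big(\tfrac{\theta-\eta+\beta(\theta)-\beta(\eta)}{2}\big)\sim\frac{2}{(1+\partial_\theta\beta(\theta))(\theta-\eta)}$, so your $K$ behaves like $-\frac{2\partial_\theta\beta}{1+\partial_\theta\beta}\,\frac{1}{\theta-\eta}$. Your own algebra shows this: with $x=\frac{\theta-\eta}{2}$ and $y=\frac{\beta(\theta)-\beta(\eta)}{2}$,
\[
K=-\frac{1}{\sin x}\cdot\frac{\tan y/\sin x}{1+\cos x\,\tan y/\sin x}.
\]
Since $\tan y/\sin x\to\partial_\theta\beta$ on the diagonal, the singular factor $1/\sin x$ does not disappear. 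The operator you wrote is $-\frac{\partial_\theta\beta}{1+\partial_\theta\beta}\mathcal{H}$ plus a smoothing part, so it has order zero. Unless $\partial_\theta\beta\equiv0$, its kernel is not even square integrable, and no composition estimate can give $\|K\|_{q,s}^{\gamma,\mathscr{O}}\lesssim\|\beta\|_{q,s+2}^{\gamma,\mathscr{O}}$.

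With the prefactor kept, the residues cancel exactly and
\[
K(\theta,\eta)=2\partial_\theta\log\Big(\cos Y+\cos X\,\frac{\sin Y}{\sin X}\Big),\qquad X\triangleq\tfrac{\theta-\eta}{2},\quad Y\triangleq\tfrac{\widehat\beta(\theta)-\widehat\beta(\eta)}{2}.
\]
The argument of the logarithm equals $\frac{\sin(X+Y)}{\sin X}$. It is a smooth function of the difference quotient of $\widehat\beta$ and stays close to $1$ under \eqref{small_beta_technic}.

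From there your plan works: shift $\eta\mapsto\theta+\eta$, then use Lemma \ref{lem:funct_topo}, the bound $\|\widehat\beta\|_{q,s}^{\gamma,\mathscr{O}}\lesssim\|\beta\|_{q,s}^{\gamma,\mathscr{O}}$, Lemma \ref{lem:rever_integr}, and a $\Delta_{12}$ version of the same computation. Two corrections remain:
\begin{itemize}
\item The two lost derivatives come from the difference quotient and from the outer $\partial_\theta$ (equivalently, the factor $1+\partial_\theta\widehat\beta$). They do not come from composing with $\widehat\beta$, as you state.
\item For reversibility you must also check the prefactor. This is fine: $\partial_\theta\widehat\beta$ is even in $(\varphi,\theta)$, so $K$ remains odd in $(\varphi,\theta,\eta)$.
\end{itemize}
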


\noindent \textbf{Emeric Roulley}\\
Dipartimento di Matematica “Federigo Enriques”\\
Università degli Studi di Milano\\
Via Cesare Saldini 50, 20133 Milano, Italy.\\
Email : emeric.roulley@unimi.it.
\end{document}